\newtheorem{theorem}{Theorem}[section]
\newtheorem{lemma}[theorem]{Lemma}
\newtheorem{definition}[theorem]{Definition}
\newtheorem{remark}[theorem]{Remark}
\numberwithin{equation}{section}
\newcommand*{\bigchi}{\mbox{\Large$\chi$}}
\newcommand*{\bigrho}{\mbox{\Large$\rho$}}
\newcommand*{\bigsigma}{\mbox{\Large$\sigma$}}
\newlength{\drop}
\definecolor{amethyst}{rgb}{0.6, 0.4, 0.8}
\definecolor{burgundy}{rgb}{0.5, 0.0, 0.13}
\title{On mesh restrictions to satisfy comparison principles, 
  maximum principles, and the non-negative constraint: 
  Recent developments and new results}
\author{\textbf{M.~K.~Mudunuru} and \textbf{K.~B.~Nakshatrala} \\
{\small Department of Civil and Environmental Engineering, 
University of Houston}}
\date{\today{}}
\begin{document}

\begin{titlepage}
    \drop=0.1\textheight
    \centering
    \vspace*{\baselineskip}
    \rule{\textwidth}{1.6pt}\vspace*{-\baselineskip}\vspace*{2pt}
    \rule{\textwidth}{0.4pt}\\[\baselineskip]
    {\LARGE \textbf{\color{burgundy} 
    On mesh restrictions to satisfy comparison principles, \\
    maximum principles, and the non-negative constraint: \\ 
    Recent developments and new results}}\\[0.3\baselineskip]
    \rule{\textwidth}{0.4pt}\vspace*{-\baselineskip}\vspace{3.2pt}
    \rule{\textwidth}{1.6pt}\\[\baselineskip]
    \scshape
    
    \vspace*{2\baselineskip}
    Authored by \\[\baselineskip]
    
    {\Large M.~K.~Mudunuru\par}
    {\itshape Graduate Student, University of Houston}\\[\baselineskip]
    
    {\Large K.~B.~Nakshatrala\par}
    {\itshape Department of Civil \& Environmental Engineering \\
    University of Houston, Houston, Texas 77204--4003.\\ 
    \textbf{phone:} +1-713-743-4418, \textbf{e-mail:} knakshatrala@uh.edu \\
    \textbf{website:} http://www.cive.uh.edu/faculty/nakshatrala\par}
    
    \begin{figure}[h]
      \psfrag{A1}{{\small Non-negativity}}
      \psfrag{A2}{{\small Monotone}}
      \psfrag{A3}{{\small Monotonicity}}
      \psfrag{A4}{{\small preserving}}
      \includegraphics[scale=0.5]{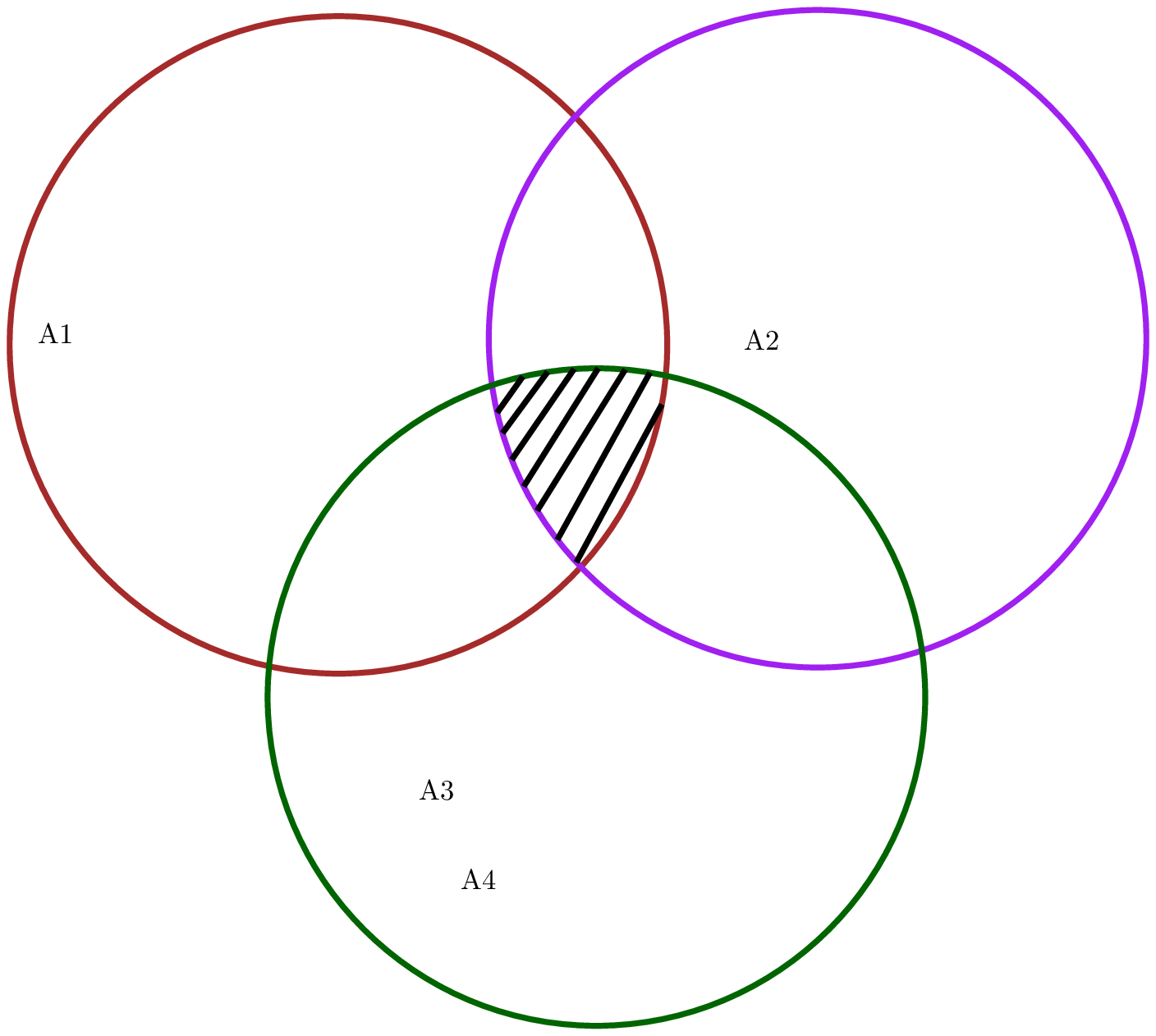}
    \end{figure}
    \emph{Designing a numerical methodology that possesses 
    non-negativity, monotone, and monotonicity preserving 
    properties is still an open problem.}
    \vfill
    {\scshape 2015} \\
    {\small Computational \& Applied Mechanics Laboratory} \par
\end{titlepage}

\begin{abstract}
  This paper concerns with mesh restrictions that are 
  needed to satisfy several important mathematical 
  properties -- maximum principles, comparison 
  principles, and the non-negative constraint -- 
  for a general linear second-order elliptic partial 
  differential equation. We critically review some 
  recent developments in the field of discrete maximum 
  principles, derive new results, and discuss some 
  possible future research directions in this area. 
  In particular, we derive restrictions for a three-node 
  triangular (T3) element and a four-node quadrilateral 
  (Q4) element to satisfy comparison principles, maximum 
  principles, and the non-negative constraint under the 
  standard single-field Galerkin formulation. Analysis 
  is restricted to uniformly elliptic linear differential 
  operators in divergence form with Dirichlet boundary 
  conditions specified on the entire boundary of the 
  domain. Various versions of maximum principles and 
  comparison principles are discussed in both continuous 
  and discrete settings. 
  In the literature, it is well-known that an acute-angled 
  triangle is sufficient to satisfy the discrete weak maximum 
  principle for pure \emph{isotropic} diffusion. Herein, we 
  show that this condition can be either too restrictive or 
  not sufficient to satisfy various discrete principles when 
  one considers anisotropic diffusivity, advection velocity 
  field, or linear reaction coefficient. Subsequently, we 
  derive appropriate restrictions on the mesh for simplicial 
  (e.g., T3 element) and non-simplicial (e.g., Q4 element) 
  elements. Based on these conditions, an iterative algorithm 
  is developed to construct simplicial meshes that preserves 
  discrete maximum principles using existing open source mesh 
  generators. Various numerical examples based on different 
  types of triangulations are presented to show the pros and 
  cons of placing restrictions on a computational mesh. We 
  also quantify local and global mass conservation errors 
  using representative numerical examples, and illustrate 
  the performance of metric-based meshes with respect to 
  mass conservation.  
\end{abstract}
\keywords{Mesh restrictions; anisotropic $\mathcal{M}$-uniform 
  meshes; angle conditions; comparison principles; maximum 
  principles; non-negative solutions; elliptic equations; 
  local and global species balance}

\maketitle



\section{INTRODUCTION, MOTIVATION, AND CONTEMPORARY ADVANCEMENTS}
\label{Sec:S1_MP_Introduction}
Diffusion-type equations are commonly encountered in 
various branches of engineering, sciences, and even 
in economics \cite{Crank_Oxford_1975,Mei,Aoki}. These 
equations have been well-studied in Applied Mathematics, 
and several properties and \emph{a priori} estimates have 
been derived \cite{Pao}. Numerous numerical formulations 
have been proposed and their performance has been 
analyzed both theoretically and numerically \cite{Gresho_Sani_v1}. 
Several sophisticated software packages, such as \textsf{ABAQUS} 
\cite{Abaqus_6.8-3}, \textsf{ANSYS} \cite{ANSYS_v14.5}, \textsf{COMSOL} 
\cite{COMSOL_v4.3a}, and \textsf{MATLAB's PDE Toolbox} \cite{MATLAB_2012a}, 
have been developed to solve these types of equations. Special solvers for 
solving the resulting discrete equations have also been proposed and studied 
adequately \cite{Hackbusch}. 

This paper is concerned with numerical solutions for anisotropic 
advection-diffusion-reaction equations. Despite the aforementioned
advances, it should be noted that a numerical solution always loses 
some mathematical properties that the exact solution possesses. 
In particular, the aforementioned software packages and popular 
numerical formulations do not satisfy the so-called discrete 
comparison principles (DCPs), discrete maximum principles (DMPs), 
and the non-negative constraint (NC). For example, consider the 
numerical simulation for pure anisotropic diffusion equation in 
an L-shaped domain with multiple holes using the commercial software
package \textsf{ABAQUS}.

Numerical simulations are performed based on various unstructured 
finite element meshes (see Figure \ref{Fig:ABAQUS_T3_Q4_Meshes}, 
which is to the scale), using the following popular anisotropic 
diffusivity tensor from hydrogeological and subsurface flow 
literature \cite{2013_Nakshatrala_Mudunuru_Valocchi_JCP_v253_p278_p307}:
\begin{align}
  \label{Eqn:AnisoRotationDiff}
  \mathbf{D}(\mathbf{x}) = \mathbf{R} 
  \mathbf{D}_{\mathrm{eigen}} \mathbf{R}^{\mathrm{T}}
\end{align}
where $\mathbf{D}_{\mathrm{eigen}}$ is a diagonal matrix comprised 
of the eigenvalues of $\mathbf{D}(\mathbf{x})$. The corresponding 
principal eigenvectors are the column entries in the orthogonal 
matrix $\mathbf{R}$. The expressions for $\mathbf{D}_{\mathrm{eigen}}$ 
and $\mathbf{R}$ are assumed as follows:
\begin{align}
  \label{Eqn:NN_rotation_tensor}
  \mathbf{R} &= \left(\begin{array}{cc}
    \cos(\theta) & -\sin(\theta) \\
    \sin(\theta) & \cos(\theta) \\
  \end{array}\right) \\
  \label{Eqn:NN_Eigenvalue_Matrix}
  \mathbf{D}_{\mathrm{eigen}} &= \left(\begin{array}{cc}
    d_{\mathrm{max}} & 0 \\
    0 & d_{\mathrm{min}} \\
  \end{array}\right) 
\end{align}
Herein, $d_{\mathrm{max}}$ and $d_{\mathrm{min}}$ correspond to the 
maximum and minimum eigenvalues. $\theta$ corresponds to the angle 
of orientation of the eigenvector coordinate system. It should be 
noted that these eigenvalues have physical significance and are 
related to the \emph{transverse and longitudinal diffusivities} 
in the eigenvector coordinate system. Diffusion process is simulated 
based on equations \eqref{Eqn:NN_AD_GE_BE}--\eqref{Eqn:NN_AD_GE_DBC}. 
We assume $\mathbf{v}(\mathbf{x}) = \mathbf{0}$, $\alpha(\mathbf{x}) 
= 0$, and $f(\mathbf{x}) = 0$. The prescribed concentration on 
the sides of the L-shaped domain is equal to zero. Correspondingly, 
the concentration on the perimeter of the holes are set to be equal 
to one. The values of $d_{\mathrm{max}}$, $d_{\mathrm{min}}$, and 
$\theta$ for $\mathbf{D}(\mathbf{x})$ given by the equations 
\eqref{Eqn:AnisoRotationDiff}--\eqref{Eqn:NN_Eigenvalue_Matrix} 
are assumed to be equal to 1000, 1, and $\frac{\pi}{3}$. 

Very fine triangular (where the total number of nodes and mesh elements 
are equal to 86326 and 169453) and quadrilateral (where the total number 
of nodes and mesh elements are equal to 91778 and 90625) meshes are used 
to perform \textsf{ABAQUS} numerical simulations. The concentration profile 
obtained is shown in Figure \ref{Fig:ABAQUS_T3_Q4_Contours}. In this figure, 
we have not shown the concentration contour using four-node quadrilateral 
mesh, as it is almost identical to that of the contour obtained by 
employing three-node triangular mesh. The white area within the L-shaped 
domain with multiple holes represents the region in which the \emph{numerical 
value} of concentration is \emph{negative} and also exceeds the \emph{maximum}. 
To be precise, in the case of triangular mesh, 2.22\% and 3.92\% of the 
nodes have violated the non-negative and maximum constraints. Correspondingly, 
the minimum and maximum values for concentration obtained are -0.0238 and 
1.0076. These are considerably far away from the possible values, which 
are between 0 and 1. Similarly, for quadrilateral mesh, these values are 
slightly lower. Quantitatively, these are around 2.17\% and 3.75\%. But 
the minimum and maximum values of concentration (-0.0287 and 1.0086) are 
slightly higher than that of the triangular mesh. 
Additionally, it is evident that more than 6\% of the nodes 
have \emph{unphysical negative values} for the concentration. 
There are nodes for which the concentration exceeded the 
maximum constraint. 
Furthermore, from Figures \ref{Fig:ABAQUS_T3_Q4_MinMaxConc} 
and \ref{Fig:ABAQUS_T3_Q4_MinMaxpercent}, it is evident that 
these values \emph{do not decrease} with mesh refinement. In 
general, there are three possible routes to overcome such 
limitations and satisfy DCPs, DMPs, and NC; which we shall 
describe below. 

\subsection{Strategy - I: Mesh restrictions}
The first approach is to place restrictions on the mesh 
to meet maximum principles and the non-negative constraint.
For isotropic homogeneous diffusivity, Ciarlet and Raviart 
\cite{Ciarlet_Raviart_CMAME_1973_v2_p17} have shown that 
numerical solutions based on the single-field Galerkin 
finite element formulation, in general, does not converge 
\emph{uniformly}. It should however be noted that the 
single-field Galerkin formulation is a converging scheme. 
Ciarlet and Raviart have also shown that a sufficient condition 
for single-field Galerkin formulation to converge uniformly for 
\emph{isotropic} diffusion is to employ a well-centered 
three-node triangular element mesh with low-order interpolation.
   
The obvious advantage of this approach is that one can use the 
single-field Galerkin formulation without any modification. The 
drawback is that an appropriate computational mesh may not exist 
because of the required restrictions on the shape and size of the 
finite element. For example, it is not an easy task (sometimes 
it \emph{not} possible) to generate a well-centered triangular mesh for any 
given two-dimensional domain \cite{Vanderzee_Hirani_Guoy_Ramos_SIAMJSC_2010_v31_p4497}. 
Note that requiring a mesh to be well-centered is a more 
stringent than requiring the mesh to be Delaunay. In fact, 
a well-centered mesh is Delaunay but the converse need not 
be true. 

In scientific literature, there are numerous commercial and non-commercial 
mesh generators that produce premium quality structured and unstructured 
meshes for various complicated domains. For instance, the survey paper 
by Owen \cite{1998_Owen_IMR_p239_p267} accounts for more than 70 
unstructured mesh generation software products. But it should be emphasized 
that Owen \cite{1998_Owen_IMR_p239_p267} rarely mentions about non-obtuse, 
acute, and anisotropic $\mathcal{M}$-uniform mesh generators. However, it 
is evident from the above discussion that these types of meshes have a profound 
impact on solving various important physical problems related to diffusion-type 
equations. In recent years, there has been considerable effort in developing 
such types of mesh generators. For example, some open source meshing software packages 
which are relevant to mesh restrictions methodology are as follows:
\begin{itemize} 
  \item Non-obtuse and acute triangulations in 2D: \textsf{aCute} 
    \cite{2007_Erten_Ungor_CCCG_p205_p208,
    2009_Erten_Ungor_PVD_p192_p201,2009_Erten_Ungor_SIAMJSC_v31_p2103_p2130} 
    (a meshing software, which is based on \textsf{Triangle} 
    \cite{shewchuk96b})
    \item Anisotropic $\mathcal{M}$-uniform triangulations in 2D: \textsf{BAMG} 
      \cite{1998_Hecht_BAMG} in \textsf{FreeFem++} \cite{FreeFempp,2002_Hecht_JNM_v20_p251_p266},
      \textsf{BL2D}  \cite{1996_Laug_Borouchaki}
    \item Anisotropic $\mathcal{M}$-uniform triangulations in 3D: \textsf{Mmg3d} 
      \cite{2012_Dobrzynski_Mmg3d}
    \item Locally uniform anisotropic Delaunay meshes (surface, 2D, and 3D): 
      \textsf{CGALmesh} \cite{2003_Alliez_etal_v22_p485_p493,
      2008_Boissonnat_Wormser_Yvinec_Proc_p270_p277,
      2009_Boissonnat_Pons_Yvinec_LNCS_5416,
      2011_Boissonnat_Wormser_Yvinec}
\end{itemize}
However, the use of these mesh generators in the area of numerical analysis 
and engineering, in particular, to construct mesh restrictions for diffusion-type 
equations to satisfy DCPs, DMPs, and NC is hardly known. Recently, Huang and 
co-workers \cite{2005_Li_Huang_JCP_v229_p8072_p8094,2012_Lu_Huang_Qiu_arXiv_1201_3564,
2012_Huang_arXiv_1306_1987} used \textsf{BAMG} to generate anisotropic simplicial 
meshes to satisfy various discrete properties for linear advection-diffusion-reaction 
equations. But in their research works, the computational domains under consideration 
are \emph{uncomplicated}. 

\subsection{Strategy - II: Non-negativity, monotone, and monotonicity preserving 
formulations}
The second approach is mainly concerned with developing new innovative 
numerical methodologies based on certain \emph{physical and variational 
principles}, so that they satisfy DCPs, DMPs, and NC. Broadly, these 
methods can be classified into the following three categories:
\begin{itemize}
  \item \emph{Non-negative formulations}: A numerical 
    method belongs to the class of non-negative formulations 
    if the resulting numerical solution satisfies certain 
    DMPs and NC.
  \item \emph{Monotone formulations}: A numerical method is 
    said to be monotone if the resulting numerical solution 
    satisfies certain DMPs, DCPs, and NC.
  \item \emph{Monotonicity preserving formulations}: A 
    numerical formulation is said to be monotonicity preserving 
    if the resulting numerical solution does not exhibit 
    spurious oscillations within itself.
\end{itemize}
It needs to be emphasized that a non-negative formulation need not satisfy 
monotone conditions, a monotone numerical method need not be monotonicity 
preserving, and vice-versa. It is still an open research problem to develop 
a numerical formulation that meets all the aforementioned properties.

Some popular formulations and notable research works in this direction are 
finite difference schemes (FDS) \cite{Ciarlet_AequationesMath_1970_v4_p338,
Varga_SIAMJNA_1966_v3_p355}, mimetic finite difference methods (MFDM) 
\cite{Brezzi_Lipnikov_Shashkov_SIAMJNA_2005_v43_p1872,
2011_Lipnikov_Manzini_Svyatskiy_JCP_v230_p2620_p2642}, 
finite volume methods (FVM) \cite{2009_LePotier_IJFV_v6,
Nordbotten_Aavatsmark_Eigestad_NumerMath_2007_v106_p255,
2011_Droniou_LePotier_SIAMJNA_v49_p459_p490},
and finite element methods (FEM) \cite{Burman_Ern_MathComput_2005_v74_p1637,
2005_Draganescu_etal_MC_v74_p1_p23,Liska_Shashkov_CiCP_2008_v3_p852,
Nakshatrala_Valocchi_JCP_2009_v228_p6726}. It should be noted that most of 
these techniques are inherently non-linear. For example, the optimization-based 
finite element methodologies proposed by Nakshatrala and co-workers 
\cite{Nakshatrala_Valocchi_JCP_2009_v228_p6726,
Nagarajan_Nakshatrala_IJNMF_2011_v67_p820,
Nakshatrala_Nagaraja_Shabouei_arXiv_2012,
2013_Nakshatrala_Mudunuru_Valocchi_JCP_v253_p278_p307} enforce the desirable 
properties as explicit constraints. This is achieved by constructing variationally 
consistent constrained minimization problems for various numerical formulations. 
But one should note that this comes with an additional computational cost. 
However, Nakshatrala \emph{et al.} \cite{2013_Nakshatrala_Mudunuru_Valocchi_JCP_v253_p278_p307}
have shown through numerical experiments that the additional computational cost 
is less than 10\% . 

\subsection{Strategy - III: Post-processing methods}
The third approach is about post-processing (PP) based methods. In literature, 
there are various types of PP methods, which can be used to recover certain 
discrete properties for diffusion-type equations. Some of the notable research 
works in this direction include:
\begin{itemize} 
\item Local and global remapping/repair methods 
  \cite{2003_Kucharik_Shashkov_Wendroff_JCP_v188_p462_p471,
    2007_Garimella_Kucharik_Shashkov_CF_v36_p224_p237}
\item Constrained monotonic regression based methods 
  \cite{2012_Burdakov_etal_JCP_v231_p3126_p3142}
\item Cutoff methods (also known as the clipping 
  methods) \cite{2012_Kreuzer_arXiv_1208_3958,
    2013_Lu_Huang_Vleck_JCP_v242_p24_p36}
\item A combination of remapping/repair methods
  and cutoff methods \cite{2012_Wang_etal_IJNMF_v70_p1188_p1205,
    2013_Zhao_etal_DOI}
\end{itemize}
We shall now give a brief description of the pros and cons of 
these methods. Nevertheless, it is very difficult to apply these 
techniques to recover DCPs, DMPs, and NC for higher-order FEM 
methods, as the shape functions can change their sign within the 
element. In addition, most of the above methods \emph{do not} 
have a variational basis.

The remapping/repair techniques proposed by Shashkov and co-workers 
are designed to improve the quality of the numerical solutions, so that 
they satisfy some discrete properties. Even though these are efficient, 
conservative, linearity and bound preserving interpolation algorithms, 
it should be emphasized that they are mesh dependent. In addition, 
application of such algorithms to anisotropic diffusion equations to 
satisfy DMPs and NC are seldom \cite{2012_Wang_etal_IJNMF_v70_p1188_p1205,
2013_Zhao_etal_DOI}.

The post-processing procedure based on a constrained monotonic regression 
problem proposed by Burdakov \emph{et al.} \cite{2012_Burdakov_etal_JCP_v231_p3126_p3142}
is locally conservative, bound preserving, and monotonicity recovering method.
This is a constrained optimization-based PP method, wherein one needs to 
specify various constraints in order to fulfill certain discrete properties.
It is applicable to FDS, FVM, and FEM. But in the case of FEM, this PP method 
is valid only for linear and multi-linear shape functions. In order to 
construct appropriate constraints for the optimization problem, one needs 
to know a prior information on the lower bounds, upper bounds, and monotonicity 
of the numerical solution for the physical problem. In general, obtaining the
qualitative and quantitative nature of the solution to a given physical 
problem is not always possible. If such information 
on the monotonicity, lower and upper bounds for the numerical solution is 
not available, then this methodology reduces to the standard clipping procedure. 
In addition, one should note that it is not always possible to satisfy DCPs 
using this constrained monotonic regression algorithm. For instance, a 
counterexample similar to the research work by Nakshatrala \emph{et al.} 
(see \cite[Section 4]{2013_Nakshatrala_Mudunuru_Valocchi_JCP_v253_p278_p307}) 
can be constructed to show that it does not satisfy a DCP.

Finally, we would like to emphasize that \emph{a posterior cutoff analysis} 
is a variational crime. In general, this method is neither conservative 
nor satisfies DMPs and DCPs. The primary objective of this method is to 
cut off the values of a numerical solution if it is less than a given number 
(which is the cutoff value). Hence, it is called the cutoff method. In the case 
of highly anisotropic diffusion problems and for distorted meshes, this method 
predicts erroneous numerical results \cite{Nagarajan_Nakshatrala_IJNMF_2011_v67_p820,
2013_Nakshatrala_Mudunuru_Valocchi_JCP_v253_p278_p307}. By specifying the 
cutoff value to be zero, it is always guaranteed to satisfy NC through 
this methodology. In addition, if the nature of the solution is known a prior, 
then one can also prevent undershooting and overshooting of the numerical 
solution by chopping off those values.

\subsection{Main contributions and an outline of this paper} 
Herein, we shall focus on the first approach of placing 
restrictions on the mesh to meet desired mathematical 
properties. We shall derive sufficient conditions for 
restrictions on the three-node triangular and four-node 
quadrilateral finite elements to meet comparison principles, 
maximum principles, and the non-negative constraint in the 
case of heterogeneous anisotropic advection-diffusion-reaction 
(ADR) equations. The notable contributions of this paper are 
as follows: 
\begin{enumerate}[(i)]
  \item We provide an in-depth review of various versions 
  of comparison principles, maximum principles, and the 
  non-negative constraint in the continuous setting.
  \item We derive necessary and sufficient conditions 
    on the coefficient (i.e., the ``stiffness") matrix 
    to satisfy discrete weak and strong comparison 
    principles. 
  \item A relationship between various discrete 
    principles within the context of mesh restrictions, 
    numerical formulations, and post-processing methods 
    is presented.
  \item We propose an iterative method to generate simplicial meshes 
    that satisfy discrete properties using open source mesh generators 
    such as \textsf{BAMG}, \textsf{FreeFem++}, and \textsf{Gmsh}.
  \item Different types of non-dimensional quantities are 
    proposed for anisotropic diffusivity, which are variants 
    of the standard P\'{e}clet and Damk\"{o}hler numbers. 
    These quantities are extremely useful in numerical 
    simulations and have not been discussed in the 
    literature.
  \item Lastly, several realistic numerical examples are 
    presented to corroborate the theoretical findings as 
    well as to show the importance of preserving discrete 
    principles.
\end{enumerate}

The remainder of this paper is organized as follows. In Section 
\ref{Sec:S2_MP_Diffusion}, we present the governing equations 
for a general linear second-order elliptic equation and discuss 
associated mathematical principles: comparison principles, maximum principles, 
and the non-negative constraint. Section \ref{Sec:S3_MP_GeneralRemarks} 
provides several important remarks on the continuous and discrete 
properties of elliptic equations. In Section \ref{Sec:S4_MP_Restrictions}, 
we shall derive mesh restrictions for the three-node triangular 
element and the rectangular element to meet the discrete 
versions of maximum principles, comparison principles, and 
the non-negative constraint. Finally, conclusions are drawn 
in Section \ref{Sec:S5_MP_Conclusions}.  

We will denote scalars by lower case English alphabet 
or lower case Greek alphabet (e.g., concentration $c$ 
and density $\rho$). We will make a distinction between 
vectors in the continuum and finite element settings. 
Similarly, a distinction will be made between second-order 
tensors in the continuum setting versus matrices in the 
discrete setting. The continuum vectors are denoted by 
lower case boldface normal letters, and the second-order 
tensors will be denoted using upper case boldface normal 
letters (e.g., vector $\mathbf{x}$ and second-order tensor 
$\mathbf{D}$). In the finite element context, we shall 
denote the vectors using lower case boldface italic 
letters, and the matrices are denoted using upper case 
boldface italic letters (e.g., vector $\boldsymbol{f}$ 
and matrix $\boldsymbol{K}$). Other notational conventions 
adopted in this paper are introduced as needed.

\section[Short title]{LINEAR SECOND-ORDER ELLIPTIC EQUATION 
AND ASSOCIATED MATHEMATICAL PRINCIPLES}
\label{Sec:S2_MP_Diffusion}
Let $\Omega \subset \mathbb{R}^{nd}$ be a open bounded domain, 
where ``$nd$'' denotes the number of spatial dimensions. The 
boundary of the domain is denoted by $\partial \Omega$, which 
is assumed to be piecewise smooth. Mathematically, $\partial 
\Omega := \overline{\Omega} - \Omega$, where a superposed 
bar denotes the set closure. A spatial point is denoted by 
$\mathbf{x} \in \overline{\Omega}$. The gradient and divergence 
operators with respect to $\mathbf{x}$ are, respectively, 
denoted by $\mathrm{grad}[\bullet]$ and $\mathrm{div}[\bullet]$. 
Let $c(\mathbf{x})$ denote the concentration field. We assume 
that Dirichlet boundary condition is prescribed (i.e., the 
concentration is prescribed) on the entire boundary.The 
remainder of this paper deals with the following boundary 
value problem, which is written in terms of a general linear 
second-order differential operator in \emph{divergence form}: 
\begin{subequations}
  \label{Eqn:NN_AD_GE}
  \begin{align}
    \label{Eqn:NN_AD_GE_BE}
    \mathcal{L}[c] &:= -\mathrm{div}\left[\mathbf{D}
    (\mathbf{x}) \mathrm{grad}[c(\mathbf{x})]\right] 
    + \mathbf{v}(\mathbf{x}) \cdot \mathrm{grad}[
    c(\mathbf{x})] + \alpha(\mathbf{x}) c(\mathbf{x}) 
    = f(\mathbf{x}) \quad \mathrm{in} \; \Omega \\
    \label{Eqn:NN_AD_GE_DBC}
    c(\mathbf{x}) &= c^{\mathrm{p}}(\mathbf{x}) \quad 
    \mathrm{on} \; \partial \Omega
  \end{align}
\end{subequations}
where $\mathcal{L}$ denotes the second-order linear differential 
operator, $f(\mathbf{x})$ is the prescribed volumetric source, 
$\alpha(\mathbf{x})$ is the linear reaction coefficient, $\mathbf{v}
(\mathbf{x})$ is the velocity vector field, $\mathbf{D}(\mathbf{x})$ 
is the anisotropic diffusivity tensor, and $c^{\mathrm{p}}(\mathbf{x})$ 
is the prescribed concentration. Physics of the problem demands 
that the diffusivity tensor (which is a second-order tensor) be 
symmetric. That is, 
\begin{align}
  \label{Eqn:Diffusivity_Symmetric}
  \mathbf{D}^{\mathrm{T}}(\mathbf{x}) = \mathbf{D}
  (\mathbf{x}) \quad \forall \mathbf{x} \in \Omega
\end{align}

\begin{remark}
  In mathematical analysis, the divergence form is a suitable 
  setting for energy methods. However, some studies on maximum 
  principles do employ the non-divergence form, which can be 
  written as follows: 
  \begin{align}
    \mathcal{L}[c] = \sum \limits_{i,j=1}^{nd} (\mathbf{P})_{ij} 
    \frac{\partial^2 c}{\partial x_i \partial x_j} + \sum 
    \limits_{i=1}^{nd} (\mathbf{q})_{i} \frac{\partial c}{\partial x_i} 
    + r(\mathbf{x}) c
  \end{align}
  where the coefficient $(\mathbf{P})_{ij}$, $(\mathbf{q})_{i}$, 
  and $r(\mathbf{x})$, can be related to the physical quantities 
  such as the diffusivity tensor, velocity field, and linear reaction 
  coefficient. It should be, however, noted that the non-divergence 
  form exists irrespective of differentiability of the diffusivity 
  tensor. If $\mathbf{D}(\mathbf{x})$ is continuously differentiable, 
  then there exists a one-to-one correspondence between the divergence 
  form and the non-divergence form. In such cases, the operator $\mathcal{L}$ 
  in the divergence form given by equation \eqref{Eqn:NN_AD_GE_BE} 
  can be put into the following non-divergence form \cite[Chapter 6]{Evans_PDE}: 
  \begin{align}
    \label{Eqn:General_Linear_Elliptic_Operator}
    \mathcal{L}[c] = -\mathbf{D}(\mathbf{x}) \cdot \mathrm{grad}
    \left[\mathrm{grad}[c(\mathbf{x})]\right] + \left( \mathbf{v}
    (\mathbf{x}) - \mathrm{div} \left[\mathbf{D}(\mathbf{x})\right] 
    \right) \cdot \mathrm{grad}[c(\mathbf{x})] + \alpha(\mathbf{x}) 
    c(\mathbf{x})
  \end{align}
  where we have used the following identity in combination with 
  equation \eqref{Eqn:Diffusivity_Symmetric} to obtain equation 
  \eqref{Eqn:General_Linear_Elliptic_Operator}
  \begin{align}
    \label{Eqn:Divergence_Identity}
    \mathrm{div} \left[\mathbf{D}^{\mathrm{T}}(\mathbf{x}) 
    \mathrm{grad}[c(\mathbf{x}) \right] = \mathbf{D}(\mathbf{x}) 
    \cdot \mathrm{grad} \left[\mathrm{grad}[c(\mathbf{x})]\right] 
    + \mathrm{div} \left[\mathbf{D}(\mathbf{x})\right] 
    \cdot \mathrm{grad}[c(\mathbf{x})]
  \end{align}
\end{remark}

Based on the nature of the coefficients and connectedness of the physical 
domain, different versions of maximum and comparison principles exist in 
the mathematical literature \cite{Evans_PDE,Gilbarg_Trudinger,Pucci_Serrin}. 
As stated earlier in this paper, we shall restrict ourselves to the boundary 
value problem given by the equations \eqref{Eqn:NN_AD_GE_BE}--\eqref{Eqn:NN_AD_GE_DBC}. 
Further analysis pertaining to Neumann boundary conditions and mixed boundary 
conditions within the context of maximum principles, comparison principles, 
and the non-negative constraint is beyond the scope of this paper, and one 
can refer to references \cite{Karatson_Korotov_NumerMath_2005_v99_p669,Pucci_Serrin,
Borsuk_Kondratiev,Pao}.

We shall say that the operator $\mathcal{L}$ is \emph{elliptic} at a point 
$\mathbf{x} \in \Omega$ if
\begin{align}
  \label{Eqn:Diffusivity_PositiveDefinite}
  0 < \lambda_{\mathrm{min}} (\mathbf{x}) 
  \boldsymbol{\xi} \cdot \boldsymbol{\xi} 
  \leq \boldsymbol{\xi} \cdot \mathbf{D}
  (\mathbf{x}) \boldsymbol{\xi}\leq \lambda_
  {\mathrm{max}} (\mathbf{x}) \boldsymbol{\xi} 
  \cdot \boldsymbol{\xi} \quad \forall \boldsymbol{\xi} 
  \in \mathbb{R}^{nd} \backslash \{\boldsymbol{0}\}
\end{align}
where $\lambda_{\mathrm{min}}(\mathbf{x})$ and $\lambda_{\mathrm{max}}(\mathbf{x})$ 
are, respectively, the minimum and maximum eigenvalues of $\mathbf{D}(\mathbf{x})$. 
The operator $\mathcal{L}$ is said to be \emph{strictly elliptic} if there exists 
a constant $\lambda_0$, such that
\begin{align}
  \label{Eqn:Diffusivity_StrictlyElliptic}
  0 < \lambda_0 \leq \lambda_{\mathrm{min}} 
  (\mathbf{x}) \quad \forall \; \mathbf{x} 
  \in \Omega
\end{align}
and \emph{uniformly elliptic} if 
\begin{align}
  \label{Eqn:Diffusivity_UniformlyElliptic}
  0 < \frac{\lambda_{\mathrm{max}}(\mathbf{x})}{
  \lambda_{\mathrm{min}} (\mathbf{x})} < +\infty 
  \quad \forall \; \mathbf{x} \in \Omega
\end{align}
In the studies on maximum principles, it is common to impose the following 
restrictions on the velocity field $\mathbf{v}(\mathbf{x})$ and the linear 
reaction coefficient $\alpha(\mathbf{x})$:
\begin{subequations}
\begin{align}
  \label{Eqn:alpha_NeccSuff}
  &\alpha(\mathbf{x}) \geq 0 \quad \forall \; \mathbf{x} \in 
  \Omega \\
  \label{Eqn:alpha_velocity}
  &\alpha(\mathbf{x}) - \frac{1}{2} \mathrm{div}\left[ 
  \mathbf{v}(\mathbf{x}) \right] \geq 0 \quad \forall \; 
  \mathbf{x} \in \Omega \\
  \label{Eqn:velocity_DiffEigenValue}
  &0 \leq \frac{\left|\left(\mathbf{v}(\mathbf{x})\right)_i 
  \right|}{\lambda_{\mathrm{min}}(\mathbf{x})} \leq 
  \beta_0 < +\infty \quad \forall \; \mathbf{x} \in \Omega 
  \quad \mathrm{and} \quad \forall \; i = 1, \cdots, nd
\end{align}
\end{subequations}
where $\beta_0$ is a bounded non-negative constant. If $(\mathbf{D})
_{ij}$ and $(\mathbf{v})_{i}$ are continuous in $\Omega$, then the 
operator $\mathcal{L}$ is uniformly elliptic for any bounded subdomain 
$\Omega^{'} \subset \subset \Omega$ (which means that $\Omega^{'}$ is 
\emph{compactly embedded} in $\Omega$) and the condition given in 
equation \eqref{Eqn:velocity_DiffEigenValue} holds. The restrictions 
given in equation \eqref{Eqn:alpha_velocity} can be relaxed in some 
situations (e.g., see references \cite{2012_Lu_Huang_Qiu_arXiv_1201_3564,
2012_Huang_arXiv_1306_1987}). But the constraint on $\alpha(\mathbf{x})$ 
given by equation \eqref{Eqn:alpha_NeccSuff} cannot be relaxed. If 
$\alpha(\mathbf{x}) < 0$, then equation \eqref{Eqn:NN_AD_GE_BE} is 
referred to as an Helmholtz-type equation, which does not possess a 
maximum principle. From the theory of partial differential equations, 
it is well-known that the aforementioned boundary value problem given 
by equations \eqref{Eqn:NN_AD_GE_BE}--\eqref{Eqn:NN_AD_GE_DBC} satisfies 
the so-called (weak and strong) comparison principles, (weak and strong) 
maximum principles, and the non-negative constraint. For future reference 
and for completeness, we shall briefly outline the main results. For a 
more detailed mathematical treatment, one could consult references 
\cite{Evans_PDE,Pao,Gilbarg_Trudinger}.

\begin{theorem}[\texttt{Continuous weak and strict weak maximum principles}]
  \label{Thm:Continuous_WeakMaximum_Principle}
  Let $\mathcal{L}$ be a uniformly elliptic operator satisfying the 
  conditions given by equations 
  \eqref{Eqn:alpha_NeccSuff}--\eqref{Eqn:velocity_DiffEigenValue}. 
  In addition, let $\mathbf{D}(\mathbf{x})$ be continuously differentiable. 
  Suppose that $c(\mathbf{x}) \in C^{2}(\Omega) \cap C^{0}(\overline{\Omega})$ 
  satisfies the differential inequality $\mathcal{L}[c] \leq 0$ in $\Omega$,
  then the maximum of $c(\mathbf{x})$ in $\overline{\Omega}$ is obtained 
  on $\partial \Omega$. That is, $c(\mathbf{x})$ possesses the weak maximum 
  principle ($\mathrm{wMP}$), which can be written as follows: 
  \begin{align}
    \label{Eqn:Weak_Maximum_Principle}
    \mathop{\mathrm{max}}_{\mathbf{x} \in \overline{\Omega}} 
    \left [ c(\mathbf{x}) \right ] \leq \mathop{\mathrm{max}} 
    \left [ 0, \mathop{\mathrm{max}}_{\mathbf{x} \in \partial 
    \Omega} \left [ c(\mathbf{x}) \right ] \right ]
  \end{align}
   Moreover, if $\alpha(\mathbf{x}) = 0$, then we have the strict weak maximum 
   principle ($\mathrm{WMP}$):
   \begin{align}
     \label{Eqn:StrictWeak_Maximum_Principle}
     \mathop{\mathrm{max}}_{\mathbf{x} \in \overline{\Omega}} 
     \left [ c(\mathbf{x}) \right ] = \mathop{\mathrm{max}}_
     {\mathbf{x} \in \partial \Omega} \left [ c(\mathbf{x}) 
     \right ]
   \end{align}
\end{theorem}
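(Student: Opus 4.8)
The plan is to reduce the assertion to the classical weak maximum principle for a non-divergence-form operator, carried out in three successive reductions: (a) rewrite $\mathcal{L}$ in the non-divergence form \eqref{Eqn:General_Linear_Elliptic_Operator} -- this is legitimate precisely because $\mathbf{D}(\mathbf{x})$ is assumed continuously differentiable, and it also uses the symmetry \eqref{Eqn:Diffusivity_Symmetric} so that $\mathbf{D}(\mathbf{x})$ is a symmetric positive-definite matrix at each point by \eqref{Eqn:Diffusivity_PositiveDefinite}; (b) prove the strict statement \eqref{Eqn:StrictWeak_Maximum_Principle} first under the additional hypothesis $\alpha(\mathbf{x}) = 0$; and (c) remove that hypothesis, using only $\alpha(\mathbf{x}) \geq 0$ from \eqref{Eqn:alpha_NeccSuff}, by localizing to the set where $c$ is positive, which yields \eqref{Eqn:Weak_Maximum_Principle}.

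For step (b), I would first dispose of the auxiliary case of a \emph{strict} subsolution, i.e. $\mathcal{L}[c] < 0$ in $\Omega$ with $\alpha = 0$. Suppose, for contradiction, that $c$ attains its maximum over $\overline{\Omega}$ at an interior point $\mathbf{x}_{0} \in \Omega$. Then $\mathrm{grad}[c](\mathbf{x}_{0}) = \mathbf{0}$ and the Hessian $\mathrm{grad}[\mathrm{grad}[c]](\mathbf{x}_{0})$ is negative semidefinite. Evaluating \eqref{Eqn:General_Linear_Elliptic_Operator} at $\mathbf{x}_{0}$ the first-order term vanishes, leaving $\mathcal{L}[c](\mathbf{x}_{0}) = -\mathbf{D}(\mathbf{x}_{0}) \cdot \mathrm{grad}[\mathrm{grad}[c]](\mathbf{x}_{0})$. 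Since the double contraction of a symmetric positive-definite matrix with a symmetric negative-semidefinite matrix is $\leq 0$ (diagonalize $\mathbf{D}(\mathbf{x}_{0})$ and sum), we obtain $\mathcal{L}[c](\mathbf{x}_{0}) \geq 0$, contradicting $\mathcal{L}[c] < 0$. Hence the maximum must be attained on $\partial\Omega$, which is \eqref{Eqn:StrictWeak_Maximum_Principle} in this auxiliary case.

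To pass from $\mathcal{L}[c] \leq 0$ to the strict case, I would introduce the comparison function $w(\mathbf{x}) = e^{\gamma x_{1}}$, where $x_{1}$ is the first spatial coordinate and $\gamma > 0$ is to be fixed, and set $c_{\varepsilon} = c + \varepsilon w$ for $\varepsilon > 0$. A direct computation from \eqref{Eqn:General_Linear_Elliptic_Operator} with $\alpha = 0$ gives $\mathcal{L}[w] = e^{\gamma x_{1}}\,(-\gamma^{2}(\mathbf{D})_{11} + \gamma\,b_{1})$ with $b_{1} := (\mathbf{v})_{1} - (\mathrm{div}[\mathbf{D}])_{1}$; dividing by $\lambda_{\mathrm{min}}(\mathbf{x})$ and using $(\mathbf{D})_{11} \geq \lambda_{\mathrm{min}}(\mathbf{x})$ together with the ellipticity and the boundedness assumptions on the coefficients (notably \eqref{Eqn:velocity_DiffEigenValue}, plus the $C^{1}$ regularity of $\mathbf{D}$), one can choose $\gamma$ large enough, independently of $\mathbf{x}$, so that $\mathcal{L}[w] < 0$ throughout $\Omega$. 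Then $\mathcal{L}[c_{\varepsilon}] = \mathcal{L}[c] + \varepsilon\,\mathcal{L}[w] < 0$, so the auxiliary case gives $\max_{\overline{\Omega}} c_{\varepsilon} = \max_{\partial\Omega} c_{\varepsilon}$, whence $\max_{\overline{\Omega}} c \leq \max_{\partial\Omega} c + \varepsilon \max_{\partial\Omega} w$; letting $\varepsilon \downarrow 0$ yields $\max_{\overline{\Omega}} c \leq \max_{\partial\Omega} c$, and the reverse inequality is trivial since $\partial\Omega \subset \overline{\Omega}$, so \eqref{Eqn:StrictWeak_Maximum_Principle} holds whenever $\alpha = 0$.

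Finally, for step (c), with only $\alpha(\mathbf{x}) \geq 0$, set $\Omega^{+} := \{\mathbf{x} \in \Omega : c(\mathbf{x}) > 0\}$. If $\Omega^{+} = \emptyset$ then $c \leq 0$ on $\overline{\Omega}$ and \eqref{Eqn:Weak_Maximum_Principle} is immediate; otherwise, on $\Omega^{+}$ we have $\alpha(\mathbf{x})c(\mathbf{x}) \geq 0$, so the reaction-free operator $\mathcal{L}_{0}$ obtained by deleting the term $\alpha(\mathbf{x})c(\mathbf{x})$ from \eqref{Eqn:General_Linear_Elliptic_Operator} satisfies $\mathcal{L}_{0}[c] = \mathcal{L}[c] - \alpha(\mathbf{x})c(\mathbf{x}) \leq 0$ in $\Omega^{+}$. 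Applying the $\alpha = 0$ conclusion of step (b) on each connected component of $\Omega^{+}$ gives $\max_{\overline{\Omega^{+}}} c \leq \max_{\partial\Omega^{+}} c$; by continuity of $c$ one has $\partial\Omega^{+} \subseteq \partial\Omega \cup \{\mathbf{x} : c(\mathbf{x}) = 0\}$, hence $\max_{\partial\Omega^{+}} c \leq \max[\,0,\ \max_{\partial\Omega} c\,]$, and since any point at which $c$ attains a positive maximum over $\overline{\Omega}$ lies either on $\partial\Omega$ or in $\Omega^{+}$, the bound \eqref{Eqn:Weak_Maximum_Principle} follows in all cases. The step I expect to be the main obstacle is the construction of the comparison function in step (b): producing a single exponent $\gamma$ that makes $\mathcal{L}[w]<0$ uniformly in $\mathbf{x}$ is exactly where the quantitative hypotheses \eqref{Eqn:alpha_velocity}--\eqref{Eqn:velocity_DiffEigenValue} and the $C^{1}$ regularity of $\mathbf{D}$ are genuinely used, and some care is required because, after the conversion to non-divergence form, the effective first-order coefficient is $\mathbf{v} - \mathrm{div}[\mathbf{D}]$ rather than $\mathbf{v}$ itself; by contrast, the irregular boundary of $\Omega^{+}$ in step (c) causes no trouble, since the weak maximum principle established in step (b) requires no boundary regularity whatsoever.
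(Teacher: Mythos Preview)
Your proposal is correct and follows precisely the classical route found in the references the paper cites (Gilbarg--Trudinger, Chapter~3; Evans, \S6.4): the paper itself does not supply a proof at all, writing only ``For a proof, see references \cite{Gilbarg_Trudinger,Evans_PDE}.'' Your three-step reduction --- non-divergence form via \eqref{Eqn:General_Linear_Elliptic_Operator}, interior-maximum contradiction for strict subsolutions, exponential perturbation $e^{\gamma x_1}$ to pass from $\mathcal{L}[c]\le 0$ to $\mathcal{L}[c]<0$, and then localization to $\Omega^{+}=\{c>0\}$ to handle $\alpha\ge 0$ --- is exactly the Gilbarg--Trudinger argument, and your identification of the choice of $\gamma$ as the place where the quantitative hypotheses enter is accurate.

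One small point worth tightening: after the conversion to non-divergence form the drift becomes $\mathbf{v}-\mathrm{div}[\mathbf{D}]$, and to bound $|(\mathrm{div}[\mathbf{D}])_1|/\lambda_{\mathrm{min}}$ uniformly you need a bit more than the stated $C^{1}$ regularity of $\mathbf{D}$ on the open set $\Omega$ together with the paper's definition of ``uniformly elliptic'' (which here is only $\lambda_{\mathrm{max}}/\lambda_{\mathrm{min}}<\infty$, not $\lambda_{\mathrm{min}}\ge\lambda_0>0$). In practice one assumes continuity of the coefficients up to $\overline{\Omega}$ and works on compact subdomains, or simply invokes strict ellipticity; the cited textbooks make these standing assumptions explicit, and you should flag which version you are using when you write this up.
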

\begin{proof} 
  For a proof, see references \cite{Gilbarg_Trudinger,Evans_PDE}. 
\end{proof}

\begin{theorem}[\texttt{Continuous strong and strict strong maximum principles}]
  \label{Thm:Continuous_StrongMaximum_Principle}
   Let the domain $\Omega$ be simply connected. Given that 
   $c(\mathbf{x})$ satisfies $\mathrm{wMP}$ and the conditions 
   given in Theorem \ref{Thm:Continuous_WeakMaximum_Principle}, 
   then $c(\mathbf{x})$ cannot attain an interior non-negative 
   maximum in $\overline{\Omega}$ unless it is a constant. This 
   means that, $c(\mathbf{x})$ possesses the strong maximum 
   principle ($\mathrm{sMP}$) if the following hold:
   \begin{align}
     \label{Eqn:Strong_Maximum_Principle}
      \mathop{\mathrm{max}}_{\mathbf{x} \in \Omega} 
      \left [ c(\mathbf{x}) \right ] = \mathop{\mathrm{max}}_
      {\mathbf{x} \in \overline{\Omega}} \left [ c(\mathbf{x}) 
      \right ] = m \geq 0 \qquad \Rightarrow \qquad c(\mathbf{x}) 
      \equiv m \quad \mathrm{in} \; \overline{\Omega}  
   \end{align}
   Moreover, if $\alpha(\mathbf{x}) = 0$ and $c(\mathbf{x})$ 
   satisfies $\mathrm{WMP}$, then we have the strict strong 
   maximum principle ($\mathrm{SMP}$) given as follows:
   \begin{align}
     \label{Eqn:StrictStrong_Maximum_Principle}
     \mathop{\mathrm{max}}_{\mathbf{x} \in \Omega} 
     \left [ c(\mathbf{x}) \right ] = \mathop{\mathrm{max}}_
     {\mathbf{x} \in \overline{\Omega}} \left [ c(\mathbf{x}) 
     \right ] = m \qquad \Rightarrow \qquad c(\mathbf{x}) 
     \equiv m \quad \mathrm{in} \; \overline{\Omega}  
   \end{align}        
\end{theorem}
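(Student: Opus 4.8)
The plan is to recognize the statement as the classical Hopf strong maximum principle and to reduce it to two ingredients: the weak maximum principle already supplied by Theorem~\ref{Thm:Continuous_WeakMaximum_Principle}, and Hopf's boundary point lemma for $\mathcal{L}$. Because $\mathbf{D}(\mathbf{x})$ is continuously differentiable, the Remark permits rewriting $\mathcal{L}$ in the non-divergence form \eqref{Eqn:General_Linear_Elliptic_Operator}, so the estimates below are those for a uniformly elliptic second-order operator with bounded lower-order coefficients: the advection part is controlled via \eqref{Eqn:velocity_DiffEigenValue} together with $\mathrm{div}[\mathbf{D}]$ being continuous hence locally bounded, and the zeroth-order coefficient satisfies $\alpha(\mathbf{x}) \geq 0$.

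The first step is Hopf's lemma: if $B_R(\mathbf{y}) \subset \Omega$, $c(\mathbf{x}) < M$ on $B_R(\mathbf{y})$, $c(\mathbf{x}_0) = M$ for some $\mathbf{x}_0 \in \partial B_R(\mathbf{y})$, and (when $\alpha \not\equiv 0$) $M \geq 0$, then $c$ has a strictly positive outward normal derivative at $\mathbf{x}_0$. The argument is the standard barrier construction: on the annulus $A = B_R(\mathbf{y}) \setminus \overline{B_{R/2}(\mathbf{y})}$ take $h(\mathbf{x}) = e^{-\gamma|\mathbf{x} - \mathbf{y}|^2} - e^{-\gamma R^2}$; choosing $\gamma$ large makes $\mathcal{L}[h] \leq 0$ on $A$, the second-order term, which is quadratic in $\gamma$, dominating the first- and zeroth-order contributions by virtue of \eqref{Eqn:Diffusivity_UniformlyElliptic} and the coefficient bounds. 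Since $M \geq 0$ and $\alpha \geq 0$ give $\mathcal{L}[c - M] \leq \mathcal{L}[c] \leq 0$, and $c - M < 0$ on the inner sphere, for small $\epsilon > 0$ the function $w = (c - M) + \epsilon h$ satisfies $\mathcal{L}[w] \leq 0$ in $A$ and $w \leq 0$ on $\partial A$; by Theorem~\ref{Thm:Continuous_WeakMaximum_Principle}, $w \leq 0$ on $\overline{A}$, and since $w(\mathbf{x}_0) = 0$ the one-sided difference quotient of $c$ at $\mathbf{x}_0$ along the inward radius is bounded below by $-\epsilon\,\partial h/\partial n (\mathbf{x}_0) > 0$.

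The second step is the connectedness argument. Suppose $m := \max_{\overline{\Omega}} c \geq 0$ is attained at an interior point but $c \not\equiv m$. Then $\Sigma := \{\mathbf{x} \in \Omega : c(\mathbf{x}) < m\}$ is non-empty, open, and proper; since $\Omega$ is connected (it is assumed to be simply connected), one can choose $\mathbf{p} \in \Sigma$ with $\mathrm{dist}(\mathbf{p}, \{c = m\}) < \mathrm{dist}(\mathbf{p}, \partial\Omega)$ and then enlarge a ball about $\mathbf{p}$ inside $\Sigma$ until its boundary first meets $\{c = m\}$ at some $\mathbf{x}_0 \in \Omega$. On this ball $c < m = c(\mathbf{x}_0)$, so Hopf's lemma, applicable since $m \geq 0$, forces $\mathrm{grad}[c](\mathbf{x}_0) \neq \mathbf{0}$, contradicting that $\mathbf{x}_0$ is an interior maximum. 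Hence $c \equiv m$, which is \eqref{Eqn:Strong_Maximum_Principle}. When $\alpha(\mathbf{x}) \equiv 0$ the zeroth-order term is absent, so the sign requirement $m \geq 0$ drops out of Hopf's lemma and the same argument applies to any value $m$; combined with the $\mathrm{WMP}$ statement of Theorem~\ref{Thm:Continuous_WeakMaximum_Principle}, this yields \eqref{Eqn:StrictStrong_Maximum_Principle}.

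The main obstacle is Hopf's lemma, specifically choosing the exponential weight $\gamma$ so that $\mathcal{L}[h] \leq 0$ holds on the annulus while correctly controlling the advection term through \eqref{Eqn:velocity_DiffEigenValue} and absorbing the zeroth-order term using $\alpha \geq 0$ and the shift by $m \geq 0$; once this local estimate is in hand, the remaining steps are routine topology and compactness.
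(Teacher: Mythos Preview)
Your sketch is the standard Hopf strong-maximum-principle argument (exponential barrier for the boundary-point lemma, followed by the connectedness argument), which is precisely the proof given in the references \cite{Gilbarg_Trudinger,Evans_PDE} that the paper cites in lieu of an in-text proof. So your approach coincides with what the paper defers to, and the details you outline are correct.
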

\begin{proof} For a proof, see references 
\cite{Gilbarg_Trudinger,Evans_PDE}. 
\end{proof}

\begin{theorem}[\texttt{Continuous weak and strong comparison principles}]
  \label{Thm:Continuous_Comparison_Principle}
  Let $c_1(\mathbf{x}), \; c_2(\mathbf{x}) \in C^{2}(\Omega) \cap 
  C^{0}(\overline{\Omega})$. Suppose $\mathcal{L}$ be a uniformly 
  elliptic operator satisfying the conditions given by the equations 
  \eqref{Eqn:alpha_NeccSuff}--\eqref{Eqn:velocity_DiffEigenValue}. 
  Then $\mathcal{L}$ is said to possess
  \begin{itemize}
  \item the weak comparison principle ($\mathrm{wCP}$) 
    if $c_1(\mathbf{x})$ and $ c_2(\mathbf{x})$ satisfies 
    $\mathrm{wMP}$, $\mathcal{L}[c_1] \leq \mathcal{L}[c_2]$ 
    in $\Omega$, and $c_1(\mathbf{x}) \leq c_2(\mathbf{x})$ 
    on $\partial \Omega$, then the following holds:
    \begin{align}
      \label{Eqn:wCP_Result}
      c_1(\mathbf{x}) \leq c_2(\mathbf{x}) \quad 
      \forall \mathbf{x} \in \overline{\Omega}
    \end{align}
  \item the strong comparison principle ($\mathrm{sCP}$) 
    if $c_1(\mathbf{x})$ and $ c_2(\mathbf{x})$ satisfies 
    $\mathrm{sMP}$, $\mathcal{L}[c_1] < \mathcal{L}[c_2]$ 
    in $\Omega$, and $c_1(\mathbf{x}) \leq c_2(\mathbf{x})$ 
    on $\partial \Omega$, then the following holds:
    \begin{align}
      \label{Eqn:sCP_Result}
      c_1(\mathbf{x}) < c_2(\mathbf{x}) \quad 
      \forall \mathbf{x} \in \Omega
    \end{align}
  \end{itemize}
\end{theorem}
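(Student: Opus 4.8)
The plan is to derive both comparison principles by linearizing: introduce the auxiliary function $w(\mathbf{x}) := c_1(\mathbf{x}) - c_2(\mathbf{x})$ and reduce each statement to the maximum-principle results already established in Theorems~\ref{Thm:Continuous_WeakMaximum_Principle} and~\ref{Thm:Continuous_StrongMaximum_Principle}. First I would note that, since $C^{2}(\Omega)\cap C^{0}(\overline{\Omega})$ is a linear space, $w$ belongs to it, and since $\mathcal{L}$ is a linear operator we have $\mathcal{L}[w] = \mathcal{L}[c_1] - \mathcal{L}[c_2]$. Consequently the hypothesis $\mathcal{L}[c_1]\le\mathcal{L}[c_2]$ in $\Omega$ translates into the differential inequality $\mathcal{L}[w]\le 0$ in $\Omega$, and the boundary hypothesis $c_1\le c_2$ on $\partial\Omega$ becomes $w\le 0$ on $\partial\Omega$. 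One checks that $w$ thereby meets all the hypotheses needed to invoke the continuous maximum principles (uniform ellipticity and the conditions \eqref{Eqn:alpha_NeccSuff}--\eqref{Eqn:velocity_DiffEigenValue} are assumed on $\mathcal{L}$, and the regularity of $w$ is inherited from $c_1,c_2$).

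For the weak comparison principle I would simply apply Theorem~\ref{Thm:Continuous_WeakMaximum_Principle} to $w$, which gives $\max_{\mathbf{x}\in\overline{\Omega}} w(\mathbf{x}) \le \max\big[\,0,\ \max_{\mathbf{x}\in\partial\Omega} w(\mathbf{x})\,\big]$. Because $w\le 0$ on $\partial\Omega$, the inner maximum is non-positive, so the right-hand side collapses to $0$; hence $w\le 0$ on $\overline{\Omega}$, i.e.\ $c_1(\mathbf{x})\le c_2(\mathbf{x})$ for all $\mathbf{x}\in\overline{\Omega}$, which is \eqref{Eqn:wCP_Result}. For the strong comparison principle, the strict inequality $\mathcal{L}[c_1]<\mathcal{L}[c_2]$ yields $\mathcal{L}[w]<0$ in $\Omega$, in particular $\mathcal{L}[w]\le 0$, so the $\mathrm{wCP}$ just proved gives $w\le 0$ on $\overline{\Omega}$. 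I would then argue by contradiction: if $w(\mathbf{x}_0)=0$ at some interior point $\mathbf{x}_0\in\Omega$, then $\max_{\mathbf{x}\in\Omega} w = \max_{\mathbf{x}\in\overline{\Omega}} w = 0 \ge 0$, so $w$ attains a non-negative maximum in the interior; by Theorem~\ref{Thm:Continuous_StrongMaximum_Principle} this forces $w\equiv 0$ in $\overline{\Omega}$. But then $\mathcal{L}[w] = \alpha(\mathbf{x})\cdot 0 = 0$ in $\Omega$, contradicting $\mathcal{L}[w]<0$. Hence $w(\mathbf{x})<0$ for every $\mathbf{x}\in\Omega$, which is \eqref{Eqn:sCP_Result}.

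There is no deep obstacle here, since the substantive analytic work is carried by Theorems~\ref{Thm:Continuous_WeakMaximum_Principle} and~\ref{Thm:Continuous_StrongMaximum_Principle}; the only care needed is bookkeeping. The two points most worth checking are: (i) that the ``$\max[0,\cdot]$'' shape of the weak bound and the ``non-negative interior maximum'' clause of the strong bound line up correctly with the sign condition on $w$ at the boundary — both work precisely because $w\le 0$ on $\partial\Omega$ makes the relevant supremum equal to $0$, which is exactly the borderline non-negative value that the strong principle is allowed to act on; and (ii) that the geometric hypotheses under which $\mathrm{sMP}$ is valid (e.g.\ $\Omega$ simply connected, $\mathbf{D}$ continuously differentiable) are in force for the $\mathrm{sCP}$ part, which is subsumed in the standing assumption that $c_1,c_2$ satisfy $\mathrm{sMP}$. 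For completeness I would also record that the proof uses only linearity of $\mathcal{L}$ and the conclusions of the two maximum-principle theorems, so it does not require any additional smoothness beyond what is already hypothesized.
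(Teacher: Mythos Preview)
Your proposal is correct and follows the standard linearization argument: set $w=c_1-c_2$, use linearity of $\mathcal{L}$, and reduce to Theorems~\ref{Thm:Continuous_WeakMaximum_Principle} and~\ref{Thm:Continuous_StrongMaximum_Principle}. The paper itself does not give an in-text proof of this theorem but simply cites Gilbarg--Trudinger, where precisely this argument appears; moreover, your reasoning mirrors exactly the structure the paper later uses for the discrete analogue in Theorem~\ref{Thm:DCP_Conditions_Galerkin} (define $\boldsymbol{c}_3=\boldsymbol{c}_1-\boldsymbol{c}_2$, apply the relevant maximum principle, and for the strong case argue by contradiction that $\boldsymbol{c}_3\equiv\boldsymbol{0}$ forces $\boldsymbol{r}_3=\boldsymbol{0}$).
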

\begin{proof}
  For proof, see reference \cite{Gilbarg_Trudinger}.
\end{proof}

Numerical formulations based on the finite element method, finite volume 
method, and finite difference method exist to solve the boundary value 
problem given by the equations \eqref{Eqn:NN_AD_GE_BE}--\eqref{Eqn:NN_AD_GE_DBC}.
It is well-known that the framework offered by the finite element method 
is particularly attractive in obtaining accurate numerical results for 
elliptic partial differential equations. In particular, the single-field 
Galerkin formulation is a very popular finite element formulation. In 
this paper, we shall use the single-field Galerkin formulation to derive 
mesh restrictions. It should be, however, noted that restrictions imposed 
on a mesh may alter if an alternate numerical formulation is employed. But 
the overall procedure presented in this paper can be employed to derive mesh 
restrictions for other numerical formulations.

\subsection{Single-field Galerkin formulation}
Let us define the following function spaces: 
\begin{subequations}
  \begin{align}
    \mathcal{C} &:= \left\{c(\mathbf{x}) \in H^{1}(\Omega) \; 
    \big| \; c(\mathbf{x}) = c^{\mathrm{p}}(\mathbf{x}) \; 
    \mathrm{on} \; \partial \Omega \right\} \\
    \mathcal{W} &:= \left\{w(\mathbf{x}) \in H^{1}(\Omega) \; 
    \big| \; w(\mathbf{x}) = 0 \; \mathrm{on} \; \partial \Omega 
    \right\} 
  \end{align}
\end{subequations}
where $H^{1}(\Omega)$ is a standard Sobolev space 
\cite{Evans_PDE}. For weak solutions, the regularity 
of the diffusivity tensor can be relaxed as follows:
\begin{align}
  \int \limits_{\Omega} \mathrm{tr}\left[
    \mathbf{D}(\mathbf{x})^{\mathrm{T}} 
    \mathbf{D}(\mathbf{x})\right] \; 
  \mathrm{d} \Omega < +\infty
\end{align}
where $\mathrm{tr}[\bullet]$ is the standard trace operator 
used in tensor algebra and continuum mechanics \cite{Gurtin}.
Given two fields $a(\mathbf{x})$ and $b(\mathbf{x})$ on a 
set $\mathcal{D}$, the standard $L_2$ inner-product over 
$\mathcal{D}$ will be denoted as follows:
\begin{align}
  \left(a;b\right)_{\mathcal{D}} = \int \limits_{\mathcal{D}} 
  a(\mathbf{x}) \cdot b(\mathbf{x}) \; \mathrm{d} \mathcal{D}
\end{align}
The subscript on the inner-product will be dropped if 
$\mathcal{D} = \Omega$. The single-field Galerkin 
formulation for the boundary value problem given by 
equations \eqref{Eqn:NN_AD_GE_BE}--\eqref{Eqn:NN_AD_GE_DBC}
can be written as follows: Find $c(\mathbf{x}) \in \mathcal{C}$, 
such that we have 
\begin{align}
  \label{Eqn:single-field_Galerkin_Formulation}
  \mathcal{B}(w;c) = L(w) \quad \forall \; 
  w(\mathbf{x}) \in \mathcal{W}
\end{align}
where the bilinear form and the linear functional 
are, respectively, defined as follows:
\begin{subequations}
  \begin{align}
    \label{Eqn:Bilinear_Form}
    \mathcal{B}(w;c) &:= \left(w; \alpha(\mathbf{x}) c \right) 
    + \left(w;\mathbf{v}(\mathbf{x}) \cdot \mathrm{grad}[c]\right) 
    + \left(\mathrm{grad}[w] ; \mathbf{D}(\mathbf{x}) \mathrm{grad}[c] 
    \right) \\
    \label{Eqn:Linear_Form}
    L(w) &:= \left(w;f(\mathbf{x})\right)
  \end{align}
\end{subequations}

\subsection{Discrete single-field Galerkin formulation}
\label{SubSec:Discrete_Formulation}
Let the computational domain $\Omega$ be decomposed into 
``$Nele$'' non-overlapping open sub-domains, which in the 
finite element context will be elements. That is,
\begin{align}
  \label{Eqn:Omega_To_Omega_e}
  \overline{\Omega} = \bigcup_{e = 1}^{Nele} \overline{\Omega}^{e}
\end{align}
The boundary of $\Omega^e$ is denoted as $\partial \Omega^{e} 
:= \overline{\Omega}^{e} - \Omega^{e}$. Let $\mathbb{P}^{1}
(\Omega^{e})$ denote the vector space spanned by linear 
polynomials on the sub-domain $\Omega^{e}$. We shall define 
the following finite dimensional subsets of $\mathcal{C}$ and 
$\mathcal{W}$: 
\begin{subequations}
  \begin{align}
    \label{Eqn:Ch}
    \mathcal{C}^{h} &:= \left\{c^{h}(\mathbf{x}) \in \mathcal{C} \; 
    \big| \; c^{h}(\mathbf{x})  \in C^{0}(\bar{\Omega}); \, c^{h}
    (\mathbf{x}) \big|_{\Omega^e} \in \mathbb{P}^{1}(\Omega^{e}); \, 
    e = 1, \cdots, Nele \right\} \\
    \label{Eqn:Wh} 
    \mathcal{W}^{h} &:= \left\{w^{h}(\mathbf{x}) \in \mathcal{W} 
    \; \big| \; w^{h}(\mathbf{x}) \in C^{0}(\bar{\Omega}); \, w^{h}
    (\mathbf{x}) \big|_{\Omega^e} \in \mathbb{P}^{1}(\Omega^{e});  \, 
    e = 1, \cdots, Nele \right\}       
  \end{align}
\end{subequations}
A corresponding finite element formulation can be written 
as follows: Find $c^{h}(\mathbf{x}) \in \mathcal{C}^{h}$, 
such that we have
\begin{align}
  \label{Eqn:Discrete_Formulation}
  \mathcal{B}(w^{h};c^{h}) = L(w^{h}) \quad \forall \; 
  w^{h}(\mathbf{x}) \in \mathcal{W}^{h}
\end{align}
where $\mathcal{B}(w^{h};c^{h})$ and $L(w^{h})$ are, 
respectively, given as follows:
\begin{subequations}
\begin{align}
  \label{Eqn:Discrete_Bilinear_Form}
  \mathcal{B}(w^{h};c^{h}) &:= \left(w^{h}; \alpha(\mathbf{x}) 
  c^{h} \right) + \left(w^{h};\mathbf{v}(\mathbf{x}) \cdot 
  \mathrm{grad}[c^{h}]\right) + \left(\mathrm{grad}[w^{h}] ; 
  \mathbf{D}(\mathbf{x}) \mathrm{grad}[c^{h}] \right) \\
  \label{Eqn:Discrete_Linear_Form}
  L(w^{h}) &:= \left(w^{h};f(\mathbf{x})\right)
\end{align}
\end{subequations}

Let ``$n_t$'' denote the total number of degrees-of-freedom, 
``$n_f$'' denote the free degrees-of-freedom, and ``$n_p$'' 
be the prescribed degrees-of-freedom for the concentration 
vector. Obviously, we have $n_t = n_f + n_p$. We assume 
that $n_t,n_p \geq 2$. After finite element discretization, 
the discrete equations for the boundary value problem take 
the following form:
\begin{align}
  \label{Eqn:Discrete_Diffusion}
  \boldsymbol{K}\boldsymbol{c} = \boldsymbol{r}
\end{align}
where $\boldsymbol{K} \equiv \left [ \boldsymbol{K}_{ff} 
| \boldsymbol{K}_{fp} \right]$ is the stiffness matrix, 
$\boldsymbol{c} \equiv \left [\boldsymbol{c}_{f}^{\mathrm{T}} 
\; | \; \boldsymbol{c}_{p}^{\mathrm{T}} \right]^{\mathrm{T}}$ is the vector 
containing nodal concentration, and  $\boldsymbol{r} = \left [ 
\boldsymbol{r}_{f} \right]^{\mathrm{T}}$ is the corresponding nodal 
volumetric source vector.  The stiffness matrices $\boldsymbol{K}$, 
$\boldsymbol{K}_{ff}$, and $\boldsymbol{K}_{fp}$ are, respectively, 
of size $n_f \times n_t$, $n_f \times n_f$, and $n_f \times n_p$. 
Correspondingly, the nodal 
concentration vectors $\boldsymbol{c}$, $\boldsymbol{c}_f$, 
and $\boldsymbol{c}_{p}$ are of sizes $n_t \times 1$, $n_f 
\times 1$, and $n_p \times 1$. Similar inference is 
applicable to the load vector $\boldsymbol{r}$.

Before we state a discrete version of (weak and strong) 
maximum and comparison principles, we introduce the 
required notation. The symbols $\preceq$ and $\succeq$ 
shall denote component-wise inequalities for vectors 
and matrices. That is, given two (finite dimensional) 
vectors $\boldsymbol{a}$ and $\boldsymbol{b}$
\begin{align}
  \boldsymbol{a} \preceq \boldsymbol{b} \quad \mbox{means that} 
  \quad a_i \leq b_i \; \forall \; i
\end{align}
Correspondingly, given two matrices 
$\boldsymbol{A}$ and $\boldsymbol{B}$
\begin{align}
  \boldsymbol{A} \preceq \boldsymbol{B} \quad \mbox{means that} 
  \quad \left(\boldsymbol{A} \right)_{ij} \leq \left(\boldsymbol{B} 
  \right)_{ij} \; \forall \; i,j
\end{align}
Similarly, one can define the symbol $\succeq$, 
$\prec$, and $\succ$. In the remainder of this 
paper, we will be frequently using the symbols 
$\boldsymbol{0}$ and $\boldsymbol{O}$, which, 
respectively, denote a zero vector and a zero 
matrix.

We shall now briefly outline the main results corresponding 
to the discrete weak and strong maximum principles in the 
form of definitions and theorems. Using these results, 
we shall discuss in detail about discrete comparison 
principles. However, it should be noted that Theorem 
\ref{Thm:DCP_Conditions_Galerkin} and its proof are 
new and have not been discussed elsewhere. We shall 
also present the necessary and sufficient conditions on the 
stiffness matrices $\boldsymbol{K}_{ff}$ and $\boldsymbol{K}_{fp}$ 
to satisfy different versions of discrete maximum principles 
and the non-negative constraint. For more details, see references 
\cite{1987_Ishihara_LAA_v88_p431_p448,2012_Mincsovics_Hovarth_LSSC_p614_p621}.

\begin{definition}[\texttt{Discrete maximum principles}
  \cite{2012_Mincsovics_Hovarth_LSSC_p614_p621}]
  \label{Thm:Discrete_WeakStrongMaximum_Principle}
  A numerical formulation is said to possess 
  \begin{itemize}
    \item the discrete weak maximum principle 
      ($\mathrm{DwMP}$) if
      \begin{align}
        \label{Eqn:DwMP}
        \boldsymbol{r} \preceq \boldsymbol{0} 
        \qquad \mathrm{implies} \qquad \mathop{\mathrm{max}} 
        \left[ \boldsymbol{c} \right] \leq \mathop{\mathrm{max}} 
        \left[ 0, \mathop{\mathrm{max}} \left [ \boldsymbol{c}_{p} 
        \right ] \right]
      \end{align}
    \item the discrete strict weak maximum principle 
      ($\mathrm{DWMP}$) if
      \begin{align}
      \label{Eqn:DWMP}
        \boldsymbol{r} \preceq \boldsymbol{0} 
        \qquad \mathrm{implies} \qquad \mathop{\mathrm{max}} 
        \left[ \boldsymbol{c} \right] = \mathop{\mathrm{max}} 
        \left[ \boldsymbol{c}_{p} \right]
      \end{align}
    \item the discrete strong maximum principle ($\mathrm{DsMP}$) 
      if it possesses $\mathrm{DwMP}$ and satisfies the following 
      condition: 
      \begin{align}
      \label{Eqn:DsMP}
        \boldsymbol{r} \preceq \boldsymbol{0}, 
        \quad \mbox{and} \quad \mathop{\mathrm{max}} 
        \left[ \boldsymbol{c} \right] = \mathop{\mathrm{max}} 
        \left[ \boldsymbol{c}_{f} \right] = m \geq 0 \qquad 
        \mathrm{implies} \qquad \boldsymbol{c} = m \boldsymbol{1}
      \end{align}
    \item the discrete strict strong maximum principle 
      ($\mathrm{DSMP}$) if it possesses $\mathrm{DWMP}$ 
      and satisfies the following condition: 
      \begin{align}
      \label{Eqn:DSMP}
        \boldsymbol{r} \preceq \boldsymbol{0}, 
        \quad \mbox{and} \quad \mathop{\mathrm{max}} 
        \left[ \boldsymbol{c} \right] = \mathop{\mathrm{max}} 
        \left[ \boldsymbol{c}_{f} \right] = m \qquad 
        \mathrm{implies} \qquad \boldsymbol{c} = m \boldsymbol{1}
      \end{align}
  \end{itemize}
  where $\mathop{\mathrm{max}} \left[ \bullet \right]$ 
  denotes the maximal element of a vector and the symbol 
  $\boldsymbol{1}$ is the vector whose components are all 
  equal to 1. 
\end{definition}

\begin{theorem}[\texttt{Necessary and sufficient conditions to satisfy DMPs}]
  \label{Thm:NeccSuff_Conditions_DMP}
  The stiffness matrix $\boldsymbol{K}$ given by equation 
  \eqref{Eqn:Discrete_Diffusion} is said to possess
  \begin{itemize}
    \item the discrete weak maximum principle 
      ($\mathrm{DwMP}_{\boldsymbol{K}}$) if and 
      only if all of the following conditions 
      are satisfied:
      \begin{align}
      \label{Eqn:StiffMat_DwMP}
        (\mathrm{a})\boldsymbol{K}_{ff}^{-1} \succeq \boldsymbol{O} 
        \qquad (\mathrm{b})-\boldsymbol{K}_{ff}^{-1} \boldsymbol{K}_{fp} 
        \succeq \boldsymbol{O} \qquad (\mathrm{c})-\boldsymbol{K}_{ff}^{-1} 
        \boldsymbol{K}_{fp} \boldsymbol{1} \preceq \boldsymbol{1}
      \end{align}
    \item the discrete strict weak maximum principle 
      ($\mathrm{DWMP}_{\boldsymbol{K}}$) if and only 
      if all of the following conditions are 
      satisfied:
      \begin{align}
      \label{Eqn:StiffMat_DWMP}
        (\mathrm{a})\boldsymbol{K}_{ff}^{-1} \succeq \boldsymbol{O} 
        \qquad (\mathrm{b})-\boldsymbol{K}_{ff}^{-1} \boldsymbol{K}_{fp} 
        \succeq \boldsymbol{O} \qquad (\mathrm{c})-\boldsymbol{K}_{ff}^{-1} 
        \boldsymbol{K}_{fp} \boldsymbol{1} = \boldsymbol{1}
      \end{align}
    \item the discrete strong maximum principle 
      ($\mathrm{DsMP}_{\boldsymbol{K}}$) if and 
      only if all of the following conditions 
      are satisfied:
      \begin{align}
      \label{Eqn:StiffMat_DsMP}
        (\mathrm{a})\boldsymbol{K}_{ff}^{-1} \succ \boldsymbol{O} 
        \qquad (\mathrm{b})-\boldsymbol{K}_{ff}^{-1} \boldsymbol{K}_{fp} 
        \succ \boldsymbol{O} \qquad (\mathrm{c})-\boldsymbol{K}_{ff}^{-1} 
        \boldsymbol{K}_{fp} \boldsymbol{1} \prec \boldsymbol{1} \quad 
        \mathrm{or} \; \; -\boldsymbol{K}_{ff}^{-1} \boldsymbol{K}_{fp} 
        \boldsymbol{1} = \boldsymbol{1}
      \end{align}
    \item the discrete strict strong maximum principle 
      ($\mathrm{DSMP}_{\boldsymbol{K}}$) if and only if 
      all of the following conditions are satisfied: 
      \begin{align}
      \label{Eqn:StiffMat_DSMP}
        (\mathrm{a})\boldsymbol{K}_{ff}^{-1} \succ \boldsymbol{O} 
        \qquad (\mathrm{b})-\boldsymbol{K}_{ff}^{-1} \boldsymbol{K}_{fp} 
        \succ \boldsymbol{O} \qquad (\mathrm{c})-\boldsymbol{K}_{ff}^{-1} 
        \boldsymbol{K}_{fp} \boldsymbol{1} = \boldsymbol{1}
      \end{align}
    \end{itemize}  
\end{theorem}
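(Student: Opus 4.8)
The plan is to eliminate the prescribed degrees of freedom and reduce each of the four discrete maximum principles to a pair of componentwise matrix conditions. Partitioning $\boldsymbol{K}\boldsymbol{c}=\boldsymbol{r}$ as $\boldsymbol{K}_{ff}\boldsymbol{c}_f+\boldsymbol{K}_{fp}\boldsymbol{c}_p=\boldsymbol{r}_f$ and using that $\boldsymbol{K}_{ff}$ is invertible, I would write $\boldsymbol{c}_f=\boldsymbol{A}\boldsymbol{r}_f+\boldsymbol{B}\boldsymbol{c}_p$ with $\boldsymbol{A}:=\boldsymbol{K}_{ff}^{-1}$ and $\boldsymbol{B}:=-\boldsymbol{K}_{ff}^{-1}\boldsymbol{K}_{fp}$, so that (a)--(c) are exactly sign and row-sum conditions on $\boldsymbol{A}$ and $\boldsymbol{B}$. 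Since $\mathop{\mathrm{max}}[\boldsymbol{c}]=\mathop{\mathrm{max}}\bigl(\mathop{\mathrm{max}}[\boldsymbol{c}_f],\mathop{\mathrm{max}}[\boldsymbol{c}_p]\bigr)$, the only non-automatic part of each principle is an upper bound on $\boldsymbol{c}_f$, and that is precisely what the matrix inequalities must supply.

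For $\mathrm{DwMP}$, sufficiency is a short chain of componentwise inequalities: given data with $\boldsymbol{r}_f\preceq\boldsymbol{0}$, put $M:=\mathop{\mathrm{max}}\bigl[0,\mathop{\mathrm{max}}[\boldsymbol{c}_p]\bigr]\ge 0$; then $\boldsymbol{A}\boldsymbol{r}_f\preceq\boldsymbol{0}$ by (a), while $M\boldsymbol{1}-\boldsymbol{c}_p\succeq\boldsymbol{0}$ and (b) give $\boldsymbol{B}\boldsymbol{c}_p\preceq M\boldsymbol{B}\boldsymbol{1}$, and (c) together with $M\ge0$ give $M\boldsymbol{B}\boldsymbol{1}\preceq M\boldsymbol{1}$, whence $\boldsymbol{c}_f\preceq M\boldsymbol{1}$. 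For necessity I would probe the formulation with specific data: $\boldsymbol{c}_p=\boldsymbol{0}$, $\boldsymbol{r}_f=-\boldsymbol{e}_j$ forces the $j$-th column of $\boldsymbol{A}$ to be nonnegative, i.e. (a); $\boldsymbol{r}_f=\boldsymbol{0}$, $\boldsymbol{c}_p=-\boldsymbol{e}_j$ (for which $\mathop{\mathrm{max}}[0,\mathop{\mathrm{max}}[\boldsymbol{c}_p]]=0$) forces the $j$-th column of $\boldsymbol{B}$ to be nonnegative, i.e. (b); and $\boldsymbol{r}_f=\boldsymbol{0}$, $\boldsymbol{c}_p=\boldsymbol{1}$ gives $\boldsymbol{c}_f=\boldsymbol{B}\boldsymbol{1}\preceq\boldsymbol{1}$, i.e. (c). The $\mathrm{DWMP}$ case is identical except that testing additionally with $\boldsymbol{c}_p=-\boldsymbol{1}$ upgrades $\boldsymbol{B}\boldsymbol{1}\preceq\boldsymbol{1}$ to $\boldsymbol{B}\boldsymbol{1}=\boldsymbol{1}$, and on the sufficiency side $\boldsymbol{B}\boldsymbol{1}=\boldsymbol{1}$ turns the bound $\mathop{\mathrm{max}}[\boldsymbol{c}_f]\le\mathop{\mathrm{max}}[\boldsymbol{c}_p]$ into the desired equality for $\boldsymbol{c}$ as a whole.

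The substantive part is the two strong principles, where the weak-principle inequalities must be sharpened to strict positivity. For sufficiency of $\mathrm{DsMP}$, assume $\boldsymbol{A}\succ\boldsymbol{O}$, $\boldsymbol{B}\succ\boldsymbol{O}$ and look at a free index $i$ with $(\boldsymbol{c}_f)_i=m=\mathop{\mathrm{max}}[\boldsymbol{c}]\ge0$: the identity $m=(\boldsymbol{A}\boldsymbol{r}_f)_i+(\boldsymbol{B}\boldsymbol{c}_p)_i$ with $(\boldsymbol{A}\boldsymbol{r}_f)_i\le0$ and $(\boldsymbol{B}\boldsymbol{c}_p)_i\le m(\boldsymbol{B}\boldsymbol{1})_i$ forces both to be tight, and since every entry of $\boldsymbol{A}$ and $\boldsymbol{B}$ is strictly positive, $(\boldsymbol{A}\boldsymbol{r}_f)_i=\sum_k A_{ik}(\boldsymbol{r}_f)_k=0$ gives $\boldsymbol{r}_f=\boldsymbol{0}$ and tightness of the $\boldsymbol{B}$-term gives $\boldsymbol{c}_p=m\boldsymbol{1}$, hence $\boldsymbol{c}=m\boldsymbol{1}$; the dichotomy in (c) enters because if $\boldsymbol{B}\boldsymbol{1}\prec\boldsymbol{1}$ then $(\boldsymbol{B}\boldsymbol{c}_p)_i<m$ whenever $m>0$, so one first deduces $m=0$ and then argues as before. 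For necessity, if some $A_{ij}=0$ then $\boldsymbol{r}_f=-\boldsymbol{e}_j$, $\boldsymbol{c}_p=\boldsymbol{0}$ yields $\boldsymbol{c}_f=-\boldsymbol{A}\boldsymbol{e}_j\preceq\boldsymbol{0}$ with $(\boldsymbol{c}_f)_i=0$, so $\mathop{\mathrm{max}}[\boldsymbol{c}]=0=m\ge0$ and $\mathrm{DsMP}$ forces $\boldsymbol{c}_f=\boldsymbol{0}$, i.e. a zero column of $\boldsymbol{A}=\boldsymbol{K}_{ff}^{-1}$ — impossible by nonsingularity; similarly $B_{ij}=0$ with $\boldsymbol{r}_f=\boldsymbol{0}$, $\boldsymbol{c}_p=-\boldsymbol{e}_j$ (here $n_p\ge2$ is used so that $\mathop{\mathrm{max}}[\boldsymbol{c}_p]=0$) forces $\boldsymbol{c}_p=\boldsymbol{0}$, a contradiction; and a mixed row sum $(\boldsymbol{B}\boldsymbol{1})_i=1>(\boldsymbol{B}\boldsymbol{1})_k$ is ruled out by testing $\boldsymbol{c}_p=\boldsymbol{1}$. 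The $\mathrm{DSMP}$ argument is the same with the sign restriction on $m$ dropped and (c) kept as the equality inherited from $\mathrm{DWMP}$. I expect the passage to strict positivity to be the main obstacle: the care lies in choosing test vectors whose maximum is \emph{attained} with the admissible sign (value $m\ge0$ for $\mathrm{DsMP}$, any $m$ for $\mathrm{DSMP}$) while still contradicting the putative conclusion, which is where the standing assumptions $n_t,n_p\ge2$ and the invertibility of $\boldsymbol{K}_{ff}$ are genuinely needed.
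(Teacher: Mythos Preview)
The paper does not supply its own proof of this theorem; it simply cites Mincsovics and Hov\'{a}rth \cite{2012_Mincsovics_Hovarth_LSSC_p614_p621}. Your proposal, by contrast, gives a direct self-contained argument, and it is essentially the standard one: rewrite $\boldsymbol{c}_f=\boldsymbol{A}\boldsymbol{r}_f+\boldsymbol{B}\boldsymbol{c}_p$ with $\boldsymbol{A}=\boldsymbol{K}_{ff}^{-1}$, $\boldsymbol{B}=-\boldsymbol{K}_{ff}^{-1}\boldsymbol{K}_{fp}$, establish sufficiency by chaining componentwise inequalities, and establish necessity by probing with canonical data $(\boldsymbol{r}_f,\boldsymbol{c}_p)\in\{(-\boldsymbol{e}_j,\boldsymbol{0}),(\boldsymbol{0},-\boldsymbol{e}_j),(\boldsymbol{0},\pm\boldsymbol{1})\}$. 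The sufficiency arguments for all four principles are correct as written; in particular your treatment of the dichotomy in \eqref{Eqn:StiffMat_DsMP}(c) --- first forcing $m=0$ when $\boldsymbol{B}\boldsymbol{1}\prec\boldsymbol{1}$, then using strict positivity of the $i$-th row of $\boldsymbol{A}$ and $\boldsymbol{B}$ to conclude $\boldsymbol{r}_f=\boldsymbol{0}$ and $\boldsymbol{c}_p=m\boldsymbol{1}$ --- is exactly the right mechanism. The necessity arguments are also sound: the test $(-\boldsymbol{e}_j,\boldsymbol{0})$ produces a nonpositive $\boldsymbol{c}_f$ with a zero at index $i$ whenever $A_{ij}=0$, which triggers $\mathrm{DsMP}$ with $m=0$ and contradicts invertibility of $\boldsymbol{K}_{ff}$; the test $(\boldsymbol{0},-\boldsymbol{e}_j)$ does the same for $B_{ij}=0$, where the assumption $n_p\ge2$ is indeed what guarantees $\mathop{\mathrm{max}}[\boldsymbol{c}_p]=0$; and the test $(\boldsymbol{0},\boldsymbol{1})$ rules out a mixed row-sum vector $\boldsymbol{B}\boldsymbol{1}$. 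Your closing remark that $\mathrm{DSMP}$ reduces to the $\mathrm{DsMP}$ analysis (for the strict-positivity conditions) together with the $\mathrm{DWMP}$ analysis (for the row-sum equality) is correct, since $\mathrm{DSMP}\Rightarrow\mathrm{DsMP}$ and $\mathrm{DSMP}\Rightarrow\mathrm{DWMP}$ directly from Definition~\ref{Thm:Discrete_WeakStrongMaximum_Principle}. In short, you have supplied what the paper only cites.
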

\begin{proof}
  For a proof, see reference 
  \cite{2012_Mincsovics_Hovarth_LSSC_p614_p621}.
\end{proof}

\begin{definition}[\texttt{Discrete weak and strong comparison principles}]
  \label{Def:Discrete_Comparison_Principle}
  A numerical formulation is said to possess
  \begin{itemize}
    \item the \emph{discrete weak comparison principle} 
      ($\mathrm{DwCP}$) if it satisfies $\mathrm{DwMP}$, 
      and 
      \begin{align}
        \boldsymbol{c}_{1} \preceq \boldsymbol{c}_{2} \; 
        \mathrm{on} \; \partial \Omega 
        \quad \mathrm{and} \quad
        \boldsymbol{r}_1 \preceq \boldsymbol{r}_2 
        \; \; \mathrm{in} \; \Omega 
        \qquad \mathrm{implies} \qquad 
        \boldsymbol{c}_1 \preceq \boldsymbol{c}_2 
        \; \; \mathrm{in} \; \overline{\Omega}
      \end{align}
    \item the \emph{discrete strong comparison principle} 
      ($\mathrm{DsCP}$) if it satisfies $\mathrm{DsMP}$, 
      and 
      \begin{align}
        \boldsymbol{c}_{1} \preceq \boldsymbol{c}_{2} \; 
        \mathrm{on} \; \partial \Omega 
        \quad \mathrm{and} \quad
        \boldsymbol{r}_1 \prec \boldsymbol{r}_2 
        \; \; \mathrm{in} \; \Omega 
        \qquad \mathrm{implies} \qquad 
        \boldsymbol{c}_1 \prec \boldsymbol{c}_2 
        \; \; \mathrm{in} \; \Omega
      \end{align}
  \end{itemize}   
\end{definition}

\begin{theorem}[\texttt{Necessary and sufficient conditions to satisfy DCPs}]
  \label{Thm:DCP_Conditions_Galerkin}
  Let $\boldsymbol{c}_1$ and $\boldsymbol{c}_2$ be 
  two nodal concentration vectors corresponding to 
  the volumetric source vectors $\boldsymbol{r}_1$ 
  and $\boldsymbol{r}_2$ based on the equation 
  \eqref{Eqn:Discrete_Diffusion}. 
  If $\boldsymbol{c}_1$ and $\boldsymbol{c}_2$ satisfy 
  $\mathrm{DwMP}$ and the hypothesis of $\mathrm{DwCP}$ 
  (i.e., $\boldsymbol{c}_{1} \preceq \boldsymbol{c}_{2}$ 
  on $\partial \Omega$ and $\boldsymbol{r}_1 \preceq 
  \boldsymbol{r}_2$ in $\Omega$), then a necessary and 
  sufficient condition to satisfy the discrete weak 
  comparison principle ($\mathrm{DwCP}_{\boldsymbol{K}}$) 
  (which means that $\boldsymbol{c}_1 \preceq \boldsymbol{c}_2 
  \; \mathrm{in} \; \overline{\Omega}$)
   is that the stiffness matrix $\boldsymbol{K}$ possess 
   $\mathrm{DwMP}_{\boldsymbol{K}}$ (which is given by 
   equation \eqref{Eqn:StiffMat_DwMP} in Theorem 
   \ref{Thm:NeccSuff_Conditions_DMP}). 

   If $\boldsymbol{c}_1$ and $\boldsymbol{c}_2$ satisfy 
   $\mathrm{DsMP}$ and the hypothesis of $\mathrm{DsCP}$, 
   (i.e., $\boldsymbol{c}_{1} \preceq \boldsymbol{c}_{2}$ 
   on $\partial \Omega$ and $\boldsymbol{r}_1 \prec 
   \boldsymbol{r}_2$ in $\Omega$), then a necessary 
   and sufficient condition to satisfy the discrete 
   strong comparison principle ($\mathrm{DsCP}_{\boldsymbol{K}}$) 
   (which means that $\boldsymbol{c}_1 \prec \boldsymbol{c}_2 
     \; \mathrm{in} \; \Omega$)
   is that the stiffness matrix $\boldsymbol{K}$ possess 
   $\mathrm{DsMP}_{\boldsymbol{K}}$ (which is given by 
   equation \eqref{Eqn:StiffMat_DsMP} in Theorem 
   \ref{Thm:NeccSuff_Conditions_DMP}).
\end{theorem}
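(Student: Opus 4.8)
The plan is to reduce both the weak and the strong statement to one linear-algebra identity for the difference of the two discrete solutions and then to match the signs that appear against the matrix conditions listed in Theorem~\ref{Thm:NeccSuff_Conditions_DMP}. Written in block form, \eqref{Eqn:Discrete_Diffusion} reads $\boldsymbol{K}_{ff}\boldsymbol{c}_{i,f}+\boldsymbol{K}_{fp}\boldsymbol{c}_{i,p}=\boldsymbol{r}_i$ for $i=1,2$; since $\boldsymbol{K}_{ff}$ is invertible, the free part of $\boldsymbol{e}:=\boldsymbol{c}_2-\boldsymbol{c}_1$ satisfies
\[
  \boldsymbol{e}_f \;=\; \boldsymbol{K}_{ff}^{-1}\bigl(\boldsymbol{r}_2-\boldsymbol{r}_1\bigr)\;+\;\bigl(-\boldsymbol{K}_{ff}^{-1}\boldsymbol{K}_{fp}\bigr)\bigl(\boldsymbol{c}_{2,p}-\boldsymbol{c}_{1,p}\bigr),
\]
while the prescribed part is $\boldsymbol{e}_p=\boldsymbol{c}_{2,p}-\boldsymbol{c}_{1,p}\succeq\boldsymbol{0}$ by the boundary hypothesis. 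Each of the two summands on the right is a product of a matrix that $\mathrm{DwMP}_{\boldsymbol{K}}$ declares component-wise non-negative with a vector that the hypothesis of $\mathrm{DwCP}$ declares non-negative, so the identity is really all there is to it.

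For sufficiency in the weak case I would take $\boldsymbol{K}$ to possess $\mathrm{DwMP}_{\boldsymbol{K}}$, i.e.\ (a)--(c) of \eqref{Eqn:StiffMat_DwMP}. Then (a) together with $\boldsymbol{r}_2-\boldsymbol{r}_1\succeq\boldsymbol{0}$ makes the first summand $\succeq\boldsymbol{0}$, (b) together with $\boldsymbol{e}_p\succeq\boldsymbol{0}$ makes the second summand $\succeq\boldsymbol{0}$, hence $\boldsymbol{e}_f\succeq\boldsymbol{0}$; combined with $\boldsymbol{e}_p\succeq\boldsymbol{0}$ this is exactly $\boldsymbol{c}_1\preceq\boldsymbol{c}_2$ in $\overline{\Omega}$. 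Because $\mathrm{DwCP}$ additionally requires the formulation to possess $\mathrm{DwMP}$, and Theorem~\ref{Thm:NeccSuff_Conditions_DMP} says (a)--(c) is precisely that, sufficiency is done; note the ordering conclusion used only (a) and (b), so (c) enters the statement only through the $\mathrm{DwMP}$ that $\mathrm{DwCP}$ subsumes. For necessity I would simply reverse this: $\mathrm{DwCP}$ contains $\mathrm{DwMP}$ by definition, so Theorem~\ref{Thm:NeccSuff_Conditions_DMP} returns (a)--(c). If one prefers to recover (a)--(b) by hand, feed the comparison implication admissible test pairs: with $\boldsymbol{r}_1=\boldsymbol{r}_2$ chosen to share a strictly positive component (so $\mathrm{DwMP}$ holds vacuously for both) and equal boundary data, letting $\boldsymbol{c}_{2,p}-\boldsymbol{c}_{1,p}$ run over the standard basis extracts the columns of $-\boldsymbol{K}_{ff}^{-1}\boldsymbol{K}_{fp}$, and symmetrically letting $\boldsymbol{r}_2-\boldsymbol{r}_1$ run over the standard basis with equal boundary data extracts the columns of $\boldsymbol{K}_{ff}^{-1}$; condition (c) then follows from $\mathrm{DwMP}$ applied with zero source and unit boundary data.

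The strong statement uses the same identity, now with the strict inequalities provided by $\mathrm{DsMP}_{\boldsymbol{K}}$ in \eqref{Eqn:StiffMat_DsMP} and by the hypothesis of $\mathrm{DsCP}$. There $\boldsymbol{K}_{ff}^{-1}\succ\boldsymbol{O}$ and $\boldsymbol{r}_2-\boldsymbol{r}_1\succ\boldsymbol{0}$, so every entry of $\boldsymbol{K}_{ff}^{-1}(\boldsymbol{r}_2-\boldsymbol{r}_1)$ is a sum of strictly positive numbers, hence strictly positive, while the second summand is still only $\succeq\boldsymbol{0}$ (from $-\boldsymbol{K}_{ff}^{-1}\boldsymbol{K}_{fp}\succeq\boldsymbol{O}$ and $\boldsymbol{e}_p\succeq\boldsymbol{0}$); adding gives $\boldsymbol{e}_f\succ\boldsymbol{0}$, i.e.\ $\boldsymbol{c}_1\prec\boldsymbol{c}_2$ in $\Omega$. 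Since the conclusion of $\mathrm{DsCP}$ is only asked for on the interior (free) nodes, the merely non-strict ordering on $\partial\Omega$ does no harm. Necessity is again immediate: $\mathrm{DsCP}$ contains $\mathrm{DsMP}$, so Theorem~\ref{Thm:NeccSuff_Conditions_DMP} yields $\mathrm{DsMP}_{\boldsymbol{K}}$.

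I expect no genuine analytic difficulty here; the delicate part is purely bookkeeping. One has to keep straight that (a)--(b) alone already force the ordering of $\boldsymbol{c}_1$ and $\boldsymbol{c}_2$, that condition (c) is pulled in only through the $\mathrm{DwMP}$ (resp.\ $\mathrm{DsMP}$) baked into the definition of $\mathrm{DwCP}$ (resp.\ $\mathrm{DsCP}$), and --- if necessity is argued directly rather than by invoking that definition --- that the test configurations used to read off the sign conditions on $\boldsymbol{K}_{ff}^{-1}$ and $-\boldsymbol{K}_{ff}^{-1}\boldsymbol{K}_{fp}$ must themselves satisfy the relevant discrete maximum principle, which is why one makes the slightly artificial choice of a common source with a positive component so that that hypothesis is met vacuously.
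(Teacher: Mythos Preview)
Your argument is correct and is actually cleaner than the paper's in a couple of places. The paper does not write the explicit block-solve identity; instead it sets $\boldsymbol{c}_3:=\boldsymbol{c}_1-\boldsymbol{c}_2$, $\boldsymbol{r}_3:=\boldsymbol{r}_1-\boldsymbol{r}_2$, observes $\boldsymbol{K}\boldsymbol{c}_3=\boldsymbol{r}_3$ with $\boldsymbol{r}_3\preceq\boldsymbol{0}$ and $\boldsymbol{c}_{3,p}\preceq\boldsymbol{0}$, and then invokes Theorem~\ref{Thm:NeccSuff_Conditions_DMP} as a black box to say that $\boldsymbol{c}_3$ inherits $\mathrm{DwMP}$ (resp.\ $\mathrm{DsMP}$), from which $\max[\boldsymbol{c}_3]\le 0$ follows. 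For the strong case the paper then runs a short contradiction: if the nonnegative maximum of $\boldsymbol{c}_3$ were attained at an interior node, $\mathrm{DsMP}$ would force $\boldsymbol{c}_3\equiv m\boldsymbol{1}$ with $m=0$, hence $\boldsymbol{r}_3=\boldsymbol{0}$, contradicting $\boldsymbol{r}_1\prec\boldsymbol{r}_2$. Your route---reading the sign of $\boldsymbol{e}_f$ directly off the two summands $\boldsymbol{K}_{ff}^{-1}(\boldsymbol{r}_2-\boldsymbol{r}_1)$ and $(-\boldsymbol{K}_{ff}^{-1}\boldsymbol{K}_{fp})\boldsymbol{e}_p$---dispenses with that case split entirely, since $\boldsymbol{K}_{ff}^{-1}\succ\boldsymbol{O}$ and $\boldsymbol{r}_2-\boldsymbol{r}_1\succ\boldsymbol{0}$ already give strict positivity of the first summand. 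The trade-off is that the paper's version stays closer in spirit to the continuous proof (apply the maximum principle to the difference), while yours exposes exactly which of the conditions (a)--(c) in \eqref{Eqn:StiffMat_DwMP}--\eqref{Eqn:StiffMat_DsMP} does which work; in particular your remark that (c) is needed only because $\mathrm{DwCP}$ subsumes $\mathrm{DwMP}$, not for the ordering itself, is a point the paper's proof leaves implicit. For necessity both you and the paper ultimately fall back on ``$\mathrm{DwCP}$ (resp.\ $\mathrm{DsCP}$) contains $\mathrm{DwMP}$ (resp.\ $\mathrm{DsMP}$) by definition, now apply Theorem~\ref{Thm:NeccSuff_Conditions_DMP}''; your alternative test-pair argument is a nice extra but not required.
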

\begin{proof} 
  For convenience, let us define the following: 
  \begin{subequations}
  \begin{align}
    \label{Eqn:c3}
    \boldsymbol{c}_3 &:= \boldsymbol{c}_1 - \boldsymbol{c}_2 \\
    \label{Eqn:r3}
    \boldsymbol{r}_3 &:= \boldsymbol{r}_1 - \boldsymbol{r}_2
  \end{align}
  \end{subequations}
  Clearly, $\boldsymbol{c}_3$ and $\boldsymbol{r}_3$ 
  satisfy the following:
  \begin{align}
    \label{Eqn:Kc3r3}
    \boldsymbol{K} \boldsymbol{c}_3 = \boldsymbol{r}_3
  \end{align}
    
  \emph{Necessary condition to satisfy $\mathrm{DwCP}_{\boldsymbol{K}}:$}
  Let $\boldsymbol{c}_1 \preceq \boldsymbol{c}_2$ in 
  $\overline{\Omega}$, which implies that $\boldsymbol{c}_3 
  \preceq \boldsymbol{0}$ in $\overline{\Omega}$. The 
  hypothesis of $\mathrm{DwCP}_{\boldsymbol{K}}$ and the fact 
  that $\boldsymbol{c}_3 \preceq \boldsymbol{0}$ in 
  $\overline{\Omega}$ imply the following:
  \begin{subequations}
    \begin{align}
      \label{Eqn:DwCP_Proof_Result01}
      \boldsymbol{r}_3 &\preceq \boldsymbol{0} 
      \quad \mathrm{in} \; \Omega \\
      \label{Eqn:DwCP_Proof_Result02}
      \boldsymbol{c}_3 &\preceq \boldsymbol{0} 
      \quad \mathrm{on} \; \partial 
      \Omega\\
      \label{Eqn:DwCP_Proof_Result03}
      \mathop{\mathrm{max}} \left[ \boldsymbol{c}_3 
      \right] &\leq 0 \quad \mathrm{on} \; 
      \partial \Omega \\
      \label{Eqn:DwCP_Proof_Result03}
      \mathop{\mathrm{max}}_{\overline{\Omega}} 
      \left[ \boldsymbol{c}_3 \right] &\leq 
      \mathop{\mathrm{max}} \left[ 0, \mathop{\mathrm{max}}
      _{\partial \Omega} \left[ \boldsymbol{c}_{3} \right] 
      \right] = 0
    \end{align}
  \end{subequations}
  which implies that $\boldsymbol{c}_3$ satisfies $\mathrm{DwMP}$ 
  (based on equation \eqref{Eqn:DwMP} in Definition 
  \ref{Thm:Discrete_WeakStrongMaximum_Principle}). But vector 
  $\boldsymbol{c}_3$ also satisfies equation \eqref{Eqn:Kc3r3}. 
  Hence, according to equation \eqref{Eqn:StiffMat_DwMP} 
  and the hypothesis of Theorem \ref{Thm:NeccSuff_Conditions_DMP}, 
  it is evident that $\boldsymbol{K}$ must possess $\mathrm{DwMP}_
  {\boldsymbol{K}}$. This completes the proof for the necessary 
  condition to satisfy $\mathrm{DwCP}_{\boldsymbol{K}}$.

  \emph{Sufficient condition to satisfy $\mathrm{DwCP}_{\boldsymbol{K}}:$}
  It is given that $\boldsymbol{c}_1$ and $\boldsymbol{c}_2$ 
  satisfy $\mathrm{DwMP}$. Equations \eqref{Eqn:c3}--\eqref{Eqn:r3}
  and $\mathrm{DwCP}_{\boldsymbol{K}}$ imply that 
  \begin{subequations}
    \begin{align}
      \label{Eqn:DwCP_Proof_Result1}
      \boldsymbol{c}_3 &\preceq \boldsymbol{0} 
      \quad \mathrm{on} \; \partial 
      \Omega \\
        \label{Eqn:DwCP_Proof_Result2}
        \boldsymbol{r}_3 &\preceq \boldsymbol{0} 
        \quad \mathrm{in} \; \Omega
    \end{align}
  \end{subequations} 
  If the stiffness matrix $\boldsymbol{K}$ possess $\mathrm{DwMP}_
  {\boldsymbol{K}}$, it is evident from Theorem \ref{Thm:NeccSuff_Conditions_DMP} 
  and equations \eqref{Eqn:Kc3r3}, 
    \eqref{Eqn:DwCP_Proof_Result1}--\eqref{Eqn:DwCP_Proof_Result2}       
    that vector $\boldsymbol{c}_3$ satisfies $\mathrm{DwMP}$. Hence, 
    from Definition \ref{Thm:Discrete_WeakStrongMaximum_Principle} 
    and equations \eqref{Eqn:DwMP}, 
    \eqref{Eqn:DwCP_Proof_Result1}--\eqref{Eqn:DwCP_Proof_Result2},  
    we have the following result:
    \begin{align}
      \label{Eqn:DwCP_Proof_Result3}
      \mathop{\mathrm{max}}_{\overline{\Omega}} \left[ 
      \boldsymbol{c}_3 \right] &\leq \mathop{\mathrm{max}} 
      \left[ 0, \mathop{\mathrm{max}}_{\partial \Omega} 
      \left[ \boldsymbol{c}_{3} \right] \right] = 0
    \end{align} 
    From equation \eqref{Eqn:DwCP_Proof_Result3}, it is evident 
    that the least upper bound for any component of vector 
    $\boldsymbol{c}_3$ is equal to zero. Hence, we have 
    $\boldsymbol{c}_3 \preceq \boldsymbol{0}$ in $\overline{\Omega}$. 
    This implies that $\boldsymbol{c}_1 \preceq \boldsymbol{c}_2$ 
    on $\overline{\Omega}$, which completes the proof for the 
    sufficient condition to satisfy $\mathrm{DwCP}_{\boldsymbol{K}}$.
    
    \emph{Necessary condition to satisfy $\mathrm{DsCP}_{\boldsymbol{K}}:$}  
      Following the arguments about the proof for the necessary 
      condition to satisfy the $\mathrm{DwCP}_{\boldsymbol{K}}$ property, 
      it 
      is evident that $\boldsymbol{c}_3$ satisfies $\mathrm{DwMP}$. 
      In addition, we are given that $\boldsymbol{c}_1 \prec \boldsymbol{c}_2$ 
      in $\Omega$. This implies $\boldsymbol{c}_3 \prec \boldsymbol{0}$ 
      in $\Omega$. Based on the hypothesis of $\mathrm{DsCP}_{\boldsymbol{K}}$ 
      and utilizing the fact that $\boldsymbol{c}_3 \prec \boldsymbol{0}$ in 
      $\Omega$ yields the following:
      \begin{subequations} 
      \begin{align}
        \label{Eqn:DwCP_Proof_Result(-2)3}
        \mathop{\mathrm{max}} \left[ \boldsymbol{c}_3 
        \right] &\leq 0 \quad \mathrm{on} \; \partial 
        \Omega \\
        \label{Eqn:DwCP_Proof_Result(-1)}
        \mathop{\mathrm{max}} \left[ \boldsymbol{c}_3 
        \right] &< 0 \quad \mathrm{in} \; \Omega 
      \end{align}
      \end{subequations}
      This means that vector $\boldsymbol{0}$ is the least upper 
      bound for $\boldsymbol{c}_3$ in $\overline{\Omega}$, and any 
      component of $\boldsymbol{c}_3$ is \emph{strictly} less than 
      zero in the interior of the domain $\Omega$. From equation 
      \eqref{Eqn:DwCP_Proof_Result(-2)3} and \eqref{Eqn:DwCP_Proof_Result(-1)}, 
      it is clear that the \emph{non-negative} maximum value for vector 
      $\boldsymbol{c}_3$ occurs on the boundary $\partial \Omega$. 
      From equation \eqref{Eqn:DsMP} in 
      Definition \ref{Thm:Discrete_WeakStrongMaximum_Principle}, it 
      follows that vector $\boldsymbol{c}_3$ satisfies $\mathrm{DsMP}$.
      Hence, according to conditions specified by equation 
      \eqref{Eqn:StiffMat_DsMP} and the hypothesis of Theorem 
      \ref{Thm:NeccSuff_Conditions_DMP}, it is evident that $\boldsymbol{K}$ 
      must possess the $\mathrm{DsMP}_{\boldsymbol{K}}$ property. This completes the 
      proof for the necessary condition to satisfy $\mathrm{DsCP}_{\boldsymbol{K}}$.
       
   \emph{Sufficient condition to satisfy $\mathrm{DsCP}_{\boldsymbol{K}}:$}
      Given that $\boldsymbol{c}_1$ and $\boldsymbol{c}_2$ 
      satisfy $\mathrm{DsMP}$. Under the assumptions of $\mathrm{DsCP}
      _{\boldsymbol{K}}$ and from equations \eqref{Eqn:c3}--\eqref{Eqn:r3}, 
      we have the following relations:
      \begin{subequations}
      \begin{align}
        \label{Eqn:DsCP_Proof_Result01}
        \boldsymbol{r}_3 &\prec \boldsymbol{0} 
        \quad \mathrm{in} \; \Omega \\
        \label{Eqn:DsCP_Proof_Result02}
        \boldsymbol{c}_3 &\preceq \boldsymbol{0} 
        \quad \mathrm{on} \; \partial \Omega
      \end{align}
      \end{subequations}
      If the stiffness matrix $\boldsymbol{K}$ possess $\mathrm{DsMP}_
      {\boldsymbol{K}}$, it is evident from Theorem \ref{Thm:NeccSuff_Conditions_DMP} 
      and equations \eqref{Eqn:Kc3r3}, 
      \eqref{Eqn:DsCP_Proof_Result01}--\eqref{Eqn:DsCP_Proof_Result02}       
      that vector $\boldsymbol{c}_3$ satisfies $\mathrm{DsMP}$. Hence,
      by appealing to Definition \ref{Thm:Discrete_WeakStrongMaximum_Principle} 
      and equation \eqref{Eqn:DsMP}, if vector $\boldsymbol{c}_3$ 
      \emph{does not} attain a non-negative maximum value at an interior 
      point of $\Omega$, then we have the following result:
      \begin{align}
        \label{Eqn:DsCP_Proof_Result1}
        \mathop{\mathrm{max}}_{\Omega} \left[ \boldsymbol{c}_3 
        \right] < \mathop{\mathrm{max}} \left[ 0, \mathop{\mathrm{max}}
        _{\partial \Omega} \left[ \boldsymbol{c}_{3} \right] 
        \right] = 0
      \end{align}
      which implies that each component of vector $\boldsymbol{c}_3$
      is less than zero. Hence, we have $\boldsymbol{c}_1 \prec 
      \boldsymbol{c}_2$ in $\Omega$.
      Suppose, if vector $\boldsymbol{c}_3$ attains a 
      non-negative maximum value at an interior point of $\Omega$, 
      then according to the surmise of $\mathrm{DsMP}$, we first 
      need to satisfy $\mathrm{DwMP}$. So from equation \eqref{Eqn:DwMP}, 
      we have the following relation:
      \begin{align}
        \label{Eqn:DsCP_Proof_Result21}
        \mathop{\mathrm{max}}_{\overline{\Omega}} \left[ 
        \boldsymbol{c}_3 \right] &\leq \mathop{\mathrm{max}} 
        \left[ 0, \mathop{\mathrm{max}}_{\partial \Omega} 
        \left[ \boldsymbol{c}_{3} \right] \right] = 0 
      \end{align}
      Secondly, according to $\mathrm{DsMP}$, we also need to 
      satisfy the equation \eqref{Eqn:DsMP}. These conditions 
      in terms of vector $\boldsymbol{c}_3$ are given as follows: 
      \begin{subequations}
      \begin{align}
        \label{Eqn:DsCP_Proof_Result22}
        \mathop{\mathrm{max}}_{\overline{\Omega}} \left[ 
        \boldsymbol{c}_3 \right] &= \mathop{\mathrm{max}}_{ 
        \Omega} \left[ \boldsymbol{c}_3 \right] = m \geq 0 \\
        \label{Eqn:DsCP_Proof_Result23}
        \boldsymbol{c}_3 &= m \boldsymbol{1} \quad 
        \mathrm{in} \; \overline{\Omega}
      \end{align}
      \end{subequations}
      From equations \eqref{Eqn:DsCP_Proof_Result21} and 
      \eqref{Eqn:DsCP_Proof_Result22}--\eqref{Eqn:DsCP_Proof_Result23}, 
      it is evident that $m = 0$; which implies that $\boldsymbol{c}_3 
      = \boldsymbol{0}$. Thus, we have $\boldsymbol{c}_1 = \boldsymbol{c}_2$ 
      in $\overline{\Omega}$. But from equation \eqref{Eqn:Kc3r3}, it is 
      obvious that $\boldsymbol{r}_3 = \boldsymbol{0}$ in $\Omega$, which 
      contradicts the hypothesis of $\mathrm{DsCP}_{\boldsymbol{K}}$ given 
      by the equation \eqref{Eqn:DsCP_Proof_Result01}. Hence, we have the 
      final result $\boldsymbol{c}_1 \prec \boldsymbol{c}_2$ in $\Omega$,
      which completes the proof for the sufficient condition to satisfy 
      $\mathrm{DsCP}_{\boldsymbol{K}}$.
\end{proof}

In the next section, we shall discuss the various factors (i.e., mesh restrictions, 
numerical formulations, and post-processing methods) that influence the satisfaction 
of discrete versions of maximum principles, comparison principles, and the non-negative 
constraint.

\section{YET ANOTHER LOOK AT CONTINUOUS AND DISCRETE PRINCIPLES}
\label{Sec:S3_MP_GeneralRemarks}
Based on the finite element methodology outlined in subsection 
\ref{SubSec:Discrete_Formulation}, we shall analyze the properties 
that the stiffness matrix $\boldsymbol{K}$ inherits from the 
continuous problem. An important attribute that the discrete 
system needs to have in order to mimic the mathematical properties 
that the continuous system possesses is that the stiffness matrix 
$\boldsymbol{K}_{ff}$ has to be a \emph{(reducible or irreducible) 
monotone matrix}. The part (a) in all the equations 
\eqref{Eqn:StiffMat_DwMP}--\eqref{Eqn:StiffMat_DSMP} of Theorem 
\ref{Thm:NeccSuff_Conditions_DMP} corresponds to reducibility or 
irreducibility of $\boldsymbol{K}_{ff}$.
 
On general computational grids, it is well-known that the stiffness 
matrix $\boldsymbol{K}_{ff}$ obtained via low-order finite element 
discretization \emph{might not} be a monotone matrix 
\cite{Ciarlet_Raviart_CMAME_1973_v2_p17,2005_Draganescu_etal_MC_v74_p1_p23,
2012_Mincsovics_Hovarth_LSSC_p614_p621}. So, the discrete single-field
Galerkin formulation might (or shall) violate the non-negative constraint, 
discrete maximum principles, and discrete comparison principles on 
unstructured computational meshes \cite{Ciarlet_Raviart_CMAME_1973_v2_p17,
1987_Ishihara_LAA_v88_p431_p448,Liska_Shashkov_CiCP_2008_v3_p852,
Nakshatrala_Valocchi_JCP_2009_v228_p6726,Nagarajan_Nakshatrala_IJNMF_2011_v67_p820,
2012_Mincsovics_Hovarth_LSSC_p614_p621}. The violation is more severe 
if the diffusion tensor is anisotropic. One of the ways to overcome 
such unphysical values for concentration and preserve the discrete 
properties is to restrict the element shape and size in a computational 
mesh. This can be achieved by developing sufficient mesh conditions 
under which $\boldsymbol{K}_{ff}$ is ensured to be a reducibly or 
irreducibly diagonally dominant matrix \cite{Berman_Plemmons}. Before 
we discuss such a class of monotone matrices, which are easily amenable 
for deriving mesh restrictions, some important remarks on various DMPs and 
their relationship to DCPs and NC are in order. We would like to emphasize 
that such a comprehensive discussion is not reported elsewhere in the 
literature.

\subsection{Simply connected vs. multiple connected domains}
\label{SubSec:Simple_Multiple_ConnDomains}
For many applications in mathematics, sciences, and engineering, 
it is necessary to at least satisfy the weak or strict weak maximum 
principle. But there are numerous cases where in it is required 
to satisfy a strong version of the maximum principle 
\cite{1987_Ishihara_LAA_v88_p431_p448,2005_Draganescu_etal_MC_v74_p1_p23}. 
In such scenarios, geometry and topology of the domain play a vital 
role. According to the hypothesis of Theorem \ref{Thm:Continuous_StrongMaximum_Principle},
it is evident that a strong maximum principle exists if the domain 
is simply connected (see reference \cite[Chapter 3]{Gilbarg_Trudinger}). 
However, one should not immediately conclude that if a domain is 
not simply connected, then a strong maximum principle will not exist 
\cite{Pucci_Serrin,2012_Mincsovics_Hovarth_LSSC_p614_p621}. 

In a discrete setting, Ishihara \cite{1987_Ishihara_LAA_v88_p431_p448},
Dr\v{a}g\v{a}nescu et.al. \cite{2005_Draganescu_etal_MC_v74_p1_p23}, 
and Mincsovics and Hov\'{a}rth \cite{2012_Mincsovics_Hovarth_LSSC_p614_p621} 
have conducted various numerical experiments related to discrete strong 
maximum principles for multiple connected domains. They performed analysis 
related to satisfaction of $\mathrm{DsMP}_{\boldsymbol{K}}$ and $\mathrm{DSMP}
_{\boldsymbol{K}}$ for various non-obtuse and acute triangulations for 
multiple connected domains. In particular, Mincsovics and Hov\'{a}rth 
discuss various interesting examples related to the irreducibility 
property of the stiffness matrix $\boldsymbol{K}_{ff}$ when the domain 
is not simply connected. In all of their examples, they solve the following 
equations: 
\begin{subequations}
  \begin{align}
    & \alpha c - \Delta c = 0 \quad \mathrm{in} \; \Omega \\
    &c(\mathbf{x}) = c^{\mathrm{p}}(\mathbf{x}) \quad 
    \mathrm{on} \; \partial \Omega
  \end{align}
\end{subequations}
where the linear decay $\alpha = 0$ or $\alpha = 128$. Through numerical 
experiments, the authors demonstrate that even though the triangulation 
satisfies the non-obtuse or acute angled mesh condition (proposed by 
Ciarlet and Raviart \cite{Ciarlet_Raviart_CMAME_1973_v2_p17}), it is 
not guaranteed to fulfill either DsMP or DSMP. This means that non-obtuse 
\cite{2007_Erten_Ungor_CCCG_p205_p208} and well-centered triangulation 
\cite{Vanderzee_Hirani_Guoy_Ramos_SIAMJSC_2010_v31_p4497} of any given 
domain will always satisfy the weak DMPs, but need not satisfy the strong 
DMPs.
  
Within the context of directed graphs \cite{Berman_Plemmons, 
2005_Draganescu_etal_MC_v74_p1_p23}, there is a one-to-one 
correspondence between irreducibility of the stiffness matrix 
$\boldsymbol{K}_{ff}$ and the interior vertices of the computational 
mesh \cite{2012_Huang_arXiv_1306_1987}. In order to satisfy 
the discrete (strong and strictly strong) maximum principle, 
the mesh has to be \emph{interiorly connected}, which in turn 
implies that $\boldsymbol{K}_{ff}$ has to be irreducible 
\cite{Varga_MatrixIterativeAnalysis_SecondEdition}. By interiorly 
connected mesh, we mean that any pair of interior vertices of 
the mesh are connected at least by an interior edge path 
\cite{2005_Draganescu_etal_MC_v74_p1_p23}. Hence,  $\boldsymbol{K}
_{ff}^{-1} \succ \boldsymbol{0}$ and $-\boldsymbol{K}_{ff}^{-1} 
\boldsymbol{K}_{fp} \succ \boldsymbol{0}$ in Theorem 
\ref{Thm:NeccSuff_Conditions_DMP} correspond to this discrete 
connectedness property of the computational mesh 
\cite{2005_Draganescu_etal_MC_v74_p1_p23,2012_Mincsovics_Hovarth_LSSC_p614_p621}.
However, it should be noted that irreducibility is a necessary 
condition, but not sufficient. For other details on numerical 
aspects related to mesh connectivity, see references \cite[Section 4, 
Figures 1--4]{2012_Mincsovics_Hovarth_LSSC_p614_p621}, 
\cite{2005_Draganescu_etal_MC_v74_p1_p23}, and \cite{2012_Huang_arXiv_1306_1987}.
  
\subsection{Minimum principles, and non-negative and min-max constraints}
\label{SubSec:MinMax_Principles}
Due to linearity of the operator $\mathcal{L}$, similar theorems 
corresponding to minimum principles and the non-negative constraint 
for equations \eqref{Eqn:NN_AD_GE_BE}--\eqref{Eqn:NN_AD_GE_DBC} 
can be derived. To obtain the non-negative solution and corresponding 
min-max constraint on $c(\mathbf{x})$, we shall appeal to the 
continuous weak minimum/minimum-maximum principle, which can 
be written as follows \cite{Evans_PDE,Gilbarg_Trudinger}:
\begin{lemma}[\texttt{Continuous weak minimum/minimum-maximum principle}]
  \label{Lemma:Continuous_WeakMinimum_Principles}
  Let $\mathcal{L}$ be a uniformly elliptic operator satisfying the 
  conditions given by \eqref{Eqn:alpha_NeccSuff}--\eqref{Eqn:velocity_DiffEigenValue} 
  and $\mathbf{D}(\mathbf{x})$ be continuously differentiable. Given 
  that $\mathcal{L}[c] \geq 0$, $c^{\mathrm{p}}(\mathbf{x}) \geq 0$, 
  and $c(\mathbf{x}) \in C^{2}(\Omega) \cap C^{0}(\overline{\Omega})$, 
  then $c(\mathbf{x})$ possess a continuous weak minimum principle, 
  which is given as follows:
  \begin{align}
    \label{Eqn:Weak_Minimum_Principle}
    \mathop{\mathrm{min}}_{\mathbf{x} \in \overline{\Omega}} 
    \left [ c(\mathbf{x}) \right ] \geq \mathop{\mathrm{min}} 
    \left [ 0, \mathop{\mathrm{min}}_{\mathbf{x} \in \partial 
    \Omega} \left [ c(\mathbf{x}) \right ] \right ]
  \end{align}
  Moreover, if $\mathcal{L}[c] = 0$, then we obtain the classical weak 
  minimum-maximum principle for $c(\mathbf{x})$ in $\overline{\Omega}$,
  which is given as follows:
  \begin{align}
    \label{Eqn:Weak_Minimum_Maximum_Principle}
    \mathop{\mathrm{min}} \left [ 0, \mathop{\mathrm{min}}_{\mathbf{x} 
    \in \partial \Omega} \left [ c(\mathbf{x}) \right ] \right ] \leq 
    c(\mathbf{x}) \leq \mathop{\mathrm{max}} \left [ 0, \mathop{\mathrm{max}}_
    {\mathbf{x} \in \partial \Omega} \left [ c(\mathbf{x}) \right ] \right ]
  \end{align}  
\end{lemma}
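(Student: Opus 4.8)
The plan is to deduce the weak minimum principle from the weak maximum principle (Theorem \ref{Thm:Continuous_WeakMaximum_Principle}) by exploiting the linearity of $\mathcal{L}$. First I would set $\tilde{c}(\mathbf{x}) := -c(\mathbf{x})$. Since $\mathcal{L}$ is linear, $\mathcal{L}[\tilde{c}] = -\mathcal{L}[c] \leq 0$ in $\Omega$, and $\tilde{c} \in C^{2}(\Omega) \cap C^{0}(\overline{\Omega})$. The structural hypotheses on the operator --- uniform ellipticity and the conditions \eqref{Eqn:alpha_NeccSuff}--\eqref{Eqn:velocity_DiffEigenValue}, together with continuous differentiability of $\mathbf{D}(\mathbf{x})$ --- constrain only the coefficients of $\mathcal{L}$ and are untouched by the sign change of the argument. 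Hence $\tilde{c}$ satisfies every hypothesis of Theorem \ref{Thm:Continuous_WeakMaximum_Principle}.

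Applying the weak maximum principle \eqref{Eqn:Weak_Maximum_Principle} to $\tilde{c}$ gives
\begin{align*}
  \mathop{\mathrm{max}}_{\mathbf{x} \in \overline{\Omega}} \left[ -c(\mathbf{x}) \right] \leq \mathop{\mathrm{max}} \left[ 0, \mathop{\mathrm{max}}_{\mathbf{x} \in \partial \Omega} \left[ -c(\mathbf{x}) \right] \right].
\end{align*}
Using $\mathop{\mathrm{max}}[-c] = -\mathop{\mathrm{min}}[c]$ on both $\overline{\Omega}$ and $\partial \Omega$, together with the elementary identity $\mathop{\mathrm{max}}[0, -t] = -\mathop{\mathrm{min}}[0, t]$, and multiplying through by $-1$ (which reverses the inequality) yields exactly \eqref{Eqn:Weak_Minimum_Principle}. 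If in addition $c^{\mathrm{p}} \geq 0$ on $\partial \Omega$, then $\mathop{\mathrm{min}}_{\mathbf{x} \in \partial \Omega}[c(\mathbf{x})] \geq 0$, so the right-hand side of \eqref{Eqn:Weak_Minimum_Principle} collapses to $0$, which recovers the non-negative constraint $c(\mathbf{x}) \geq 0$ in $\overline{\Omega}$.

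For the minimum-maximum principle, suppose $\mathcal{L}[c] = 0$. Then simultaneously $\mathcal{L}[c] \leq 0$, so $c$ itself satisfies the hypothesis of Theorem \ref{Thm:Continuous_WeakMaximum_Principle} and the upper bound \eqref{Eqn:Weak_Maximum_Principle} holds; and $\mathcal{L}[-c] = 0 \leq 0$, so the argument of the previous paragraph applies and delivers the lower bound \eqref{Eqn:Weak_Minimum_Principle}. Chaining the two one-sided bounds produces the two-sided estimate \eqref{Eqn:Weak_Minimum_Maximum_Principle}.

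Because the lemma is a direct corollary of an already-established theorem, there is no genuinely hard step here; the only point demanding care is verifying that the passage from $c$ to $-c$ preserves all hypotheses of Theorem \ref{Thm:Continuous_WeakMaximum_Principle} --- in particular that the sign conditions \eqref{Eqn:alpha_NeccSuff}--\eqref{Eqn:velocity_DiffEigenValue} are statements about the operator $\mathcal{L}$ and not about the solution, so that they remain in force --- and keeping the $\mathop{\mathrm{max}}$/$\mathop{\mathrm{min}}$ bookkeeping consistent under negation.
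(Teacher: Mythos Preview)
Your argument is correct and is precisely the standard reduction of the minimum principle to the maximum principle via $\tilde{c} = -c$. The paper itself does not supply a proof for this lemma but simply refers the reader to Gilbarg--Trudinger and Evans, where this same negation argument is used; so your proposal is entirely in line with what the cited references do.
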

\begin{proof}
  For a proof, see references \cite{Gilbarg_Trudinger,Evans_PDE}.
\end{proof}
It is evident from equation \eqref{Eqn:Weak_Minimum_Principle} 
that for $f(\mathbf{x}) \geq 0$ and $c^{\mathrm{p}}(\mathbf{x}) 
\geq 0$, we have $c(\mathbf{x}) \geq 0$ for any $\mathbf{x} 
\in \overline{\Omega}$. Correspondingly, a discrete version 
of continuous weak minimum principle and weak minimum-maximum 
principle is given as follows:
\begin{definition}[\texttt{Discrete weak minimum/minimum-maximum principle}]
  \label{Def:Discrete_WeakMinimum_Principles}
  A numerical formulation is said to possess 
  \begin{itemize}
    \item the discrete weak minimum principle if
    \begin{align}
      \label{Eqn:DiscreteMinPrinciple}
      \boldsymbol{r} \succeq \boldsymbol{0} 
      \qquad \mathrm{implies} \qquad \mathop{\mathrm{min}} 
      \left[ \boldsymbol{c} \right] \geq \mathop{\mathrm{min}} 
      \left[ 0, \mathop{\mathrm{min}} \left [ \boldsymbol{c}_{p} 
      \right ] \right]
    \end{align}
    \item the discrete weak minimum-maximum principle if
    \begin{align}
      \label{Eqn:DiscreteMinMaxPrinciple}
      \boldsymbol{r} = \boldsymbol{0} \qquad \mathrm{implies} 
      \qquad c_{\mathrm{min}} \boldsymbol{1} \preceq \boldsymbol{c} 
      \preceq c_{\mathrm{max}} \boldsymbol{1}
    \end{align}
  \end{itemize}
  where $c_{\mathrm{min}}:= \mathop{\mathrm{min}} \left[ 
  \boldsymbol{c}_{p} \right]$, $c_{\mathrm{max}}:= 
  \mathop{\mathrm{max}} \left[ \boldsymbol{c}_{p} \right]$, 
  and $\mathop{\mathrm{min}} \left[ \bullet \right]$ denotes 
  the minimal element of a vector.
\end{definition}
      
\subsection{High-order finite element methods}
\label{SubSec:Higher_Order_FEM}
An attribute of low-order finite elements, which plays a 
central role in designing non-negative formulations for 
diffusion-type equations, is that the shape functions for 
these elements are monotonic and do not change their sign 
within the element \cite{Nagarajan_Nakshatrala_IJNMF_2011_v67_p820}. 
Moreover, they are convenient to generate computational meshes 
for complex geometries \cite{Frey_George}, to perform error 
analysis \cite{Babuska_Whiteman_Strouboulis}, and for adaptive 
local mesh refinement \cite{Plewa_Linde_Weirs}. 
High-order finite elements are widely use for solving smooth 
problems, as one can obtain exponential convergence under 
high-order interpolations for these problems. But the shape 
functions of high-order finite elements change the sign within 
an element, which makes them not suitable under most of the 
current non-negative formulations (e.g., see reference 
\cite{Nagarajan_Nakshatrala_IJNMF_2011_v67_p820,
Payette_Nakshatrala_Reddy_IJNME_2012}). The conditions 
presented in this paper will also not be applicable to 
high-order finite elements for the same reason of change 
in sign of interpolation functions within an element. 

As compared to low-order finite element methods, the discrete 
counterparts of continuous weak and strong maximum principles 
for high-order finite element methods is not well understood 
yet. This is because of the complicated task related to the 
test of non-negativity of a multivariate polynomial \cite{2010_Vejchodsky_NMAA}. 
It should be noted that construction of non-negative high-order 
shape functions for finite element methods is still an unsolved 
problem and its roots can be traced back to the famous Hilbert's 
$17^{\mathrm{th}}$ problem \cite{Prestel_Delzell,2000_Reznick_CM_v253_p251_p272}. 
Within the context of variational methods, probably, the research 
works by Ciarlet \cite{1970_Ciarlet_AequationesMath_v4_p74_p82,
1970_Ciarlet_Varga_NM_v16_p115_p128} are the first attempt to develop 
high-order non-negative shape functions to satisfy a discrete maximum 
principle. This study is based on a general theory of discrete Green's 
function (DGF) for uniformly elliptic linear partial differential 
operators. Later, various attempts were made by different researchers 
to develop shape functions and derive mesh restrictions based on the 
DGF approach. Most of them are pertinent to one-dimensional problems 
or particular cases of isotropic diffusion. The conditions to be met 
for high-order elements to satisfy DMPs based on DGF methodology are 
much more stringent and have a less broad scope for general applications. 
Furthermore, one should be aware that the discrete analogues of continuous 
Green's functions are applicable only for linear problems, and cannot be 
extended to to non-linear problems (such as semi-linear and quasi-linear 
elliptic partial differential equations). Hence, such a method will have 
limited scope. For more details, one can consult the following references 
\cite{Hohn_Mittelmann_Computing_1981_v27_p145,Solin_Vejchodsky_JCAM_2007_v209_p54,
2010_Vejchodsky_NMAA}.
  
\subsection{Relationship between various DCPs and DMPs}
\label{SubSec:Relationship_DMPs_DCPs}
It is evident from Definitions \ref{Thm:Discrete_WeakStrongMaximum_Principle}, 
\ref{Def:Discrete_Comparison_Principle}, and \ref{Def:Discrete_WeakMinimum_Principles}; 
Lemma \ref{Lemma:Continuous_WeakMinimum_Principles}; and Theorems 
\ref{Thm:NeccSuff_Conditions_DMP} and \ref{Thm:DCP_Conditions_Galerkin} 
that if a numerical formulation satisfies either $\mathrm{DwCP}$/$\mathrm{DwCP}_
{\boldsymbol{K}}$ or $\mathrm{DsCP}$/$\mathrm{DsCP}_{\boldsymbol{K}}$, 
then it automatically obeys $\mathrm{DwMP}$/$\mathrm{DwMP}_{\boldsymbol{K}}$ 
and NC. Figure \ref{Fig:VennDiagram_MeshRestrictions} illustrates a graphical 
representation among various numerical solution spaces that satisfy different 
DMPs and DCPs within the context of mesh restrictions. By a (finite dimensional) 
numerical solution space $\mathcal{V}_{\mathfrak{p}}$, we mean a set of numerical 
solutions $\{\boldsymbol{c}_i\}_{i = 1}^{n}$, which satisfy a given discrete 
property given by $\mathfrak{p}$. For example, if a concentration vector 
$\boldsymbol{c}_i$ corresponding to a given volumetric source vector 
$\boldsymbol{r}_i$ satisfies the discrete property $\mathrm{DwMP}$, then 
$\boldsymbol{c}_i \in \mathcal{V}_{\mathrm{DwMP}}$. It should be noted that 
within the context of DMPs, DCPs, and NC; $\mathcal{V}_{\mathfrak{p}}$ is 
\emph{not} a vector space. This is because if $\boldsymbol{c}_i \in 
\mathcal{V}_{\mathfrak{p}}$, then according to non-negative property 
$-\boldsymbol{c}_i \notin \mathcal{V}_{\mathfrak{p}}$, which is one 
of the properties needed for $\mathcal{V}_{\mathfrak{p}}$ to be a 
vector space.
   
Herein, we would like to emphasize that the route taken to satisfy 
$\mathfrak{p}$ is very important. One can fulfill a discrete property 
$\mathfrak{p}$ in numerous ways. In general, this is achieved by either 
placing mesh restrictions or developing a new (non-negative or monotone 
or monotonicity based) numerical formulation or through various post-processing 
methods. Couple of these techniques are developed based along the lines 
similar to Theorem \ref{Thm:NeccSuff_Conditions_DMP} and others based 
on Definition \ref{Thm:Discrete_WeakStrongMaximum_Principle}. But it 
should be noted that developing numerical formulations accordant to 
Theorem \ref{Thm:NeccSuff_Conditions_DMP} is much more difficult than 
that of Definition \ref{Thm:Discrete_WeakStrongMaximum_Principle}. This 
is because in order to satisfy Theorem \ref{Thm:NeccSuff_Conditions_DMP}, 
we need to place restrictions on the stiffness matrices $\boldsymbol{K}_{ff}$ 
and $\boldsymbol{K}_{fp}$. On the other hand, the hypothesis of Definition 
\ref{Thm:Discrete_WeakStrongMaximum_Principle} does not assume any particular 
constraints on $\boldsymbol{K}$. Hence, we would like to differentiate between 
the set of discrete properties given by $\mathrm{DwMP}_{\boldsymbol{K}}$, 
$\mathrm{DWMP}_{\boldsymbol{K}}$, $\mathrm{DsMP}_{\boldsymbol{K}}$, 
$\mathrm{DSMP}_{\boldsymbol{K}}$, $\mathrm{DwCP}_{\boldsymbol{K}}$, and  
$\mathrm{DsCP}_{\boldsymbol{K}}$ to that of $\mathrm{DwMP}$, $\mathrm{DWMP}$, 
$\mathrm{DsMP}$, $\mathrm{DSMP}$, $\mathrm{DwCP}$, and $\mathrm{DsCP}$.
   
In spite of the fact that there are several numerical methods available 
to satisfy a given discrete property $\mathfrak{p}$, from the characterization 
of $\mathcal{V}_{\mathfrak{p}}$, it is evident that the resulting numerical 
solution spaces will be the same (for example, we have $\mathcal{V}_
{\mathrm{DwCP}_{\boldsymbol{K}}} \equiv \mathcal{V}_{\mathrm{DwCP}}$). 
From Theorems \ref{Thm:NeccSuff_Conditions_DMP} and \ref{Thm:DCP_Conditions_Galerkin}, 
it is evident that among various DMPs and DCPs, we have the following 
set inclusions:
\begin{subequations}
  \begin{align} 
    &\mathcal{V}_{\mathrm{DSMP}_{\boldsymbol{K}}} \subset 
    \mathcal{V}_{\mathrm{DsMP}_{\boldsymbol{K}}} \subset 
    \mathcal{V}_{\mathrm{DWMP}_{\boldsymbol{K}}} \subset 
    \mathcal{V}_{\mathrm{DwMP}_{\boldsymbol{K}}} \\
    &\mathcal{V}_{\mathrm{DsCP}_{\boldsymbol{K}}} \subset 
    \mathcal{V}_{\mathrm{DwCP}_{\boldsymbol{K}}}
  \end{align}
\end{subequations}
But it should be noted that a similar type of enclosure for 
numerical solution spaces between DMPs and DCPs does not hold:
\begin{subequations}
  \begin{align}
    \label{Eqn:V_DwCP_DWMP}
    \mathcal{V}_{\mathrm{DwCP}_{\boldsymbol{K}}} &\not \subset 
    \mathcal{V}_{\mathrm{DWMP}_{\boldsymbol{K}}} \\
    \label{Eqn:V_DwCP_DsMP}
    \mathcal{V}_{\mathrm{DwCP}_{\boldsymbol{K}}} &\not \subset 
    \mathcal{V}_{\mathrm{DsMP}_{\boldsymbol{K}}} \\
    \label{Eqn:V_DwCP_DSMP}
    \mathcal{V}_{\mathrm{DwCP}_{\boldsymbol{K}}} &\not \subset 
    \mathcal{V}_{\mathrm{DSMP}_{\boldsymbol{K}}} \\
    \label{Eqn:V_DsCP_DSMP}
    \mathcal{V}_{\mathrm{DsCP}_{\boldsymbol{K}}} &\not \subset 
    \mathcal{V}_{\mathrm{DSMP}_{\boldsymbol{K}}}
  \end{align}
\end{subequations}
The reason for such a non-enclosure stems from the hypothesis of 
Theorems \ref{Thm:NeccSuff_Conditions_DMP} and \ref{Thm:DCP_Conditions_Galerkin}, 
wherein we only need to satisfy $\mathrm{DwMP}_{\boldsymbol{K}}$ 
for $\mathrm{DwCP}_{\boldsymbol{K}}$ and $\mathrm{DsMP}_{\boldsymbol{K}}$ 
for $\mathrm{DsCP}_{\boldsymbol{K}}$. In a discrete setting, a 
numerical methodology may inherit one or more than one of these 
discrete principles and in some cases none. Now, we shall discuss 
in detail a class of numerical formulations, which satisfy a certain 
discrete property $\mathfrak{p}$. We shall epitomize our findings 
based on various popular research works in literature, which span 
across different disciplines, such as computational geometry, optimization 
theory, numerical linear algebra, and partial differential equations.
  
\begin{enumerate}[(a)]
\item \emph{Isotropic and anisotropic non-obtuse angle conditions}: 
Recently, Huang and co-workers \cite{2005_Li_Huang_JCP_v229_p8072_p8094,
2012_Lu_Huang_Qiu_arXiv_1201_3564,2012_Huang_arXiv_1306_1987} were 
able to satisfy certain discrete properties through mesh restrictions, 
which are based on anisotropic $\mathcal{M}$-uniform mesh generation 
techniques. However, their theoretical investigation is mainly restricted 
to linear simplicial elements, specifically, the three-node triangular 
element and four-node tetrahedral element. But one should note that it 
is difficult to extend the procedure outlined by Huang and co-workers 
to multi-linear elements, such as the four-node quadrilateral element, 
six-node wedge element, and eight-node brick element. This is because 
the partial derivatives of the shape functions for multi-linear finite 
elements are not constant (for more details, see subsection 
\ref{SubSec:MeshRestrict_Rectangular_Element} of this paper, which discusses 
mesh restrictions for a rectangular element). Nevertheless, constructing 
a WCT mesh \cite{Vanderzee_Hirani_Guoy_Ramos_SIAMJSC_2010_v31_p4497} or an 
anisotropic $\mathcal{M}$-uniform triangular mesh \cite{2005_Huang_JCP_v204_p633_p665,
2013_Schneider_LNACM_p133_p152} that satisfies various DMPs for an 
\emph{arbitrary domain} is still an open problem \cite{1995_Krizek_Qun_EWJNM_v3_p59_p69}. 
The key-concept we would like to emphasize is that the numerical solutions 
obtained for isotropic diffusion-type equations using WCT meshes and 
anisotropic diffusion-type equations using the diffusivity tensor based 
anisotropic $\mathcal{M}$-uniform meshes satisfy all versions of discrete 
maximum principles. In addition, if the hypothesis of $\mathrm{DwCP}_
{\boldsymbol{K}}$ and $\mathrm{DsCP}_{\boldsymbol{K}}$ is satisfied, then 
these meshes also satisfy all versions of discrete comparison principles.

\item \emph{Non-linear finite volume and mimetic finite difference methods}:    
Le Potier's method \cite{2009_LePotier_IJFV_v6} and Lipnikov 
et al. \cite{Lipnikov_Shashkov_Svyatskiy_Vassilevski_JCP_2007_v227_p492} 
are some of the noteworthy works in the direction of FVM that satisfy 
the non-negative constraint, but do not possess a discrete version of 
the comparison principles and the maximum principles. However, it should 
be noted that recently these authors have developed techniques based on 
non-linear finite volume methods \cite{2009_LePotier_IJFV_v6,
2011_Droniou_LePotier_SIAMJNA_v49_p459_p490} and mimetic finite 
difference methods \cite{2011_Lipnikov_Manzini_Svyatskiy_JCP_v230_p2620_p2642} 
to satisfy various versions of DMPs for a certain \emph{specific} 
class of linear \emph{self-adjoint} elliptic operators. But it 
should be noted that there is no discussion on satisfying various
DCPs. 

\item \emph{Optimization-based finite element methods}:
Based on the works by Liska and Shashkov \cite{Liska_Shashkov_CiCP_2008_v3_p852} 
and Nakshatrala and co-workers \cite{Nakshatrala_Valocchi_JCP_2009_v228_p6726,
Nagarajan_Nakshatrala_IJNMF_2011_v67_p820,Nakshatrala_Nagaraja_Shabouei_arXiv_2012,
2013_Nakshatrala_Mudunuru_Valocchi_JCP_v253_p278_p307}, the optimization-based 
low-order finite element methods, under certain conditions (when $\boldsymbol{K}
_{ff}$ is symmetric and positive definite), can be written as follows:
\begin{subequations}
  \label{Eqn:Diffusion_FOOC}
  \begin{align}
    \label{Eqn:Diffusion_FOOC_1}
    &\boldsymbol{K}_{ff} \boldsymbol{c}_{f} = \boldsymbol{r}
    - \boldsymbol{K}_{fp} \boldsymbol{c}_{p}
    + \boldsymbol{\lambda}_{\mathrm{min}} - 
    \boldsymbol{\lambda}_{\mathrm{max}} \\
    \label{Eqn:Diffusion_FOOC_2}
    &c_{\mathrm{min}}^{*} \boldsymbol{1} \preceq 
    \boldsymbol{c}_{f} \preceq c_{\mathrm{max}}^{*} 
    \boldsymbol{1} \\
    \label{Eqn:Diffusion_FOOC_3}
    &\boldsymbol{\lambda}_{\mathrm{min}} \succeq \boldsymbol{0} \\
    \label{Eqn:Diffusion_FOOC_4}
    &\boldsymbol{\lambda}_{\mathrm{max}} \succeq \boldsymbol{0} \\
    \label{Eqn:Diffusion_FOOC_5}
    &\left(\boldsymbol{c}_f - c_{\mathrm{min}}^{*} \boldsymbol{1} 
    \right) \cdot \boldsymbol{\lambda}_{\mathrm{min}} = 0 \\
    \label{Eqn:Diffusion_FOOC_6}
    &\left(c_{\mathrm{max}}^{*} \boldsymbol{1} - \boldsymbol{c}_f 
    \right) \cdot \boldsymbol{\lambda}_{\mathrm{max}} = 0 
  \end{align}
\end{subequations}
where $c_{\mathrm{min}}^{*}$ and $c_{\mathrm{max}}^{*}$ are 
the minimum and maximum concentration values possible in 
$\overline{\Omega}$. These values can be obtained based on 
the boundary conditions and a prior knowledge about the solution. 
$\boldsymbol{\lambda}_{\mathrm{min}}$ is the vector of Lagrange 
multipliers corresponding to the constraint $c_{\mathrm{min}}^{*} 
\boldsymbol{1} \preceq \boldsymbol{c}_{f}$ and similarly 
$\boldsymbol{\lambda}_{\mathrm{max}}$ is the vector of Lagrange 
multipliers corresponding to the constraint $\boldsymbol{c}_{f} 
\preceq c_{\mathrm{max}}^{*} \boldsymbol{1}$.
     
Based on the nature of constraints, one can satisfy different discrete 
principles and it should be emphasized that DsMP, DSMP, DwCP, and DsCP 
can be fulfilled only under certain conditions. If either $\boldsymbol{r} 
\succ \boldsymbol{0}$ or $\boldsymbol{r} \prec \boldsymbol{0}$, then the 
non-negative constraint and the weaker versions of discrete minimum/maximum 
principles can be satisfied by specifying either $c_{\mathrm{min}}^{*}$ or 
$c_{\mathrm{max}}^{*}$. But in the case of $\boldsymbol{r} = \boldsymbol{0}$,
both $c_{\mathrm{min}}^{*}$ and $c_{\mathrm{max}}^{*}$ can be prescribed 
based on the Dirichlet boundary conditions; moreover, according to Definition 
\ref{Def:Discrete_WeakMinimum_Principles} and from equation \eqref{Eqn:DiscreteMinMaxPrinciple}, 
we have $c_{\mathrm{min}}^{*} = c_{\mathrm{min}}$ and $c_{\mathrm{max}}^{*} 
= c_{\mathrm{max}}$. However, one should note that if the qualitative and 
quantitative nature of the solution is known a prior, then one can satisfy 
\emph{all} of the discrete versions of (weak and strong) maximum principles 
by specifying $c_{\mathrm{min}}^{*}$ and $c_{\mathrm{max}}^{*}$ in the 
Karush-Kuhn-Tucker conditions given by equations 
\eqref{Eqn:Diffusion_FOOC_1}--\eqref{Eqn:Diffusion_FOOC_6}. In general, 
these methods do not inherit a discrete strong maximum principle and a 
discrete comparison principle. For more details, a counter example is 
shown in the reference \cite[Section 4, Figure 1]{2013_Nakshatrala_Mudunuru_Valocchi_JCP_v253_p278_p307}). Nevertheless, satisfying DsMP, DSMP, DwCP, and DsCP is still an open 
problem and are interesting topics to investigate in future endeavors.   

\item \emph{Variationally inconsistent methods}:   
In literature, there are various post-processing methods 
\cite{2012_Kreuzer_arXiv_1208_3958,2012_Burdakov_etal_JCP_v231_p3126_p3142,
2013_Lu_Huang_Vleck_JCP_v242_p24_p36} available that can 
recover certain discrete properties if a prior information 
about the numerical solution is known. However, one should 
note that such methods are variationally inconsistent. A 
summary of the above discussion between various discrete 
principles within the context of FDS, MFDM, FVM, FEM and 
PP based methods is pictorially described in Figure 
\ref{Fig:VennDiagram_NumericalFormulations}.
\end{enumerate}

In the next section, we shall derive sufficient conditions on the three-node 
triangular element and four-node quadrilateral element to satisfy discrete 
versions of comparison principles, maximum principles, and the non-negative 
constraint.

\section{MESH RESTRICTIONS TO SATISFY DISCRETE PRINCIPLES}
\label{Sec:S4_MP_Restrictions}
In this section, we shall utilize and build upon the research 
works of Huang and co-workers \cite{2005_Huang_JCP_v204_p633_p665,
2005_Li_Huang_JCP_v229_p8072_p8094,2012_Lu_Huang_Qiu_arXiv_1201_3564,
2012_Huang_arXiv_1306_1987} for linear second-order elliptic equations. 
We first present, without proofs, relevant mathematical and geometrical 
results required to obtain mesh restrictions for simplicial elements. 
We will then use these results to construct mesh restriction theorems 
and generate various types of triangulations (see Algorithm \ref{Algo:Aniso_Meshes} 
and the discussion in subsection \ref{SubSec:NumEg_Triangulations}) 
using the open source mesh generators, such as \textsf{Gmsh} \cite{gmsh.www} 
and \textsf{BAMG} \cite{1998_Hecht_BAMG} available in the \textsf{FreeFem++} 
software package \cite{FreeFempp,2002_Hecht_JNM_v20_p251_p266}. It should 
be emphasized that these results cannot be extended to Q4 element, as this 
element is not simplicial. For more details on mesh restrictions for Q4 
element, see subsection \ref{SubSec:MeshRestrict_Rectangular_Element} of 
this paper.

Let $\hat{\boldsymbol{x}}_{1}, \hat{\boldsymbol{x}}_{2}, \dots, 
\hat{\boldsymbol{x}}_{nd+1}$ denote the vertices of an arbitrary 
simplex $\Omega_e \in \mathcal{T}_{h}$, where $\mathcal{T}_{h}$ 
is a simplicial triangulation of the domain $\Omega$. The subscript 
`$h$' in the triangulation $\mathcal{T}_{h}$ corresponds to the maximum 
element size (which will be described later in this section, see equation 
\eqref{Eqn:GlobMeshSize}). Based on various values of $h$, we have an 
affine family of such simplicial meshes denoted by $\{ \mathcal{T}_{h} \}$. 
Designate the total number of vertices and the corresponding interior 
vertices of $\mathcal{T}_{h}$ by `$Nv$' and `$Niv$'. The edge matrix of 
$\Omega_e$, which is denoted by $\boldsymbol{E}_{\Omega_e}$, is defined 
as follows: 
\begin{align}
  \label{Eqn:Edge_Matrix}
  \boldsymbol{E}_{\Omega_e}:= \left[ \hat{\boldsymbol{x}}_{2} - 
  \hat{\boldsymbol{x}}_{1}, \hat{\boldsymbol{x}}_{3} - 
  \hat{\boldsymbol{x}}_{1}, \dots, \hat{\boldsymbol{x}}_{nd+1} - 
  \hat{\boldsymbol{x}}_{1} \right] \quad \forall \Omega_e \in 
  \mathcal{T}_{h}
\end{align}
then an edge connecting vertices $\hat{\boldsymbol{x}}_{p}$ and 
$\hat{\boldsymbol{x}}_{q}$ of $\Omega_e$ is denoted as $e_{pq}$. 
Correspondingly, the edge vector $\boldsymbol{e}_{pq,\Omega_e}$ 
(which can be expressed as a linear combination of the elements 
corresponding to the edge matrix $\boldsymbol{E}_{\Omega_e}$) 
and the element boundary $\partial \Omega_e$ in-terms of $\hat{
\boldsymbol{x}}_{p}$, $\hat{\boldsymbol{x}}_{q}$, and $e_{pq}$ 
are given by:
\begin{subequations}
\begin{align}
  \label{Eqn:Edge_Vector}
  \boldsymbol{e}_{pq,\Omega_e} &= \hat{\boldsymbol{x}}_{q} - 
  \hat{\boldsymbol{x}}_{p} \quad \forall p,q = 1, 2, \dots, nd+1 
  \quad \mathrm{and} \quad p \neq q \\
  \label{Eqn:Element_Boundary}
  \partial \Omega_e &= \bigcup_{\substack{p,q = 1 \\ p \neq q}}^{nd+1} e_{pq}
\end{align}
\end{subequations}
Following references \cite{2008_Brandts_etal_LAA_v429_p2344_p2357,
2005_Li_Huang_JCP_v229_p8072_p8094}, a set of $\boldsymbol{q}$-vectors 
corresponding to this edge matrix $\boldsymbol{E}_{\Omega_e}$ are defined 
as follows:
\begin{subequations}
\begin{align}
  \label{Eqn:q_vectors_1}
  &\boldsymbol{E}^{-\mathrm{T}}_{\Omega_e}:= \left[ \boldsymbol{q}_2, 
  \boldsymbol{q}_3, \dots, \boldsymbol{q}_{nd+1} \right] \\
  \label{Eqn:q_vectors_2}
  &\boldsymbol{q}_1 + \displaystyle \sum \limits_{p=2}^{nd+1} \boldsymbol{q}_p 
  = \boldsymbol{0}    \quad \forall \Omega_e \in \mathcal{T}_{h}
\end{align}
\end{subequations}
Let $\varphi_{p_g}$ denote the linear basis function associated with 
the $p_g$-th global vertex in the triangulation $\mathcal{T}_{h}$. 
Then, $\{ \varphi_{p_g} \}^{Nv}_{p_g=1}$ and $\{ \varphi_{p_g} \}^{Niv}
_{p_g=1}$ span the respective finite dimensional subsets $\mathcal{C}^{h}$ 
and $\mathcal{W}^{h}$ given by equations \eqref{Eqn:Ch}--\eqref{Eqn:Wh}. 
Denote the face opposite to vertex $\hat{\boldsymbol{x}}_p$ by $F_p$ and 
the corresponding unit inward normal pointing towards the vertex 
$\hat{\boldsymbol{x}}_p$ by $\mathbf{n}_p$. The perpendicular distance 
(or the height) from vertex $\hat{\boldsymbol{x}}_p$ to face $F_p$ is 
denoted by $h_p$. In the case of 2D, the above set of geometrical properties 
are pictorially described in Figure \ref{Fig:T3_Simplex_GeoProp}.
 
\begin{definition}[\texttt{Positive linear maps}]
  \label{Def:Positive_Linear_Maps}
  Let $\mathbb{M}_n := \mathbb{M}_{n \times n} (\mathbb{R})$ 
  be the set of all real matrices of size $n \times n$, 
  which forms a vector space over the field $\mathbb{R}$. 
  A linear map $\Phi: \mathbb{M}_n \rightarrow \mathbb{M}_k$ 
  is called positive if $\Phi(\boldsymbol{A})$ is positive 
  semi-definite whenever $\boldsymbol{A}$ is positive 
  semi-definite. Similarly, $\Phi$ is called strictly 
  positive if $\Phi(\boldsymbol{A})$ is positive 
  definite whenever $\boldsymbol{A}$ is positive 
  definite.
\end{definition}

\begin{theorem}[\texttt{Strictly positive linear mapping of anisotropic diffusivity}]
  \label{Thm:Positive_LinearMap_AnisoDiff}
  Let $\Phi[\bullet]:= \int_{\Omega_e} [\bullet] \mathrm{d} \Omega$. Show 
  that $\Phi[\bullet]$ is a linear map and $\Phi[\mathbf{D} (\mathbf{x})]$ 
  is symmetric, uniformly elliptic, and bounded above.
\begin{proof}
  From Definition \ref{Def:Positive_Linear_Maps}, it is evident that 
  $\Phi[\bullet]$ is a linear map and $\Phi[\mathbf{D} (\mathbf{x})]$ is 
  symmetric. From equations \eqref{Eqn:Diffusivity_PositiveDefinite} and 
  \eqref{Eqn:Omega_To_Omega_e}, we have $\boldsymbol{\xi} \cdot \mathbf{D}
  (\mathbf{x}) \boldsymbol{\xi} > 0$ and $\mathrm{meas}(\Omega_e) > 0$. It 
  is well known that Lebesgue integration of a scalar for a strictly positive 
  measure is always greater than zero. Hence, $\int_{\Omega_e} 
  \boldsymbol{\xi} \cdot \mathbf{D}(\mathbf{x}) \boldsymbol{\xi} \mathrm{d} 
  \Omega > 0 \quad \forall \mathbf{x} \in \Omega_e$. Now, integrating equation 
  \eqref{Eqn:Diffusivity_PositiveDefinite} over $\Omega_e$ results in the   
  following relation:
\begin{align}
  \label{Eqn:IntDiff_SPD}
  0 < \gamma_{\mathrm{min}} \boldsymbol{\xi} \cdot \boldsymbol{\xi} 
  \leq \frac{1}{\mathrm{meas}(\Omega_e)} \displaystyle \int \limits_
  {\Omega_e} \boldsymbol{\xi} \cdot \mathbf{D}(\mathbf{x}) \boldsymbol{\xi} 
  \mathrm{d} \Omega \leq \gamma_{\mathrm{max}} \boldsymbol{\xi} \cdot 
  \boldsymbol{\xi} \quad \forall \boldsymbol{\xi} \in \mathbb{R}^{nd} 
  \backslash \{\boldsymbol{0}\} \; \mbox{and} \; \forall \mathbf{x} \in 
  \Omega_e
\end{align}
where the positive constants $\gamma_{\mathrm{min}}$ and $\gamma_{\mathrm{max}}$ 
are respectively the integral average of minimum and maximum eigenvalues of 
$\mathbf{D}(\mathbf{x})$ over $\Omega_e$. These are given as follows:
\begin{subequations}
\begin{align}
  \gamma_{\mathrm{min}} &:= \frac{1}{\mathrm{meas}(\Omega_e)} \displaystyle 
  \int \limits_{\Omega_e} \lambda_{\mathrm{min}} (\mathbf{x}) 
  \mathrm{d} \Omega \\
  \gamma_{\mathrm{max}} &:= \frac{1}{\mathrm{meas}(\Omega_e)} \displaystyle 
  \int \limits_{\Omega_e} \lambda_{\mathrm{max}} (\mathbf{x}) 
  \mathrm{d} \Omega
\end{align}
\end{subequations}
Since the vector $\boldsymbol{\xi}$ is independent of $\mathbf{x}$ 
and $\Omega_e$, we can interchange the order of integration. This gives 
us the following equation:
\begin{align}
  \label{Eqn:IntDiff_SPD}
  0 < \gamma_{\mathrm{min}} \boldsymbol{\xi} \cdot \boldsymbol{\xi} 
  \leq \frac{1}{\mathrm{meas}(\Omega_e)} \boldsymbol{\xi} \cdot 
  \Phi[\mathbf{D} (\mathbf{x})] \boldsymbol{\xi} \leq \gamma_
  {\mathrm{max}} \boldsymbol{\xi} \cdot \boldsymbol{\xi} \quad 
  \forall \boldsymbol{\xi} \in \mathbb{R}^{nd} \backslash 
  \{\boldsymbol{0}\} \; \mbox{and} \; \forall \mathbf{x} \in 
  \Omega_e
\end{align}
which shows that $\Phi[\mathbf{D} (\mathbf{x})]$ is a strictly 
positive linear map of $\mathbf{D} (\mathbf{x})$ and indeed 
preserves its properties.  
\end{proof} 
\end{theorem}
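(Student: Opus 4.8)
The plan is to verify the four asserted properties in turn, each of which reduces to an elementary property of the integral acting entrywise on the matrix-valued field $\mathbf{D}(\mathbf{x})$.

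First I would dispose of linearity and symmetry. Linearity of $\Phi[\bullet]$ is immediate: for matrix-valued fields $\mathbf{A}(\mathbf{x})$, $\mathbf{B}(\mathbf{x})$ and scalars $a$, $b$ the identity $\Phi[a\mathbf{A} + b\mathbf{B}] = a\,\Phi[\mathbf{A}] + b\,\Phi[\mathbf{B}]$ holds because each entry of $\Phi$ is an ordinary scalar integral and scalar integration is linear; this places $\Phi$ in the framework of Definition \ref{Def:Positive_Linear_Maps}. Symmetry of $\Phi[\mathbf{D}(\mathbf{x})]$ follows from the pointwise symmetry \eqref{Eqn:Diffusivity_Symmetric}: since $(\mathbf{D})_{ij}(\mathbf{x}) = (\mathbf{D})_{ji}(\mathbf{x})$ for every $\mathbf{x}$, integrating over $\Omega_e$ gives $(\Phi[\mathbf{D}])_{ij} = (\Phi[\mathbf{D}])_{ji}$, i.e.\ $\Phi[\mathbf{D}]^{\mathrm{T}} = \Phi[\mathbf{D}]$.

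Next, for uniform ellipticity and boundedness from above I would fix an arbitrary $\boldsymbol{\xi} \in \mathbb{R}^{nd} \backslash \{\boldsymbol{0}\}$ and examine the quadratic form $\boldsymbol{\xi} \cdot \Phi[\mathbf{D}(\mathbf{x})] \boldsymbol{\xi}$. Because $\boldsymbol{\xi}$ does not depend on $\mathbf{x}$, it can be carried inside the integral, so that $\boldsymbol{\xi} \cdot \Phi[\mathbf{D}(\mathbf{x})] \boldsymbol{\xi} = \int_{\Omega_e} \boldsymbol{\xi} \cdot \mathbf{D}(\mathbf{x}) \boldsymbol{\xi} \, \mathrm{d}\Omega$. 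Integrating the pointwise ellipticity estimate \eqref{Eqn:Diffusivity_PositiveDefinite} over $\Omega_e$, and using $\mathrm{meas}(\Omega_e) > 0$ (which holds since the $\Omega_e$ partition $\Omega$ via \eqref{Eqn:Omega_To_Omega_e}), then yields $0 < \gamma_{\mathrm{min}}\, \boldsymbol{\xi} \cdot \boldsymbol{\xi} \le \mathrm{meas}(\Omega_e)^{-1}\, \boldsymbol{\xi} \cdot \Phi[\mathbf{D}] \boldsymbol{\xi} \le \gamma_{\mathrm{max}}\, \boldsymbol{\xi} \cdot \boldsymbol{\xi}$, where $\gamma_{\mathrm{min}}$ and $\gamma_{\mathrm{max}}$ denote the integral averages over $\Omega_e$ of $\lambda_{\mathrm{min}}(\mathbf{x})$ and $\lambda_{\mathrm{max}}(\mathbf{x})$. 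This is precisely the ellipticity inequality \eqref{Eqn:Diffusivity_PositiveDefinite} with the constant symmetric matrix $\mathrm{meas}(\Omega_e)^{-1}\Phi[\mathbf{D}]$ in place of $\mathbf{D}(\mathbf{x})$; in particular $\Phi[\mathbf{D}]$ is bounded above in the operator sense (by $\gamma_{\mathrm{max}}\,\mathrm{meas}(\Omega_e)$), and forming the ratio $\gamma_{\mathrm{max}}/\gamma_{\mathrm{min}} < +\infty$ gives the uniform ellipticity condition \eqref{Eqn:Diffusivity_UniformlyElliptic} for $\Phi[\mathbf{D}]$.

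The one step deserving genuine care — the closest thing to an obstacle in an otherwise bookkeeping argument — is the \emph{strictness} of the lower bound, namely $\gamma_{\mathrm{min}} > 0$ rather than merely $\gamma_{\mathrm{min}} \ge 0$. Here I would invoke strict monotonicity of the integral: the integrand $\boldsymbol{\xi} \cdot \mathbf{D}(\mathbf{x}) \boldsymbol{\xi}$ is strictly positive for a.e.\ $\mathbf{x} \in \Omega_e$ by \eqref{Eqn:Diffusivity_PositiveDefinite}, and $\Omega_e$ has positive measure, so its integral is strictly positive; equivalently, under the standing continuity assumption on the entries of $\mathbf{D}$ one may instead note that $\lambda_{\mathrm{min}}$ attains a strictly positive minimum on the compact set $\overline{\Omega}_e$. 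Combining the linearity and symmetry parts with these bounds completes the argument; moreover the strictness of the lower bound is exactly what makes $\Phi$ a \emph{strictly} positive linear map in the sense of Definition \ref{Def:Positive_Linear_Maps} when applied to positive-definite diffusivity tensors.
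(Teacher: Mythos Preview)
Your proposal is correct and follows essentially the same route as the paper: both arguments establish linearity and symmetry directly, then pull the fixed vector $\boldsymbol{\xi}$ inside the integral and integrate the pointwise ellipticity bounds \eqref{Eqn:Diffusivity_PositiveDefinite} over $\Omega_e$ to obtain the two-sided estimate with $\gamma_{\mathrm{min}}$ and $\gamma_{\mathrm{max}}$ as integral averages of the extreme eigenvalues. Your treatment is in fact slightly more explicit than the paper's in justifying strictness of the lower bound and in closing the uniform-ellipticity ratio, but the underlying idea is identical.
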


\subsection{Geometrical properties and finite element analysis of 
  simplicial elements}
\label{SubSec:GeoProp_Simplicial_Elements}
Based on the above notation, we have the following important mathematical 
results relating the linear basis functions, finite element matrices, and 
geometrical properties of simplicial elements. 
\begin{enumerate}
  \item The integral element average anisotropic diffusivity 
   $\widetilde{\boldsymbol{D}}_{\Omega_e}$ is given by:
    \begin{align}
      \label{Eqn:EleAvg_Aniso_Diff}
      \widetilde{\boldsymbol{D}}_{\Omega_e} := \frac{1}{\mathrm{meas}(\Omega_e)} 
      \int \limits_{\Omega_e} \mathbf{D} (\mathbf{x}) \mathrm{d} \Omega \quad 
      \forall \Omega_e \in \mathcal{T}_{h}
    \end{align}
    is a strictly positive linear map (see Definition \ref{Def:Positive_Linear_Maps} 
    and Theorem \ref{Thm:Positive_LinearMap_AnisoDiff}).
  \item For any arbitrary simplicial element $\Omega_e \in \mathcal{T}_{h}$, 
    the vector $\boldsymbol{q}_p$ associated with the face $F_p$, the gradient 
    of the linear basis function $\mathrm{grad}[\varphi_{p_g}]$, the unit inward 
    normal $\mathbf{n}_p$, and the height $h_p$ are related as 
    follows \cite{1995_Krizek_Qun_EWJNM_v3_p59_p69,2008_Brandts_etal_LAA_v429_p2344_p2357}:
    \begin{align}
      \label{Eqn:q_Vectors_Inward_Normals}
      \boldsymbol{q}_p = \mathrm{grad}[\varphi_{p_g}] \Big |_{\Omega_e} = 
      \frac{\mathbf{n}_p}{h_p} \quad \forall p = 1,2, \dots, nd+1
    \end{align}
  \item The dihedral angle $\beta_{pq}$ (measured in the Euclidean metric) 
    between any two faces $F_p$ and $F_q$ is related to $\boldsymbol{q}$-vectors 
    ($\mathbf{q}_p$ and $\mathbf{q}_q$) and unit inward normals ($\mathbf{n}_p$ 
    and $\mathbf{n}_q$) by the following equation
    \cite{2005_Li_Huang_JCP_v229_p8072_p8094,2012_Lu_Huang_Qiu_arXiv_1201_3564}:
    \begin{align}
      \label{Eqn:Dihedral_Angles_EucMetric}
      \cos(\beta_{pq}) = - \mathbf{n}_p \cdot \mathbf{n}_q = 
      - \frac{\mathbf{q}_p \cdot \mathbf{q}_q}{\|\mathbf{q}_p\| \,
      \|\mathbf{q}_q\|} \quad \forall p,q = 1, 2, \dots, nd+1 \quad 
      \mathrm{and} \quad p \neq q
    \end{align}
    where $\| \bullet \|$ is the standard Euclidean norm. Similarly, 
    the dihedral angle $\beta_{pq,\widetilde{\boldsymbol{D}}^{-1}_{\Omega_e}}$ 
    measured in $\widetilde{\boldsymbol{D}}^{-1}_{\Omega_e}$ metric is given as 
    follows:
    \begin{align}
      \label{Eqn:Dihedral_Angles_RieMetric}
      \cos(\beta_{pq,\widetilde{\boldsymbol{D}}^{-1}_{\Omega_e}}) =  
      - \frac{\mathbf{q}_p \cdot \widetilde{\boldsymbol{D}}_{\Omega_e} 
      \mathbf{q}_q}{\|\mathbf{q}_p\|_{\widetilde{\boldsymbol{D}}_{\Omega_e}} 
      \, \|\mathbf{q}_q\|_{\widetilde{\boldsymbol{D}}_{\Omega_e}}} \quad 
      \forall p,q = 1, 2, \dots, nd+1 \quad \mathrm{and} \quad p \neq q
    \end{align}
    where $\| \bullet \|_{\widetilde{\boldsymbol{D}}_{\Omega_e}}$ denotes the norm 
    in $\widetilde{\boldsymbol{D}}_{\Omega_e}$ metric. For example, $\| \mathbf{q}_p \|
    _{\widetilde{\boldsymbol{D}}_{\Omega_e}} = \sqrt{\mathbf{q}_p \cdot \widetilde{
    \boldsymbol{D}}_{\Omega_e} \mathbf{q}_p}$.
   \item For any simplex $\Omega_e \in \mathcal{T}_{h}$, the gradient 
     of the linear basis functions ($\mathrm{grad}[\varphi_{p_g}]$ and 
     $\mathrm{grad}[\varphi_{q_g}]$), the dihedral angle $\beta_{pq}$, 
     and heights ($h_p$ and $h_q$) are related as follows 
     \cite{2005_Li_Huang_JCP_v229_p8072_p8094,2012_Lu_Huang_Qiu_arXiv_1201_3564}:
     \begin{align}
       \label{Eqn:pq_Comp_LaplacianMatrix_1}
       \mathrm{meas}(\Omega_e) \left( \mathrm{grad}[\varphi_{p_g}] 
       \Big |_{\Omega_e} \cdot \mathrm{grad}[\varphi_{q_g}] \Big |_
       {\Omega_e} \right) = &- \frac{\mathrm{meas}(\Omega_e) 
       \cos(\beta_{pq})}{h_p h_q} \nonumber \\
       &\forall p,q = 1, 2, \dots, nd+1 \quad \mathrm{and} \quad p 
       \neq q
     \end{align}
     and in 2D, equation \eqref{Eqn:pq_Comp_LaplacianMatrix_1} reduces to:
     \begin{align}
       \label{Eqn:pq_Comp_LaplacianMatrix_2}
       \mathrm{meas}(\Omega_e) \left( \mathrm{grad}[\varphi_{p_g}] 
       \Big |_{\Omega_e} \cdot \mathrm{grad}[\varphi_{q_g}] \Big |_
       {\Omega_e} \right) = &- \frac{\cot(\beta_{pq})}{2} \nonumber \\
       &\forall p,q = 1, 2, \dots, nd+1 \quad \mathrm{and} \quad p 
       \neq q
     \end{align}
    \item For any arbitrary simplicial element $\Omega_e \in \mathcal{T}_{h}$, 
      the integral element average anisotropic diffusivity $\widetilde{\boldsymbol{D}}_
      {\Omega_e}$, the gradient of the linear basis functions ($\mathrm{grad}[
      \varphi_{p_g}]$ and $\mathrm{grad}[\varphi_{q_g}]$), the dihedral angle 
      $\beta_{pq,\widetilde{\boldsymbol{D}}^{-1}_{\Omega_e}}$ measured in 
      $\widetilde{\boldsymbol{D}}^{-1}_{\Omega_e}$ metric, and heights ($h_p$ and 
      $h_q$) are related as follows \cite{2012_Lu_Huang_Qiu_arXiv_1201_3564}:
      \begin{align}
        \label{Eqn:pq_Comp_AnisoDiffMatrix_1}
        \mathrm{meas}(\Omega_e) \left( \mathrm{grad}[\varphi_{p_g}] 
        \Big |_{\Omega_e} \cdot \widetilde{\boldsymbol{D}}_{\Omega_e} \, 
        \mathrm{grad}[\varphi_{q_g}] \Big |_{\Omega_e} \right) = &- 
        \frac{\mathrm{meas}(\Omega_e) \cos(\beta_{pq,\widetilde{\boldsymbol{D}}
        ^{-1}_{\Omega_e}})}{\| \mathbf{q}_p \|^{-1}_{\widetilde{\boldsymbol{D}}_
        {\Omega_e}} \| \mathbf{q}_q \|^{-1}_{\widetilde{\boldsymbol{D}}_{\Omega_e}}} 
        \nonumber \\
        &\forall p,q = 1, 2, \dots, nd+1 \quad \mathrm{and} \quad p 
        \neq q
      \end{align}
      and in 2D, equation \eqref{Eqn:pq_Comp_AnisoDiffMatrix_1} reduces to:
      \begin{align}
        \label{Eqn:pq_Comp_AnisoDiffMatrix_2}
        \mathrm{meas}(\Omega_e) \left( \mathrm{grad}[\varphi_{p_g}] 
        \Big |_{\Omega_e} \cdot \widetilde{\boldsymbol{D}}_{\Omega_e} \, 
        \mathrm{grad}[\varphi_{q_g}] \Big |_{\Omega_e} \right) = &- 
        \frac{\mathrm{det}[ \widetilde{\boldsymbol{D}}_{\Omega_e} ]^
        {\frac{1}{2}}}{2} \cot(\beta_{pq,\widetilde{\boldsymbol{D}}^{-1}
        _{\Omega_e}}) \nonumber \\
        &\forall p,q = 1, 2, \dots, nd+1 \quad \mathrm{and} \quad p 
        \neq q
      \end{align} 
\end{enumerate}

\subsection{Sufficient conditions for a three-node triangular element}
Using the mathematical results outlined in subsection \ref{SubSec:GeoProp_Simplicial_Elements}, 
we shall present various sufficient conditions on the T3 element to satisfy 
different types of discrete properties. In general, there are \emph{two 
different approaches} to obtain sufficient conditions. The first approach, 
which shall be called \emph{global stiffness restriction method}, 
involves manipulating the entries of the global stiffness matrix, so that 
it is either weakly or strictly diagonally dominant (see Theorems 
\ref{Thm:Aniso_Weak_Condition} and \ref{Thm:Delaunay_Weak_Condition}) 
based on the nature of $\alpha(\mathbf{x})$. This means that $\boldsymbol{K}
_{ff}^{-1}$ exists and $\boldsymbol{K}_{ff}^{-1} \succeq \boldsymbol{0}$.
The component wise entries of $\boldsymbol{K}_{ff}$ satisfy the following 
conditions:
\begin{enumerate}[(a)]
  \item Positive diagonal entries: $\left( \boldsymbol{K}_{ff} \right)_{ii} > 0$,
  \item Non-positive off-diagonal entries: $\left( \boldsymbol{K}_{ff} \right)_{ij} 
    \leq 0 \quad \forall i \neq j$, and
\end{enumerate}
one of the following two conditions:
\begin{enumerate}[(c)]
  \item Strict diagonal dominance of rows: $|\left( \boldsymbol{K}_{ff} \right)_{ii}| 
    > \displaystyle \sum \limits_{i \neq j} |\left( \boldsymbol{K}_{ff} \right)_{ij}| 
    \quad \forall i,j$ 
  \item Weak diagonal dominance of rows: $|\left( \boldsymbol{K}_{ff} \right)_{ii}| 
    \geq \displaystyle \sum \limits_{i \neq j} |\left( \boldsymbol{K}_{ff} \right)_{ij}| 
    \quad \forall i,j$
\end{enumerate}
The second method, which shall be called \emph{local stiffness restriction method}, 
engineers on stiffness matrices at the local level, so that they are weakly diagonally 
dominant. Once we ascertain that all of the local stiffness matrices are weakly 
diagonally dominant, then the standard finite element assembly process 
\cite{1989_Wathen_CMAME_v74_p271_p287} guarantees that the global stiffness matrix 
$\boldsymbol{K}_{ff}$ is monotone and weakly diagonally dominant. In particular, 
if $\alpha(\mathbf{x}) > 0$, then $\boldsymbol{K}_{ff}$ is strictly diagonally 
dominant. It should be noted that the above three conditions are sufficient, but 
not necessary, for the global stiffness matrix $\boldsymbol{K}_{ff}$ to be monotone. 

\subsubsection{\textbf{Global and local stiffness restriction methods}}
\label{Subsubsec:Global_Local_Stiffness_Restriction_Methods}
In general, to get an explicit analytical formula for $\boldsymbol{K}_{ff}^{-1}$ 
is extremely difficult and not practically viable. Hence, it is not feasible 
to find mesh restrictions based on the condition that $\boldsymbol{K}_{ff}^{-1} 
\succeq \boldsymbol{0}$. So an expedient route to obtain monotone stiffness matrices 
through mesh restrictions is by means of weakly or strictly diagonally dominant 
matrices, which form a subset to the class of monotone matrices \cite[Section 3, 
Corollary 3.20 and Corollary 3.21]{Varga_MatrixIterativeAnalysis_SecondEdition}. 
The obvious edge being that there is no need to compute $( \boldsymbol{K}_{ff}
^{-1} )_{ij}$ explicitly. Based on the global stiffness restriction method, we 
shall now present \emph{stronger and weaker mesh restriction theorems}. A brief 
and concise proof to the theorems shall be put forth, which is along similar lines 
to the one presented in references \cite{2012_Lu_Huang_Qiu_arXiv_1201_3564,
2012_Huang_arXiv_1306_1987}. These mesh restriction theorems shall be used in 
constructing triangular meshes to satisfy different discrete principles (see 
subsection \ref{SubSec:NumEg_Triangulations}).

\begin{theorem}[\texttt{Anisotropic non-obtuse angle condition}]
  \label{Thm:Aniso_Weak_Condition}
  If any $nd$-simplicial mesh satisfies the following anisotropic non-obtuse 
  angle condition: 
  \begin{align}
    \label{Eqn:Aniso_Weak_Condition}
    0 < \frac{h_{p} \, \|\boldsymbol{v}\|_{\infty,\overline{\Omega}_e}}{(nd+1) 
    \, \Lambda_{min,\widetilde{\boldsymbol{D}}_{\Omega_e}}} &+ \frac{h_{p} \, 
    h_{q} \, \|\alpha\|_{\infty,\overline{\Omega}_e}}{(nd+1) \, (nd+2) \, 
    \Lambda_{min,\widetilde{\boldsymbol{D}}_{\Omega_e}}} \leq \cos(\beta_{pq,
    \widetilde{\boldsymbol{D}}^{-1}_{\Omega_e}}) \nonumber \\
    &\forall p,q = 1,2,\cdots,nd+1, 
    \; p \neq q, \; \Omega_e \in \mathcal{T}_h
  \end{align}
  then we have the following three results:
  \begin{itemize}
    \item The global stiffness matrix $\boldsymbol{K}_{ff}$ is (reducibly/irreducibly) 
      weakly diagonally dominant if $\alpha(\mathbf{x}) \geq 0$ and is (reducibly/irreducibly) 
      strictly diagonally dominant if $\alpha(\mathbf{x}) > 0$ for all $\mathbf{x} \in 
      \overline{\Omega}$.
    \item The discrete single-field Galerkin formulation given by equations 
      \eqref{Eqn:Discrete_Bilinear_Form}--\eqref{Eqn:Discrete_Linear_Form} 
      in combination with equations 
      \eqref{Eqn:EleAvg_Aniso_Diff}--\eqref{Eqn:pq_Comp_AnisoDiffMatrix_2} 
       satisfies $\mathrm{DwMP}_{\boldsymbol{K}}$/$\mathrm{DWMP}_{\boldsymbol{K}}$,
       where $\|\boldsymbol{v}\|_{\infty,\overline{\Omega}_e}$ and $\|\alpha\|_{\infty,
       \overline{\Omega}_e}$ 
       are defined as: 
       \begin{subequations}
       \begin{align}
         \label{Eqn:COP_Vel_11}
         \|\boldsymbol{v}\|_{\infty,\overline{\Omega}_e} &:= \displaystyle 
         \mathop{\mbox{maximize}}_{\boldsymbol{x} \in \overline{
         \Omega}_{e}} \; \; \|\boldsymbol{v}(\boldsymbol{x})\| \\
         \label{Eqn:COP_Alpha_11}
         \| \alpha \|_{\infty,\overline{\Omega}_e} &:= \displaystyle \mathop{
         \mbox{maximize}}_{\boldsymbol{x} \in \overline{\Omega}_{e}} 
         \; \; \alpha(\boldsymbol{x})
       \end{align}
       \end{subequations}
       and $\Lambda_{min,\widetilde{\boldsymbol{D}}_{\Omega_e}}$ denotes the minimum 
       eigenvalue of $\widetilde{\boldsymbol{D}}_{\Omega_e}$; $h_p$, $h_q$, and $\beta
       _{pq,\widetilde{\boldsymbol{D}}^{-1}_{\Omega_e}}$ are respectively the heights 
       and metric based dihedral angle opposite to the face $F_r$ of element $\Omega_e$.
    \item Moreover, if the triangulation $\mathcal{T}_{h}$ is interiorly connected, 
      then the global stiffness matrix $\boldsymbol{K}_{ff}$ is irreducibly weakly 
      or strictly diagonally dominant based on the nature of $\alpha(\mathbf{x})$ 
      and the discrete single-field Galerkin formulation given by equations 
      \eqref{Eqn:Discrete_Bilinear_Form}--\eqref{Eqn:Discrete_Linear_Form} 
      satisfies $\mathrm{DsMP}_{\boldsymbol{K}}$/$\mathrm{DSMP}_{\boldsymbol{K}}$.
  \end{itemize}
\end{theorem}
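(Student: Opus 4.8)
The plan is to assemble the global stiffness matrix $\boldsymbol{K}$ from element contributions and to exhibit it as a (reducibly or irreducibly) diagonally dominant matrix with the $M$-matrix sign pattern, after which everything follows from Theorem~\ref{Thm:NeccSuff_Conditions_DMP}. First I would fix a simplex $\Omega_e\in\mathcal{T}_h$ and exploit that the $\mathbb{P}^{1}$ basis functions have constant gradients on $\Omega_e$, so that the $(p,q)$ entry of the element matrix built from \eqref{Eqn:Discrete_Bilinear_Form} splits into a diffusion part $\mathrm{meas}(\Omega_e)\,\mathbf{q}_p\cdot\widetilde{\boldsymbol{D}}_{\Omega_e}\,\mathbf{q}_q$ (using $\mathrm{grad}[\varphi_{p_g}]\big|_{\Omega_e}=\mathbf{q}_p$ from \eqref{Eqn:q_Vectors_Inward_Normals} and the definition \eqref{Eqn:EleAvg_Aniso_Diff}), an advection part $\mathbf{q}_q\cdot\int_{\Omega_e}\varphi_{p_g}\,\boldsymbol{v}\,\mathrm{d}\Omega$, and a reaction part $\int_{\Omega_e}\alpha\,\varphi_{p_g}\varphi_{q_g}\,\mathrm{d}\Omega$. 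I would then record the elementary simplex identities $\int_{\Omega_e}\varphi_{p_g}\,\mathrm{d}\Omega=\mathrm{meas}(\Omega_e)/(nd+1)$ and, for $p\neq q$, $\int_{\Omega_e}\varphi_{p_g}\varphi_{q_g}\,\mathrm{d}\Omega=\mathrm{meas}(\Omega_e)/\big[(nd+1)(nd+2)\big]$, together with $\sum_{q}\mathbf{q}_q=\boldsymbol{0}$ from \eqref{Eqn:q_vectors_2}. The last identity forces the diffusion and advection blocks of every element matrix to have zero row sums, while the reaction block has row sum $\int_{\Omega_e}\alpha\,\varphi_{p_g}\,\mathrm{d}\Omega\geq0$.

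The heart of the argument is the element estimate $(\boldsymbol{K}^e)_{pq}\leq0$ for $p\neq q$. Using \eqref{Eqn:q_Vectors_Inward_Normals} (whence $\|\mathbf{q}_p\|=1/h_p$) together with the metric-dihedral-angle identity \eqref{Eqn:pq_Comp_AnisoDiffMatrix_1}, the diffusion entry equals $-\mathrm{meas}(\Omega_e)\cos(\beta_{pq,\widetilde{\boldsymbol{D}}^{-1}_{\Omega_e}})\,\|\mathbf{q}_p\|_{\widetilde{\boldsymbol{D}}_{\Omega_e}}\|\mathbf{q}_q\|_{\widetilde{\boldsymbol{D}}_{\Omega_e}}$, which, by the Rayleigh bound $\|\mathbf{q}\|_{\widetilde{\boldsymbol{D}}_{\Omega_e}}^{2}\geq\Lambda_{min,\widetilde{\boldsymbol{D}}_{\Omega_e}}\|\mathbf{q}\|^{2}$, is at most $-\Lambda_{min,\widetilde{\boldsymbol{D}}_{\Omega_e}}\cos(\beta_{pq,\widetilde{\boldsymbol{D}}^{-1}_{\Omega_e}})\,\mathrm{meas}(\Omega_e)/(h_ph_q)$; note that \eqref{Eqn:Aniso_Weak_Condition} already forces $\cos(\beta_{pq,\widetilde{\boldsymbol{D}}^{-1}_{\Omega_e}})>0$, so this quantity is genuinely negative. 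Cauchy--Schwarz together with the two simplex identities (and $\varphi_{p_g}\geq0$) bounds the advection entry in modulus by $\|\boldsymbol{v}\|_{\infty,\overline{\Omega}_e}\,\mathrm{meas}(\Omega_e)/\big[(nd+1)h_q\big]$ and the reaction entry by $\|\alpha\|_{\infty,\overline{\Omega}_e}\,\mathrm{meas}(\Omega_e)/\big[(nd+1)(nd+2)\big]$. Adding the three contributions and dividing through by $\Lambda_{min,\widetilde{\boldsymbol{D}}_{\Omega_e}}\,\mathrm{meas}(\Omega_e)/(h_ph_q)$ turns the requirement ``$(\boldsymbol{K}^e)_{pq}\leq0$'' into exactly the hypothesis \eqref{Eqn:Aniso_Weak_Condition}. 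The diagonal entries stay positive because the diffusion part alone contributes $\mathrm{meas}(\Omega_e)\,\mathbf{q}_p\cdot\widetilde{\boldsymbol{D}}_{\Omega_e}\mathbf{q}_p>0$ and, by the row-sum structure just noted, $(\boldsymbol{K}^e)_{pp}=\int_{\Omega_e}\alpha\,\varphi_{p_g}\,\mathrm{d}\Omega+\sum_{q\neq p}|(\boldsymbol{K}^e)_{pq}|$, which is strictly larger than $\sum_{q\neq p}|(\boldsymbol{K}^e)_{pq}|$ when $\alpha>0$.

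Next I would assemble. The standard finite element assembly only sums element matrices, so $\boldsymbol{K}$ inherits nonpositive off-diagonal entries, positive diagonal entries, and global row sums $(\boldsymbol{K}\boldsymbol{1})_P=\int_{\Omega}\alpha\,\varphi_P\,\mathrm{d}\Omega\geq0$, the latter obtained by using the partition of unity $\sum_J\varphi_J\equiv1$ so that $(\boldsymbol{K}\boldsymbol{1})_P=\mathcal{B}(\varphi_P;1)$. Deleting the prescribed columns only \emph{raises} each row sum, by $\sum_{Q\ \mathrm{prescribed}}|(\boldsymbol{K})_{PQ}|$, so $\boldsymbol{K}_{ff}$ is weakly diagonally dominant with the $M$-matrix sign pattern, strictly dominant in every row of a node adjacent to the Dirichlet boundary, and strictly dominant in all rows when $\alpha>0$ --- this is claim (a). Because the mesh covers $\overline{\Omega}$, Dirichlet data are prescribed on all of $\partial\Omega$, and $n_p\geq2$, no irreducible diagonal block of $\boldsymbol{K}_{ff}$ can be ``floating'': the elements meeting such a block would otherwise tile a closed subregion with empty boundary, hence all of $\Omega$, contradicting $n_p\geq2$. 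Thus every block contains or is chained to a strictly dominant row, so by Varga's Corollaries~3.20--3.21 $\boldsymbol{K}_{ff}$ is a nonsingular $M$-matrix with $\boldsymbol{K}_{ff}^{-1}\succeq\boldsymbol{O}$; and if $\mathcal{T}_h$ is interiorly connected then $\boldsymbol{K}_{ff}$ is irreducible, whence $\boldsymbol{K}_{ff}^{-1}\succ\boldsymbol{O}$.

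Finally, the discrete principles drop out of Theorem~\ref{Thm:NeccSuff_Conditions_DMP}. Condition (a) of $\mathrm{DwMP}_{\boldsymbol{K}}$ is $\boldsymbol{K}_{ff}^{-1}\succeq\boldsymbol{O}$; condition (b), $-\boldsymbol{K}_{ff}^{-1}\boldsymbol{K}_{fp}\succeq\boldsymbol{O}$, holds because $-\boldsymbol{K}_{fp}\succeq\boldsymbol{O}$; and condition (c) follows from the block form $\boldsymbol{K}_{ff}\boldsymbol{1}_f+\boldsymbol{K}_{fp}\boldsymbol{1}_p=\boldsymbol{m}$ with $(\boldsymbol{m})_P=\int_\Omega\alpha\,\varphi_P\,\mathrm{d}\Omega$, $\boldsymbol{m}\succeq\boldsymbol{0}$, which gives $-\boldsymbol{K}_{ff}^{-1}\boldsymbol{K}_{fp}\boldsymbol{1}_p=\boldsymbol{1}_f-\boldsymbol{K}_{ff}^{-1}\boldsymbol{m}\preceq\boldsymbol{1}_f$, with equality precisely when $\alpha\equiv0$ (this is $\mathrm{DWMP}_{\boldsymbol{K}}$). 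Under interior connectedness the same computation with $\boldsymbol{K}_{ff}^{-1}\succ\boldsymbol{O}$ gives $-\boldsymbol{K}_{ff}^{-1}\boldsymbol{K}_{fp}\succ\boldsymbol{O}$ and either $-\boldsymbol{K}_{ff}^{-1}\boldsymbol{K}_{fp}\boldsymbol{1}_p\prec\boldsymbol{1}_f$ (when $\alpha$ is positive somewhere, since then $\boldsymbol{m}\neq\boldsymbol{0}$ and $\boldsymbol{K}_{ff}^{-1}\boldsymbol{m}\succ\boldsymbol{0}$) or $=\boldsymbol{1}_f$ (when $\alpha\equiv0$), which is exactly \eqref{Eqn:StiffMat_DsMP}/\eqref{Eqn:StiffMat_DSMP}. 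I expect the genuinely delicate step to be the element estimate of the second paragraph --- routing the diffusion coupling through the \emph{metric} dihedral angle rather than the Euclidean one, and bounding the variable-velocity integral $\int_{\Omega_e}\varphi_{p_g}\boldsymbol{v}\,\mathrm{d}\Omega$ tightly enough that the constants $(nd+1)$ and $(nd+1)(nd+2)$ emerge exactly as in \eqref{Eqn:Aniso_Weak_Condition}; a secondary, more bookkeeping-heavy point is the ``no floating block'' topological argument that promotes weak diagonal dominance to nonsingularity and hence to the $M$-matrix property.
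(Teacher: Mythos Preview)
Your proposal is correct and follows essentially the same route that the paper takes (by deferring to \cite{2012_Lu_Huang_Qiu_arXiv_1201_3564,2012_Huang_arXiv_1306_1987}): split each element entry into diffusion, advection, and reaction parts; use $\mathrm{grad}[\varphi_p]\big|_{\Omega_e}=\mathbf{q}_p$, the simplex mass identities, and the metric dihedral-angle expression \eqref{Eqn:pq_Comp_AnisoDiffMatrix_1} together with the Rayleigh lower bound $\|\mathbf{q}\|_{\widetilde{\boldsymbol{D}}_{\Omega_e}}^{2}\geq\Lambda_{min,\widetilde{\boldsymbol{D}}_{\Omega_e}}\|\mathbf{q}\|^{2}$ to force $(\boldsymbol{K}^{e})_{pq}\leq0$ under \eqref{Eqn:Aniso_Weak_Condition}; then assemble, invoke Varga's $M$-matrix corollaries, and read off the DMP conditions from Theorem~\ref{Thm:NeccSuff_Conditions_DMP}. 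Your write-up is in fact more explicit than the paper's own proof, which simply cites the references; the only place to tighten the wording is that the Rayleigh bound makes \eqref{Eqn:Aniso_Weak_Condition} a \emph{sufficient} (not equivalent) condition for $(\boldsymbol{K}^{e})_{pq}\leq0$, which is all the theorem needs.
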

\begin{proof}
  For proof, see References \cite{2012_Lu_Huang_Qiu_arXiv_1201_3564,
  2012_Huang_arXiv_1306_1987}.
\end{proof}

\begin{theorem}[\texttt{Generalized Delaunay-type angle condition}]
  \label{Thm:Delaunay_Weak_Condition}
  In 2D, if a simplicial mesh satisfies the following generalized Delaunay-type 
  angle condition (which is much weaker than that of equation 
  \eqref{Eqn:Aniso_Weak_Condition}): 
  \begin{align}
    \label{Eqn:Delaunay_Weak_Condition}
    0 &< \frac{1}{2} \left[ \beta_{pq,\widetilde{\boldsymbol{D}}^{-1}_{\Omega_e}} + 
    \beta_{pq,\widetilde{\boldsymbol{D}}^{-1}_{\Omega^{'}_e}} \right] \nonumber \\
    &+ \frac{1}{2} \mathrm{arccot} 
    \left( \sqrt{\frac{\mathrm{det} [ \widetilde{\boldsymbol{D}}_{\Omega^{'}_e} ]}
    {\mathrm{det} [ \widetilde{\boldsymbol{D}}_{\Omega_e} ]}} \cot(\beta_{pq,
    \widetilde{\boldsymbol{D}}^{-1}_{\Omega^{'}_e}}) - \frac{2 \, \mathfrak{C}_{q,
    \Omega_e,\Omega^{'}_e}}{\sqrt{\mathrm{det} [ \widetilde{\boldsymbol{D}}_{\Omega_e} ]}} 
    \right) \nonumber \\
    &+ \frac{1}{2} \mathrm{arccot} 
    \left( \sqrt{\frac{\mathrm{det} [ \widetilde{\boldsymbol{D}}_{\Omega_e} ]}
    {\mathrm{det} [ \widetilde{\boldsymbol{D}}_{\Omega^{'}_e} ]}} \cot(\beta_{pq,
    \widetilde{\boldsymbol{D}}^{-1}_{\Omega_e}}) - \frac{2 \, \mathfrak{C}_{q,
    \Omega_e,\Omega^{'}_e}}{\sqrt{\mathrm{det} [ \widetilde{\boldsymbol{D}}_{\Omega^{'}_e} ]}} 
    \right) \leq \pi
  \end{align}
  for every internal edge $e_{pq}$ connecting the $p$-th and $q$-th vertices 
  of the elements $\Omega_e$ and $\Omega^{'}_e$ that share this common edge
  (see Figure \ref{Fig:T3_Simplex_GeoProp}), then we have the following three 
  results:
  \begin{itemize}
    \item The global stiffness matrix $\boldsymbol{K}_{ff}$ is (reducibly/irreducibly) 
      weakly diagonally dominant if $\alpha(\mathbf{x}) \geq 0$ and is (reducibly/irreducibly) 
      strictly diagonally dominant if $\alpha(\mathbf{x}) > 0$ for all $\mathbf{x} \in 
      \overline{\Omega}$.
    \item The discrete single-field Galerkin formulation given by equations 
      \eqref{Eqn:Discrete_Bilinear_Form}--\eqref{Eqn:Discrete_Linear_Form} 
      in association with equations 
      \eqref{Eqn:EleAvg_Aniso_Diff}--\eqref{Eqn:pq_Comp_AnisoDiffMatrix_2} 
      satisfies $\mathrm{DwMP}_{\boldsymbol{K}}$/$\mathrm{DWMP}_{\boldsymbol{K}}$. 
      The quantities $h_{p,\Omega_e}$ and $h_{q,\Omega_e}$ are the heights of 
      $\Omega_e$, $h_{p,\Omega^{'}_e}$ and $h_{q,\Omega^{'}_e}$ are the heights 
      of $\Omega^{'}_e$, $\beta_{pq,\widetilde{\boldsymbol{D}}^{-1}_{\Omega_e}}$ and 
      $\beta_{pq,\widetilde{\boldsymbol{D}}^{-1}_{\Omega^{'}_e}}$ are the relevant 
      metric based dihedral angles in the elements $\Omega_e$ and $\Omega^{'}_e$ 
      that face the edge $e_{pq}$, and the parameters $\|\boldsymbol{v}\|_{\infty,
      \overline{\Omega}_e}$ and $\|\alpha\|_{\infty,\overline{\Omega}_e}$ are 
      evaluated in $\Omega_e$ based on the equations \eqref{Eqn:COP_Vel_11}--\eqref{Eqn:COP_Alpha_11}. 
      Similarly, $\|\boldsymbol{v}\|_{\infty,\overline{\Omega}^{'}_e}$ and $\|\alpha\|_
      {\infty,\overline{\Omega}^{'}_e}$ are evaluated in $\Omega^{'}_e$. The constant 
      $\mathfrak{C}_{q,\Omega_e,\Omega^{'}_e}$ in equation \eqref{Eqn:Delaunay_Weak_Condition} 
      is given as follows:
      \begin{align}
        \mathfrak{C}_{q,\Omega_e,\Omega^{'}_e} &:= \mathrm{meas}(\Omega_e) \left( 
        \frac{\|\boldsymbol{v}\|_{\infty,\overline{\Omega}_e}}{3 h_{q,\Omega_e}} +
        \frac{\|\alpha\|_{\infty,\overline{\Omega}_e}}{12} \right) + \mathrm{meas}(
        \Omega^{'}_e) \left ( \frac{\|\boldsymbol{v}\|_{\infty,\overline{\Omega}^
        {'}_e}}{3 h_{q,\Omega^{'}_e}} + \frac{\|\alpha\|_{\infty,\overline{\Omega}
        ^{'}_e}}{12} \right)
      \end{align}
    \item Additionally, if the triangulation $\mathcal{T}_{h}$ is interiorly connected, 
      then the global stiffness matrix $\boldsymbol{K}_{ff}$ is irreducibly weakly 
      or strictly diagonally dominant based on the nature of $\alpha(\mathbf{x})$ 
      and the discrete single-field Galerkin formulation given by equations 
      \eqref{Eqn:Discrete_Bilinear_Form}--\eqref{Eqn:Discrete_Linear_Form} 
      satisfies $\mathrm{DsMP}_{\boldsymbol{K}}$/$\mathrm{DSMP}_{\boldsymbol{K}}$.
  \end{itemize}
\end{theorem}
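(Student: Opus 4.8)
The plan is to show that condition \eqref{Eqn:Delaunay_Weak_Condition} forces every off-diagonal entry of the assembled stiffness matrix $\boldsymbol{K}$ to be non-positive while the row sums stay non-negative, so that $\boldsymbol{K}_{ff}$ is a weakly (and, when $\alpha(\mathbf{x})>0$, strictly) diagonally dominant $Z$-matrix; monotonicity of $\boldsymbol{K}_{ff}$ and the discrete maximum principles then follow from Theorem~\ref{Thm:NeccSuff_Conditions_DMP}. First I would expand the entry $(\boldsymbol{K})_{p_g q_g}$ carried by an edge $e_{pq}$ as a sum of element contributions. In 2D every edge that carries a nonzero entry of $\boldsymbol{K}_{ff}$ or $\boldsymbol{K}_{fp}$ has at least one interior endpoint --- a free node is interior because Dirichlet data are prescribed on all of $\partial\Omega$ --- hence it is not contained in $\partial\Omega$ and is shared by exactly two triangles $\Omega_e,\Omega'_e$, so this sum has precisely two terms. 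Inside each element the diffusion part equals $-\frac{1}{2}\,\mathrm{det}[\widetilde{\boldsymbol{D}}_{\Omega_e}]^{1/2}\cot(\beta_{pq,\widetilde{\boldsymbol{D}}^{-1}_{\Omega_e}})$ by equation \eqref{Eqn:pq_Comp_AnisoDiffMatrix_2} (the diffusivity being replaced in the assembly by its element average, per \eqref{Eqn:EleAvg_Aniso_Diff}), while the reaction and advection parts are bounded in absolute value by $\|\alpha\|_{\infty,\overline{\Omega}_e}\,\mathrm{meas}(\Omega_e)/12$ and $\|\boldsymbol{v}\|_{\infty,\overline{\Omega}_e}\,\mathrm{meas}(\Omega_e)/(3\,h_{q,\Omega_e})$, using $\mathrm{grad}[\varphi_{q_g}]\big|_{\Omega_e}=\boldsymbol{q}_q=\mathbf{n}_q/h_q$ from \eqref{Eqn:q_Vectors_Inward_Normals} together with the standard simplex integrals $\int_{\Omega_e}\varphi_{p_g}\varphi_{q_g}\,\mathrm{d}\Omega=\mathrm{meas}(\Omega_e)/12$ and $\int_{\Omega_e}\varphi_{p_g}\,\mathrm{d}\Omega=\mathrm{meas}(\Omega_e)/3$. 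Adding the two element contributions and the two bounds reproduces exactly the constant $\mathfrak{C}_{q,\Omega_e,\Omega'_e}$ of the statement, giving $(\boldsymbol{K})_{p_g q_g}\le-\frac{1}{2}\mathrm{det}[\widetilde{\boldsymbol{D}}_{\Omega_e}]^{1/2}\cot(\beta_{pq,\widetilde{\boldsymbol{D}}^{-1}_{\Omega_e}})-\frac{1}{2}\mathrm{det}[\widetilde{\boldsymbol{D}}_{\Omega'_e}]^{1/2}\cot(\beta_{pq,\widetilde{\boldsymbol{D}}^{-1}_{\Omega'_e}})+\mathfrak{C}_{q,\Omega_e,\Omega'_e}$ (the symmetric bound for $(\boldsymbol{K})_{q_g p_g}$ follows by swapping $p$ and $q$, and \eqref{Eqn:Delaunay_Weak_Condition} is imposed for both orderings).

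The second step is to recognize that \eqref{Eqn:Delaunay_Weak_Condition} is exactly the assertion that the right-hand side above is $\le 0$, i.e.\ that $\frac{1}{2}\mathrm{det}[\widetilde{\boldsymbol{D}}_{\Omega_e}]^{1/2}\cot(\beta_{pq,\widetilde{\boldsymbol{D}}^{-1}_{\Omega_e}})+\frac{1}{2}\mathrm{det}[\widetilde{\boldsymbol{D}}_{\Omega'_e}]^{1/2}\cot(\beta_{pq,\widetilde{\boldsymbol{D}}^{-1}_{\Omega'_e}})\ge\mathfrak{C}_{q,\Omega_e,\Omega'_e}$. I would derive the displayed angle-sum form from this scalar inequality using that $\mathrm{arccot}$ is a strictly decreasing bijection onto $(0,\pi)$ together with the elementary equivalence $\cot a+\cot b\ge0\Leftrightarrow a+b\le\pi$ for $a,b\in(0,\pi)$ --- legitimate here since the metric dihedral angles all lie in $(0,\pi)$. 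As a sanity check, putting $\widetilde{\boldsymbol{D}}_{\Omega_e}=\widetilde{\boldsymbol{D}}_{\Omega'_e}=\boldsymbol{I}$ and $\mathfrak{C}=0$ collapses the two $\mathrm{arccot}$ terms to $\beta_{pq,\Omega'_e}$ and $\beta_{pq,\Omega_e}$, so \eqref{Eqn:Delaunay_Weak_Condition} reduces to the classical Delaunay condition $\beta_{pq,\Omega_e}+\beta_{pq,\Omega'_e}\le\pi$. Hence every off-diagonal entry of $\boldsymbol{K}$, in particular of $\boldsymbol{K}_{ff}$ and of $\boldsymbol{K}_{fp}$, is non-positive.

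Next I would handle the diagonal and the row sums. Because $\sum_{q_g}\varphi_{q_g}\equiv1$, the diffusion and advection contributions to a row sum of $\boldsymbol{K}$ telescope to zero and the reaction contribution is $(\varphi_{p_g};\alpha)\ge0$; restricting to free columns, the $\boldsymbol{K}_{ff}$-row sum is $R_{p_g}:=(\varphi_{p_g};\alpha)+\sum_{q\ \mathrm{prescribed}}|(\boldsymbol{K}_{fp})_{p_g q}|\ge0$, so $(\boldsymbol{K}_{ff})_{p_g p_g}=R_{p_g}+\sum_{q\ \mathrm{free},\,q\ne p_g}|(\boldsymbol{K}_{ff})_{p_g q}|>0$ and $\boldsymbol{K}_{ff}$ is a (reducibly or irreducibly) weakly diagonally dominant $Z$-matrix, strictly diagonally dominant precisely when every $R_{p_g}>0$, which holds iff $\alpha(\mathbf{x})>0$ on $\overline{\Omega}$. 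Since the Dirichlet set is the whole (connected) boundary, every irreducible diagonal block of $\boldsymbol{K}_{ff}$ contains a strictly dominant row, so by Varga's criterion \cite{Varga_MatrixIterativeAnalysis_SecondEdition} $\boldsymbol{K}_{ff}$ is monotone: $\boldsymbol{K}_{ff}^{-1}\succeq\boldsymbol{O}$, hence $-\boldsymbol{K}_{ff}^{-1}\boldsymbol{K}_{fp}\succeq\boldsymbol{O}$, and, from $\boldsymbol{K}_{ff}\boldsymbol{1}+\boldsymbol{K}_{fp}\boldsymbol{1}=(\varphi_{\bullet};\alpha)\succeq\boldsymbol{0}$, also $-\boldsymbol{K}_{ff}^{-1}\boldsymbol{K}_{fp}\boldsymbol{1}\preceq\boldsymbol{1}$ --- i.e.\ $\boldsymbol{K}$ has $\mathrm{DwMP}_{\boldsymbol{K}}$ by Theorem~\ref{Thm:NeccSuff_Conditions_DMP}, and $\mathrm{DWMP}_{\boldsymbol{K}}$ when $\alpha\equiv0$ (then that last inequality becomes the equality $-\boldsymbol{K}_{ff}^{-1}\boldsymbol{K}_{fp}\boldsymbol{1}=\boldsymbol{1}$). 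If in addition $\mathcal{T}_h$ is interiorly connected, the directed graph of $\boldsymbol{K}_{ff}$ is strongly connected, so $\boldsymbol{K}_{ff}$ is irreducible, hence irreducibly diagonally dominant with $\boldsymbol{K}_{ff}^{-1}\succ\boldsymbol{O}$ and $-\boldsymbol{K}_{ff}^{-1}\boldsymbol{K}_{fp}\succ\boldsymbol{O}$, which by Theorem~\ref{Thm:NeccSuff_Conditions_DMP} yields $\mathrm{DsMP}_{\boldsymbol{K}}$/$\mathrm{DSMP}_{\boldsymbol{K}}$.

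The main obstacle is the second step: checking that the modest-looking scalar inequality $\frac{1}{2}\mathrm{det}[\widetilde{\boldsymbol{D}}_{\Omega_e}]^{1/2}\cot\beta_{pq,\widetilde{\boldsymbol{D}}^{-1}_{\Omega_e}}+\frac{1}{2}\mathrm{det}[\widetilde{\boldsymbol{D}}_{\Omega'_e}]^{1/2}\cot\beta_{pq,\widetilde{\boldsymbol{D}}^{-1}_{\Omega'_e}}\ge\mathfrak{C}_{q,\Omega_e,\Omega'_e}$ is genuinely equivalent to the $\mathrm{arccot}$ angle-sum form \eqref{Eqn:Delaunay_Weak_Condition}, tracking the branch of $\mathrm{arccot}$ and the cases where one $\cot$ term is negative (an obtuse metric dihedral angle in one triangle is tolerable if the other triangle is sufficiently acute, exactly as for classical Delaunay triangulations). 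A secondary point needing a clean argument is the claim in step one that every edge carrying a nonzero entry of $\boldsymbol{K}_{ff}$ or $\boldsymbol{K}_{fp}$ is an internal two-triangle edge; the remaining ingredients --- the simplex mass/load integrals, the telescoping of the row sums, the local-to-global transfer of diagonal dominance through assembly \cite{1989_Wathen_CMAME_v74_p271_p287}, and the graph-theoretic characterization of irreducibility --- are routine, and the overall line of argument parallels \cite{2012_Lu_Huang_Qiu_arXiv_1201_3564,2012_Huang_arXiv_1306_1987}.
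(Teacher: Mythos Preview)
Your proposal is correct and follows essentially the same route as the paper, which does not supply an in-text argument but defers to \cite{2012_Lu_Huang_Qiu_arXiv_1201_3564,2012_Huang_arXiv_1306_1987}; your edge-by-edge assembly bound via \eqref{Eqn:pq_Comp_AnisoDiffMatrix_2} and the simplex mass/load integrals, the reduction of \eqref{Eqn:Delaunay_Weak_Condition} to the two-term cotangent inequality, and the $Z$-matrix/diagonal-dominance conclusion through Theorem~\ref{Thm:NeccSuff_Conditions_DMP} are exactly the ingredients of those references. One small wording point: strict diagonal dominance of \emph{every} row of $\boldsymbol{K}_{ff}$ follows from $\alpha>0$ (sufficiency is all the theorem claims), but your ``precisely when/iff'' is a mild overstatement since rows adjacent to $\partial\Omega$ can already be strictly dominant via the $\boldsymbol{K}_{fp}$ contribution even when $\alpha\equiv0$.
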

\begin{proof}
  For proof, see References \cite{2012_Lu_Huang_Qiu_arXiv_1201_3564,
  2012_Huang_arXiv_1306_1987}.
\end{proof}

Each method (global stiffness restriction method or local stiffness restriction 
method) has it own advantages and disadvantages. The advantage of the global 
stiffness restriction method is that we can operate at a global level. This gives 
us different types of relationships between various mesh parameters, which can 
be used in generating different types of triangulations, such as Delaunay-Voronoi, 
non-obtuse, well-centered, and anisotropic $\mathcal{M}$-uniform finite element 
meshes. For instance, Taylor series expansion of equations \eqref{Eqn:Aniso_Weak_Condition} 
and \eqref{Eqn:Delaunay_Weak_Condition} gives the following restrictions on the 
metric based dihedral angles for all simplicial elements in a triangulation:
\begin{subequations}
\begin{align}
  \label{Eqn:Metric_WellCentered_Angles}
  0 &< \beta_{pq,\widetilde{\boldsymbol{D}}^{-1}_{\Omega_e}} \leq 
  \frac{\pi}{2} - \mathcal{O} \left( h \|\boldsymbol{v}\|_{\infty,
  \mathcal{T}_{h}} + h^2 \| \alpha \|_{\infty,\mathcal{T}_{h}} \right) \\
  \label{Eqn:Metric_Delaunay_Angles}
  0 &< \frac{1}{2} \left[ \beta_{pq,\widetilde{\boldsymbol{D}}^{-1}_{\Omega_e}} + 
  \beta_{pq,\widetilde{\boldsymbol{D}}^{-1}_{\Omega^{'}_e}} \right] 
  %
  + \frac{1}{2} \mathrm{arccot} 
  \left( \sqrt{\frac{\mathrm{det} [ \widetilde{\boldsymbol{D}}_{\Omega^{'}_e} ]}
  {\mathrm{det} [ \widetilde{\boldsymbol{D}}_{\Omega_e} ]}} \cot(\beta_{pq,
  \widetilde{\boldsymbol{D}}^{-1}_{\Omega^{'}_e}}) \right) \nonumber \\
  &+ \frac{1}{2} \mathrm{arccot} 
  \left( \sqrt{\frac{\mathrm{det} [ \widetilde{\boldsymbol{D}}_{\Omega_e} ]}
  {\mathrm{det} [ \widetilde{\boldsymbol{D}}_{\Omega^{'}_e} ]}} \cot(\beta_{pq,
  \widetilde{\boldsymbol{D}}^{-1}_{\Omega_e}}) \right) \leq \pi - \mathcal{O} 
  \left( h \|\boldsymbol{v}\|_{\infty,\mathcal{T}_{h}} + h^2 \| \alpha \|_{\infty,
  \mathcal{T}_{h}} \right)
\end{align}
\end{subequations}
where the maximum element size $h$, maximum element normed velocity $\| \boldsymbol{v} \|
_{\infty,\mathcal{T}_{h}}$, and maximum element normed linear reaction coefficient 
$\| \alpha \|_{\infty,\mathcal{T}_{h}}$ are given as follows: 
\begin{subequations}
\begin{align}
  \label{Eqn:GlobMeshSize}
  h &:= \mathop{\mathrm{max}}_{\Omega_e \in \mathcal{T}_{h}} \left [ h_{\mathrm{max}, 
  \Omega_e} \right] \\ 
  \label{Eqn:GlobVel}
  \| \boldsymbol{v} \|_{\infty,\mathcal{T}_{h}} &:= \mathop{\mathrm{max}}_{\Omega_e 
  \in \mathcal{T}_{h}} \left [ \|\boldsymbol{v}\|_{\infty,\overline{\Omega}_e} \right] 
  \\ 
  \label{Eqn:GlobAlpha}
  \| \alpha \|_{\infty,\mathcal{T}_{h}} &:= \mathop{\mathrm{max}}_{\Omega_e \in 
  \mathcal{T}_{h}} \left [ \|\alpha\|_{\infty,\overline{\Omega}_e} \right]
\end{align}
\end{subequations}
where $h_{\mathrm{max}, \Omega_e}$ is the maximum possible height in a given simplicial 
element $\Omega_e$, and $\mathcal{O}(\bullet)$ is the standard ``big-oh'' notation 
\cite{Heath_Numerical}. Specifically, on a $h$-refined triangular mesh, which conforms 
to Theorems \ref{Thm:Aniso_Weak_Condition} and \ref{Thm:Delaunay_Weak_Condition}, then
equation \eqref{Eqn:Metric_WellCentered_Angles} implies that all the dihedral angles 
when measured in the metric of $\widetilde{\boldsymbol{D}}^{-1}_{\Omega_e}$ have to 
be $\mathcal{O} \left( h \|\boldsymbol{v}\|_{\infty,\mathcal{T}_{h}} + h^2 \| \alpha 
\|_{\infty,\mathcal{T}_{h}} \right)$ acute/non-obtuse and equation \eqref{Eqn:Metric_Delaunay_Angles} indicates that the triangulation needs to be $\mathcal{O} \left( h \|\boldsymbol{v}\|_
{\infty,\mathcal{T}_{h}} + h^2 \| \alpha \|_{\infty,\mathcal{T}_{h}} \right)$ Delaunay. 

One downside of the global stiffness restriction approach is that 
obtaining mesh conditions is mathematically cumbersome. Moreover, 
extending it to non-simplicial low-order finite elements is extremely 
hard and is not straightforward. This is because the basis functions 
$\varphi_{p_g}$ spanning the finite dimensional subsets $\mathcal{C}^{h}$ 
and $\mathcal{W}^{h}$ are multi-linear, which makes $\mathrm{grad}
[\varphi_{p_g}]$ on any arbitrary element $\Omega_e$ to be non-constant. 
So most of properties given by equations 
\eqref{Eqn:EleAvg_Aniso_Diff}--\eqref{Eqn:pq_Comp_AnisoDiffMatrix_2} 
are not valid for low-order finite elements such as the Q4 element and its 
corresponding elements in higher dimensions. 

Now we shall describe the local stiffness restriction method and highlight the pros 
and cons of using this methodology. For the sake of illustration, we shall 
consider a pure anisotropic diffusion equation and assume that $\hat{\boldsymbol{x}}_p 
= (0,0)$, $\hat{\boldsymbol{x}}_q = (1,0)$, and $\hat{\boldsymbol{x}}_r = (a,b)$. Our 
objective is to find the coordinates $(a,b)$, such that the local stiffness matrix is 
weakly diagonally dominant for any given type of diffusivity tensor. The local stiffness 
matrix for an anisotropic diffusion equation based on discrete single-field Galerkin 
formulation is given by:
\begin{align}
\label{Eqn:Local_Element_Stiffness_Matrix}
  \boldsymbol{K}_{e} = \displaystyle \int \limits_{\Omega_e} 
  \boldsymbol{B} \boldsymbol{D} (\boldsymbol{x}) 
  \boldsymbol{B}^{\mathrm{t}} \; \mathrm{d} \Omega
\end{align}
where the matrix $\boldsymbol{B}$ in terms of the coordinates 
$(a,b)$ is given as follows: 
\begin{align}
\label{Eqn:B_Matrix}
  \boldsymbol{B} = \frac{1}{b} \left(\begin{array}{cc}
    -b & (a-1) \\
    b & -a \\
    0 & 1 \\
  \end{array}\right)
\end{align}
Since the entries in the matrix $\boldsymbol{B}$ 
are constants, we have the local stiffness matrix 
$\boldsymbol{K}_{e}$ given as follows:
\begin{align}
  \boldsymbol{K}_{e} = \boldsymbol{B} \left( \; \displaystyle 
  \int \limits_{\Omega_e} \boldsymbol{D} (\boldsymbol{x}) \; 
  \mathrm{d} \Omega \; \right ) \boldsymbol{B}^{\mathrm{t}}
\end{align}
In the subsequent subsections, we present various 
sufficient conditions through which we can find 
these coordinates $(a,b)$ and glean information 
on the possible angles and corresponding shape and 
size of the triangle PQR.

\subsubsection{\textbf{T3 element for heterogeneous isotropic diffusivity}}
\label{SubSec:Heterogeneous_Isotropic_Diffusivity}
In this subsection, we consider the case where the diffusivity is 
isotropic and heterogeneous in the total domain. For this case, we
show that the diffusivity \emph{does not} have any influence on 
determining the coordinates $(a,b)$. This means that the restrictions 
we obtain on the coordinates and the angles of the triangle PQR is 
independent of how the diffusivity is varying across the domain. 
The following is the local stiffness matrix for scalar heterogeneous 
isotropic diffusion:
\begin{align}
  \boldsymbol{K}_{e} = \frac{1}{b^2} \displaystyle \int 
  \limits_{\Omega_e} D (\boldsymbol{x}) \; \mathrm{d} \Omega  
  \left(\begin{array}{ccc}
    b^2 + (a-1)^2 & a - a^2 - b^2 & (a-1) \\
    a - a^2 - b^2 & a^2 + b^2 & -a \\
    (a-1) & -a & 1 \\
  \end{array}\right)
\end{align}
where the integral of the diffusivity $D (\boldsymbol{x})$ over 
the actual T3 element $\Omega_e$ (triangle PQR) is given as 
follows:
\begin{align}
  \widetilde{D} = \frac{1}{\mathrm{meas}(\Omega_e)} \displaystyle 
  \int \limits_{\Omega_e} D (\boldsymbol{x}) \; \mathrm{d} 
  \Omega
\end{align}
where $\mathrm{meas}(\Omega_e) = \frac{b}{2}$ is the area of the T3 
element PQR, which is always positive. The simplified expression for 
the local stiffness matrix is given as follows:
\begin{align}
  \boldsymbol{K}_{e} = \frac{\widetilde{D}}{2b}  
  \left(\begin{array}{ccc}
    b^2 + (a-1)^2 & a - a^2 - b^2 & (a-1) \\
    a - a^2 - b^2 & a^2 + b^2 & -a \\
    (a-1) & -a & 1 \\
  \end{array}\right)
\end{align}
We shall now present the sufficient conditions so that the 
matrix $\boldsymbol{K}_{e}$ is weakly diagonally dominant:

\subsection*{Condition No-1}
Positive diagonal entries: $\left( \boldsymbol{K}_e \right)_{ii}
> 0 \quad \forall i = 1,2,3$. This restriction gives us the 
following inequalities:
\begin{subequations}
\begin{align}
  \label{Eqn:Diagonal_Entry_11}
  \left( \boldsymbol{K}_e \right)_{11} &= \frac{\widetilde{D}}{2b} 
  \left( b^2 + (a-1)^2 \right) > 0\\
  \label{Eqn:Diagonal_Entry_22}
  \left( \boldsymbol{K}_e \right)_{22} &= \frac{\widetilde{D}}{2b}
  \left( a^2 + b^2 \right) > 0\\
  \label{Eqn:Diagonal_Entry_33}
  \left( \boldsymbol{K}_e \right)_{33} &= \frac{\widetilde{D}}{2b}
  > 0
\end{align}
\end{subequations}
As $\widetilde{D} > 0$ and $b > 0$, it is evident that all of the 
inequalities given by equations 
\eqref{Eqn:Diagonal_Entry_11}--\eqref{Eqn:Diagonal_Entry_33} 
are trivially satisfied. Hence, this condition has no effect on 
obtaining restrictions on coordinates $(a,b)$. 

\subsection*{Condition No-2}
Weak diagonal dominance of rows: $|\left( \boldsymbol{K}_e 
\right)_{ii}| \geq \displaystyle \sum \limits_{i \neq j} |\left( 
\boldsymbol{K}_e \right)_{ij}| \quad \forall i,j$, where $i = 1,2,3$ 
and $j = 1,2,3$. This restriction gives the following inequalities:
\begin{subequations}
\begin{align}
  \label{Eqn:Diagonal_Entry_11_12_13}
  \left( b^2 + (a-1)^2 \right) &\geq \left( a^2 + b^2 - a\right) + 
  \left( 1 - a \right) \\
  \label{Eqn:Diagonal_Entry_22_21_23} 
  \left( a^2 + b^2 \right) &\geq a + \left( a^2 + b^2 - a\right) \\
  \label{Eqn:Diagonal_Entry_33_31_32}
  1 &\geq  \left( 1 - a \right) + a
\end{align}
\end{subequations}
Note that these inequalities
\eqref{Eqn:Diagonal_Entry_11_12_13}--\eqref{Eqn:Diagonal_Entry_33_31_32}
are trivially satisfied. Hence, this condition has no influence on
obtaining restrictions on triangle PQR.

\subsection*{Condition No-3}
Non-positive off-diagonal entries: $\left( \boldsymbol{K}_e 
\right)_{ij} \leq 0 \quad \forall i \neq j$, where $i = 1,2,3$ 
and $j = 1,2,3$. This restriction gives the following inequalities:
\begin{align*} 
  \left( \boldsymbol{K}_e \right)_{ij} = 
  \begin{cases}
    \left( \boldsymbol{K}_e \right)_{12} &= \frac{\widetilde{D}}{2b} 
    \left( a - a^2 - b^2 \right) \leq 0 \\
    \left( \boldsymbol{K}_e \right)_{13} &= 
    \frac{\widetilde{D}}{2b} \left( a-1 \right) \leq 0 \\
    \left( \boldsymbol{K}_e \right)_{23} &= \frac{\widetilde{D}}{2b} 
    \left( -a \right) \leq 0
  \end{cases} \quad \forall i \neq j
\end{align*}
Using the fact that $\widetilde{D} > 0$ and $b > 0$, and rearranging 
the above relations, we get the following inequalities:
\begin{subequations}
  \begin{align}
    \label{Eqn:Diagonal_Entry_12}
    \left( a - \frac{1}{2} \right )^2 + b^2 &\geq \left( \frac{1}{2} 
    \right )^2 \\
    \label{Eqn:Diagonal_Entry_13}
    a &\leq 1 \\
    \label{Eqn:Diagonal_Entry_23}
    a &\geq 0 
\end{align}
\end{subequations}
The region in which the coordinates $(a,b)$ satisfy the above 
inequalities given by the equations 
\eqref{Eqn:Diagonal_Entry_12}--\eqref{Eqn:Diagonal_Entry_23} 
is shown in Figure \ref{Fig:T3_Isotropic_Actual_Element}. 
According to these inequalities 
\eqref{Eqn:Diagonal_Entry_12}--\eqref{Eqn:Diagonal_Entry_23}, 
heterogeneity of the scalar diffusivity has \emph{no role} 
in obtaining the feasible region for the coordinates $(a,b)$.
It is evident from Figure \ref{Fig:T3_Isotropic_Actual_Element} 
that the interior angles of the triangle PQR are either acute 
or at most right-angle. Based on the sufficient conditions, 
one can also notice that an obtuse-angled triangle is not possible. 
So in order to satisfy discrete comparison principles, discrete 
maximum principles, and non-negative constraint, the triangulation 
of a given computational domain must contain acute-angled triangles 
or right-angled triangles. These three sufficient conditions show 
that non-obtuse or well-centered triangulations inherit all the 
three discrete versions of continuous properties of scalar 
heterogeneous isotropic diffusion equations.

\subsubsection{\textbf{T3 element for heterogeneous anisotropic diffusivity}}
In this subsection, we consider the case where the diffusivity $
\mathbf{D} (\mathbf{x})= \left(\begin{smallmatrix}
D_{xx} (\mathbf{x}) & D_{xy} (\mathbf{x}) \\ 
D_{xy} (\mathbf{x}) & D_{yy} (\mathbf{x})
\end{smallmatrix} \right)$ is anisotropic and heterogeneous across the 
domain. For the sake of brevity and ease of manipulations, we shall drop the 
symbol $(\mathbf{x})$ in the components of the diffusivity tensor.
Note that the symbol `$\mathbf{x}$' in the components of $\mathbf{D} 
(\mathbf{x})$ is dropped for the sake of convenience and \emph{should not} be 
interpreted as though the diffusivity tensor is constant. As discussed in 
Section \ref{Sec:S2_MP_Diffusion}, the diffusivity tensor needs to satisfy 
certain properties. Based on equations \eqref{Eqn:Diffusivity_Symmetric} and \eqref{Eqn:Diffusivity_PositiveDefinite}, we derive various results related 
to $\mathbf{D} (\mathbf{x})$ that will be used in deriving mesh restrictions.    

\begin{remark}
  In 2D, it is trivial to show that, if the matrix $\boldsymbol{D} (\boldsymbol{x})$ 
  is symmetric, uniformly elliptic, and bounded above, then its components satisfy 
  the following relations:
  \begin{subequations}
    \begin{align}
      \label{Eqn:D_xx}
      & D_{xx} > 0 \\
      \label{Eqn:D_yy}
      & D_{yy} > 0 \\
      \label{Eqn:D_xxD_yy}
      & D_{xx} D_{yy} > D_{xy}^2
    \end{align}
  \end{subequations}
\end{remark}

Let us denote $\epsilon := \frac{\widetilde{D}_{yy}}{\widetilde{D}_{xx}}$ and 
$\eta := \frac{\widetilde{D}_{xy}}{\widetilde{D}_{xx}}$, where $\widetilde{D}_{xx}$,
$\widetilde{D}_{xy}$, and $\widetilde{D}_{yy}$ are the components of matrix 
$\widetilde{\boldsymbol{D}}_{\Omega_e}$ given by equation \eqref{Eqn:EleAvg_Aniso_Diff}. 
From Theorem \ref{Thm:Positive_LinearMap_AnisoDiff}, it is evident that $\widetilde{D}_{xx} 
> 0$, $\widetilde{D}_{yy} > 0$, and $\widetilde{D}_{xx} 
\widetilde{D}_{yy} > \widetilde{D}_{xy}^2$. So from equation \eqref{Eqn:D_xxD_yy}, we have 
$\eta \in \left( -\sqrt{\epsilon}, \sqrt{\epsilon} \right)$. These two non-dimensional 
quantities $\epsilon$ and $\eta$ govern the mesh restrictions that we impose on the 
coordinates $(a,b)$. From equations \eqref{Eqn:Local_Element_Stiffness_Matrix} 
and \eqref{Eqn:B_Matrix}, the stiffness matrix for any given anisotropic diffusivity 
tensor is given as follows:
\begin{align}
  \boldsymbol{K}_{e} = 
  \left( \begin{array}{ccc}
    \frac{\widetilde{D}_{xx} b^2 - 2 \widetilde{D}_{xy} b(a-1) + 
    \widetilde{D}_{yy} (a-1)^2}{2b} &
    -\frac{\widetilde{D}_{xx} b^2 + \widetilde{D}_{xy} (b - 2ab) + 
    \widetilde{D}_{yy} a(a-1)}{2b} &
    \frac{-\widetilde{D}_{xy} b + \widetilde{D}_{yy} (a-1)}{2b} \\
    -\frac{\widetilde{D}_{xx} b^2 + \widetilde{D}_{xy} (b - 2ab) + 
    \widetilde{D}_{yy} a(a-1)}{2b} & 
    \frac{\widetilde{D}_{xx} b^2 - 2 \widetilde{D}_{xy} ab + 
    \widetilde{D}_{yy} a^2}{2b} & 
    \frac{\widetilde{D}_{xy} b - \widetilde{D}_{yy} a}{2b} \\
    \frac{-\widetilde{D}_{xy} b + \widetilde{D}_{yy} (a-1)}{2b} &
    \frac{\widetilde{D}_{xy} b - \widetilde{D}_{yy} a}{2b} & 
    \frac{\widetilde{D}_{yy}}{2b} \\
  \end{array} \right)
\end{align}
We now present sufficient conditions so that the matrix $\boldsymbol{K}_{e}$ 
is weakly diagonally dominant.

\subsection*{Condition No-4}
Positive diagonal entries: $\left( \boldsymbol{K}_e \right)_{ii} > 0 \quad 
\forall i = 1,2,3$, gives the following relations:
\begin{align*} 
  \left( \boldsymbol{K}_e \right)_{ii} = 
  \begin{cases}
    \left( \boldsymbol{K}_e \right)_{11} &= 
    \frac{\widetilde{D}_{xx} b^2 - 2 \widetilde{D}_{xy} b(a-1) + 
    \widetilde{D}_{yy} (a-1)^2}{2b} > 0 \\
    \left( \boldsymbol{K}_e \right)_{22} &= 
    \frac{\widetilde{D}_{xx} b^2 - 2 \widetilde{D}_{xy} ab + 
    \widetilde{D}_{yy} a^2}{2b} > 0\\
    \left( \boldsymbol{K}_e \right)_{33} &= 
    \frac{\widetilde{D}_{yy}}{2b} > 0
  \end{cases}
\end{align*} 
As $\widetilde{D}_{yy} > 0$ and $b > 0$, rearranging the above relations, we 
have the following restrictions:
\begin{subequations}
\begin{align}
  \label{Eqn:Aniso_Diagonal_Entry_11}
  \left( \boldsymbol{K}_e \right)_{11} &= 
  \left( b \sqrt{\widetilde{D}_{xx}} - |a-1| \sqrt{\widetilde{D}_{yy}} 
  \right)^2 + 2 b |a-1| \left( \sqrt{\widetilde{D}_{xx}} \sqrt{\widetilde{D}_{yy}} 
  - \mathrm{Sgn} \left[ |a-1| \right] \widetilde{D}_{xy}\right) > 0 \\
  \label{Eqn:Aniso_Diagonal_Entry_22}
  \left( \boldsymbol{K}_e \right)_{22} &= 
  \left( b \sqrt{\widetilde{D}_{xx}} - |a| \sqrt{\widetilde{D}_{yy}} 
  \right)^2 + 2 b |a| \left( \sqrt{\widetilde{D}_{xx}} \sqrt{\widetilde{D}_{yy}} 
  -\mathrm{Sgn} \left[ |a| \right] \widetilde{D}_{xy}\right) > 0 \\
  \label{Eqn:Aniso_Diagonal_Entry_33}
  \left( \boldsymbol{K}_e \right)_{33} &= 
  \frac{\widetilde{D}_{yy}}{2b} > 0
\end{align}
\end{subequations}
where $\mathrm{Sgn}[\bullet]$ is the standard lignum function (which provides 
the sign of the real number). It is evident that $\sqrt{\widetilde{D}_{xx}} \sqrt{\widetilde{D}_{yy}} > \widetilde{D}
_{xy}$. Hence, equations \eqref{Eqn:Aniso_Diagonal_Entry_11}-\eqref{Eqn:Aniso_Diagonal_Entry_33} 
are trivially satisfied for any abscissa $a$.

\subsection*{Condition No-5}
Non-positive off-diagonal entries: $\left( \boldsymbol{K}_e \right)_{ij} 
\leq 0 \quad \forall i \neq j$, where $i = 1,2,3$, and $j = 1,2,3$. This 
restriction gives the following relations:
\begin{align*} 
  \left( \boldsymbol{K}_e \right)_{ij} = 
  \begin{cases}
    \left( \boldsymbol{K}_e \right)_{12} &= 
    -\frac{\widetilde{D}_{xx} b^2 + \widetilde{D}_{xy} (b - 2ab) + 
    \widetilde{D}_{yy} a(a-1)}{2b} \leq 0 \\
    \left( \boldsymbol{K}_e \right)_{13} &= 
    \frac{-\widetilde{D}_{xy} b + \widetilde{D}_{yy} (a-1)}{2b} 
    \leq 0\\
    \left( \boldsymbol{K}_e \right)_{23} &= 
    \frac{\widetilde{D}_{xy} b - \widetilde{D}_{yy} a}{2b} 
    \leq 0
  \end{cases}
\end{align*}
Using the parameters $\epsilon$, $\eta$, and the fact that 
ordinate $b > 0$, we have the following inequalities: 
\begin{subequations}
\begin{align}
  \label{Eqn:Aniso_Diagonal_Entry_12}
  \left( a - \frac{1}{2} \right )^2 + \left( \frac{b}{\sqrt{\epsilon}} 
  \right)^2 - 2 b \left( \frac{\eta}{\epsilon} \right) \left( a - 
  \frac{1}{2} \right )&\geq \left( \frac{1}{2} \right )^2 \\
  \label{Eqn:Aniso_Diagonal_Entry_13}
   \frac{a-1}{b} &\leq \frac{\eta}{\epsilon} \\
   \label{Eqn:Aniso_Diagonal_Entry_23}
   \frac{a}{b} &\geq \frac{\eta}{\epsilon}
\end{align}
\end{subequations}
which dictate the feasible region for coordinates 
$(a,b)$. For a given $\epsilon$ and by varying $\eta$, which 
lies between $-\sqrt{\epsilon}$ and $\sqrt{\epsilon}$, we get 
different feasible regions for $(a,b)$. Herein, we have chosen 
$\epsilon = 10$ and $\eta \in \{-1,0,1\}$. For these values, 
we have plotted the feasible region based on the inequalities 
\eqref{Eqn:Aniso_Diagonal_Entry_12}-\eqref{Eqn:Aniso_Diagonal_Entry_23}.
From Figures \ref{Fig:T3_AnisoDiff_Dxy_EqualToZero}--\ref{Fig:T3_ConstEpsilon_VaryEta}, 
the following can be inferred based on the feasible region:
\begin{itemize}
  \item If $\eta = 0$, the possible T3 elements are either acute-angled 
    or right-angled triangles.
   \item If either $\eta < 0$ or $\eta > 0$, then obtuse-angled triangles 
   are also possible. Moreover, the resulting triangles can be \emph{skinny 
   or skewed}.
\end{itemize} 

\subsection*{Condition No-6}
Weak diagonal dominance of rows: $|\left( \boldsymbol{K}_e \right)_{ii}| \geq 
\displaystyle \sum \limits_{i \neq j} |\left( \boldsymbol{K}_e \right)_{ij}| 
\quad \forall i,j$, where $i = 1,2,3$, and $j = 1,2,3$. This gives the following 
relations:
\begin{subequations}
\begin{align}
  \label{Eqn:AnisoDiff_Diagonal_Entry_11_12_13}
  \left( b^2 - 2 \eta b(a-1) + \epsilon (a-1)^2 \right) 
  &\geq \left( b^2 + \eta (b - 2ab) + \epsilon a(a-1) 
  \right) + \left( \eta b - \epsilon (a-1) \right) \\
  \label{Eqn:AnisoDiff_Diagonal_Entry_22_21_23}
  \left( b^2 - 2 \eta a b + \epsilon a^2 \right) &\geq 
  \left( b^2 + \eta (b - 2ab) + \epsilon a(a-1) \right)
  \left( \epsilon a - \eta b \right) \\
  \label{Eqn:AnisoDiff_Diagonal_Entry_33_31_32}
  \epsilon &\geq \left( \eta b - \epsilon (a-1) \right) 
  + \left( \epsilon a - \eta b \right)
\end{align}
\end{subequations}
if Condition-4 and Condition-5 are satisfied, then this condition is trivially 
satisfied. In a similar fashion, for a general case, where $\hat{\boldsymbol{x}}
_p = (x_1,y_1)$, $\hat{\boldsymbol{x}}_q = (x_2,y_2)$, and $\hat{\boldsymbol{x}}_r 
= (x_3,y_3)$, we have the following conditions based on the local stiffness restriction 
method:
\begin{subequations}
\begin{align}
  \label{Eqn:Aniso_LSRM_1}
  (y_1 - y_3)(y_3 - y_2) - \eta (x_1 - x_3)(y_3 - y_2) - \eta (x_3 - x_2)
  (y_1 - y_3) + \epsilon (x_1 - x_3)(x_3 - x_2) &\leq 0 \\
  \label{Eqn:Aniso_LSRM_2}
  (y_2 - y_1)(y_3 - y_2) - \eta (x_3 - x_2)(y_2 - y_1) - \eta (x_2 - x_1)
  (y_3 - y_2) + \epsilon (x_2 - x_1)(x_3 - x_2) &\leq 0 \\
  \label{Eqn:Aniso_LSRM_3}
  (y_1 - y_3)(y_2 - y_1) - \eta (x_1 - x_3)(y_2 - y_1) - \eta (x_2 - x_1)
  (y_1 - y_3) + \epsilon (x_1 - x_3)(x_2 - x_1) &\leq 0 \\
  \label{Eqn:Aniso_LSRM_4}
  (x_1 - x_3)(y_2 - y_1) - (x_1 - x_2)(y_3 - y_1) &> 0
\end{align}
\end{subequations}  
where the first three inequalities given by equations 
\eqref{Eqn:Aniso_LSRM_1}--\eqref{Eqn:Aniso_LSRM_3} are 
obtained based on the condition that $\left( \boldsymbol{K}_e 
\right)_{ij} \leq 0$. The last inequality given by equation 
\eqref{Eqn:Aniso_LSRM_4} is the result of the condition that 
$\mathrm{meas}(\Omega_e) > 0$.
  
The benefit (attractive feature) of the local stiffness restriction method 
is that the local stiffness matrix for the discrete Galerkin formulation 
given by equations \eqref{Eqn:Discrete_Bilinear_Form}--\eqref{Eqn:Discrete_Linear_Form} 
can be calculated quite easily and could be extended to even non-simplicial 
elements (see subsection \ref{SubSec:MeshRestrict_Rectangular_Element}). 
Using this approach, we can obtain general restrictions and analytical 
expressions relating various coordinates of an arbitrary mesh element, 
using popular symbolic packages like \textsf{Mathematica} \cite{Mathematica}. 
But a flip-side of this procedure is that incorporating the inequalities given 
by equations \eqref{Eqn:Aniso_LSRM_1}-\eqref{Eqn:Aniso_LSRM_4} in a mesh generator 
is very difficult and needs further detailed investigation. Additionally, the 
conditions obtained using this method are stringent and similar to that of Theorem 
\ref{Thm:Aniso_Weak_Condition} given by the global stiffness restriction method (which
will be evident based on the numerical examples discussed in the following subsection). 
Finally, it should be noted that extending the local stiffness restriction method to 
include advection and linear reaction is straightforward and shall not be dealt with
to save space. We shall now present various numerical examples and respective triangular 
meshes corresponding to different types of $\mathbf{D}(\mathbf{x})$. Using these meshes, 
we shall analyze and study in detail, which kind of DMPs and DCPs are preserved.

\subsection{Numerical examples based on different types of triangulations}
\label{SubSec:NumEg_Triangulations}
In this subsection, we shall first briefly discuss on a metric tensor $\mathcal{M}
(\boldsymbol{x})$ to satisfy DCPs, DMPs, and NC. Based on this metric tensor, we 
shall describe an algorithm to generate various types of DMP-based triangulations 
(mainly utilizing open source mesh generators, such as \textsf{Gmsh} and \textsf{BAMG}). 
Simplicial meshes constructed based on $\mathcal{M}(\boldsymbol{x})$ (where the metric 
tensor $\mathcal{M}(\boldsymbol{x})$ is \emph{not equal} to a scalar multiple of identity
tensor) are called anisotropic $\mathcal{M}$-uniform simplicial meshes. They are uniform 
in the metric specified by $\mathcal{M}(\boldsymbol{x})$ \cite{2005_Huang_JCP_v204_p633_p665,
Frey_George} and are of primal importance in satisfying various important discrete properties 
in the areas of transport of chemical species, fluid mechanics, and porous media applications 
\cite{1997_Castro_IJNMF_v25_p475_p491,2010_Huang_Li_FEAD_v46_p61_p73,
2005_Frey_Alauzet_CMAME_v198_p5068_p5082}. Given a $\mathcal{M}(\boldsymbol{x})$, 
there are different approaches to generate anisotropic $\mathcal{M}$-uniform 
simplicial meshes. Some of the notable research works in this direction include 
blue refinement \cite{1990_Kornhuber_Roitzsch_ICSE_v2_p40_p72}, 
bubble packing \cite{2003_Yamakawa_Shimada_IJNME_v57_p1923_p1942}, 
Delaunay-type triangulation \cite{1997_Castro_IJNMF_v25_p475_p491}, directional refinement 
\cite{1993_Rachowicz_CMAME_v109_p169_p181}, front advancing 
\cite{2003_Yamakawa_Shimada_IJNME_v57_p1923_p1942}, local refinement and modification 
\cite{1996_Bossen_Heckbert_IMR_p63_p74}, and variational mesh generation 
\cite{2001_Huang_JCP_v174_p903_p924}. In general, the metric tensor $\mathcal{M}(\boldsymbol{x})$ 
is symmetric and positive definite, and gives relevant information on the shape, size, 
and orientation of mesh elements in the computational domain.

Let $\bigchi$ be an affine mapping from the reference element $\Omega_{\mathrm{\tiny ref}}$ 
to background mesh element $\Omega_e$. Denote its Jacobian by $\boldsymbol{J}_{\Omega_e}$. 
The affine mapping $\bigchi$ and its Jacobian $\boldsymbol{J}_{\Omega_e}$ are given as follows:
\begin{subequations}
\begin{align}
  \label{Eqn:Affine_Mapping}
    \boldsymbol{x} &= \bigchi ( \hat{\boldsymbol{X}} , 
    \boldsymbol{N} ) = \hat{\boldsymbol{X}}^{\mathrm{T}} \boldsymbol{N}
    ^{\mathrm{T}} \\
  \label{Eqn:Jacobian}
    \boldsymbol{J}_{\Omega_e} &= \hat{\boldsymbol{X}}^{\mathrm{T}} \boldsymbol{DN}
\end{align}
\end{subequations}
where $\hat{\boldsymbol{X}}$ is the nodal matrix, which comprises of nodal vertices 
($\hat{\boldsymbol{x}}_{1}, \hat{\boldsymbol{x}}_{2}, \dots, \hat{\boldsymbol{x}}_{nd+1}$) 
of an arbitrary simplex $\Omega_e$. The elements of vector $\boldsymbol{N}$ consists of 
shape functions corresponding to $\Omega_{\mathrm{\tiny ref}}$. The entries of matrix 
$\boldsymbol{DN}$ (which are constants for simplicial elements) correspond to the 
derivatives of shape functions. Straightforward manipulations on equations \eqref{Eqn:q_Vectors_Inward_Normals} and \eqref{Eqn:Jacobian} give the following result:
\begin{align}
  \label{Eqn:qpref_qp}
  \boldsymbol{q}_{p,{\mathrm{\tiny ref}}} = \boldsymbol{J}_{\Omega_e} 
  \boldsymbol{q}_p
\end{align} 
where $\boldsymbol{q}_{p,{\mathrm{\tiny ref}}}$ is the corresponding $\boldsymbol{q}$-vector 
of the reference element $\Omega_{\mathrm{\tiny ref}}$. Huang \cite{2005_Huang_JCP_v204_p633_p665} 
has shown that an anisotropic $\mathcal{M}$-uniform simplicial mesh satisfies 
the following two conditions:
\begin{subequations} 
\begin{align}
  \label{Eqn:Equidistribution_Condition}
  \bigrho_{\Omega_e} \mathrm{meas}(\Omega_e) &= \frac{\bigsigma_h}{Nele} 
  \quad \forall \Omega_e \in \mathcal{T}_{h} \\
  \label{Eqn:Alignment_Condition}
  \frac{1}{nd}\mathrm{tr} \left[\boldsymbol{J}^{\mathrm{T}}_{\Omega_e} 
  \mathcal{M}_{\Omega_e} \boldsymbol{J}_{\Omega_e} \right] &=  \mathrm{det} 
  \left[\boldsymbol{J}^{\mathrm{T}}_{\Omega_e} \mathcal{M}_{\Omega_e} 
  \boldsymbol{J}_{\Omega_e} \right]^{\frac{1}{nd}} \quad \forall \Omega_e 
  \in \mathcal{T}_{h}
\end{align}
\end{subequations}
The quantities $\mathcal{M}_{\Omega_e}$, $\bigrho_{\Omega_e}$, and $\bigsigma_h$ 
are given as follows:
\begin{subequations} 
\begin{align}
  \label{Eqn:Metric_Element_General}
  \mathcal{M}_{\Omega_e} &:= \frac{1}{\mathrm{meas}(\Omega_e)} 
  \int \limits_{\Omega_e} \mathcal{M} (\boldsymbol{x}) \mathrm{d} 
  \Omega \quad \forall \Omega_e \in \mathcal{T}_{h} \\
  \label{Eqn:Rho_Element_General}
  \bigrho_{\Omega_e} &:= \sqrt{\mathrm{det} \left[\mathcal{M}_{\Omega_e} 
  \right]} \quad \forall \Omega_e \in \mathcal{T}_{h} \\
  \label{Eqn:Sigma_h_General}
  \bigsigma_h &:= \sum \limits_{\Omega_e \in \mathcal{T}_{h}} \bigrho_{\Omega_e} 
  \mathrm{meas}(\Omega_e)
\end{align}
\end{subequations}
The condition given by the equation \eqref{Eqn:Equidistribution_Condition} is called
\emph{equidistribution condition} and that by equation \eqref{Eqn:Alignment_Condition} 
is called \emph{alignment condition}. Equidistribution condition decides the size of 
$\Omega_e$, while alignment condition characterizes the shape and orientation of $\Omega_e$. 
From AM-GM inequality \cite{Cloud_Drachman}, equation \eqref{Eqn:Alignment_Condition} 
implies the following:
\begin{align}
  \label{Eqn:Scalar_Metric_Tensor}
  \boldsymbol{J}^{\mathrm{T}}_{\Omega_e} \mathcal{M}_{\Omega_e} 
  \boldsymbol{J}_{\Omega_e} = \left( \frac{\bigsigma_h}{Nele} 
  \right)^{\frac{2}{nd}} \boldsymbol{I} \quad \forall \Omega_e \in 
  \mathcal{T}_{h}
\end{align} 
Now, through trivial manipulations on equations \eqref{Eqn:qpref_qp}, 
\eqref{Eqn:Dihedral_Angles_RieMetric}, and \eqref{Eqn:Scalar_Metric_Tensor}, 
the metric tensor $\mathcal{M}_{\Omega_e}$ has to satisfy the following equation 
in order to meet various DMPs:
\begin{align}
  \label{Eqn:Element_Metric_DMP}
  \mathcal{M}_{\Omega_e} = \Theta_{\Omega_e} 
  \widetilde{\boldsymbol{D}}^{-1}_{\Omega_e}
\end{align}
where $\Theta_{\Omega_e}$ is an arbitrary piecewise positive scalar constant, which 
in general is a user-define parameters. Loosely speaking, an anisotropic $\mathcal{M}$-uniform 
simplicial mesh satisfying \eqref{Eqn:Element_Metric_DMP} satisfies weak DMPs. Furthermore, 
if the resulting simplicial mesh is \emph{interiorly connected}, then it satisfies strong 
DMPs.

For a given type of metric tensor (for example, $\mathcal{M}(\boldsymbol{x})$ given by 
equation \eqref{Eqn:Element_Metric_DMP}), open source mesh generators (such as \textsf{BAMG}, 
\textsf{BL2D}, and \textsf{Mmg3d}) take this as an input and operate on a background mesh 
to produce an anisotropic $\mathcal{M}$-uniform simplicial mesh. Algorithm \ref{Algo:Aniso_Meshes} 
provides a methodology to develop an anisotropic $\mathcal{M}$-uniform simplicial mesh 
based on a background mesh, and Figure \ref{Fig:Isotropic_Anisotropic_T3_Element} highlights 
the salient aspects of this algorithm. Nevertheless, it should be noted that Algorithm \ref{Algo:Aniso_Meshes} is a general algorithm to generate DMP-based triangulations (is 
not limited to either \textsf{Gmsh} or \textsf{BAMG}) for any type of mesh generator, which 
operates on a background mesh. On the other hand, there are certain open source and commercially 
available software packages, such as \textsf{CGAL} \cite{cgal} and \textsf{Simmetrix} \cite{MeshSim_Simmetrix}, which create an anisotropic $\mathcal{M}$-uniform simplicial mesh 
directly based on the metric tensor \cite{2009_Nguyen_etal_CMAME_v198_p2964_p2981,
2013_Huang_etal_SCM_v56_p2615_p2630} without the need of background meshes. Investigation 
of such mesh generators is beyond the scope of this research paper and is neither critical 
nor central to the ideas discussed here. Before we discuss various numerical examples based 
on different types of triangulations, we shall now present certain important (mesh-based) 
non-dimensional numbers relevant to our numerical study. Such a discussion is first of its 
kind and is not discussed elsewhere.

\begin{algorithm} 
  \caption{An iterative method to generate an anisotropic $\mathcal{M}$-uniform 
    mesh satisfying discrete principles}
  \label{Algo:Aniso_Meshes} 
  \begin{algorithmic}[1]
    \STATE INPUT: Background mesh ($\mathcal{T}_{h,0}$, $Nele_0$, $Nv_0$, and 
      $Nbv_0$); anisotropic diffusivity tensor ($\mathbf{D}(\mathbf{x})$); velocity 
      vector field ($\mathbf{v}(\mathbf{x})$); linear reaction coefficient ($\alpha
      (\mathbf{x})$); maximum number of iterations (\texttt{MaxIters}); piecewise 
      positive scalar element metric constants ($\{ \Theta_e \}^{Nele_0}_{e=1}$); and a 
      stopping criteria (\texttt{StopCrit})
      \begin{itemize}
         \item $\mathcal{T}_{h,0}$ is the initial background triangulation on 
           which an anisotropic mesh generator operates
         \item $Nv_0$ and $Nbv_0$ are correspondingly the total number of vertices 
           and boundary vertices  
      \end{itemize}
    \STATE Set the iteration number: $i = 0$
    \WHILE{(True)}
      \STATE Compute the element average anisotropic diffusivity tensor 
        using a quadrature rule (for example, see References \cite{Reddy})
        \begin{itemize}
          \item $\widetilde{\boldsymbol{D}}_{e,i} := \frac{1}{\mathrm{meas}(\Omega_e)} 
            \int_{\Omega_e} \mathbf{D} (\mathbf{x}) \mathrm{d} \Omega \quad \forall 
            e = 1, 2, \cdots, Nele_i$
        \end{itemize}
      \STATE Compute the element metric tensor by explicitly inverting 
        $\widetilde{\boldsymbol{D}}_{e,i}$
        \begin{itemize}
          \item $\mathcal{M}_{e,i}:= \Theta_e (\widetilde{\boldsymbol{D}}_{e,i})^{-1} 
            \quad \forall e = 1, 2, \cdots, Nele_i$
        \end{itemize}
      \STATE Based on the set of metric tensors $\{ \mathcal{M}_{e,i} \}^{Nele_i}_{e=1}$, 
        compute a new triangulation $\widetilde{\mathcal{T}}_{h,i}$
        \begin{itemize}
          \item Output the new triangulation $\widetilde{\mathcal{T}}_{h,i}$. 
            Corresponding to this $\widetilde{\mathcal{T}}_{h,i}$, we have 
            $\widetilde{Nele}_i$, $\widetilde{Nv}_i$, and $\widetilde{Nbv}_i$
        \end{itemize}
      \STATE Compute the following quantities: $\forall e = 1, 2, \cdots, 
        \widetilde{Nele}_i$
        \begin{itemize}
          \item $\widetilde{\boldsymbol{D}}_{e,i}$ ; $\Lambda_{min,
            \widetilde{\boldsymbol{D}}_{e,i}}$ ; $\|\boldsymbol{v}\|
            _{e,i}:= \|\boldsymbol{v}(\boldsymbol{x})\|_{\infty,
            \overline{\Omega}_{e,i}}$; and $\|\alpha\|_{e,i}:= 
            \|\alpha(\boldsymbol{x})\|_{\infty,\overline{\Omega}_{e,i}}$
          \item Need to use a constrained optimization methodology to 
            calculate $\|\boldsymbol{v}\|_{e,i}$ and $\|\alpha\|_{e,i}$ (see 
            Remark \ref{Rmrk:Max_vel_Alpha} for more details)
        \end{itemize}
      \IF {(\texttt{StopCrit = Anisotropic non-obtuse angle condition})}
        \STATE Check the inequality given by equation \eqref{Eqn:Aniso_Weak_Condition} 
          in Theorem \ref{Thm:Aniso_Weak_Condition} $\forall e = 1, 2, \cdots, 
          \widetilde{Nele}_i$
          \IF{(true)}
            \STATE OUTPUT: The triangulation $\widetilde{\mathcal{T}}_{h,i}$ and 
              corresponding $\{ \mathcal{M}_{e,i} \}^{Nele_i}_{e=1}$. EXIT
          \ELSE
            \STATE Update $\mathcal{T}_{h,i} \gets \widetilde{\mathcal{T}}_{h,i}$, 
              $Nele_i \gets \widetilde{Nele}_i$, $Nv_i \gets \widetilde{Nv}_i$, 
              $Nbv_i \gets \widetilde{Nbv}_i$, and $i \gets (i + 1)$
          \ENDIF
      \ENDIF
      \IF {(\texttt{StopCrit = Generalized Delaunay-type angle condition})}
        \STATE Check the inequality given by equation \eqref{Eqn:Delaunay_Weak_Condition} 
          in Theorem \ref{Thm:Delaunay_Weak_Condition} $\forall e = 1, 2, \cdots, 
          \widetilde{Nele}_i$
          \IF{(true)}
            \STATE OUTPUT: The triangulation $\widetilde{\mathcal{T}}_{h,i}$ and 
              corresponding $\{ \mathcal{M}_{e,i} \}^{Nele_i}_{e=1}$. EXIT
          \ELSE
            \STATE Update $\mathcal{T}_{h,i} \gets \widetilde{\mathcal{T}}_{h,i}$, 
              $Nele_i \gets \widetilde{Nele}_i$, $Nv_i \gets \widetilde{Nv}_i$, 
              $Nbv_i \gets \widetilde{Nbv}_i$, and $i \gets (i + 1)$
          \ENDIF
      \ENDIF
      \IF {($i > \mathtt{MaxIters}$)}
        \STATE OUTPUT: The \emph{existing} triangulation $\widetilde{\mathcal{T}}
          _{h,i}$ and corresponding $\{ \mathcal{M}_{e,i} \}^{Nele_i}_{e=1}$.
        \STATE Anisotropic $\mathcal{M}$-uniform triangulation not found in 
        \texttt{MaxIters}. EXIT
      \ENDIF
    \ENDWHILE  
  \end{algorithmic}
\end{algorithm}

\begin{remark}
  \label{Rmrk:Max_vel_Alpha}
  It should be noted that \textsc{{\small STEP-7}} in Algorithm 
  \ref{Algo:Aniso_Meshes} might be computationally intensive, as we 
  need to solve a series of small constrained optimization problem 
  for each `$\Omega_{e,i} \in \widetilde{\mathcal{T}}_{h,i}$' and 
  at every iteration level `$i$'. The corresponding constrained 
  optimization problems are given as follows:
  \begin{subequations}
  \begin{align}
    \label{Eqn:COP_Vel}
    \|\boldsymbol{v}\|_{e,i} &:= \displaystyle \mathop{\mbox{maximize}}_
    {\boldsymbol{x} \in \overline{\Omega}_{e,i}} \; \; \|\boldsymbol{v}(
    \boldsymbol{x})\| \qquad \forall 1 \leq i \leq \mathtt{MaxIters}\\
    \label{Eqn:COP_Alpha}
    \| \alpha \|_{e,i} &:= \displaystyle \mathop{\mbox{maximize}}_
    {\boldsymbol{x} \in \overline{\Omega}_{e,i}} \; \; \alpha(\boldsymbol{x}) 
    \qquad \; \; \forall 1 \leq i \leq \mathtt{MaxIters}
  \end{align}
  \end{subequations}
  As $\overline{\Omega}_{e,i}$ is convex, triangular, and 
  closed, by half-space representation theorem for convex 
  polytopes \cite[Section-2.2.4]{Boyd_convex_optimization} 
  equation \eqref{Eqn:COP_Vel} can be written as follows:
  \begin{subequations}
  \begin{align}
    \label{Eqn:COP_Reform_11}
    \displaystyle \mathop{\mbox{maximize}}_{\boldsymbol{x} \in \mathbb{R}^{2}} 
    & \quad \|\boldsymbol{v}(\boldsymbol{x})\|  \\
    \label{Eqn:COP_Reform_12}
    \mbox{subject to} & \quad \boldsymbol{A}_{e,i} \boldsymbol{x} \preceq \boldsymbol{b}_{e,i} 
    \qquad \forall 1 \leq i \leq \mathtt{MaxIters}
  \end{align}
  \end{subequations} 
  where $\boldsymbol{A}_{e,i}$ is a $3 \times 2$ matrix and $\boldsymbol{b}
  _{e,i}$ is a $3 \times 1$ vector, whose coefficients correspond to the 
  linear inequalities defining the relevant half-spaces and supporting 
  hyper-planes of the triangle $\overline{\Omega}_{e,i}$. If the element 
  maximum value `$\|\boldsymbol{v}\|_{e,i}$' is known a priori (through 
  analytically or by means of a rigorous mathematical analysis), then one 
  can use such information in \textsc{{\small STEP-7}} of Algorithm 
  \ref{Algo:Aniso_Meshes}. Otherwise, we need to solve equations 
  \eqref{Eqn:COP_Reform_11}--\eqref{Eqn:COP_Reform_12} using the standard 
  constrained optimization algorithms for small-scale problems
  \cite{Boyd_convex_optimization}. Similarly, equation \eqref{Eqn:COP_Alpha} 
  can be reformulated based on the lines of equations 
  \eqref{Eqn:COP_Reform_11}--\eqref{Eqn:COP_Reform_12}.
\end{remark}

\subsubsection{\textbf{P\'{e}clet and Damk\"{o}hler numbers for simplicial meshes}}
Herein, we shall describe three types of P\'{e}clet and Damk\"{o}hler 
numbers for simplicial meshes. They are devised based on Theorems 
\ref{Thm:Aniso_Weak_Condition} and \ref{Thm:Delaunay_Weak_Condition}. 
However, it should be noted that extending it to non-simplicial elements, 
such as Q4, is not straightforward. This is because in order to construct 
P\'{e}clet and Damk\"{o}hler numbers for non-simplicial meshes, one needs 
to obtain mesh restrictions using the global stiffness restriction method. 
This is beyond the scope of the current paper. 

\begin{itemize}
  \item \emph{Element P\'{e}clet and Damk\"{o}hler numbers}: Based on 
    Theorem \ref{Thm:Aniso_Weak_Condition}, one can define the following 
    mesh-based non-dimensional element P\'{e}clet $(\mathbb{P}\mathrm{e}_
    {\Omega_e})$ and Damk\"{o}hler $(\mathbb{D}\mathrm{a}_{\Omega_e})$ numbers:
    \begin{subequations}
    \begin{align}
      \label{Eqn:Element_Peclet_Number}
      \mathbb{P}\mathrm{e}_{\Omega_e} &:= \frac{h_{\mathrm{max},\Omega_e} 
      \, \|\boldsymbol{v}\|_{\infty,
      \overline{\Omega}_e}}{\Lambda_{min,\widetilde{\boldsymbol{D}}_
      {\Omega_e}}} \\
      \label{Eqn:Element_Damkohler_Number2}
      \mathbb{D}\mathrm{a}_{\Omega_e} &:= \frac{h_{\mathrm{max},\Omega_e} 
      \, h_{\mathrm{pumax},\Omega_e} \, \|\alpha\|_{\infty,\overline{\Omega}
      _e}}{\Lambda_{min,\widetilde{\boldsymbol{D}}_{\Omega_e}}}
    \end{align}
    \end{subequations}
    where the height $h_{\mathrm{pumax},\Omega_e}$ is given as follows:
    \begin{align}
      0 < h_1 \leq h_2 \leq \cdots \leq h_i \leq \cdots \leq h_{\mathrm{pumax},
      \Omega_e} \leq h_{\mathrm{max},\Omega_e} \quad \forall i = 1,2,\cdots,nd+1, 
      \; \; \Omega_e \in \mathcal{T}_h
    \end{align}
    Correspondingly, using equations 
    \eqref{Eqn:Element_Peclet_Number}--\eqref{Eqn:Element_Damkohler_Number2} in 
    equation \eqref{Eqn:Aniso_Weak_Condition} gives the following (stronger) mesh 
    restriction condition based on Theorem \ref{Thm:Aniso_Weak_Condition}:
    \begin{align}
      \label{Eqn:AnisoNonObtuseMasterEquation1}
      0 < \frac{\mathbb{P}\mathrm{e}_{\Omega_e}}{(nd+1) \, \cos(\beta_{ij,
      \widetilde{\boldsymbol{D}}^{-1}_{\Omega_e}})} &+ \frac{\mathbb{D}\mathrm{a}
      _{\Omega_e}}{(nd+1) \, (nd+2) \, \cos(\beta_{ij,\widetilde{\boldsymbol{D}}^
      {-1}_{\Omega_e}})} \leq 1 \nonumber \\
      &i = \mathrm{max} \; \; \mathrm{and} \; \; j = \mathrm{pumax}, 
      \; i \neq j, \; \forall \Omega_e \in \mathcal{T}_h
    \end{align}
  \item \emph{Edge P\'{e}clet and Damk\"{o}hler numbers}: In a similar 
    fashion, utilizing Theorem \ref{Thm:Delaunay_Weak_Condition}, one can 
    define the following mesh-based non-dimensional edge P\'{e}clet $(
    \mathbb{P}\mathrm{e}_{\Omega_e,e_{pq}})$ and Damk\"{o}hler $(\mathbb{D}
    \mathrm{a}_{\Omega_e,e_{pq}})$ numbers:
    \begin{subequations}
    \begin{align}
      \label{Eqn:Edge_Peclet_Number}
      \mathbb{P}\mathrm{e}_{\Omega_e,e_{pq}} &:=  \frac{\mathrm{meas}
      (\Omega_e) \|\boldsymbol{v}\|_{\infty,\overline{\Omega}_e}}{h_
      {q,\Omega_e} \sqrt{\mathrm{det} [ \widetilde{\boldsymbol{D}}_
      {\Omega_e} ]}} \\
      \label{Eqn:Edge_Damkohler_Number2}
      \mathbb{D}\mathrm{a}_{\Omega_e,e_{pq}} &:= \frac{\mathrm{meas}
      (\Omega_e) \| \alpha \|_{\infty,\overline{\Omega}_e}}{ \sqrt{
      \mathrm{det} [ \widetilde{\boldsymbol{D}}_{\Omega_e} ]}}
    \end{align}
    \end{subequations}
    Correspondingly, using equations 
    \eqref{Eqn:Edge_Peclet_Number}--\eqref{Eqn:Edge_Damkohler_Number2} in 
    equation \eqref{Eqn:Delaunay_Weak_Condition} gives the following (weaker) mesh 
    restriction condition based on Theorem \ref{Thm:Delaunay_Weak_Condition}:
    \begin{align}
      \label{Eqn:DelaunayWeakConditionMasterEquation}
      0 &< \frac{1}{2 \pi} \left[ \beta_{pq,\widetilde{\boldsymbol{D}}^{-1}_{\Omega_e}} + 
      \beta_{pq,\widetilde{\boldsymbol{D}}^{-1}_{\Omega^{'}_e}} \right] \nonumber \\
      &+ \frac{1}{2 \pi} \mathrm{arccot} 
      \left( \sqrt{\frac{\mathrm{det} [ \widetilde{\boldsymbol{D}}_{\Omega^{'}_e} ]}
      {\mathrm{det} [ \widetilde{\boldsymbol{D}}_{\Omega_e} ]}} \left( \cot(\beta_{pq,
      \widetilde{\boldsymbol{D}}^{-1}_{\Omega^{'}_e}}) - \frac{2\mathbb{P}\mathrm{e}
      _{\Omega^{'}_e,e_{pq}}}{3} - \frac{\mathbb{D}\mathrm{a}_{\Omega^{'}_e,e_{pq}}}{6} 
      \right) - \frac{2\mathbb{P}\mathrm{e}_{\Omega_e,e_{pq}}}{3} - \frac{\mathbb{D}
      \mathrm{a}_{\Omega_e,e_{pq}}}{6} \right) \nonumber \\
      &+ \frac{1}{2 \pi} \mathrm{arccot} 
      \left( \sqrt{\frac{\mathrm{det} [ \widetilde{\boldsymbol{D}}_{\Omega_e} ]}
      {\mathrm{det} [ \widetilde{\boldsymbol{D}}_{\Omega^{'}_e} ]}} \left( \cot(\beta
      _{pq,\widetilde{\boldsymbol{D}}^{-1}_{\Omega_e}}) - \frac{2\mathbb{P}\mathrm{e}
      _{\Omega_e,e_{pq}}}{3} - \frac{\mathbb{D}\mathrm{a}_{\Omega_e,e_{pq}}}{6} \right) 
      - \frac{2\mathbb{P}\mathrm{e}_{\Omega^{'}_e,e_{pq}}}{3} - \frac{\mathbb{D} 
      \mathrm{a}_{\Omega^{'}_e,e_{pq}}}{6} \right) \nonumber \\
      &\leq 1
    \end{align}
  \item \emph{Global mesh P\'{e}clet and Damk\"{o}hler numbers}: On sufficiently fine 
    $h$-refined simplicial meshes (which confirm to Theorem \ref{Thm:Aniso_Weak_Condition}), 
    one can define a global mesh P\'{e}clet $(\mathbb{P}\mathrm{e}_{h})$ and Damk\"{o}hler 
    $(\mathbb{D}\mathrm{a}_{h})$ numbers by modifying equations 
    \eqref{Eqn:Element_Peclet_Number}--\eqref{Eqn:Element_Damkohler_Number2} as follows:
    \begin{subequations}
    \begin{align}
      \label{Eqn:Global_Peclet_Number}
      \mathbb{P}\mathrm{e}_{h} &:= \frac{h \displaystyle \mathop{\mbox{max}}_
      {\Omega_e \in \mathcal{T}_h} \left[ \|\boldsymbol{v}\|_{\infty,\overline{
      \Omega}_e} \right]}{\displaystyle \mathop{\mbox{min}}_{\Omega_e \in \mathcal{T}
      _h} \left[\Lambda_{min,\widetilde{\boldsymbol{D}}_{\Omega_e}} \right]} \\
      \label{Eqn:Global_Damkohler_Number2}
      \mathbb{D}\mathrm{a}_{h} &:= \frac{h^2 \displaystyle \mathop{\mbox{max}}_
      {\Omega_e \in \mathcal{T}_h} \left[ \|\alpha\|_{\infty,\overline{\Omega}_e} 
      \right]}{\displaystyle \mathop{\mbox{min}}_{\Omega_e \in \mathcal{T}_h} \left[
      \Lambda_{min,\widetilde{\boldsymbol{D}}_{\Omega_e}} \right]}
    \end{align}
    \end{subequations}
    Conservatively, equation \eqref{Eqn:AnisoNonObtuseMasterEquation1} can be modified 
    to give a (stronger) global mesh restriction condition based on Theorem 
    \ref{Thm:Aniso_Weak_Condition}:
    \begin{align}
      \label{Eqn:AnisoNonObtuse_hRefinedMeshes}
      0 < \frac{\mathbb{P}\mathrm{e}_{h}}{(nd+1) \, \displaystyle \mathop{\mbox{min}}
      _{\Omega_e \in \mathcal{T}_h} \left[ \cos(\beta_{ij,\widetilde{\boldsymbol{D}}
      ^{-1}_{\Omega_e}}) \right]} &+ \frac{\mathbb{D}\mathrm{a}_{h}}{(nd+1) \, 
      (nd+2) \, \displaystyle \mathop{\mbox{min}}_{\Omega_e \in \mathcal{T}_h} \left[ 
      \cos(\beta_{ij,\widetilde{\boldsymbol{D}}^{-1}_{\Omega_e}}) \right]} \leq 1 
      \nonumber \\
      &i = \mathrm{max} \; \; \mathrm{and} \; \; j = \mathrm{pumax}, 
      \; i \neq j
    \end{align}
    One should note that equation \eqref{Eqn:AnisoNonObtuse_hRefinedMeshes} can provide 
    a useful a priori (conservative) estimate on $h$ for constructing highly refined 
    simplicial meshes. 
\end{itemize}
  
\subsubsection{\textbf{Physics-based P\'{e}clet and Damk\"{o}hler numbers}}
For isotropic diffusivity, it is well-known that the following three 
\emph{physics-based} non-dimensional numbers can be used to understand 
the qualitative nature of the solutions \cite{Gresho_Sani_v1,Donea_Huerta,
Taylor_Krishna}:
\begin{subequations}
  \begin{align}
    \label{SubEqn:Peclet_Numbers_IsoDiff}
    &\mathbb{P}\mathrm{e}_D := \frac{\displaystyle \mathop{\mbox{max}}_
    {\boldsymbol{x} \in \overline{\Omega}} \left[ \|\mathbf{v}(\mathbf{x})\|_
    {\infty} \right] L}{\displaystyle \mathop{\mbox{min}}_{\boldsymbol{x} \in 
    \overline{\Omega}} \left[ D(\mathbf{x}) \right] } \qquad \; \; \, \quad \, 
    \mathrm{P\acute{e}clet \; number} \\
    \label{SubEqn:Damkohler_Number1_IsoDiff}
    &\mathbb{D}\mathrm{a_I} := \frac{\displaystyle \mathop{\mbox{max}}_
    {\boldsymbol{x} \in \overline{\Omega}} \left[\| \alpha(\mathbf{x}) 
    \|_{\infty} \right] L}{\displaystyle \mathop{\mbox{max}}_{\boldsymbol{x} 
    \in \overline{\Omega}} \left[ \|\mathbf{v}(\mathbf{x})\|_{\infty} \right]} 
    \qquad \qquad \mbox{Damk\"{o}hler number of first kind} \\
    \label{SubEqn:Damkohler_Number2_IsoDiff}
    &\mathbb{D}\mathrm{a_{II}}_{,D} := \frac{\displaystyle \mathop{\mbox{max}}_
    {\boldsymbol{x} \in \overline{\Omega}} \left[\| \alpha(\mathbf{x}) \|_{\infty} 
    \right] L^2}{\displaystyle \mathop{\mbox{min}}_{\boldsymbol{x} \in \overline{
    \Omega}} \left[ D(\mathbf{x}) \right] } \qquad \; \mbox{Damk\"{o}hler number 
    of second kind}
  \end{align}
\end{subequations}
where $L$ is the characteristic length of the domain and $\| \bullet \|_{\infty}$ 
is the standard max-norm/infinity-norm for vectors \cite{Gurtin}. However, 
it should be noted that the above three non-dimensional numbers are not independent, 
as they satisfy the relation $\mathbb{D} \mathrm{a_{II}}_{,D} = \mathbb{P}\mathrm{e}_D \, 
\mathbb{D}\mathrm{a_I}$. Physically, the non-dimensional P\'{e}clet number characterizes 
the relative dominance of advection as compared to diffusion processes. For larger 
P\'{e}clet numbers, the advection process dominates and for smaller P\'{e}clet numbers, 
the diffusion process dominates. 
    
In the literature on chemically reacting systems (e.g., see 
\cite[Table 22.2.1]{Chung}), 
there exists various Damk\"{o}hler numbers that relate progress of chemical reactions 
with respect to mixing, diffusivity, reaction coefficient, thermal effects, and advection. 
The Damk\"{o}hler number of first-kind, $\mathbb{D}\mathrm{a_I}$, gives information about 
the relative influence of a linear reaction coefficient to that of advection. For small values 
of $\mathbb{D}\mathrm{a_I}$, advection progresses much faster than decay of the chemical 
species and has the opposite effect for large values of the number. The Damk\"{o}hler number 
of second kind, $\mathbb{D}\mathrm{a_{II}}_{,D}$, gives information related to the progress 
of chemical reaction with respect to diffusion. Based on the chemical system under consideration, 
these three non-dimensional numbers dictate how the reaction, advection, and diffusion interact 
with each other. On the other hand, one should note that extending it to anisotropic diffusivity 
is not straightforward. Motivated by equations \eqref{Eqn:Global_Peclet_Number}--\eqref{Eqn:Global_Damkohler_Number2}, a way to define 
\emph{physics-based} non-dimensional numbers for anisotropic diffusivity tensor $\mathbf{D}
(\mathbf{x})$ is as follows:
\begin{subequations}
  \begin{align}
    \label{SubEqn:Peclet_Numbers_AnIsoDiff1}
    &\mathbb{P}\mathrm{e}_{\mathbf{D}} := \frac{\displaystyle \mathop{\mbox{max}}_
    {\boldsymbol{x} \in \overline{\Omega}} \left[ \|\mathbf{v}(\mathbf{x})\|_{\infty} 
    \right] L}{\displaystyle \mathop{\mbox{min}}_{\boldsymbol{x} \in \overline{\Omega}} 
    \left[ \Lambda_{min,\mathbf{D}(\mathbf{x})} \right] } \qquad \; \; \, \, \, \, \, \,
    \mathrm{P\acute{e}clet \; number} \\
    \label{SubEqn:Damkohler_Number2_AnIsoDiff1}
    &\mathbb{D}\mathrm{a_{II}}_{,\mathbf{D}} := \frac{\displaystyle \mathop{\mbox{max}}_
    {\boldsymbol{x} \in \overline{\Omega}} \left[ \| \alpha(\mathbf{x}) \|_{\infty} \right] 
    L^2}{\displaystyle \mathop{\mbox{min}}_{\boldsymbol{x} \in \overline{\Omega}} \left[ 
    \Lambda_{min,\mathbf{D}(\mathbf{x})} \right] } \qquad \mbox{Damk\"{o}hler number 
    of second kind}
  \end{align}
\end{subequations}
where $\Lambda_{min,\mathbf{D}(\mathbf{x})}$ is the minimum eigenvalue of $\mathbf{D}(
\mathbf{x})$ at a given point $\mathbf{x}$. The Damk\"{o}hler number of first kind, 
$\mathbb{D}\mathrm{a_I}$, given by equation \eqref{SubEqn:Damkohler_Number1_IsoDiff}
remains the same as it does not depend on diffusivity tensor $\mathbf{D}(\mathbf{x})$.
Alternatively, inspired by equations \eqref{Eqn:Edge_Peclet_Number}--\eqref{Eqn:Edge_Damkohler_Number2}, 
one can define a different set of physics-based P\'{e}clet and Damk\"{o}hler numbers. 
These are given as follows:
\begin{subequations}
  \begin{align}
    \label{SubEqn:Peclet_Numbers_AnIsoDiff2}
    &\mathbb{P}\mathrm{e}_{\mathbf{D}} := \frac{\displaystyle \mathop{\mbox{max}}_
    {\boldsymbol{x} \in \overline{\Omega}} \left[ \|\mathbf{v}(\mathbf{x})\|_{\infty} 
    \right] L}{ \sqrt{\displaystyle \mathop{\mbox{min}}_{\boldsymbol{x} \in \overline{
    \Omega}} \left[ \Lambda_{min,\mathbf{D}(\mathbf{x})} \right]  \displaystyle \mathop{
    \mbox{max}}_{\boldsymbol{x} \in \overline{\Omega}} \left[ \Lambda_{max,\mathbf{D}
    (\mathbf{x})} \right]} } \qquad \; \; \, \, \, \mathrm{P\acute{e}clet \; number} \\
    \label{SubEqn:Damkohler_Number2_AnIsoDiff2}
    &\mathbb{D}\mathrm{a_{II}}_{,\mathbf{D}} := \frac{\displaystyle \mathop{\mbox{max}}_
    {\boldsymbol{x} \in \overline{\Omega}} \left[ \| \alpha(\mathbf{x}) \|_{\infty} \right] 
    L^2}{ \sqrt{\displaystyle \mathop{\mbox{min}}_{\boldsymbol{x} \in \overline{\Omega}} 
    \left[ \Lambda_{min,\mathbf{D}(\mathbf{x})} \right]  \displaystyle \mathop{\mbox{max}}
    _{\boldsymbol{x} \in \overline{\Omega}} \left[ \Lambda_{max,\mathbf{D}(\mathbf{x})} 
    \right]} } \qquad \mbox{Damk\"{o}hler number of second kind}
  \end{align}
\end{subequations}
where $\Lambda_{max,\mathbf{D}(\mathbf{x})}$ is the maximum eigenvalue of $\mathbf{D}(
\mathbf{x})$ at a given point $\mathbf{x}$. One should note that both these sets of 
non-dimensional numbers (given by equations 
\eqref{SubEqn:Peclet_Numbers_AnIsoDiff1}--\eqref{SubEqn:Damkohler_Number2_AnIsoDiff1} 
and \eqref{SubEqn:Peclet_Numbers_AnIsoDiff2}--\eqref{SubEqn:Damkohler_Number2_AnIsoDiff2}) 
are perfectly valid, as anisotropy in diffusivity tensor can introduce multiple ways 
of defining physics-based P\'{e}clet and Damk\"{o}hler numbers. Now, we shall present 
various numerical examples to demonstrate the \emph{pros and cons} of the mesh restrictions 
approach.


\subsubsection{\textbf{Test problem \#1: Transport in fractured media}}
\label{SubSubSec:Test_Prob_1}
This test problem has profound impact in simulating the transport of chemical 
species in fractured media \cite{Therrien_Sudicky_JCH_1996_v23_p1}. The numerical 
simulations, using the Delaunay-Voronoi triangulation (with \texttt{MaxIters} 
= 50) for different cases of diffusivity, velocity field, and linear reaction 
coefficient, are presented in Figure \ref{Fig:Prob1_Delaunay_DMP_Meshes_Conc}.
Homogeneous Dirichlet boundary conditions are prescribed on the sides of the 
fractured domain; $c^{\mathrm{p}}(\mathbf{x}) = 1.0$ on the left set of fracture 
lines and $c^{\mathrm{p}}(\mathbf{x}) = 2.5$ on the right set of fracture lines. 
The volumetric source $f(\mathbf{x})$ is zero inside the fractured domain. 
Diffusivity is assumed to isotropic and scalar, whose value is given by 
$D(\mathbf{x}) = 10^{-3}$. We perform numerical simulations for three different 
cases of velocity field and linear reaction coefficient, which are given by 
$\mathbf{v}(\mathbf{x}) = (0.0,0.0)$ and $\alpha(\mathbf{x}) = 0.0$, $\mathbf{v}
(\mathbf{x}) = (0.1,1.0)$ and $\alpha(\mathbf{x}) = 0.0$, and $\mathbf{v}(\mathbf{x}) 
= (0.1,1.0)$ and $\alpha(\mathbf{x}) = 1.0$.

From Figure \ref{Fig:Prob1_Delaunay_DMP_Meshes_Conc}, it can be inferred that 
we need highly refined DMP-based triangular meshes to obtain physically meaningful 
values for concentration for large values of edge P\'{e}clet and Damk\"{o}hler 
numbers. The white region in the figures indicates the area in which the value 
of concentration is negative and also violated the maximum constraint. The coarse 
Delaunay-Voronoi mesh obtained using the open source mesh generator \textsf{Gmsh} 
satisfies NC and DMPs in the case of pure diffusion. But, this is not true for AD 
and ADR cases. In such scenarios, it produces unphysical values for the concentration 
field. Moreover, the percentage of nodes that have violated NC and maximum 
constraint is also very high. 

Quantitatively, Tables \ref{Tab:Fractured_Domain_AD_MinMaxPercent_Conc} and 
\ref{Tab:Fractured_Domain_ADR_MinMaxPercent_Conc} provide more details pertinent 
to the violations in NC and DMPs for AD and ADR cases. It should be noted that 
the decrease in unphysical values of concentration is not monotonic. This is 
because the \texttt{generalized Delaunay-type angle condition}, in general, 
\emph{does not ensure uniform convergence} for diffusion-type equations in 
$L^{\infty}$ norm (for example, in case of pure isotropic diffusion, see the 
mesh restriction result by Ciarlet and Raviart \cite{Ciarlet_Raviart_CMAME_1973_v2_p17}). 
Figure \ref{Fig:Prob1_Delaunay_Condition} shows that the weak DMP-based condition 
is satisfied only for pure diffusion equation. But in all other cases, 
\texttt{generalized Delaunay-type condition} is violated. In the case of pure 
diffusion, it should be noted that DMP-based mesh given in Figure \ref{Fig:Prob1_Delaunay_Condition} 
is \emph{not} interiorly connected. Hence, it only satisfies $\mathrm{DWMP}_
{\boldsymbol{K}}$, but not $\mathrm{DSMP}_{\boldsymbol{K}}$.

\subsubsection{\textbf{Test problem \#2: Species dispersion in subsurface flows}}
\label{SubSubSec:Test_Prob_2}
A pictorial description of the boundary value problem with various parameters 
is shown in Figure \ref{Fig:Prob2_Description}. Relevant (coarse) background 
mesh used and the corresponding DMP-based mesh (with \texttt{MaxIters} = 50) 
obtained using \textsf{BAMG} are shown in Figure \ref{Fig:Prob2_Background_DMP_Meshes}. 
The diffusivity tensor for this problem is taken from the subsurface hydrology 
literature \cite{Pinder_Celia} and is given as follows:
\begin{align}
  \label{Eqn:Subsurface_Diffusivity}
  \mathbf{D}_{\mathrm{subsurface}}(\mathbf{x}) = 
  \alpha_{T} \|\mathbf{v}\| \mathbf{I} + \frac{
  \alpha_L - \alpha_T}{\|\mathbf{v}\|} \mathbf{v} 
  \otimes \mathbf{v}
\end{align}
where $\otimes$ is the tensor product, $\mathbf{I}$ is the identity tensor, 
$\mathbf{v}$ is velocity vector field of the subsurface flow, and $\alpha_T$ 
and $\alpha_L$ are, respectively, transverse and longitudinal diffusivity 
coefficients with  $\alpha_T = 0.01$ and $\alpha_L = 0.1$. It should be 
emphasized that we have neglected advection. Correspondingly, the numerical 
values for the velocity vector field used to define the diffusion tensor is 
given by $\mathbf{v}(\mathbf{x}) = (1.0,1.0)$. This test problem has importance 
in simulating diffusion of chemical species in subsurface flows of hydrogeological 
systems \cite{Zheng_Bennett_Transport,McCarty_Criddle,Dentz_Borgne_Englert_Bijeljic_JHC_2011_v120_p1}. 

Numerical simulations using these meshes are shown in Figure \ref{Fig:Prob2_Background_DMP_Meshes_Conc}.
The white region in this figure depicts the area in which the value of concentration 
is negative. From Figure \ref{Fig:Prob2_Background_DMP_Meshes_Conc}, it is apparent 
that the coarse anisotropic triangulation violates the DMPs and NC. This is because
the Algorithm \ref{Algo:Aniso_Meshes} did not converge in \texttt{MaxIters}. However, 
quantitatively, this violation in NC is low as compared to background mesh. Specifically, 
the minimum concentration and the percentage of nodes that have violated the non-negative 
constraint on the background mesh is about $-4.8 \times 10^{-5}$ and $2.13\%$, while 
these values on the anisotropic triangulation are around $-1.35 \times 10^{-8}$ and 
$0.34\%$. Additionally, from Figure \ref{Fig:Prob2_Background_DMP_Meshes_Conc}, it is 
evident that we need a highly refined DMP-based anisotropic mesh to avoid negative values 
for concentration. Nevertheless, it should be noted that there is a considerable decrease 
in the negative values for concentration if a traditionally $h$-refined anisotropic 
triangulation is used (see Figure \ref{Fig:Prob2_TradRefine_Meshes} and subsubsection 
\ref{SubSubSec:Trad_NonTrad_hRefinement} for more details).

\subsubsection{\textbf{Test problem \#3: Contaminant transport in leaky wells}}
\label{SubSubSec:Test_Prob_3}
The computational domain is a circle with a hole centered at origin $(0,0)$. 
The radius of the circular hole and the circular domain are $0.1$ and $1.0$. 
Numerical simulations are performed for four different cases of the velocity 
vector field and linear reaction coefficient, which are given by $\mathbf{v}
(\mathbf{x}) = (0.0,0.0)$ and $\alpha(\mathbf{x}) = 0.0$, $\mathbf{v}(\mathbf{x}) 
= (1.5,1.0)$ and $\alpha(\mathbf{x}) = 1.0$, $\mathbf{v}(\mathbf{x}) = (5.0,0.5)$ 
and $\alpha(\mathbf{x}) = 1.0$, and $\mathbf{v}(\mathbf{x}) = (0.0,0.0)$ and 
$\alpha(\mathbf{x}) = 1000$. Each case is designed to test a particular aspect. 
For example, $\mathbf{v}(\mathbf{x}) = (5.0,0.5)$ and $\alpha(\mathbf{x}) = 1.0$ 
corresponds to advection-dominated ADR problems, while $\mathbf{v}(\mathbf{x}) = 
(0.0,0.0)$ and $\alpha(\mathbf{x}) = 1000$ corresponds to reaction-dominated 
diffusion-reaction problems. The diffusivity tensor for this problem is given 
by equations \eqref{Eqn:AnisoRotationDiff}--\eqref{Eqn:NN_Eigenvalue_Matrix}.
Correspondingly, the values for the parameters $d_{\mathrm{max}}$, $d_{\mathrm{min}}$, 
and $\theta$ are equal to 1, 0.001, and $\frac{\pi}{3}$. 

The computational domain in this test problem has various practical applications 
related to well design in multi-aquifers and groundwater distribution systems 
\cite{2007_Zinn_etal_HJ_v15_p633_p643,2006_Konikow_Hornberger_GW_v44_p648_p660,
2012_Mejia_etal_MG_v44_p227_p238}, understanding  emanation of gaseous hydrocarbons 
through bore-holes in oil and gas reservoirs \cite{2012_Myers_GW_v50_p872_p882,
2012_Saiers_Barth_GW_v50_p826_p828,2013_Cohen_etal_GW_v51_p317_p319}, and to study 
leakage of contaminants (such as $\mathrm{CO}_2$, salts, and nitrates) through 
abandoned wells \cite{1994_Avci_WRR_v30_p2565_p2578,1995_Lacombe_etal_WRR_v31_p1871_p1882,
Ebigbo_Class_Helmig_ComputGeoscience_2007_v11_p103,2013_Nakshatrala_Turner_IJCMESM_v14_p524_p541}.
The background mesh used and the corresponding DMP-based (coarse) mesh obtained 
using \textsf{BAMG} are shown in Figure \ref{Fig:Prob3_Background_DMP_Meshes} 
(\texttt{MaxIters} = 50). For the cases when $\mathbf{v}(\mathbf{x}) = (0.0,0.0)$ 
and $\alpha(\mathbf{x}) = 0.0$, $\mathbf{v}(\mathbf{x}) = (1.5,1.0)$ and 
$\alpha(\mathbf{x}) = 1.0$, Algorithm \ref{Algo:Aniso_Meshes} converged in 
\texttt{MaxIters}. But for $\mathbf{v}(\mathbf{x}) = (5.0,0.5)$ and $\alpha(
\mathbf{x}) = 1.0$, $\mathbf{v}(\mathbf{x}) = (0.0,0.0)$ and $\alpha(\mathbf{x}) 
= 1000$, Algorithm \ref{Algo:Aniso_Meshes} did not converge in \texttt{MaxIters}. 
Herein, the DMP-based coarse mesh is composed of \emph{needle-type} triangles. 
This is because the ratio of the minimum eigenvalue of $\mathbf{D}(\mathbf{x})$ 
to its maximum is 0.001 (which is related to the aspect ratio of the sides of 
the triangle in the DMP-based anisotropic mesh). Moreover, it is evident that 
the triangles in the mesh are aligned and oriented along the principal axis of 
the eigenvectors of the diffusivity tensor.

Numerical simulations using these meshes are shown in Figure \ref{Fig:Prob3_Background_DMP_Meshes_Conc}. 
The white region in the figures (circular annulus) shows the area in which the 
value of concentration is negative. The minimum concentration and the percentage 
of nodes that have violated the non-negative constraint for the background mesh 
are about $-1.67 \times 10^{-2}$ and $30.28\%$. As the anisotropic mesh is coarse 
and the Algorithm \ref{Algo:Aniso_Meshes} did not converge in \texttt{MaxIters} 
for advection-dominated ADR problems and reaction-dominated diffusion-reaction 
problems, the resulting mesh not only violates the non-negative constraint, but 
also produces spurious oscillations. The minimum concentration and the percentage 
of nodes that have violated the non-negative constraint for the case when $\mathbf{v} 
= (5.0,0.5)$ and $\alpha = 1.0$ are about $-1.78 \times 10^{-1}$ and $13.47\%$, 
whereas for the case when $\mathbf{v} = (0.0,0.0)$ and $\alpha = 1000$ are around 
$-2.79 \times 10^{-1}$ and $20.54\%$. Hence in both these cases, we need a highly 
refined DMP-based mesh to avoid negative values for concentration (see subsubsection 
\ref{SubSubSec:Trad_NonTrad_hRefinement} for more details). For scenarios when 
$\mathbf{v}(\mathbf{x}) = (0.0,0.0)$ and $\alpha(\mathbf{x}) = 0.0$, $\mathbf{v}
(\mathbf{x}) = (1.5,1.0)$ and $\alpha(\mathbf{x}) = 1.0$, the coarse DMP-based mesh 
is interiorly connected and satisfies $\mathrm{DsMP}_{\boldsymbol{K}}$ (or 
$\mathrm{DSMP}_{\boldsymbol{K}}$, when $\alpha(\mathbf{x}) = 0$).


\subsubsection{\textbf{Issues with DMP-based $h$-refinement}}
\label{SubSubSec:Trad_NonTrad_hRefinement}
From the above set of test problems, it is apparent that mesh refinement 
is needed to avoid spurious node-to-node oscillations and satisfaction 
of various discrete principles (mainly for the cases when the values of 
the velocity vector field and linear reaction coefficient are predominant 
as compared to minimum eigenvalue of diffusivity tensor). In general, 
within the context of computational geometry and mesh generation literature,  
there are various methods to generate different types of $h$-refined 
meshes \cite{Frey_George,Plewa_Linde_Weirs,2013_Schneider_LNACM_p133_p152}. 
However, it is evident from these test problems that we are interested 
in $h$-refined meshes that conform to either \texttt{anisotropic non-obtuse 
angle condition} or \texttt{generalized Delaunay-type angle condition}. 
Herein, we shall study two different and important methodologies that 
are popular in mesh generation literature and implemented in the open 
source mesh generators, such as \textsf{Gmsh} and \textsf{BAMG} in 
\textsf{FreeFem++}. Our objective is to understand whether the $h$-refined 
meshes generated using these mesh generators satisfy DMPs, DCPs, and NC 
or not.

\subsection*{Traditional $h$-refinement}
In this method, an $h$-refined mesh is obtained by splitting each triangle 
in the coarse mesh into multiple triangles. Based on this methodology,
numerical simulations are performed using the traditional $h$-refined meshes
(which are derived based on the DMP-based coarse meshes presented in Figures 
\ref{Fig:Prob1_Delaunay_DMP_Meshes_Conc}, \ref{Fig:Prob2_Background_DMP_Meshes},
and \ref{Fig:Prob3_Background_DMP_Meshes}). The resulting concentration 
profiles for test problems \#1, \#2, and \#3, are shown in Figures 
\ref{Fig:Prob1_TradRefine_Meshes}--\ref{Fig:Prob3_TradRefine_Meshes}.
Qualitatively and quantitatively, the following inferences can be drawn 
from these numerical results:
\begin{itemize}
  \item Test problem \#1: For isotropic diffusivity, from Figure 
    \ref{Fig:Prob1_TradRefine_Meshes}, Table \ref{Tab:Fractured_Domain_AD_MinMaxPercent_Conc}, 
    and Table \ref{Tab:Fractured_Domain_ADR_MinMaxPercent_Conc}, 
    it is apparent that there is a decrease in negative values 
    for concentration and reduction in spurious oscillations.
  \item Test problem \#2: Qualitatively, based on Figure \ref{Fig:Prob2_TradRefine_Meshes}, 
    it can be concluded that there is a considerable decrease in the 
    violation of non-negative constraint. Quantitatively, for this 
    $h$-refined anisotropic triangulation, the minimum concentration 
    and the percentage of nodes that have violated the non-negative 
    constraint is about $-1.15 \times 10^{-11}$ and $0.02\%$ (which 
    is significantly close to machine epsilon $\epsilon_{\mathrm{mach}} 
    \approx 2.22 \times 10^{-16}$).
  \item Test problem \#3: For the pure anisotropic diffusion case, the 
    \emph{coarse} anisotropic DMP-based mesh given in Figure 
    \ref{Fig:Prob3_Background_DMP_Meshes} satisfies NC and all of the DMPs. 
    However, contrary to this, on traditional $h$-refinement, the resulting 
    anisotropic triangulation produces unphysical negative values and violates 
    the DMPs. Quantitatively, this violation is far from machine epsilon, 
    $\epsilon_{\mathrm{mach}}$. Correspondingly, the minimum concentration 
    and the percentage of nodes that have violated the non-negative constraint 
    is about $-2.7 \times 10^{-1}$ and $28.51\%$, which is way higher as compared 
    the numerical simulations based on the background mesh. 
\end{itemize}
To summarize, \emph{it is clear that traditional $h$-refinement does not always 
reduce the unphysical numerical values}. Certainly, there is a decrease in negative 
values for concentration for test problem \#1 and \#2. But, this is not the case 
for test problem \#3. This is because the methodology to obtain traditional $h$-refinement 
meshes from a given coarse mesh \emph{need not conform} to the conditions outlined 
in either Theorem \ref{Thm:Aniso_Weak_Condition} or Theorem \ref{Thm:Delaunay_Weak_Condition}. 

\subsection*{Non-traditional $h$-refinement}
In this method, a $h$-refined mesh is obtained \emph{directly} from the 
background mesh (using Algorithm \ref{Algo:Aniso_Meshes}) by change certain 
parameters related to metric tensor, geometry of the domain, and number of 
nodes on the boundary of the domain \cite[Chapter-5]{FreeFempp}. It should 
be noted that this methodology is completely different from the traditional 
$h$-refinement procedure, as we never generate a coarse DMP-based triangulation 
and then split the respective mesh elements. However, even this non-traditional 
$h$-refined mesh is not guaranteed to produce physics-compatible numerical 
values for concentration (as the mesh generation procedure need not converge 
in \texttt{MaxIters}). This is evident from the numerical simulations performed 
on the non-traditional $h$-refined mesh for test problem \#3. The corresponding 
concentration profile is shown in Figure \ref{Fig:Prob3_NonTradRefine_Meshes}. 
Quantitatively, the minimum concentration and the percentage of nodes that have 
violated the non-negative constraint are about $-9.69 \times 10^{-2}$ and $34.11
\%$, which is comparatively similar to that of the traditional $h$-refinement 
methodology.

\begin{table}
  \centering
	\caption{Fractured domain with isotropic diffusivity: For AD equation
	\label{Tab:Fractured_Domain_AD_MinMaxPercent_Conc}}
	\begin{tabular}{|c|c|c|c|c|} \hline
	  \multirow{2}{*}{Delaunay triangulation \vspace{0.15in}} & 
	  \multicolumn{2}{|c|}{Concentration} & 
      \multicolumn{2}{|c|}{\% of nodes violated} \\ 
      \cline{2-3} \cline{4-5}
	  $\left(Nv, Nele, h \right)$ & Minimum & Maximum & (Non-negative 
	  constraint) & (Maximum constraint) \\ \hline
	  $(539,906,0.1)$ & -10.51 & 9.85 & 17.44 & 13.73  \\
	  $(1564,2826,0.05)$ & -18.26 & 5.86 & 18.22 & 14.19  \\
	  $(5090,9620,0.025)$ & -4.65 & 5.71 & 16.09 & 13.77  \\
	  $(18372,35665,0.0125)$ & -2.97 & 5.17 & 12.82 & 12.70  \\
	  $(69995,137881,0.00625)$ & -1.62 & 3.89 & 8.08 & 8.47  \\
	  \hline
	\end{tabular}
\end{table}

\begin{table}
  \centering
	\caption{Fractured domain with isotropic diffusivity: For ADR equation
	\label{Tab:Fractured_Domain_ADR_MinMaxPercent_Conc}}
	\begin{tabular}{|c|c|c|c|c|} \hline
	  \multirow{2}{*}{Delaunay triangulation \vspace{0.15in}} & 
	  \multicolumn{2}{|c|}{Concentration} & 
      \multicolumn{2}{|c|}{\% of nodes violated} \\ 
      \cline{2-3} \cline{4-5}
	  $\left(Nv, Nele, h \right)$ & Minimum & Maximum & (Non-negative 
	  constraint) & (Maximum constraint) \\ \hline
	  $(539,906,0.1)$ & -9.64 & 4.53 & 14.29 & 8.16  \\
	  $(1564,2826,0.05)$ & -1.54 & 3.57 & 18.03 & 1.53  \\
	  $(5090,9620,0.025)$ & -4.47 & 3.30 & 16.01 & 0.29  \\
	  $(18372,35665,0.0125)$ & -2.95 & 2.56 & 12.89 & 0.04  \\
	  $(69995,137881,0.00625)$ & -1.62 & 2.50 & 8.06 & 0.00  \\
	  \hline
	\end{tabular}
\end{table}


\subsubsection{\textbf{Errors in local and global species balance}}
\label{SubSubSec:Errors_LSB_GSB}
It is well-known that without using a post-processing method, the discrete 
standard single-field Galerkin formulation \emph{does not possess} local 
and global conservation properties \cite{2000_Hughes_etal_JCP_v163_p467_p488,
2012_Burdakov_etal_JCP_v231_p3126_p3142}. In finite element literature, 
there are various post-processing methods to quantify the errors incurred 
in satisfying local and global species balance \cite{Cook,Gresho_Sani_v1,
Donea_Huerta,Zienkiewicz_etal_SeventhEdition_v1}. Herein, we shall use 
Herrmann's method of optimal sampling \cite[Subsection 15.2.2]{Zienkiewicz_etal_SeventhEdition_v1},
which is a popular post-processing technique to obtain derived quantities 
from primary variables (such as the concentration variable in single-field 
finite element formulations). In the context of recovery-based error estimators 
\cite{Babuska_Whiteman_Strouboulis}, this post-processing method is also 
known as traditional global smoothing method \cite[Subsection 9.9]{Cook}.
To summarize, this method involves minimizing a unconstrained least-squares 
flux functional to obtain nodal flux vectors. Then, using these flux vectors, local and global species balance errors are computed.
 
Qualitatively, the contours corresponding to local species balance errors 
for test problems \#1, \#2, and \#3 on \emph{coarse} DMP-based meshes 
are shown in Figure \ref{Fig:Prob123_Local_Mass_Errors}. From this figure, 
it is clear that test problem \#3 exhibits considerable errors in satisfying 
local species balance. Quantitatively, Table \ref{Tab:Errors_Local_Global_Species_Balance} 
provides local and global species balance errors on traditional $h$-refined 
meshes for test problems \#1 and \#2. From this table, it can be concluded 
that the decrease in these errors is slow. Hence, there is need for locally 
conservative DMP-preserving finite element methods.

\begin{remark}
  It should be noted that there are various ways in which one can develop 
  locally conservative global smoothing methods. For example, this can be 
  achieved by constructing a constrained monotonic regression based method 
  (following the procedure outlined by Burdakov \emph{et al.}
  \cite{2012_Burdakov_etal_JCP_v231_p3126_p3142}) to obtain a conservative 
  flux vector. However, as discussed in Section \ref{Sec:S1_MP_Introduction} 
  and Section \ref{Sec:S3_MP_GeneralRemarks}, it should be note that such 
  type of post-processing methods are not variationally consistent and refutes 
  the purpose of developing physics-compatible DMP-based meshes.
\end{remark}

\begin{table}
  \centering
	\caption{Errors in local and global species balance: For pure 
	  isotropic and anisotropic diffusion equation.
	\label{Tab:Errors_Local_Global_Species_Balance}}
	\begin{tabular}{|c|c|c|c|c|c|} \hline
	  \multirow{2}{*}{$h$} & \multicolumn{2}{|c|}{Test problem \#1} & 
	  \multirow{2}{*}{$h$} & \multicolumn{2}{|c|}{Test problem \#2} \\ 
      \cline{2-3} \cline{5-6}
	  & Local error (abs. max.) & Global error & 
	  & Local error (abs. max.) & Global error \\ \hline
	  0.1 & $1.80 \times 10^{-3}$ & $1.53 \times 10^{-2}$ & 
	  0.08 & $5.75 \times 10^{-4}$ & $-1.44 \times 10^{-4}$ \\
	  0.05 & $4.66 \times 10^{-4}$ & $1.23 \times 10^{-2}$ & 
	  0.042 & $1.47 \times 10^{-4}$ & $-1.38 \times 10^{-4}$ \\
	  0.025 & $3.48 \times 10^{-4}$ & $8.48 \times 10^{-3}$ & 
	  0.028 & $7.34 \times 10^{-5}$ & $-1.12 \times 10^{-4}$ \\
	  0.0125 & $7.14 \times 10^{-4}$ & $7.82 \times 10^{-3}$ & 
	  0.0135 & $1.59 \times 10^{-5}$ & $-6.83 \times 10^{-5}$ \\
	  0.00625 & $6.75 \times 10^{-4}$ & $5.38 \times 10^{-3}$ & 
	  0.0075 & $5.46 \times 10^{-6}$ & $-4.36 \times 10^{-5}$ \\
	  \hline
	\end{tabular} 
\end{table}


\subsection{Sufficient conditions for a rectangular element}
\label{SubSec:MeshRestrict_Rectangular_Element} 
For the sake of illustration, consider a rectangular element whose 
vertices are located at $(0,0)$, $(a,0)$, $(a,b)$, and $(0,b)$. Our 
objective is to derive conditions on $a$ and $b$, such that the local 
stiffness matrix is weakly diagonally dominant. Herein, we consider a 
pure anisotropic diffusion equation and derive restrictions on the 
coordinates of the rectangular element. Based on the lines of the 
\emph{local stiffness restriction method} outlined in subsubsection 
\ref{Subsubsec:Global_Local_Stiffness_Restriction_Methods}, the local 
stiffness matrix for the case when diffusivity $\mathbf{D} (\mathbf{x}) 
= \left(\begin{smallmatrix}
D_{xx} & D_{xy} \\ 
D_{xy} & D_{yy} \end{smallmatrix} \right)$ is anisotropic and constant 
in $\Omega$ is given as follows:
\begin{align}
  \label{Eqn:Stiffness_Matrix_Q4}
  \boldsymbol{K}_{e} = 
  \left( \begin{array}{cccc}
    \frac{b D_{xx}}{3a} + \frac{D_{xy}}{2} + \frac{a D_{yy}}{3b} &
    -\frac{b D_{xx}}{3a} + \frac{a D_{yy}}{6b} &
    \frac{b D_{xx}}{6a} - \frac{a D_{yy}}{3b} &
    -\frac{b D_{xx}}{6a} - \frac{D_{xy}}{2} - \frac{a D_{yy}}{6b} \\
    -\frac{b D_{xx}}{3a} + \frac{a D_{yy}}{6b} & 
    \frac{b D_{xx}}{3a} - \frac{D_{xy}}{2} + \frac{a D_{yy}}{3b} &
    -\frac{b D_{xx}}{6a} + \frac{D_{xy}}{2} - \frac{a D_{yy}}{6b} &
    \frac{b D_{xx}}{6a} - \frac{a D_{yy}}{3b} \\
    \frac{b D_{xx}}{6a} - \frac{a D_{yy}}{3b} &
    -\frac{b D_{xx}}{6a} + \frac{D_{xy}}{2} - \frac{a D_{yy}}{6b} &
    \frac{b D_{xx}}{3a} - \frac{D_{xy}}{2} + \frac{a D_{yy}}{3b} &
    -\frac{b D_{xx}}{3a} + \frac{a D_{yy}}{6b} \\
    -\frac{b D_{xx}}{6a} - \frac{D_{xy}}{2} - \frac{a D_{yy}}{6b} &
    \frac{b D_{xx}}{6a} - \frac{a D_{yy}}{3b} &
    -\frac{b D_{xx}}{3a} + \frac{a D_{yy}}{6b} &
    \frac{b D_{xx}}{3a} + \frac{D_{xy}}{2} + \frac{a D_{yy}}{3b}
  \end{array} \right)
\end{align}

\subsection*{Condition No-7}
From AM-GM inequality, we have the following relation:
\begin{align}
  \label{Eqn:Condition_7_AMGM_Ineq}
  \frac{b D_{xx}}{3a} + \frac{a D_{yy}}{3b} \geq \frac{2}{3} \sqrt{D_{xx} 
  D_{yy}} > \frac{2}{3} |D_{xy}| \geq \frac{1}{2} |D_{xy}|
\end{align}
which implies that the positive diagonal entries: $\left( \boldsymbol{K}_e 
\right)_{ii} > 0 \quad \forall i = 1,2,3,4$, is trivially satisfied. 

\subsection*{Condition No-8}
Non-positive off-diagonal entries: $\left( \boldsymbol{K}_e \right)_{ij} 
\leq 0 \quad \forall i \neq j$ where $i = 1,2,3,4$, and $j = 1,2,3,4,$ needs 
to be satisfied. For instance, when $i = 1$ and $j = 2,3,4$, this restriction 
gives the following relations: 
\begin{subequations}
\begin{align}
  &\sqrt{\frac{D_{xx}}{2 D_{yy}}} \leq \frac{a}{b} 
  \leq \sqrt{\frac{2 D_{xx}}{D_{yy}}} \\
  &-\frac{b D_{xx}}{6a} - \frac{D_{xy}}{2} - 
  \frac{a D_{yy}}{6b} \leq 0
\end{align}
\end{subequations}
Additionally, we should also satisfy the restrictions imposed by equations 
\eqref{Eqn:D_xx}--\eqref{Eqn:D_xxD_yy} on diffusivity tensor. In a similar 
fashion, one can derive restrictions for other combinations of $i$ and $j$.

\subsection*{Condition No-9}
Weak diagonal dominance of rows: $|\left( \boldsymbol{K}_e \right)_{ii}| \geq 
\displaystyle \sum \limits_{i \neq j} |\left( \boldsymbol{K}_e \right)_{ij}| 
\quad \forall i,j$ where $i = 1,2,3,4$, and $j = 1,2,3,4,$ is trivially satisfied 
if Condition-7 and Condition-8 are met. This is because from equation 
\eqref{Eqn:Stiffness_Matrix_Q4}, it is evident that $ \left( \boldsymbol{K}_e 
\right)_{ii} + \displaystyle \sum \limits_{i \neq j} \left( \boldsymbol{K}_e 
\right)_{ij} = 0 \quad \forall i,j$ where $i = 1,2,3,4$, and $j = 1,2,3,4$.

Finally, as explained in subsubsection \ref{Subsubsec:Global_Local_Stiffness_Restriction_Methods},
extending the local stiffness restriction approach to incorporate advection and 
linear reaction is straightforward and shall not be dealt with to save space. 
Moreover, it is easy to construct mesh restrictions for any set of arbitrary 
coordinates of a quadrilateral element using symbolic packages like \textsf{Mathematica}. 
But the resulting inequalities will be more complex to analyze mathematically 
and visualize graphically.

\section{CONCLUDING REMARKS AND OPEN QUESTIONS}
\label{Sec:S5_MP_Conclusions}
We outlined a general procedure to obtain the restrictions that are needed for 
a computational grid to satisfy various mathematical principles -- comparison 
principles, maximum principles, and the non-negative constraint. We illustrated 
the workings of this procedure by obtaining the mesh restrictions for T3 and Q4 
finite elements. The procedure is, however, equally applicable to other low-order 
finite elements. 

First, we critiqued three different approaches to satisfy maximum principles, 
comparison principles, and non-negative constraint for a general linear 
second-order elliptic equation. A pictorial description of a generic 
relationship between DMPs, DCPs, and NC based on a Venn diagram is proposed. 
This sketch helps to easily relate the space of solutions obtained using mesh 
restrictions, non-negative numerical formulations, and post-processing methods. 
We then presented necessary and sufficient conditions on the stiffness matrix 
$\boldsymbol{K}$ to meet the mathematical properties. Using these conditions, 
we derived stronger and weaker mesh restrictions for T3 element. The stronger 
mesh restriction corresponds to the \texttt{anisotropic non-obtuse angle condition} 
while the weaker one corresponds to the \texttt{generalized Delaunay-type angle 
condition}. Motivated by these mesh restriction conditions, different kinds of 
P\'{e}clet and Damk\"{o}hler numbers are proposed for advective-diffusive-reactive 
systems when diffusivity is anisotropic.

For isotropic diffusivity, we established that acute-angled or right-angled 
triangle is sufficient to satisfy discrete principles. However, for anisotropic 
diffusivity, we showed that in order to satisfy DMPs, DCPs, and NC, all the 
dihedral angles of a simplex measured in the metric of $\widetilde{\boldsymbol{D}}
^{-1}_{\Omega_e}$ have to be either $\mathcal{O} \left( h \|\boldsymbol{v}\|_{\infty,
\mathcal{T}_{h}} + h^2 \| \alpha \|_{\infty,\mathcal{T}_{h}} \right)$ acute/non-obtuse 
or $\mathcal{O} \left( h \|\boldsymbol{v}\|_{\infty,\mathcal{T}_{h}} + h^2 \| 
\alpha \|_{\infty,\mathcal{T}_{h}} \right)$ Delaunay. Pictorially, this means 
that the feasible region for T3 and Q4 elements to satisfy various discrete 
principles is based on a metric tensor whose components are a function of 
anisotropic diffusivity with respect to a suitable coordinate system. Then, 
an anisotropic metric tensor and an iterative algorithm to generate various 
types of DMP-based triangulations are described. Different numerical examples 
and respective DMP-based triangular meshes are presented for different types 
of $\mathbf{D}(\mathbf{x})$ to demonstrate the pros and cons of imposing mesh 
restrictions. Furthermore, the errors incurred in satisfying local and global 
species balance are documented. Based on these numerical experiments, the 
following inferences can be drawn:
\begin{enumerate}[(C1)]
  \item For pure isotropic or anisotropic diffusion equation, a coarse DMP-based 
    triangulation is sufficient to satisfy various discrete principles. However, 
    for advection-dominated and reaction-dominated scenarios, we need a highly 
    refined DMP-preserving computational mesh to obtain non-negative solutions. 
  \item Existing traditional and non-traditional methods of $h$-refinement may not 
    guarantee the satisfaction of DMPs, DCPs, and NC always. 
  \item On coarse DMP-based meshes, errors incurred in satisfying local and global 
    species balance for highly anisotropic diffusion-type problems is considerable 
    due to the skewed nature of the mesh elements. Moreover, the decrease in local 
    and global species balance errors upon $h$-refinement is slow.
  \item DMP-based meshes change as one alters the underlying anisotropic diffusivity 
    tensor.
\end{enumerate}

In the light of the recent developments and motivated by the above discussions, we 
have chosen to emphasize on the following \emph{four open problems} that we consider
particularly interesting in view of their mathematical richness, numerical challenges, 
and potential applications:
\begin{enumerate}[(OP1)]
  \item In this paper, all the meshes used in the numerical examples are of 
    Delaunay-type. This is because most of the existing open source mesh 
    generators such as \textsf{BAMG}, \textsf{Gmsh}, \textsf{Triangle}, 
    \textsf{BL2D}, \textsf{Mmg3d}, and \textsf{CGALmesh} are Delaunay. 
    Recently, Erten and \"{U}ng\"{o}r \cite{2007_Erten_Ungor_CCCG_p205_p208,
    2009_Erten_Ungor_PVD_p192_p201,2009_Erten_Ungor_SIAMJSC_v31_p2103_p2130} 
    have developed a non-obtuse/acute angled mesh generator called \textsf{aCute} 
    by modifying \textsf{Triangle}. However, \textsf{aCute} is restricted to 
    2D and Eucledian metric tensors. Hence, such a software can only be used 
    to satisfy discrete principles for problems involving heterogeneous isotropic 
    diffusivity. Having an anisotropic non-obtuse/acute $\mathcal{M}$-uniform 
    meshing software would be of great importance, as the numerical solutions 
    obtained from these meshes not only satisfy DMPs, DCPs, and NC, but also 
    converge uniformly (an attractive aspect in finite element analysis 
    \cite{Ciarlet_Raviart_CMAME_1973_v2_p17}). \emph{To date, there is no such 
    mesh generator}. Developing such a software will have a profound impact on 
    obtaining physically meaningful numerical solutions for diffusion-type equations. 
  \item For advection-dominated and reaction-dominated advection-diffusion-reaction 
    problems, mesh refinement that adheres to DMPs is needed to obtain stable and 
    sufficiently accurate numerical solutions. As described in (C2), not every method 
    of $h$-refinement is DMP-preserving. Hence, a consistent way of generating a 
    DMP-based $h$-refined mesh (that satisfies \texttt{Generalized Delaunay-type 
    angle condition}) is still unresolved.
  \item From (C3), it is apparent that local and global mass conservation property 
    is needed. An approach to preserve such a property without violating DMPs, DCPs, 
    and NC, is to obtain mesh restrictions for mixed Galerkin formulation based on 
    lowest-order Raviart-Thomas spaces. Recently, Huang and Wang \cite{2014_Huang_Wang_arXiv_1401_6232} 
    have developed a methodology to satisfy DMPs for a class of locally conservative 
    weak Galerkin methods using lowest-order Raviart-Thomas spaces. However, this 
    methodology is limited to pure anisotropic steady-state diffusion equation in 
    two-dimensions. Hence, a mesh restriction based method to satisfy different 
    discrete principles, local species balance, and global species balance for 
    anisotropic advection-diffusion-reaction equations thus far is unsolved.
  \item In order to construct DMP-based meshes for low-order non-simplicial finite 
    elements such as Q4, from subsection \ref{SubSec:MeshRestrict_Rectangular_Element}, 
    it is evident that stronger and weaker mesh conditions are needed. \emph{So 
    far, there are no mesh restriction theorems analogous to simplicial meshes 
    that can provide a general framework to construct non-simplicial meshes for 
    anisotropic diffusivity tensors}. Theoretically and numerically, it would 
    be very interesting and informative to have a comparative study on the 
    performance of simplicial vs. non-simplicial DMP-based meshes for various 
    benchmark problems discussed in Section \ref{Sec:S4_MP_Restrictions}.
\end{enumerate}
Nevertheless, due to enormous research activity in the field of advection-diffusion-reaction
equations, it is impossible to list every open question on preserving DCPs, DMPs, 
and NC. To conclude, the research findings in this paper will be invaluable to the 
research community and finite element practitioners in two respects. \emph{First}, 
it will guide the existing users on the restrictions to be placed on the computational 
mesh to meet important mathematical properties like maximum principles, comparison 
principles, and the non-negative constraint. \emph{Second}, for complex geometries 
and highly anisotropic media, this study has clearly shown that placing restrictions 
on computational grids may not always be a viable approach to achieve physically 
meaningful non-negative solutions. \emph{We hope that this research work will motivate 
researchers to develop new methodologies for advective-diffusive-reactive systems 
that satisfy local and global species balance, comparison principles, maximum principles, 
and the non-negative constraint on coarse general computational grids}.

\section*{ACKNOWLEDGMENTS}
The authors acknowledge the support from the DOE Nuclear 
Energy University Programs (NEUP). The opinions expressed 
in this paper are those of the authors and do not necessarily 
reflect that of the sponsors. The authors are grateful to the 
developers of the open source scientific software packages: 
\textsf{BAMG}, \textsf{FreeFem++}, and \textsf{Gmsh}. 

\bibliographystyle{unsrt}
\bibliography{Master_References/Master_References,Master_References/Books}

 
\begin{figure}
  \subfigure[T3 mesh]{\includegraphics[scale=0.35]
  {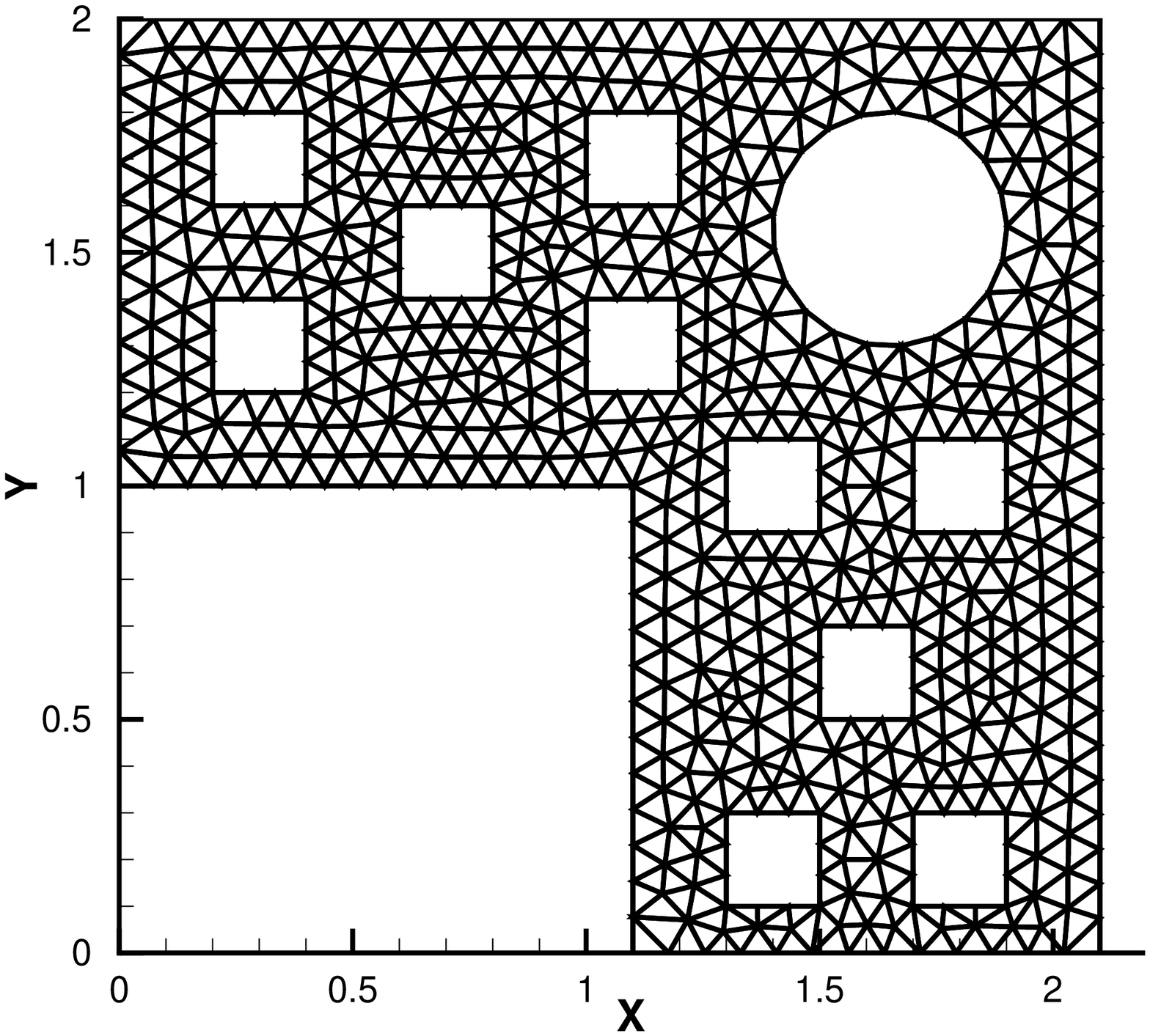}}
  \subfigure[Q4 mesh]{\includegraphics[scale=0.35]
  {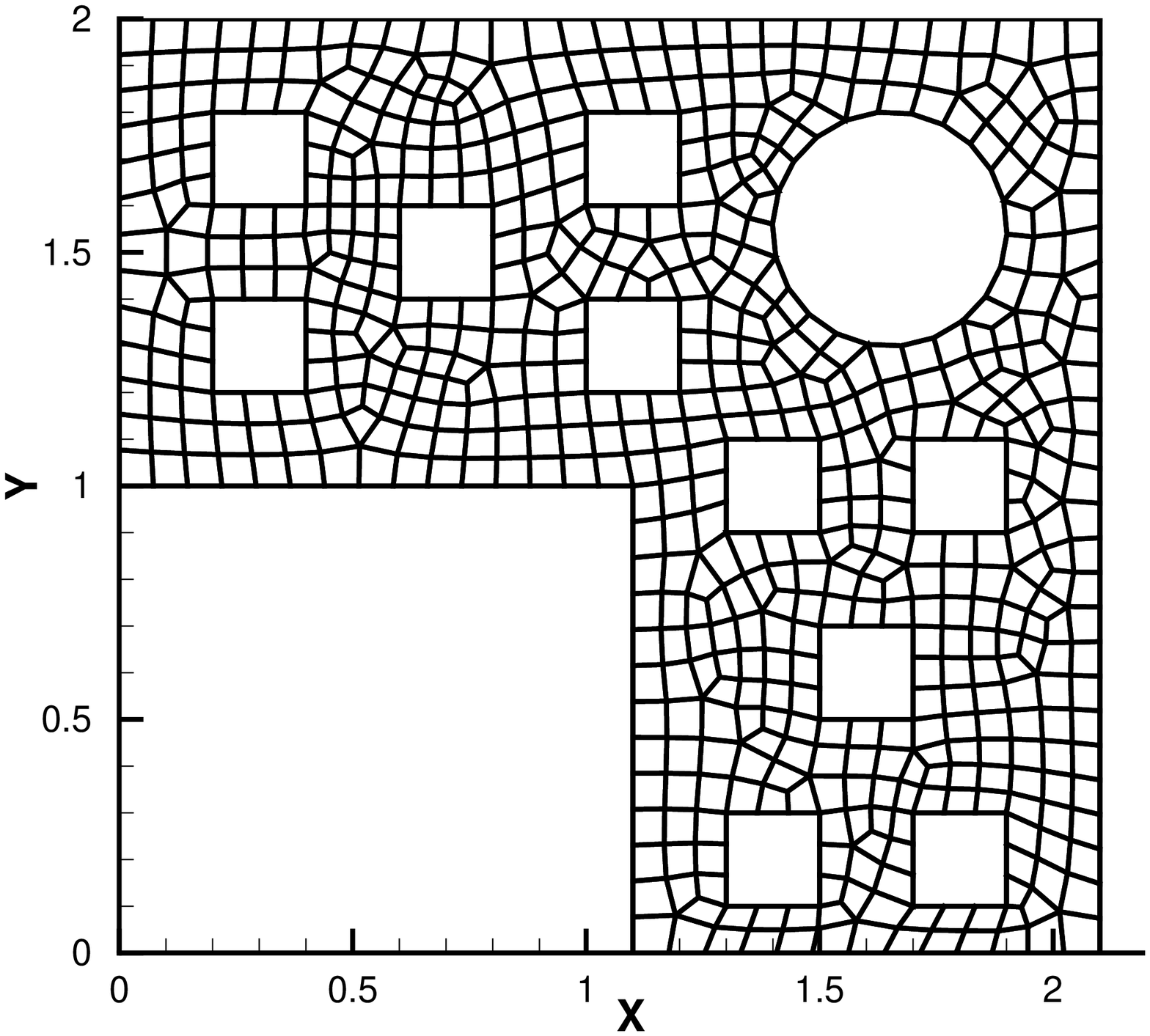}}
  \caption{\textsf{ABAQUS} \textsc{unstructured meshes for an L-shaped domain 
    with multiple holes}: The left and right figures show an instance of three-node 
    triangular and four-node quadrilateral meshes employed in the numerical 
    simulation of a pure anisotropic diffusion problem using \textsf{ABAQUS}.
    \label{Fig:ABAQUS_T3_Q4_Meshes}}
\end{figure}

\begin{figure}
  \includegraphics[scale=0.35]
  {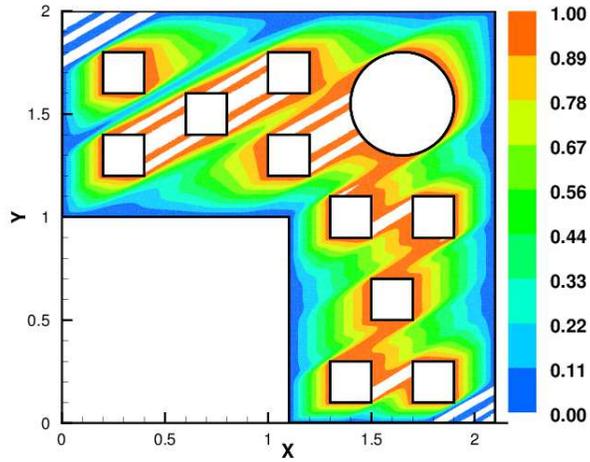}
  \caption{\textsf{ABAQUS} \textsc{numerical simulation for an L-shaped domain 
    with multiple holes}: The contours of concentration obtained using \textsf{ABAQUS} 
    are based on three-node triangular mesh.
    \label{Fig:ABAQUS_T3_Q4_Contours}}
\end{figure}

\begin{figure}
  \subfigure[Non-negative constraint violation]{\includegraphics[scale=0.12]
  {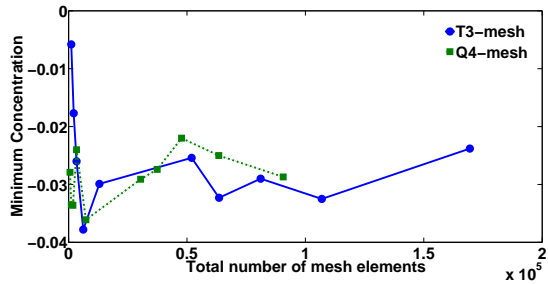}}
  \subfigure[Maximum constraint violation]{\includegraphics[scale=0.12]
  {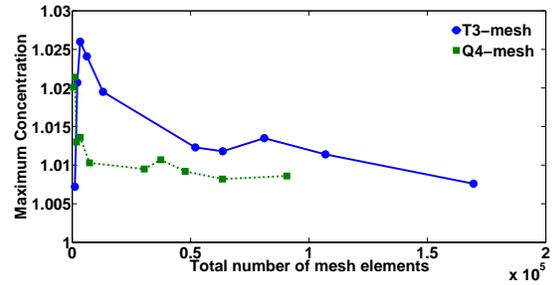}}
  \caption{\textsc{Minimum and maximum values for concentration in an L-shaped 
    domain with multiple holes}: The left and right figures show the minimum 
    and maximum values attained in the computational domain based on the numerical 
    simulations for various three-node triangular and four-node quadrilateral 
    meshes using \textsf{ABAQUS}. From the above figures, it is evident that 
    the violation in the non-negative and maximum constraints do not reduce 
    on mesh refinement.
    \label{Fig:ABAQUS_T3_Q4_MinMaxConc}}
\end{figure}

\begin{figure}
  \subfigure[\% violation: Non-negative constraint]{\includegraphics[scale=0.12]
  {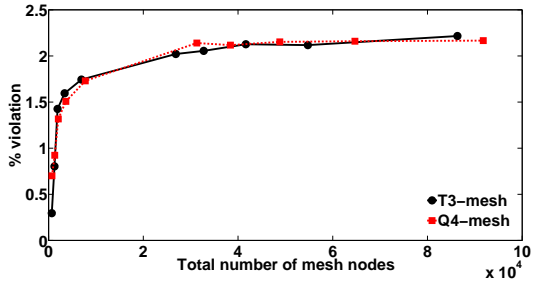}}
  \subfigure[\% violation: Maximum constraint]{\includegraphics[scale=0.12]
  {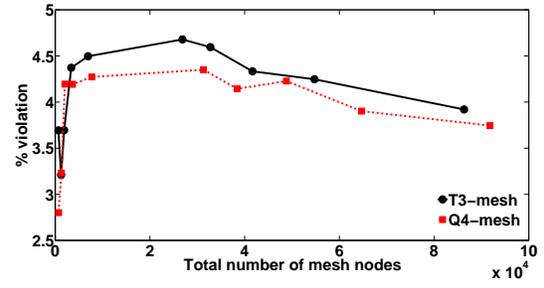}}
  \caption{\textsc{Percentage of violation in minimum and maximum constraints 
    for concentration in an L-shaped domain with multiple holes}: The left and 
    right figures show the percentage of nodes that have violated the non-negative 
    and maximum constraint for concentration obtained using \textsf{ABAQUS}. 
    These numerical results are based on various three-node triangular and 
    four-node quadrilateral meshes. In both of these cases, the percentage 
    of mesh nodes that violated these constraints never decreases to zero. 
    \label{Fig:ABAQUS_T3_Q4_MinMaxpercent}}
\end{figure}

\begin{figure}
  \psfrag{A1}{$\mathcal{V}_{\mathrm{DwMP}_{\boldsymbol{K}}}$}
  \psfrag{A2}{$\mathcal{V}_{\mathrm{DWMP}_{\boldsymbol{K}}}$}
  \psfrag{A3}{$\mathcal{V}_{\mathrm{DsMP}_{\boldsymbol{K}}}$}
  \psfrag{A4}{$\mathcal{V}_{\mathrm{DSMP}_{\boldsymbol{K}}}$}
  \psfrag{A5}{$\mathcal{V}_{\mathrm{DwCP}_{\boldsymbol{K}}}$}
  \psfrag{A6}{$\mathcal{V}_{\mathrm{DsCP}_{\boldsymbol{K}}}$}
  \includegraphics[scale=0.44]{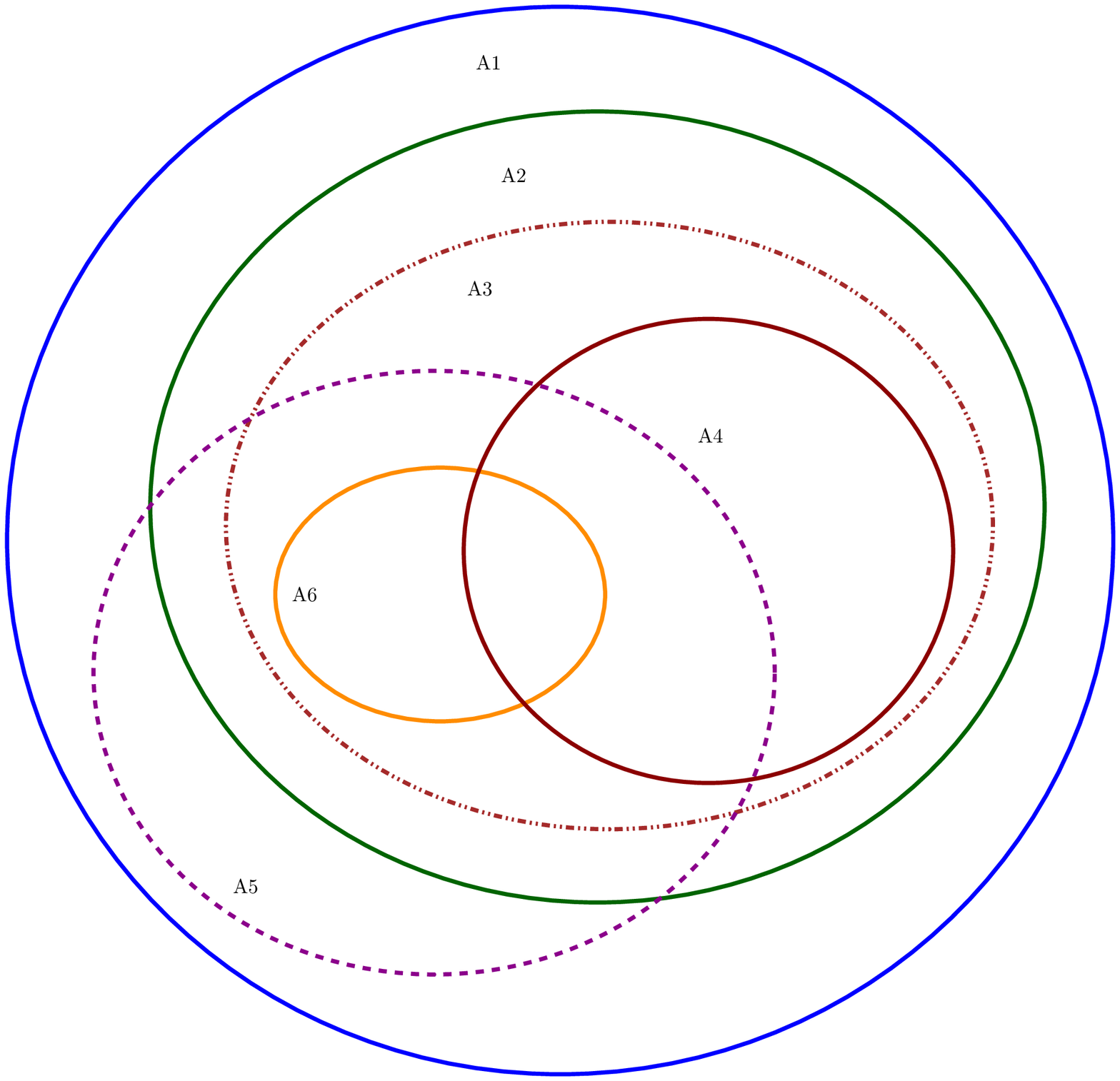}
  \caption{\textsc{Venn diagram for the space of solutions based on 
    mesh restrictions}: A pictorial description of the space of numerical 
    solutions satisfying various DMPs and DCPs based on equation 
    \eqref{Eqn:Discrete_Diffusion} and Theorem \ref{Thm:NeccSuff_Conditions_DMP}.
    It is evident from the above figure that 
    $\mathcal{V}_{\mathrm{DSMP}_{\boldsymbol{K}}} 
    \subset \mathcal{V}_{\mathrm{DsMP}_{\boldsymbol{K}}}
    \subset \mathcal{V}_{\mathrm{DWMP}_{\boldsymbol{K}}} 
    \subset \mathcal{V}_{\mathrm{DwMP}_{\boldsymbol{K}}}$
    and $\mathcal{V}_{\mathrm{DsCP}_{\boldsymbol{K}}} 
    \subset \mathcal{V}_{\mathrm{DwCP}_{\boldsymbol{K}}}$.
    But we would like to emphasize that we \emph{do not} 
    have the following enclosures: 
    $\mathcal{V}_{\mathrm{DwCP}_{\boldsymbol{K}}}
    \subset \mathcal{V}_{\mathrm{DWMP}_{\boldsymbol{K}}}$ 
    and $\mathcal{V}_{\mathrm{DsCP}_{\boldsymbol{K}}}
    \subset \mathcal{V}_{\mathrm{DSMP}_{\boldsymbol{K}}}$.     
    \label{Fig:VennDiagram_MeshRestrictions}}
\end{figure}

\begin{figure}
  \psfrag{A0}{$\mathcal{V}_{\mathrm{NC}}$}
  \psfrag{A1}{$\mathcal{V}_{\mathrm{DwMP}}$}
  \psfrag{A2}{$\mathcal{V}_{\mathrm{DWMP}}$}
  \psfrag{A3}{$\mathcal{V}_{\mathrm{DsMP}}$}
  \psfrag{A4}{$\mathcal{V}_{\mathrm{DSMP}}$}
  \psfrag{A5}{$\mathcal{V}_{\mathrm{DwCP}}$}
  \psfrag{A6}{$\mathcal{V}_{\mathrm{DsCP}}$}
  \psfrag{A7}{{\small Numerical solution space based on}}
  \psfrag{A8}{{\small FDS, MFDM, FVM, FEM, and PP-methods}}
  \includegraphics[scale=0.44]{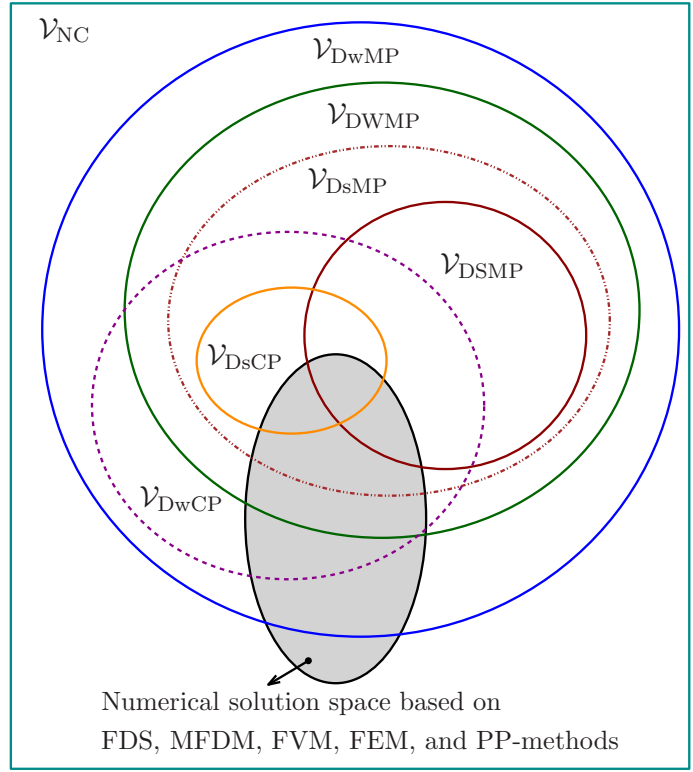}
  \caption{\textsc{Venn diagram for the space of solutions based on 
    various numerical formulations}: A pictorial description of the 
    space of numerical solutions satisfying various DMPs, DCPs, and NC. 
    In numerical literature, most of the numerical methods that exist 
    to satisfy various DMPs are mainly non-linear. In the past decade, 
    considerable advancements have been made to fulfill various version 
    of DMPs for a certain class of linear elliptic and parabolic partial 
    differential equations. But it should be noted that there is seldom 
    research progress related to satisfaction of different DCPs (see 
    \cite[Section 4]{2013_Nakshatrala_Mudunuru_Valocchi_JCP_v253_p278_p307}). 
    \emph{Hence, we would like to highlight that developing a general and 
    variationally consistent numerical technique to encompass all these 
    discrete principles is still an open problem.} 
    \label{Fig:VennDiagram_NumericalFormulations}}
\end{figure}

\begin{figure}
  \psfrag{O}{$(0,0)$}
  \psfrag{x}{$x$}
  \psfrag{y}{$y$}
  \psfrag{P}{P}
  \psfrag{Q}{Q}
  \psfrag{R}{R}
  \psfrag{hP}{$h_p$}
  \psfrag{hQ}{$h_q$}
  \psfrag{hR}{$h_r$}
  \psfrag{aPQ}{$\beta_{pq}$}
  \psfrag{aPR}{$\beta_{pr}$}
  \psfrag{arq}{$\beta_{rq}$}
  \psfrag{nP}{$\mathbf{n}_p$}
  \psfrag{nQ}{$\mathbf{n}_q$}
  \psfrag{nR}{$\mathbf{n}_r$}
  \subfigure{\includegraphics[scale=0.75]{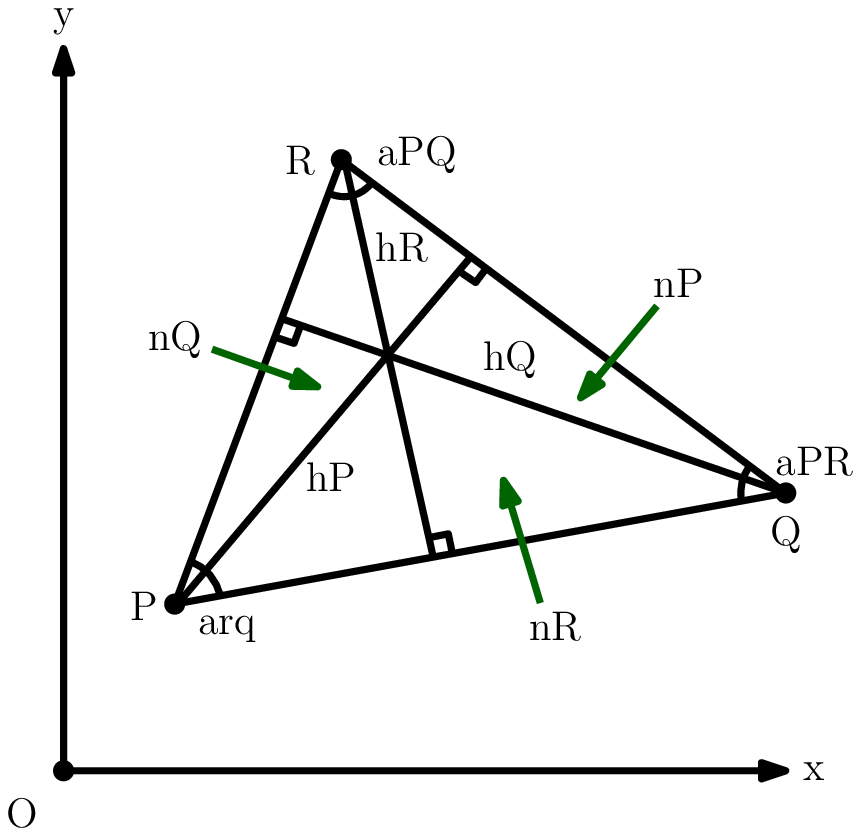}}
  \hspace{0.3in}
  \psfrag{x11}{$\hat{\boldsymbol{x}}_p$}
  \psfrag{x12}{$\hat{\boldsymbol{x}}_{p^{'}}$}
  \psfrag{x21}{$\hat{\boldsymbol{x}}_q$}
  \psfrag{x22}{$\hat{\boldsymbol{x}}_{q^{'}}$}
  \psfrag{x31}{$\hat{\boldsymbol{x}}_r$}
  \psfrag{x23}{$\hat{\boldsymbol{x}}_{r^{'}}$}
  \psfrag{a}{$\beta_{pq,\Omega_e}$}
  \psfrag{b}{$\beta_{pq,\Omega^{'}_e}$}
  \psfrag{e}{$e_{pq}$}
  \psfrag{D1}{$\Omega_e$}
  \psfrag{D2}{$\Omega^{'}_e$}
  \subfigure{\includegraphics[scale=0.81]{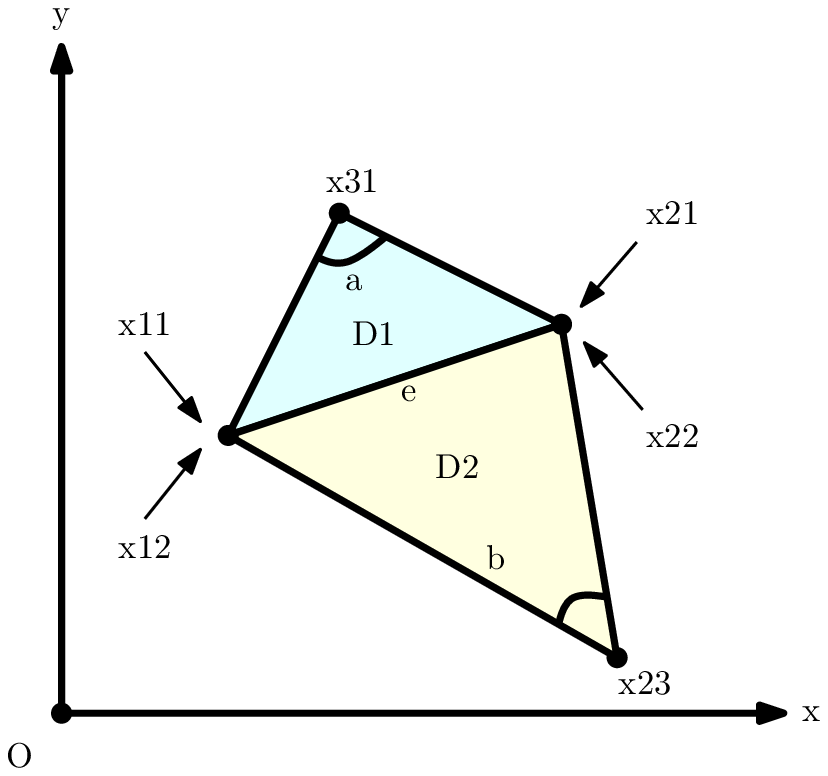}}
  \caption{\textsc{Geometrical properties of an arbitrary simplex in 2D}: The left 
    figure shows a pictorial description of various geometrical properties, such 
    as unit inward normals ($\mathbf{n}_p$, $\mathbf{n}_q$, and $\mathbf{n}_r$), 
    dihedral angles in Euclidean metric ($\beta_{pq}$, $\beta_{pr}$, and $\beta_
    {rq}$), and heights ($h_p$, $h_q$, and $h_r$) of an arbitrary element $\Omega
    _e \in \mathcal{T}_{h}$. Correspondingly, the vertices of this triangle PQR 
    are given by $\hat{\boldsymbol{x}}_p$, $\hat{\boldsymbol{x}}_q$, and $\hat{
    \boldsymbol{x}}_r$. The right figure shows an arbitrary patch of elements 
    $\Omega_e$ and $\Omega^{'}_e$, (which belong to the triangulation $\mathcal{T}
    _{h}$) sharing a common edge $e_{pq}$. The edge $e_{pq}$ connects the coordinates 
    $\hat{\boldsymbol{x}}_p$ (= $\hat{\boldsymbol{x}}_{p^{'}}$) and $\hat{\boldsymbol{x}}
    _q$ (= $\hat{\boldsymbol{x}}_{q^{'}}$). The dihedral angles in Euclidean metric 
    opposite to edge $e_{pq}$ are denoted by $\beta_{pq,\Omega_e}$ and $\beta_{pq,
    \Omega^{'}_e}$.
    \label{Fig:T3_Simplex_GeoProp}}
\end{figure}

\begin{figure}
  \subfigure{\includegraphics[scale=0.44]{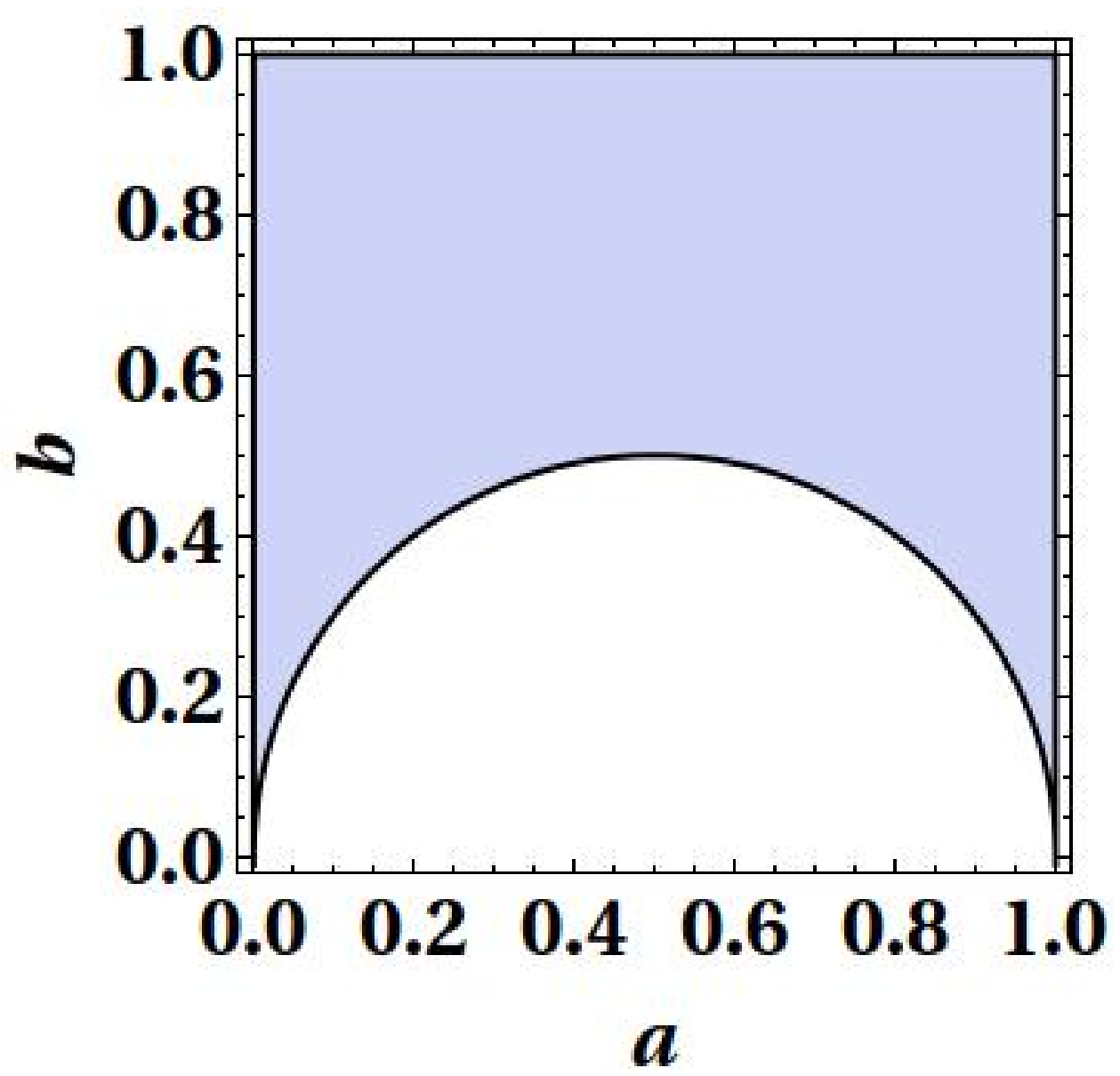}}
  \hspace{0.45in}
  \psfrag{x}{$x$}
  \psfrag{y}{$y$}
  \psfrag{a}{$(0,0)$}
  \psfrag{b}{$(\frac{1}{2},0)$}
  \psfrag{c}{$(1,0)$}
  \psfrag{d}{Feasible region}
  \psfrag{e}{Acute-angled triangle}
  \psfrag{f}{Right-angled triangle}
  \subfigure{\includegraphics[scale=0.75]{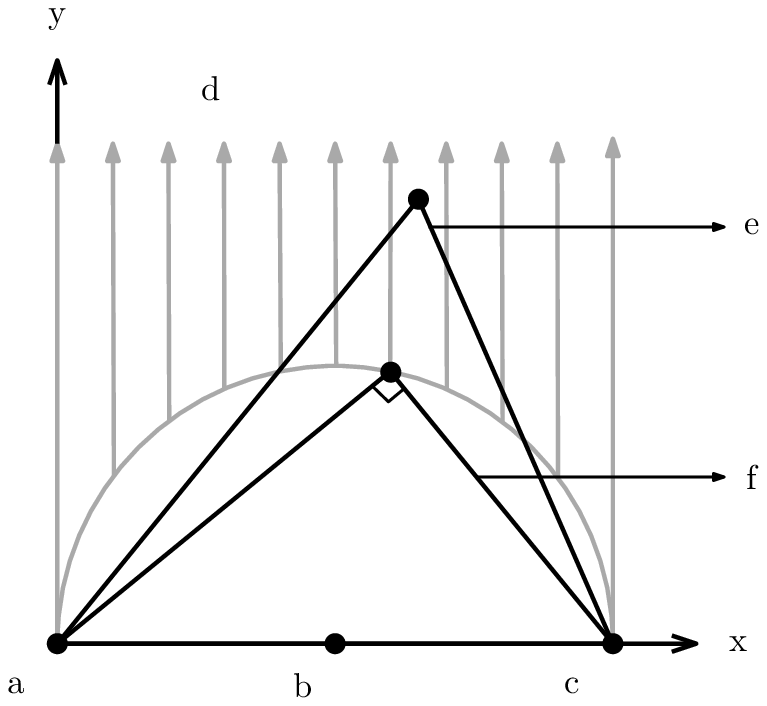}}
  \caption{\textsc{T3 element for heterogeneous isotropic diffusivity}: A 
    pictorial description of the feasible region (left figure) is shown in 
    light blue color. The right figure indicates that the point $(a,b)$ can 
    lie either on the circle with center $(\frac{1}{2},0)$ and radius $\frac{1}{2}$ 
    or outside the circular region. The points within the circular region are 
    infeasible. This results in two possibilities for choosing a T3 element 
    in the realm of the feasible region, which is either a right-angled 
    triangle or an acute-angled triangle.
    \label{Fig:T3_Isotropic_Actual_Element}}
\end{figure} 

\begin{figure}
  \subfigure[Feasible region for $(a,b)$]{\includegraphics[scale=0.44]
  {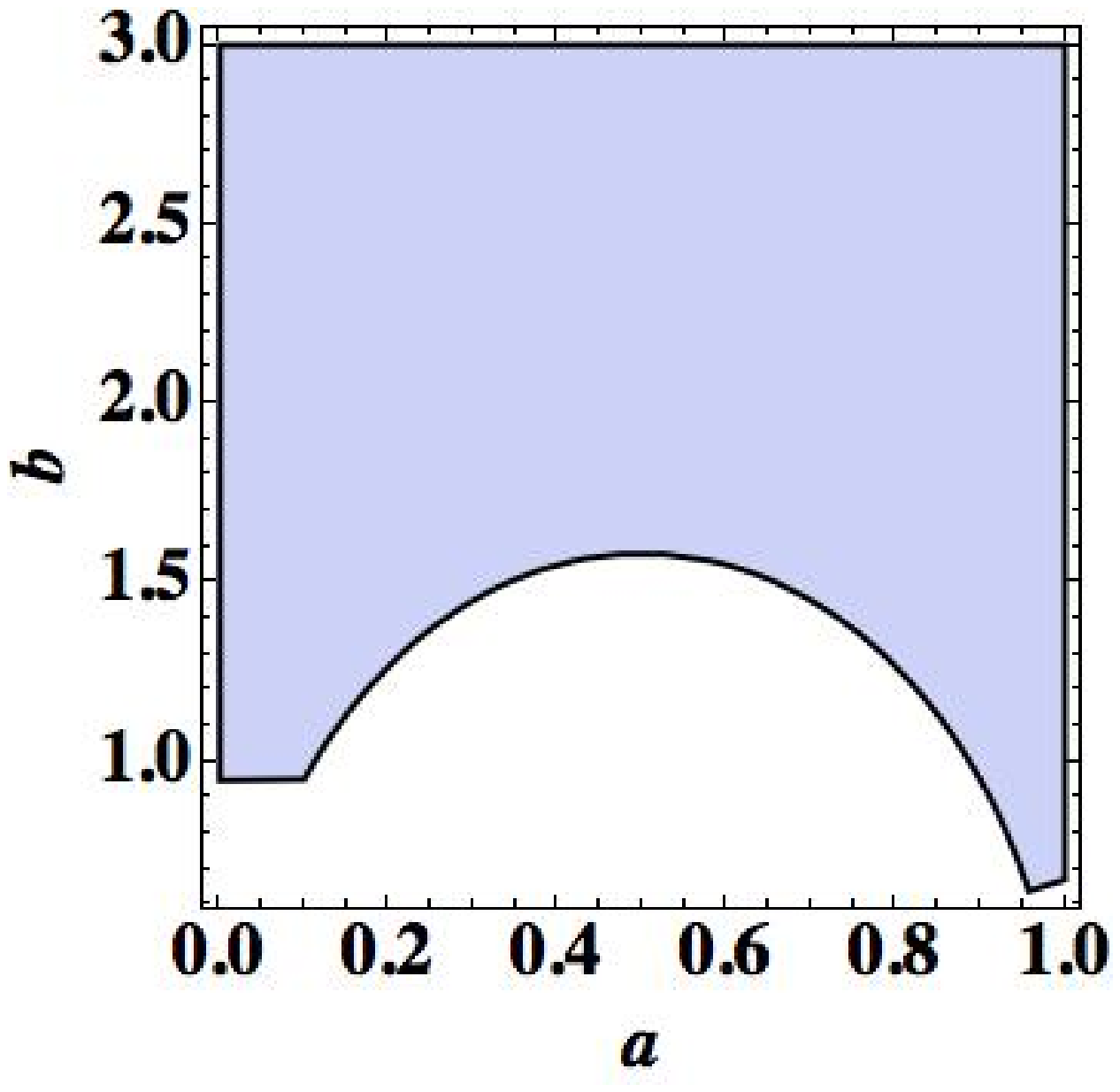}}
  \hspace{0.2in}
  \psfrag{a}{Right-angled triangle}
  \psfrag{b}{Acute-angled triangle}
  \subfigure[Possible T3 elements when $\widetilde{D}_{xy} = 0$]
  {\includegraphics[scale=0.44]{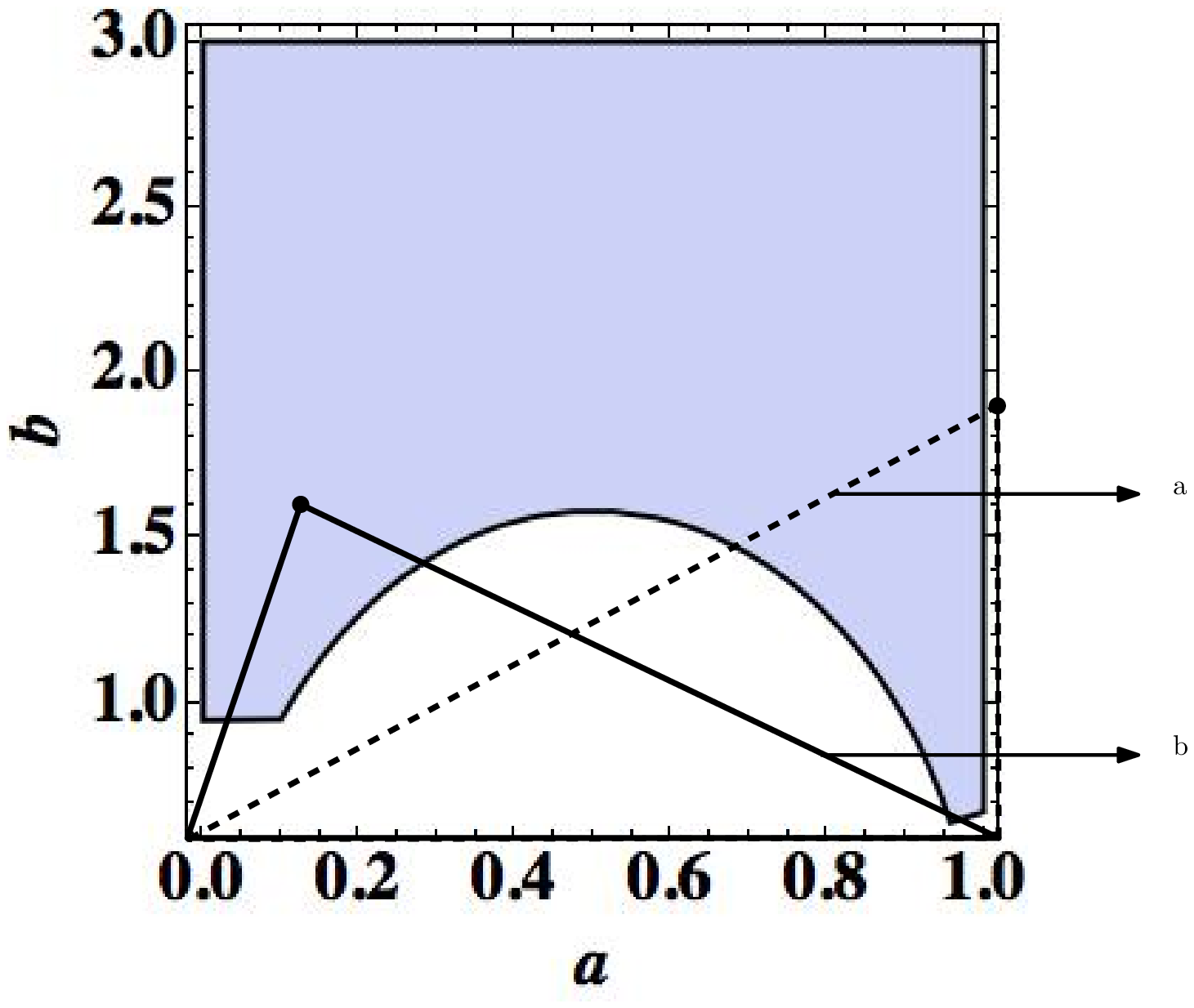}}
  \caption{\textsc{T3 element for anisotropic diffusivity when} 
    $\widetilde{D}_{xy} = 0$: A pictorial description of the feasible 
    region (left figure) for the coordinates $(a,b)$ is indicated in 
    light blue color. The numerical values for the two parameters, which 
    decide the feasible region, are chosen to be $\epsilon = 10$ and 
    $\eta = 0$. In this case, the right figure indicates that acute-angled 
    and right-angled triangles are possible. As $\epsilon$ increases, the 
    coordinate $b$ has to increase proportionally to satisfy the inequality 
    given by the equation \eqref{Eqn:Aniso_Diagonal_Entry_12}.
    \label{Fig:T3_AnisoDiff_Dxy_EqualToZero}}
\end{figure}

\begin{figure}
  \centering
  \psfrag{M1}{ $(0,0)$}
  \psfrag{M2}{$(1,0)$}
  \psfrag{M3}{$(0,1)$}
  \psfrag{K1}{$\Omega_{\mathrm{\tiny ref}}$}
  \psfrag{X1}{$\xi_1$}
  \psfrag{Y1}{$\xi_2$}
  \psfrag{S1}{Shape functions}
  \psfrag{S2}{$\boldsymbol{x} = \hat{\boldsymbol{X}}^{\mathrm{T}} 
              \boldsymbol{N}^{\mathrm{T}}$}
  \psfrag{Ele1}{{\small (a) Reference element}}
  \psfrag{M1}{$(0,0)$}
  \psfrag{A1}{P}
  \psfrag{A2}{Q}
  \psfrag{A3}{R}
  \psfrag{K2}{$\Omega_{e}$}
  \psfrag{X2}{$x$}
  \psfrag{Y2}{$y$}
  \psfrag{Ele2}{{\small (b) Background mesh element}}
  \psfrag{D11}{{\small Isotropic}}
  \psfrag{D12}{{\small Diffusivity}}
  \psfrag{D21}{{\small Anisotropic}}
  \psfrag{D22}{{\small Diffusivity}}
  \psfrag{ID1}{$\mathrm{P}_1$}
  \psfrag{ID2}{$\mathrm{Q}_1$}
  \psfrag{ID3}{$\mathrm{R}_1$}
  \psfrag{K2}{$\Omega_{e}$}
  \psfrag{Acute11}{(c) {\small Delaunay/non-obtuse/acute}}
  \psfrag{Acute12}{{\small triangle in Euclidean metric}}
  \psfrag{AD1}{$\mathrm{P}_2$}
  \psfrag{AD2}{$\mathrm{Q}_2$}
  \psfrag{AD3}{$\mathrm{R}_2$}
  \psfrag{K2}{$\Omega_{e}$}
  \psfrag{Acute21}{(d) {\small Delaunay-type/non-obtuse/acute}}
  \psfrag{Acute22}{{\small triangle in $\widetilde{\boldsymbol{D}}_{\Omega_{e}}^{-1}$ metric}}
  \includegraphics[scale=0.75]{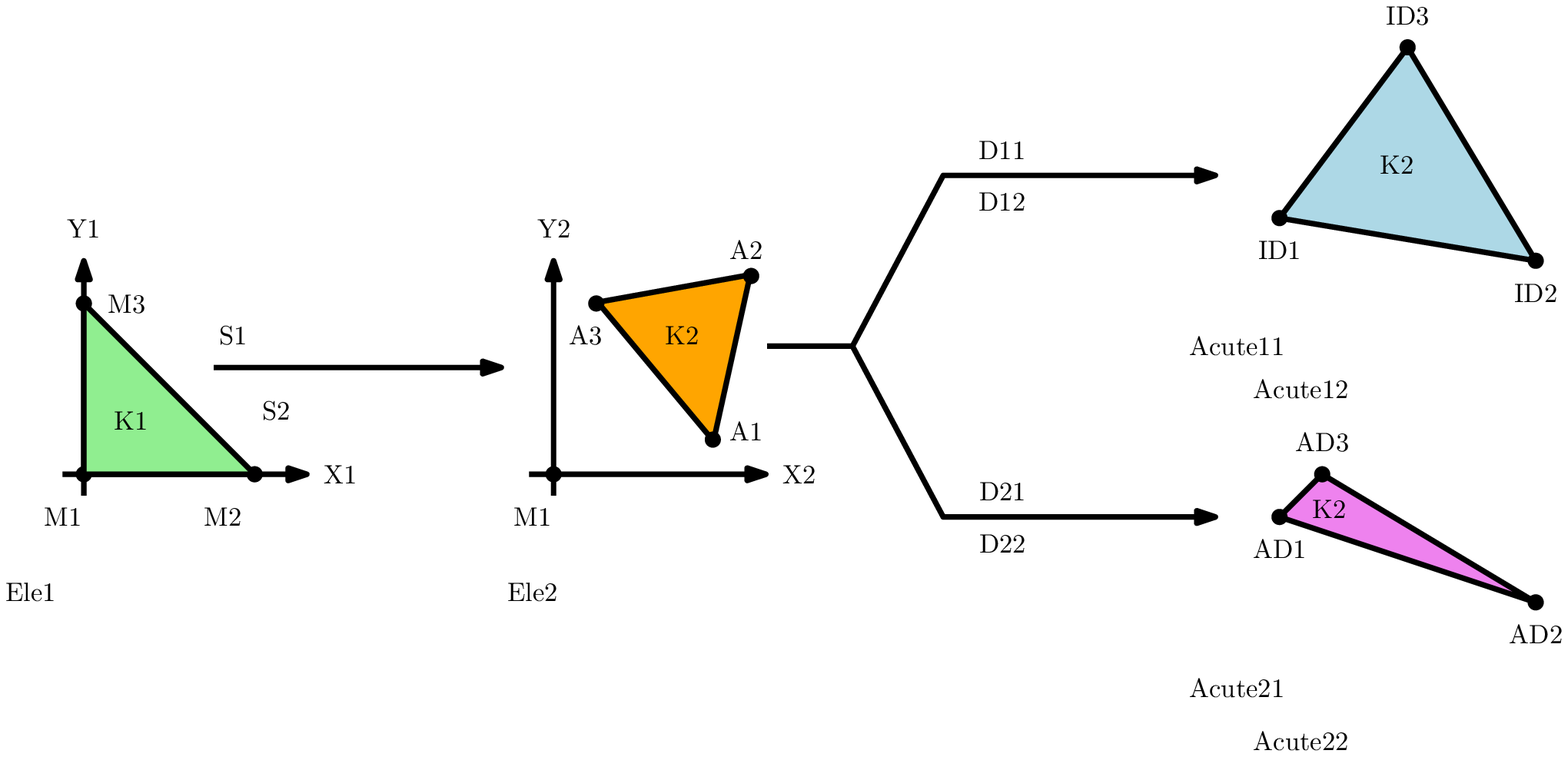}
  \caption{\textsc{DMP-based T3 elements for heterogeneous isotropic and anisotropic 
    diffusivity}: A pictorial description of a mesh generation procedure to
    obtain a new triangulation using a given background mesh. This new simplicial 
    mesh satisfies various discrete properties as contrary to the background mesh.    
    The procedure to obtain such a triangulation is iterative and is based on 
    Theorems \ref{Thm:Aniso_Weak_Condition} and \ref{Thm:Delaunay_Weak_Condition}.    
    \label{Fig:Isotropic_Anisotropic_T3_Element}}
\end{figure}

\begin{figure}
  \subfigure[Feasible region for $(a,b)$]{\includegraphics[scale=0.44]
  {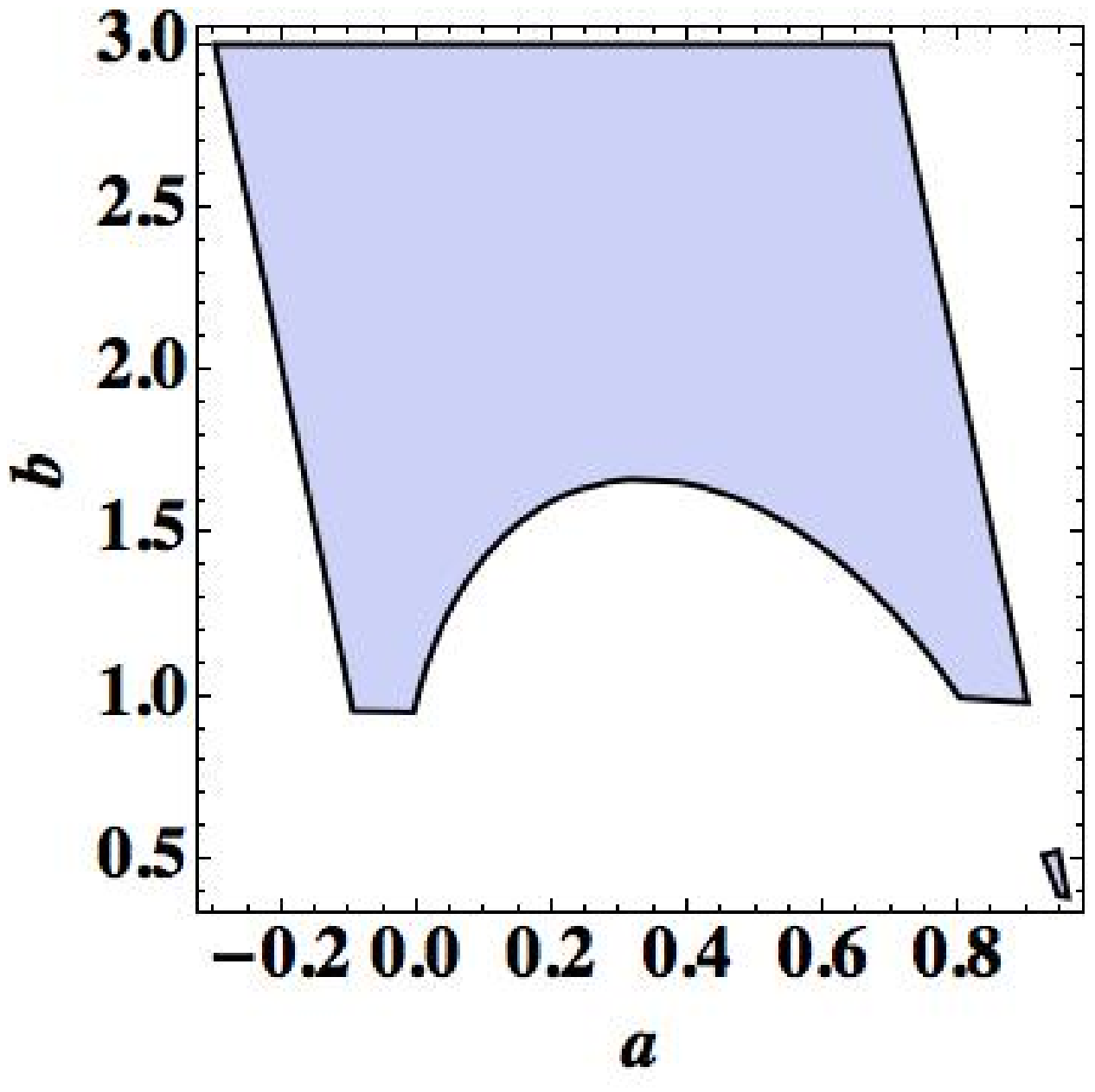}}
  \hspace{0.2in}
  \psfrag{a}{Acute-angled triangle}
  \psfrag{b}{Right-angled triangle}
  \psfrag{c}{Obtuse-angled triangle}
  \subfigure[Possible T3 elements when $\widetilde{D}_{xy} < 0$]
  {\includegraphics[scale=0.44]{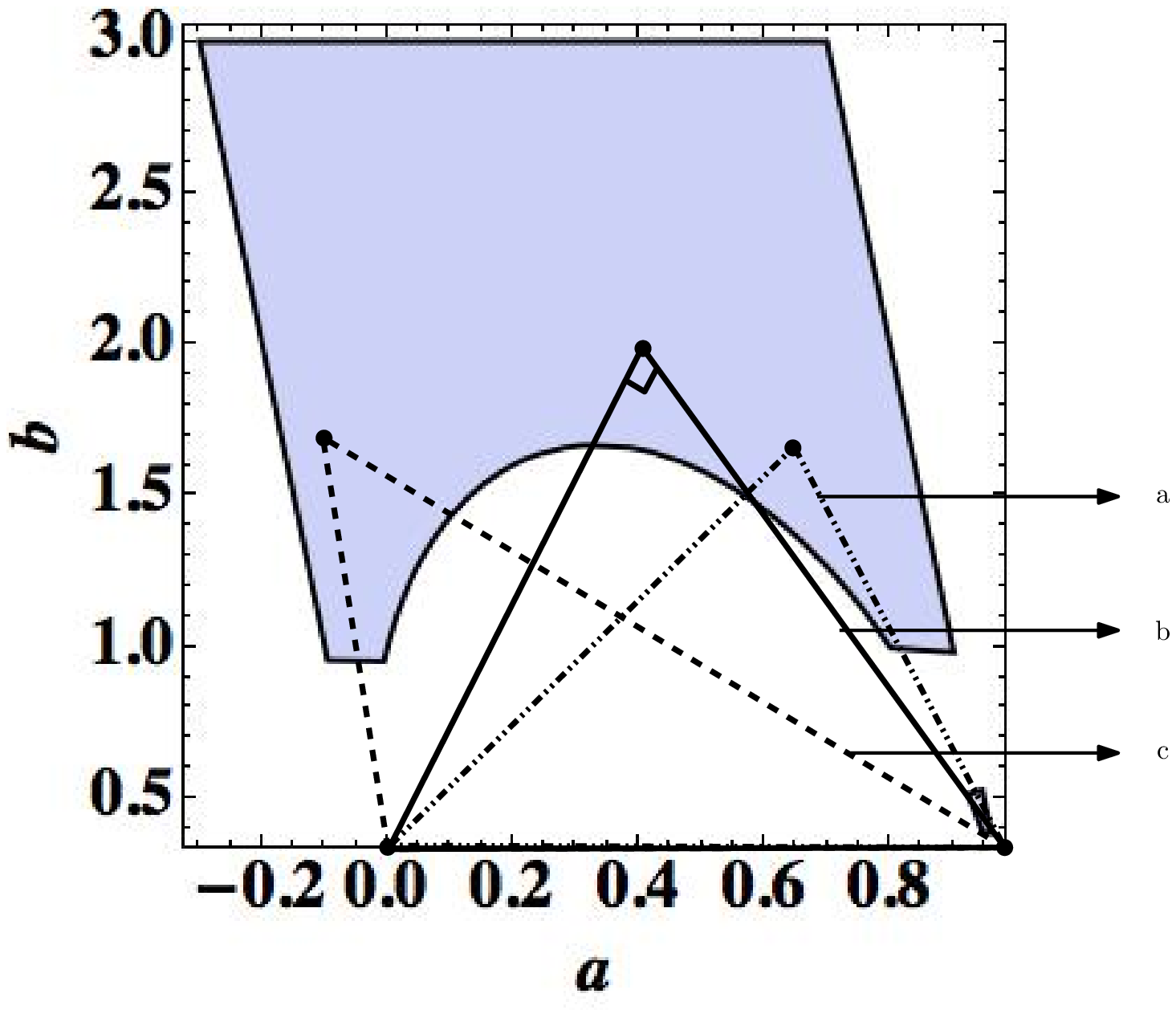}}
  \caption{\textsc{T3 element for anisotropic diffusivity when} 
    $\widetilde{D}_{xy} < 0$: The left figure indicates the 
    feasible region for the coordinates $(a,b)$ in light blue 
    color. The right figure indicates that when $\widetilde{D}_{xy} 
    < 0$, the (Euclidean metric) dihedral angles in the T3 
    element can be acute-angled or right-angled or even obtuse-angled. 
    In this case, we have chosen $\epsilon = 10$ and $\eta = -1$. 
    For a fixed $\eta$ as $\epsilon$ increases, the value of 
    coordinate $b$ also increases. So it is a daunting task 
    to find a viable T3 element. One can also notice that the 
    feasible region is \emph{not} contiguous.      
    \label{Fig:T3_AnisoDiff_Dxy_LessThanZero}}
\end{figure}

\begin{figure}
  \subfigure[Feasible region for $(a,b)$]{\includegraphics[scale=0.44]
  {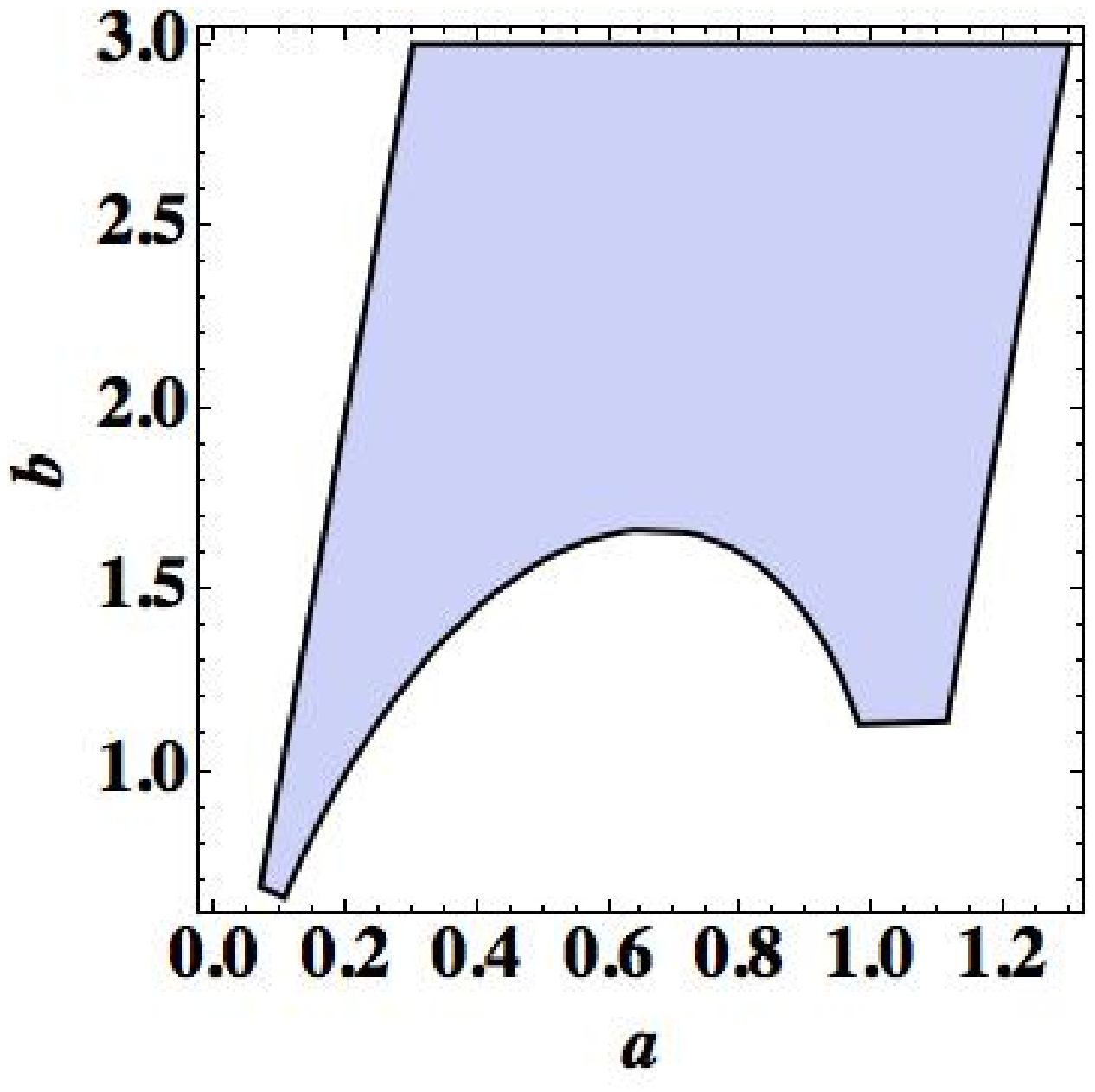}}
  \hspace{0.2in}
  \psfrag{a}{Obtuse angled triangle}
  \psfrag{b}{Right angled triangle}
  \psfrag{c}{Acute angled triangle}
  \subfigure[Possible T3 elements when $\widetilde{D}_{xy} > 0$]
  {\includegraphics[scale=0.44]{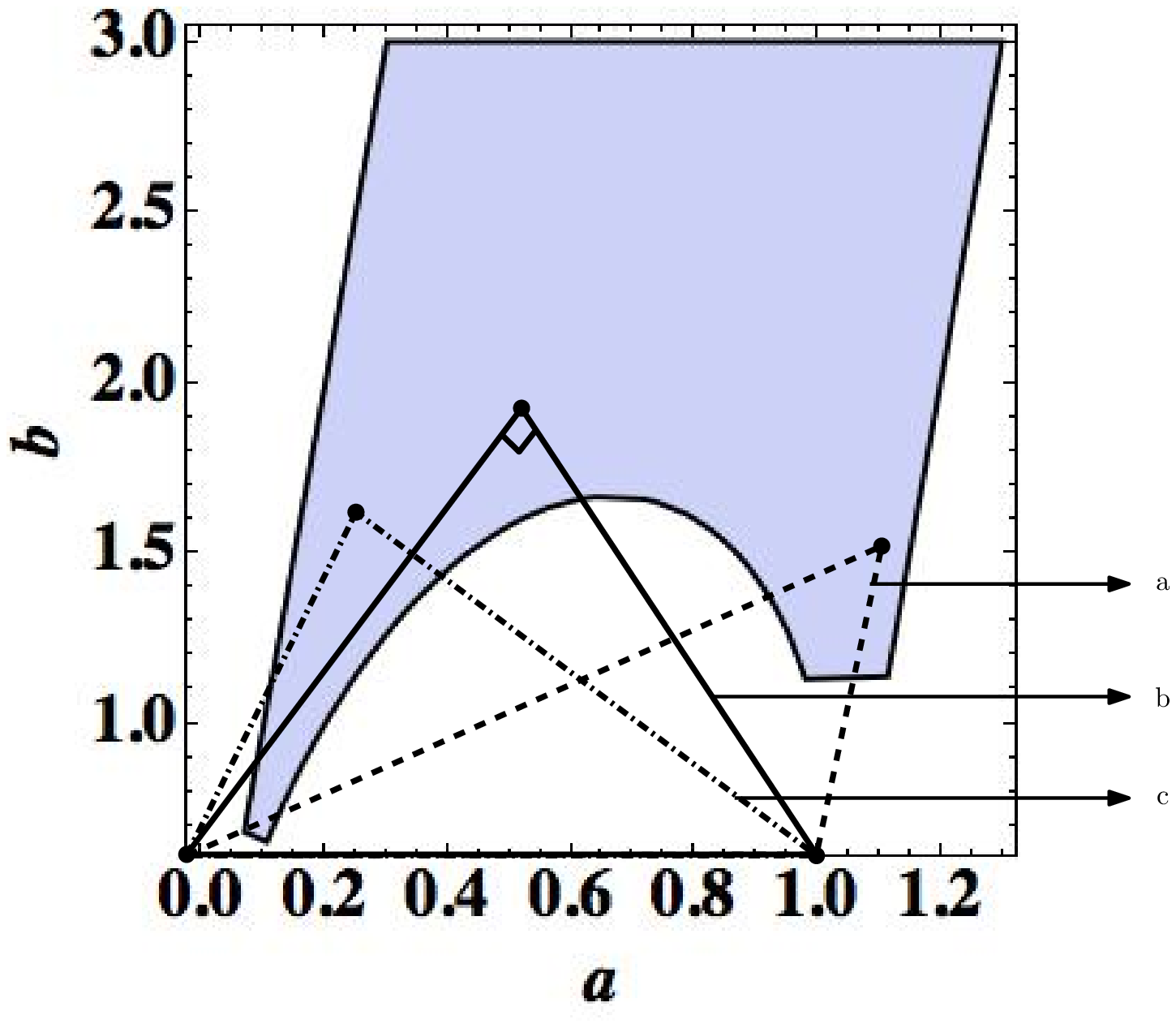}}
  \caption{\textsc{T3 element for anisotropic diffusivity when} 
    $\widetilde{D}_{xy} > 0$: The left figure indicates the 
    feasible region for the coordinates $(a,b)$ in light blue 
    color. The right figure indicates that when $\widetilde{D}_
    {xy} > 0$, the (Euclidean metric) dihedral angles in the T3 
    element can be acute-angled or right-angled or even obtuse-angled. 
    In this case, we have chosen $\epsilon = 10$ and $\eta = 1$. 
    For a fixed $\eta$ as $\epsilon$ increases, the value of 
    coordinate $b$ also increases. For higher values of $\epsilon$, 
    it is very difficult to find a suitable T3 element, which can 
    mesh any given computational domain.
    \label{Fig:T3_AnisoDiff_Dxy_GreaterThanZero}}
\end{figure}

\begin{figure}
  \subfigure[$\epsilon = 2 \quad \mathrm{and} \quad \eta = -1$]
  {\includegraphics[scale=0.425]{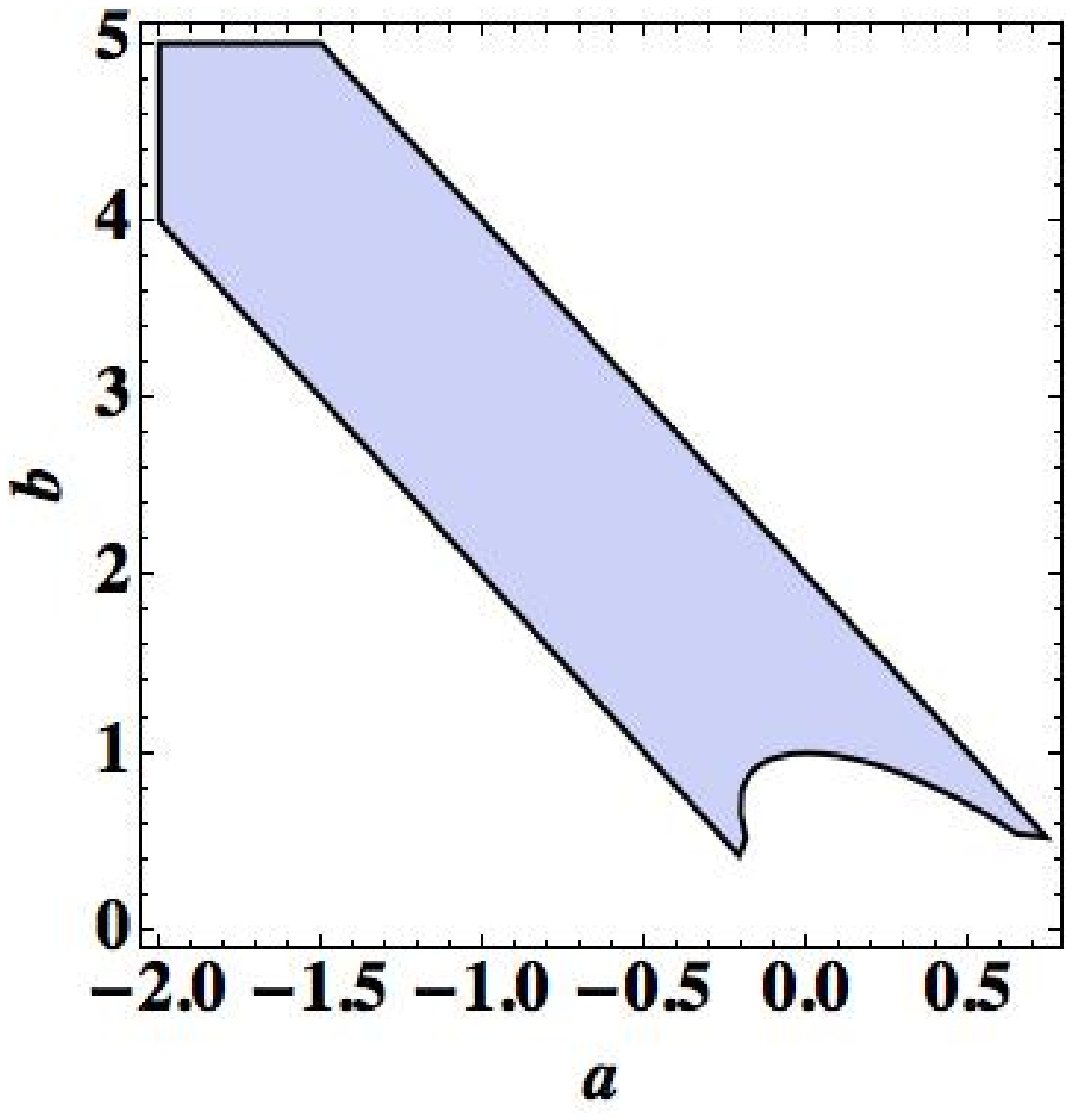}}
  \hspace{0.2in}
  \subfigure[$\epsilon = 10 \quad \mathrm{and} \quad \eta = -1$]
  {\includegraphics[scale=0.425]{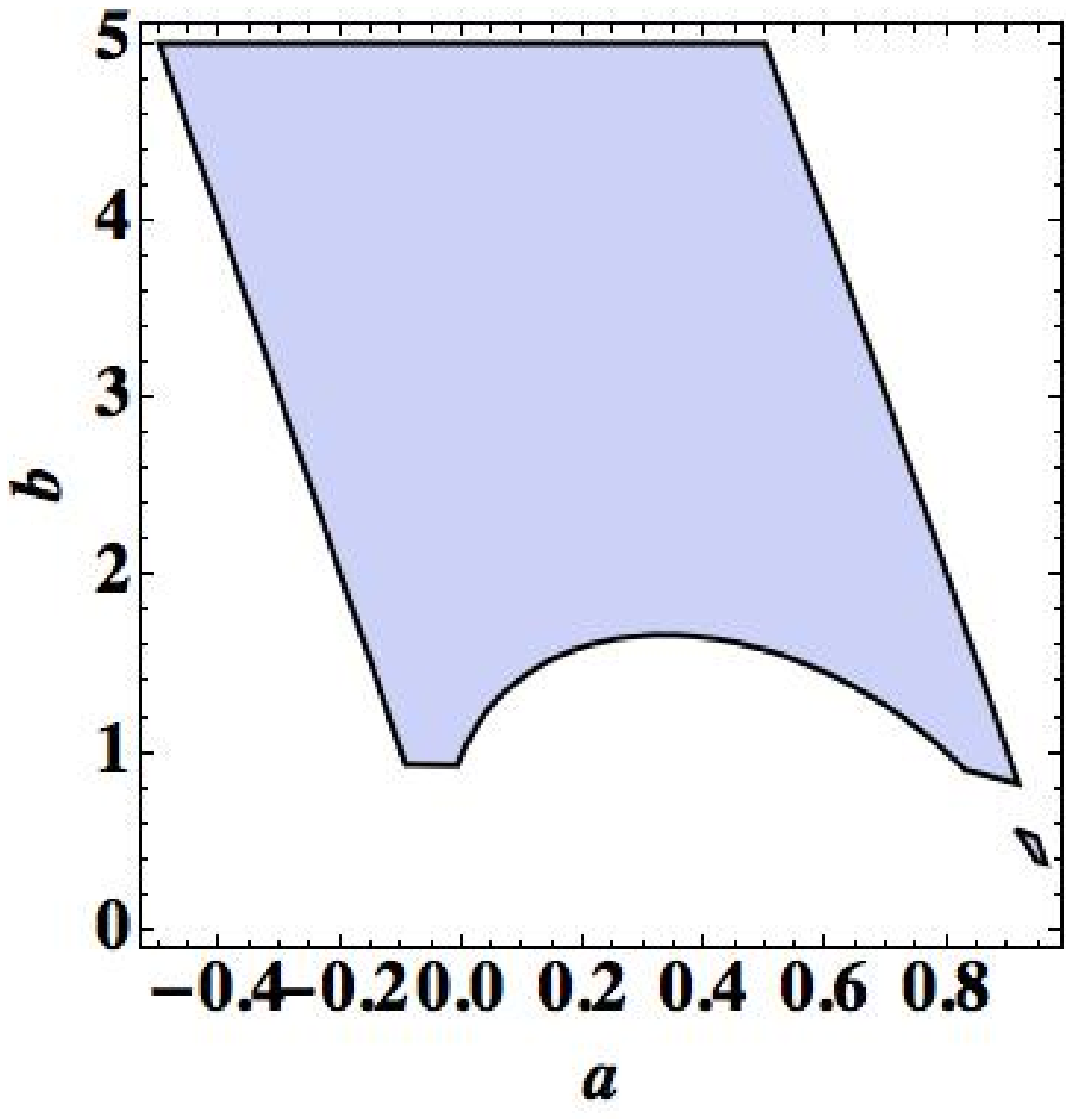}}
  \hspace{0.2in}
  \subfigure[$\epsilon = 50 \quad \mathrm{and} \quad \eta = -1$]
  {\includegraphics[scale=0.425]{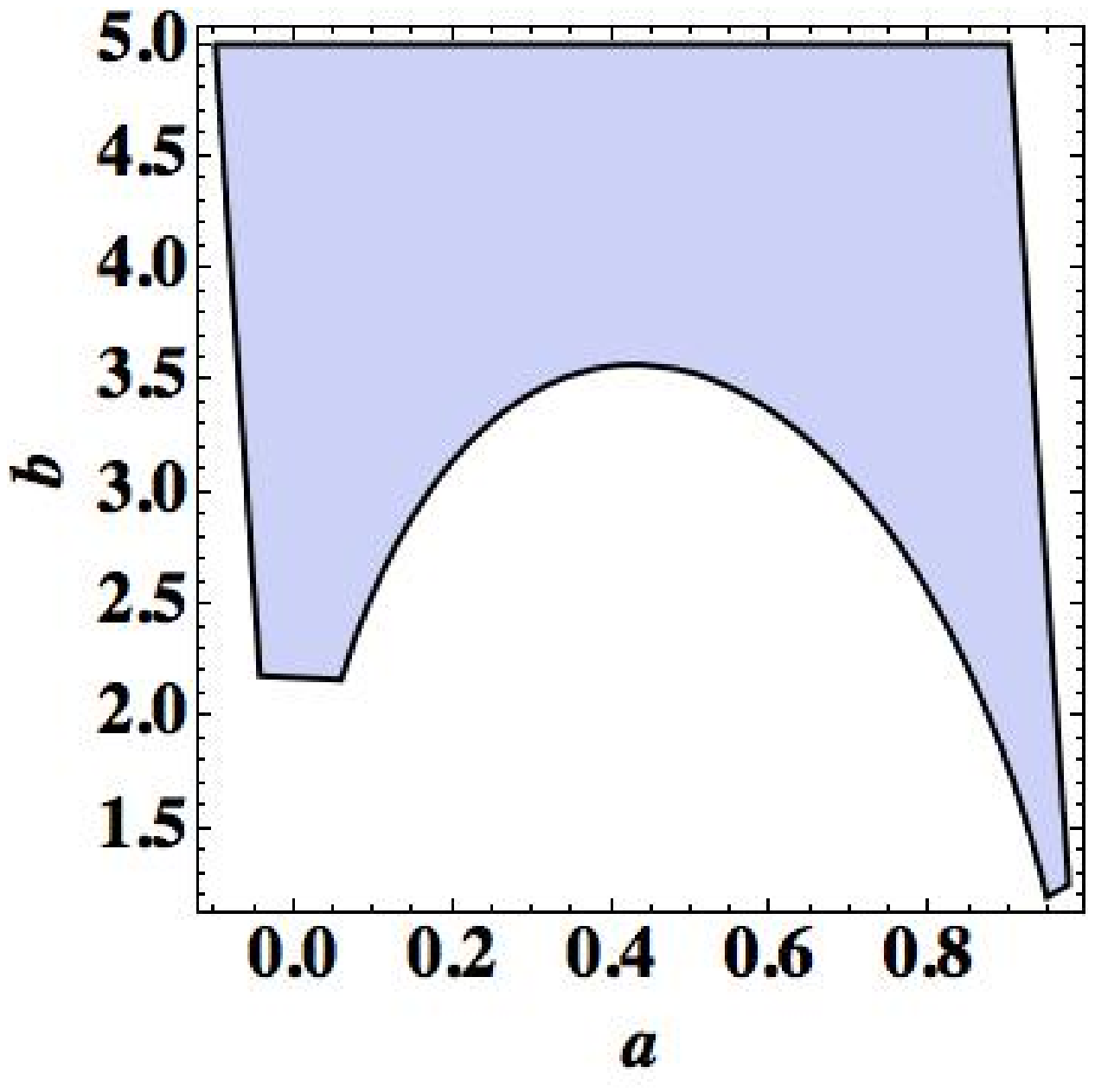}}
  \hspace{0.2in}
  \subfigure[$\epsilon = 100 \quad \mathrm{and} \quad \eta = -1$]
  {\includegraphics[scale=0.425]{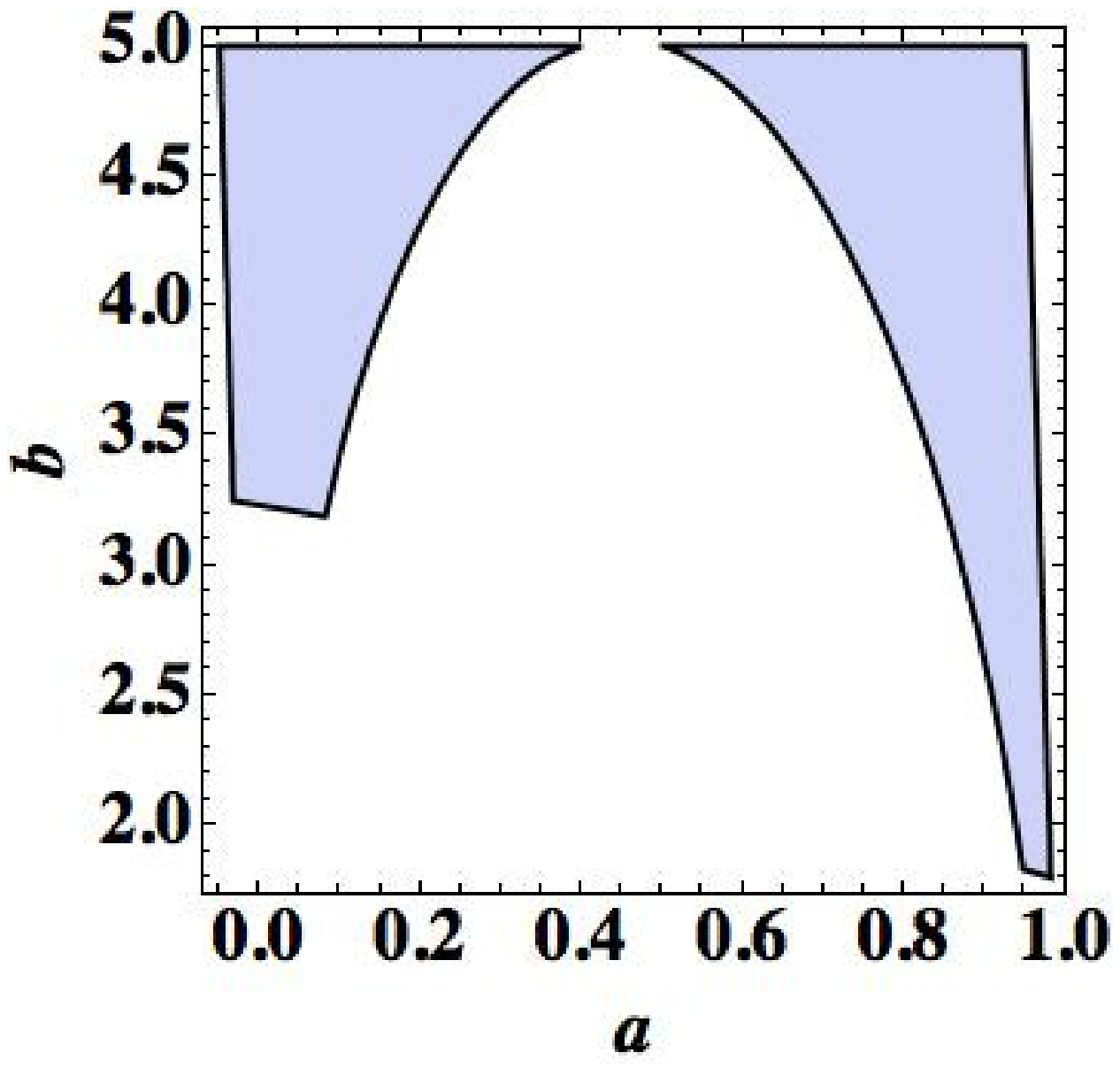}}
  \hspace{0.2in}
  \subfigure[$\epsilon = 200 \quad \mathrm{and} \quad \eta = -1$]
  {\includegraphics[scale=0.425]{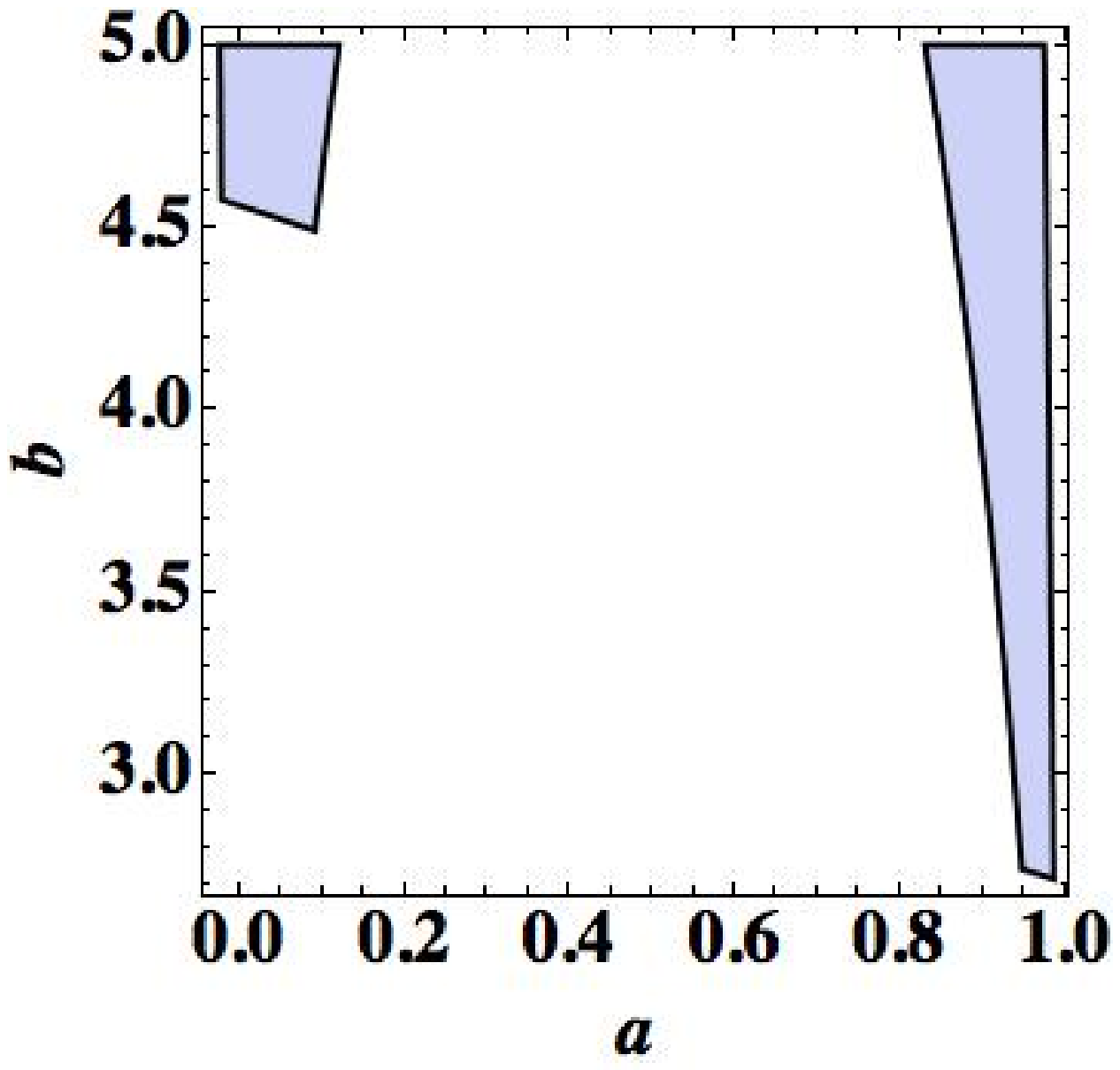}}
  \hspace{0.2in}
  \subfigure[$\epsilon = 500 \quad \mathrm{and} \quad \eta = -1$]
  {\includegraphics[scale=0.425]{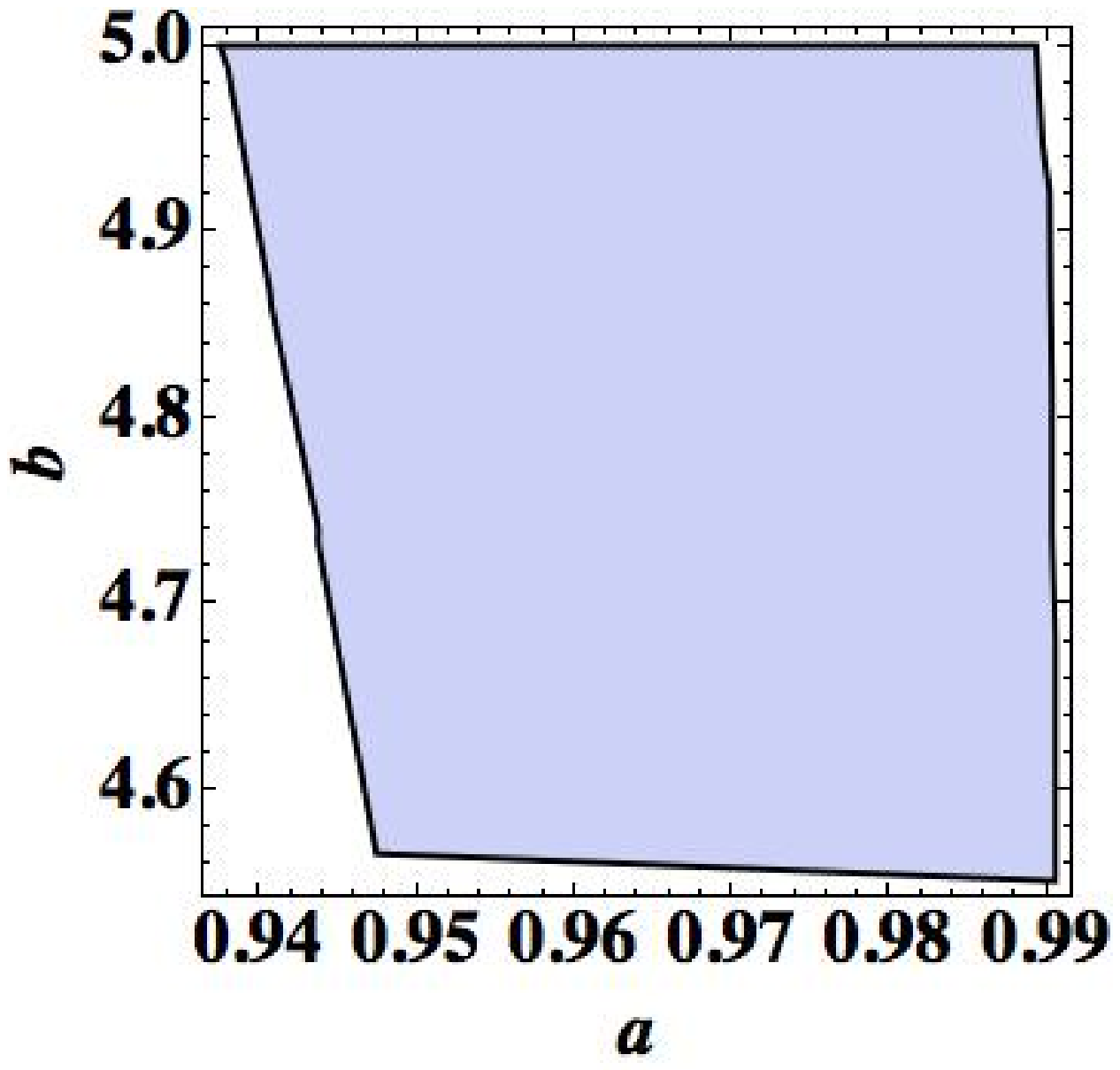}}
  \caption{\textsc{T3 element for fixed $\eta$ and varying $\epsilon$}: 
    A pictorial description of the feasible region (light blue color) 
    for a fixed $\eta$ and varying $\epsilon$. Analysis is performed 
    for $\eta = -1$ and $\epsilon = \{ 2, 10, 50, 100, 200, 500\}$. 
    It is evident there is a drastic variation in the feasible region 
    as $\epsilon$ increases.
    \label{Fig:T3_ConstEta_VaryEpsilon}}
\end{figure}

\begin{figure}
  \subfigure[$\epsilon = 100 \quad \mathrm{and} \quad \eta = -2$]
  {\includegraphics[scale=0.425]{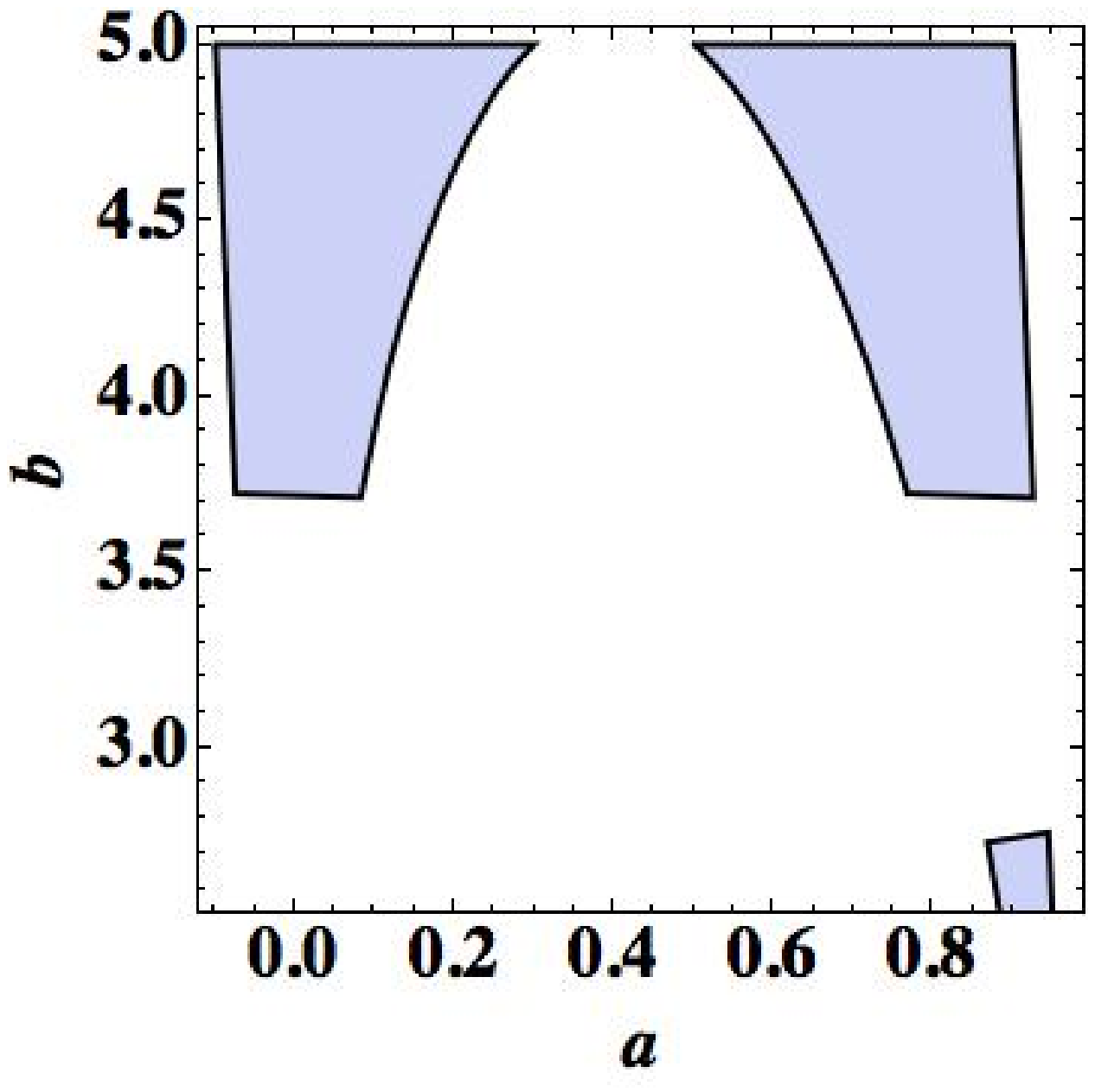}}
  \hspace{0.2in}
  \subfigure[$\epsilon = 100 \quad \mathrm{and} \quad \eta = 2$]
  {\includegraphics[scale=0.425]{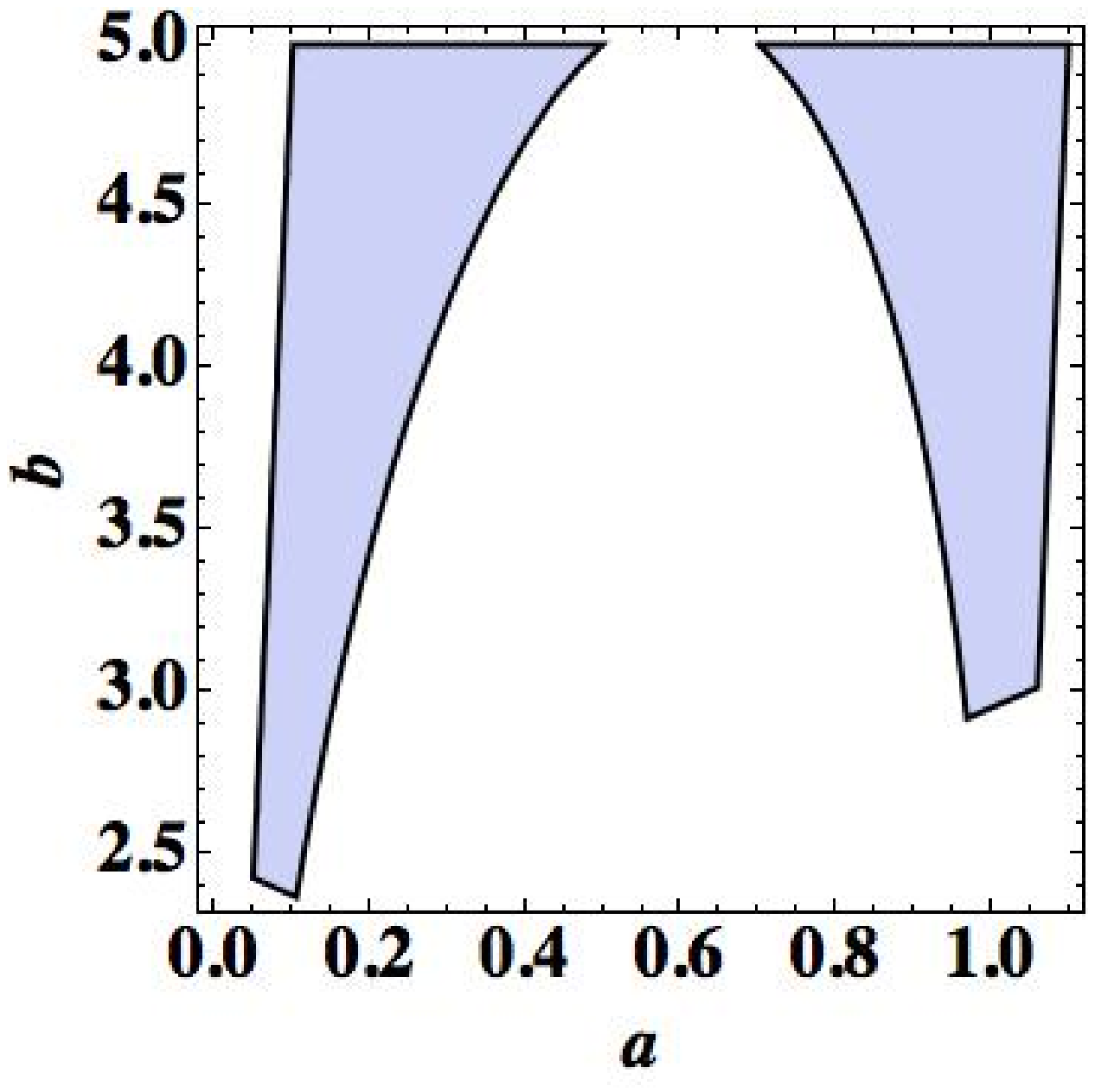}}
  \hspace{0.2in}
  \subfigure[$\epsilon = 100 \quad \mathrm{and} \quad \eta = -4$]
  {\includegraphics[scale=0.425]{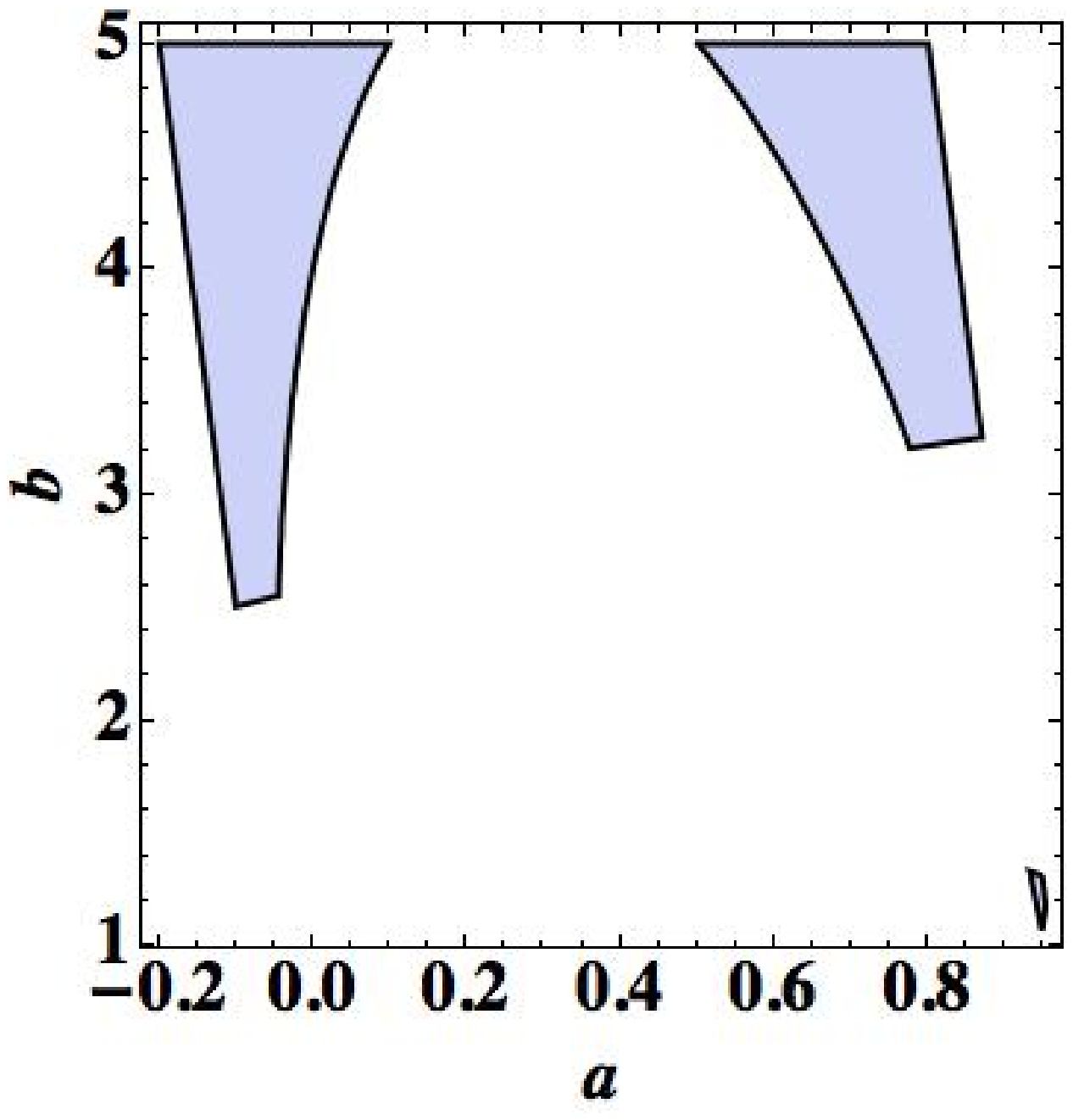}}
  \hspace{0.2in}
  \subfigure[$\epsilon = 100 \quad \mathrm{and} \quad \eta = 4$]
  {\includegraphics[scale=0.425]{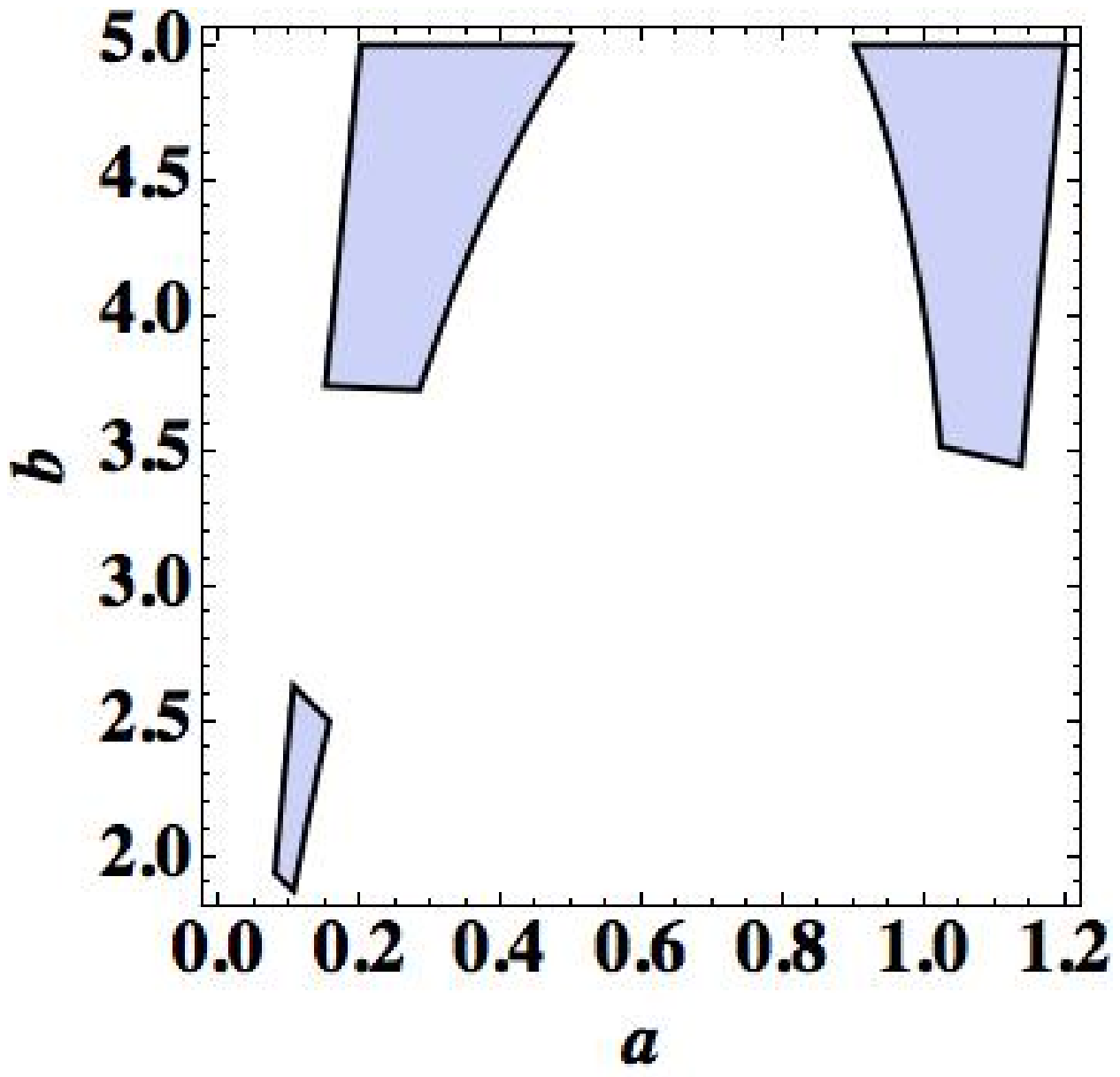}}
  \hspace{0.2in}
  \subfigure[$\epsilon = 100 \quad \mathrm{and} \quad \eta = -8$]
  {\includegraphics[scale=0.425]{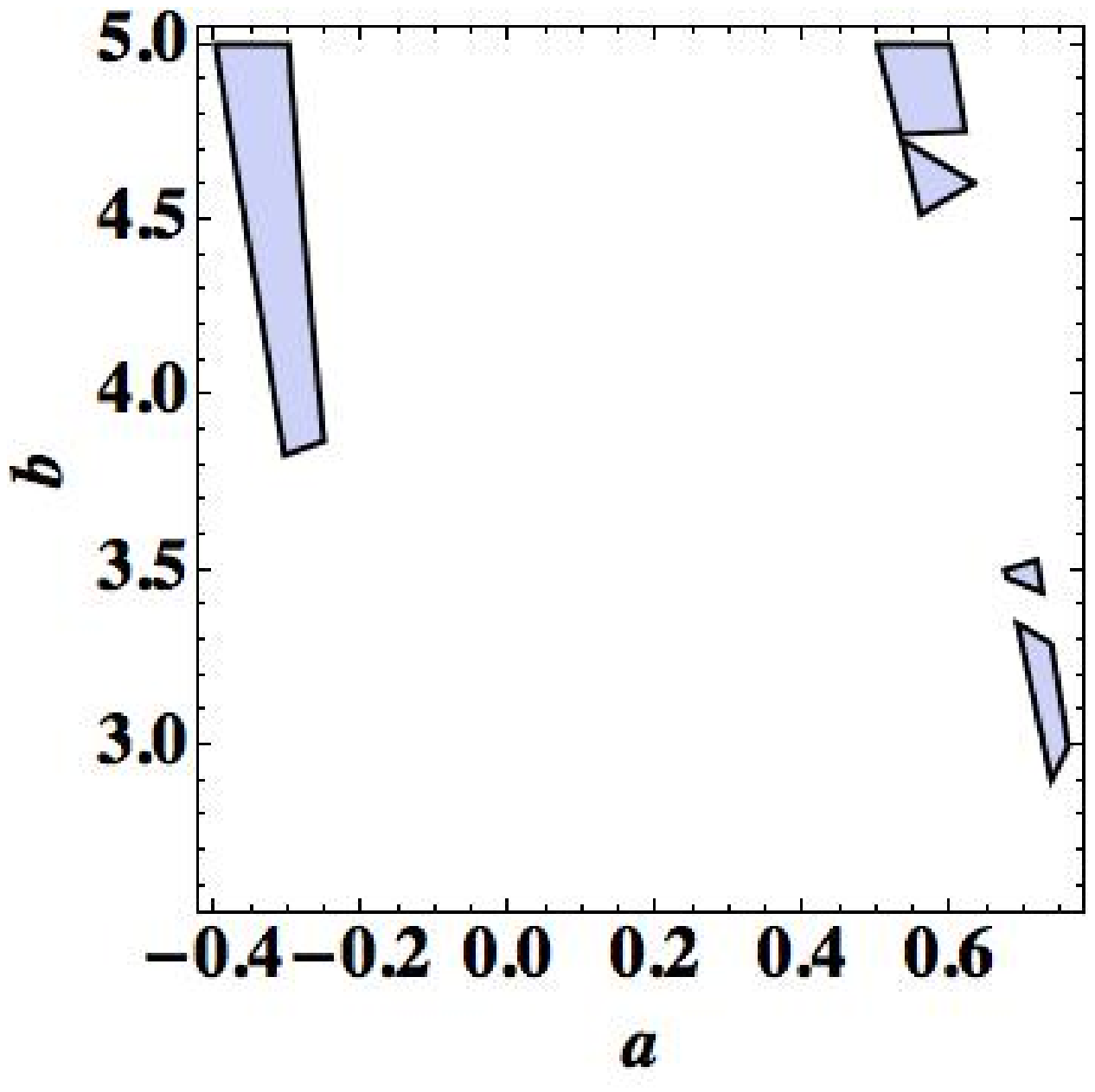}}
  \hspace{0.2in}
  \subfigure[$\epsilon = 100 \quad \mathrm{and} \quad \eta = 8$]
  {\includegraphics[scale=0.425]{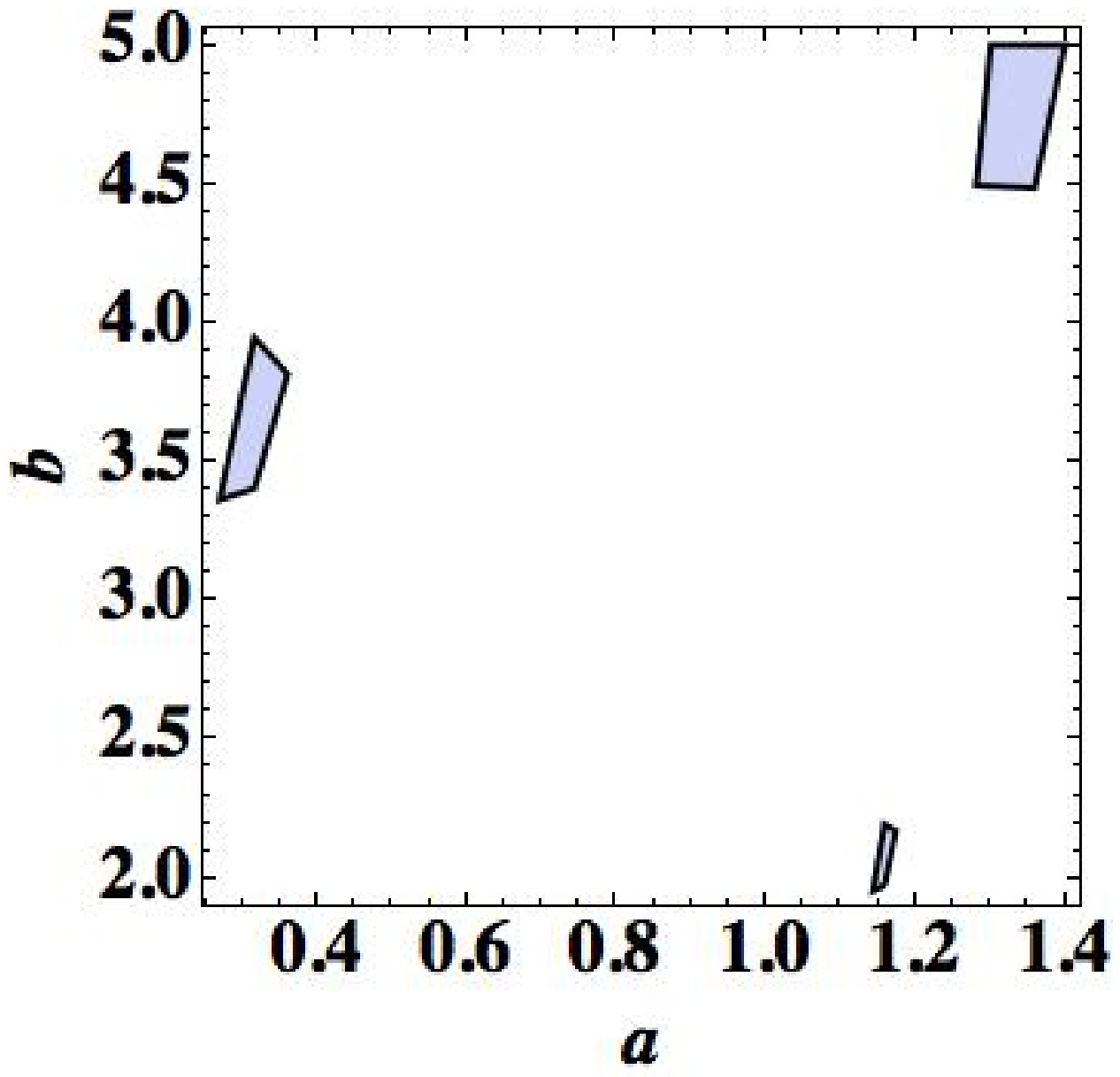}}
  \caption{\textsc{T3 element for fixed $\epsilon$ and varying $\eta$}: 
    A pictorial description of the feasible region (light blue color) 
    for a fixed $\epsilon$ and varying $\eta$. Analysis is performed 
    for $\epsilon = 100$ and $\eta = \{ -8, -4, -2, 2, 4, 8\}$. It is 
    evident there is considerable variation in the feasible region as 
    $\eta$ changes. Also, there is no fixed pattern on this variation 
    about $\eta$.
    \label{Fig:T3_ConstEpsilon_VaryEta}}
\end{figure}

\begin{figure}
  \subfigure[Delaunay-Voronoi mesh: $Nv = 539$ and $Nele = 906$]
  {\includegraphics[scale=0.35]
  {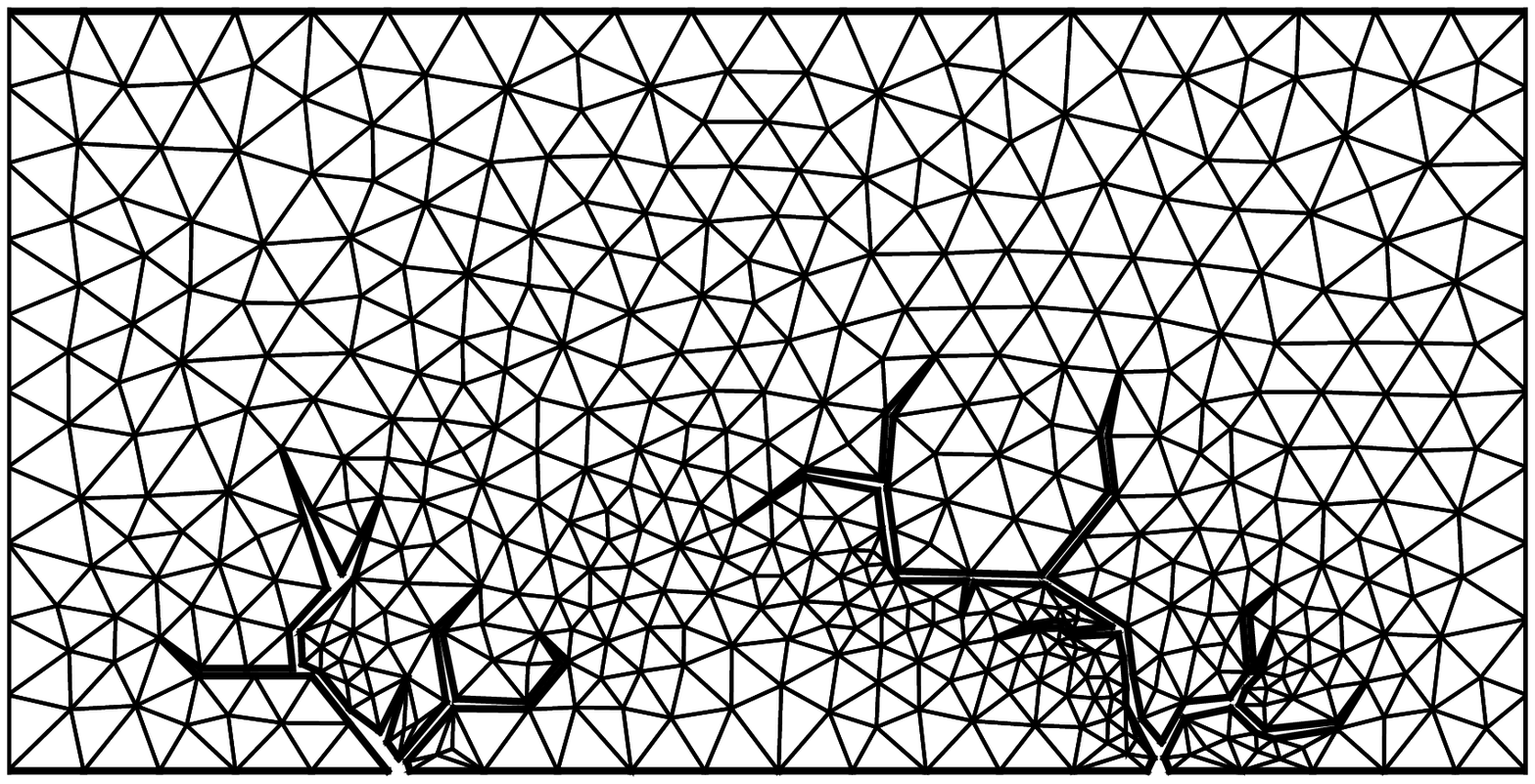}}
  \subfigure[$\mathbf{v} = (0,0)$ and $\alpha = 0$]
  {\includegraphics[scale=0.35]
  {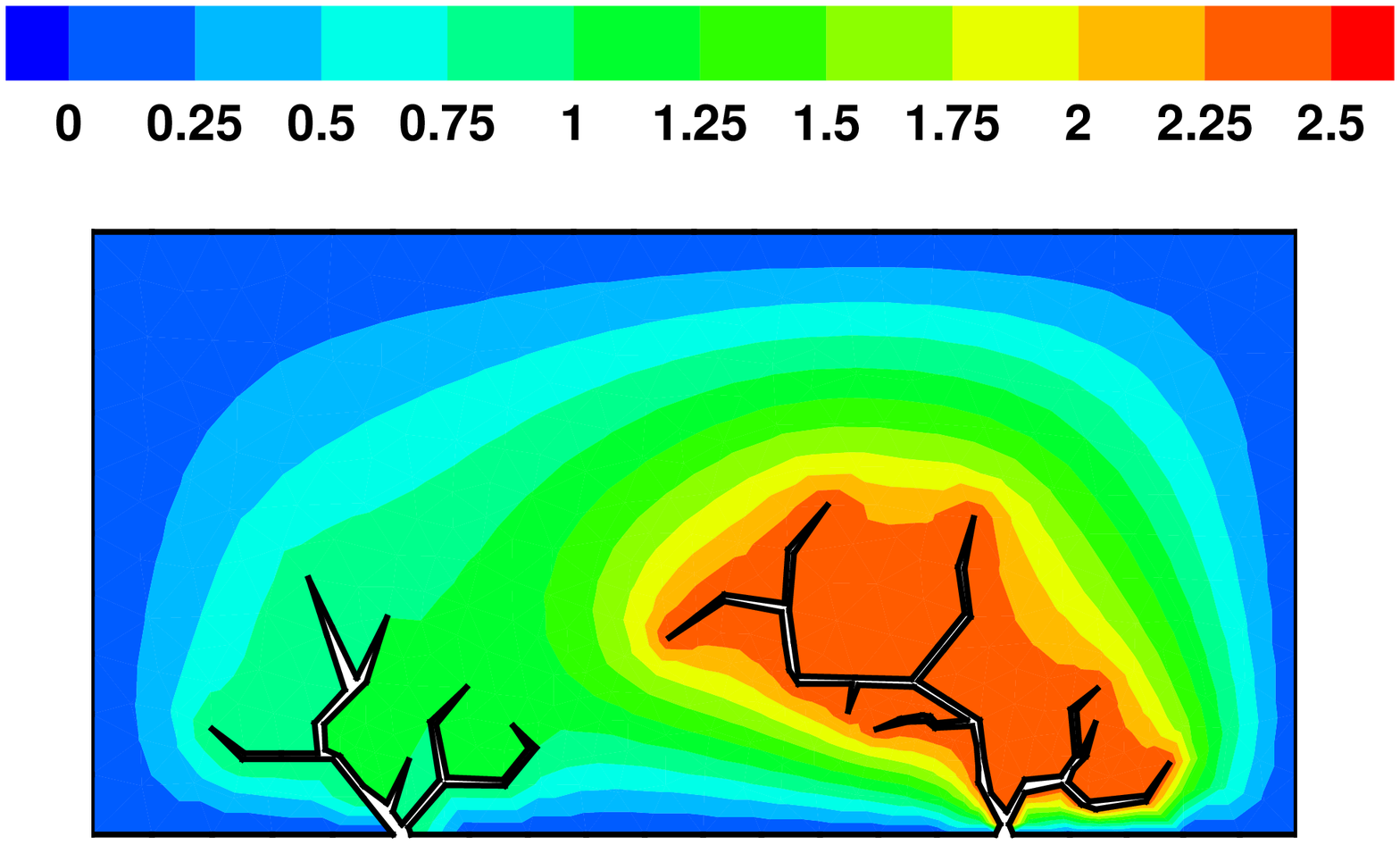}}
  \subfigure[$\mathbf{v} = (0.1,1.0)$ and $\alpha = 0$]
  {\includegraphics[scale=0.35]
  {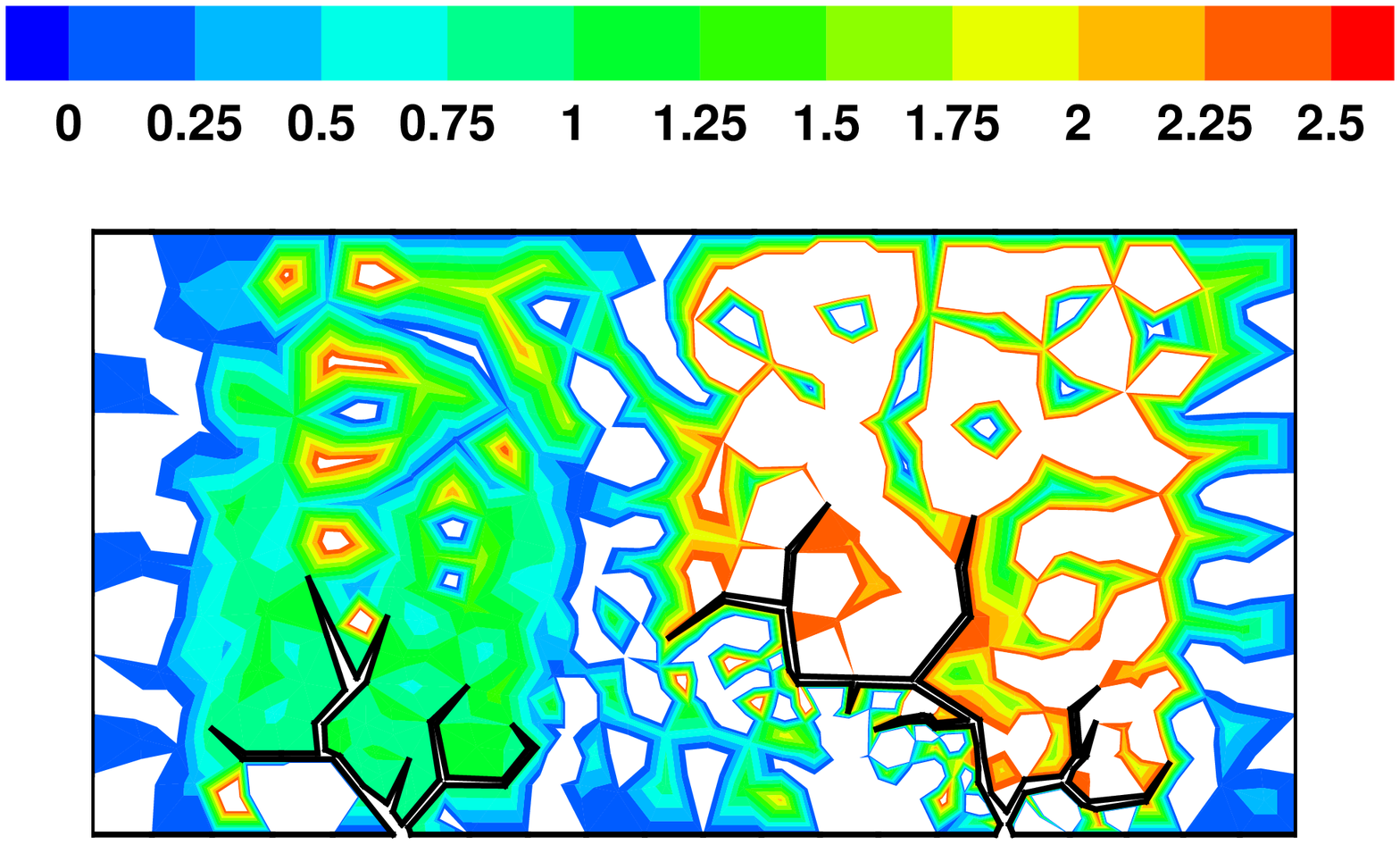}}
  \subfigure[$\mathbf{v} = (0.1,1.0)$ and $\alpha = 1.0$]
  {\includegraphics[scale=0.35]
  {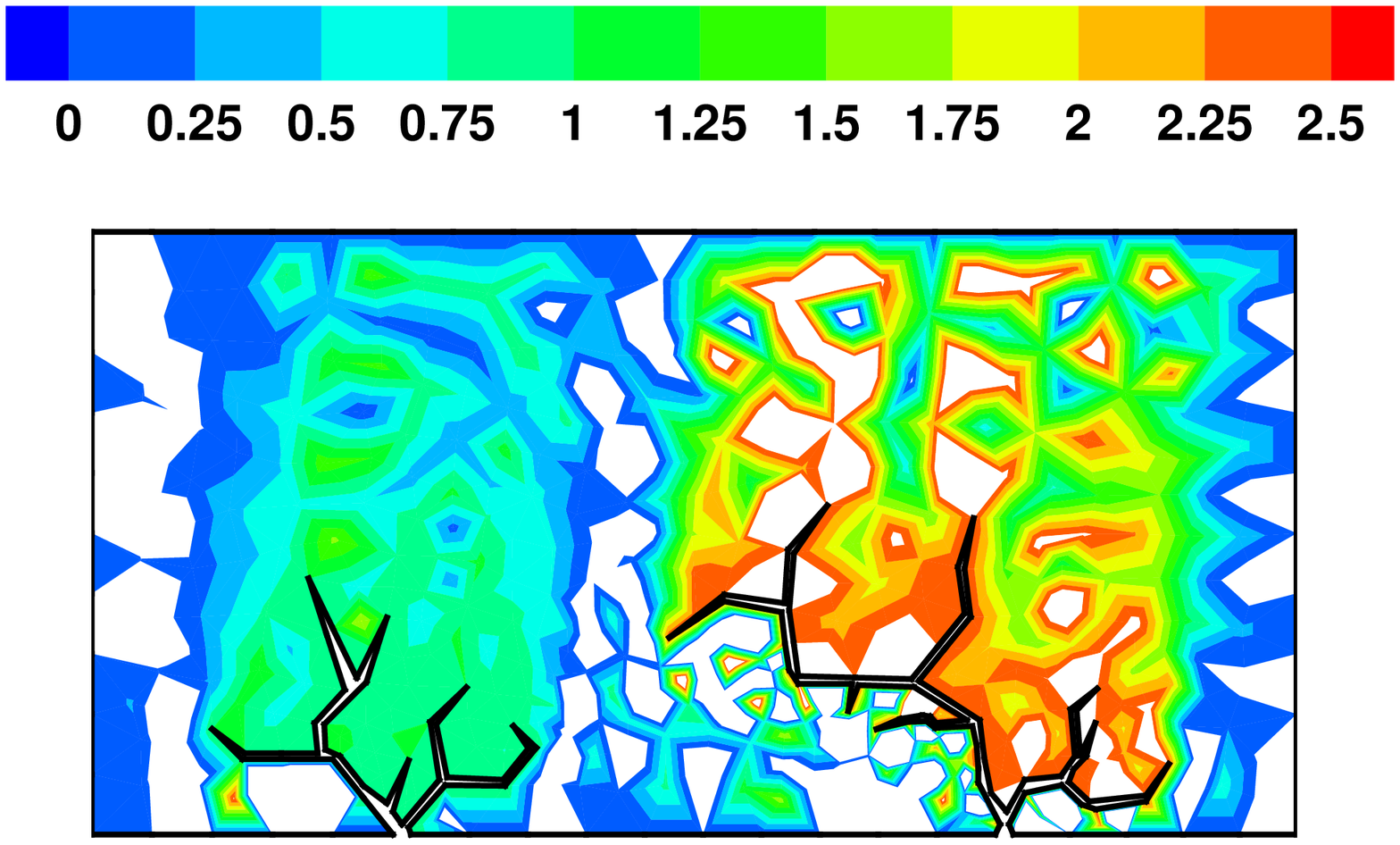}}
  \caption{\textsc{Test problem \#1}: The top left figure shows a coarse triangulation 
    (generated using \textsf{Gmsh} based on Algorithm \ref{Algo:Aniso_Meshes}) employed 
    in the numerical study, \emph{which is to the scale}. The top right figure and the 
    bottom two figures show the concentration profiles obtained for various values of 
    the velocity field and linear reaction coefficient using this mesh.
    \label{Fig:Prob1_Delaunay_DMP_Meshes_Conc}}
\end{figure}

\begin{figure}
  \subfigure[Element maximum angles: $Nv = 539$ and $Nele = 906$]
  {\includegraphics[scale=0.35]
  {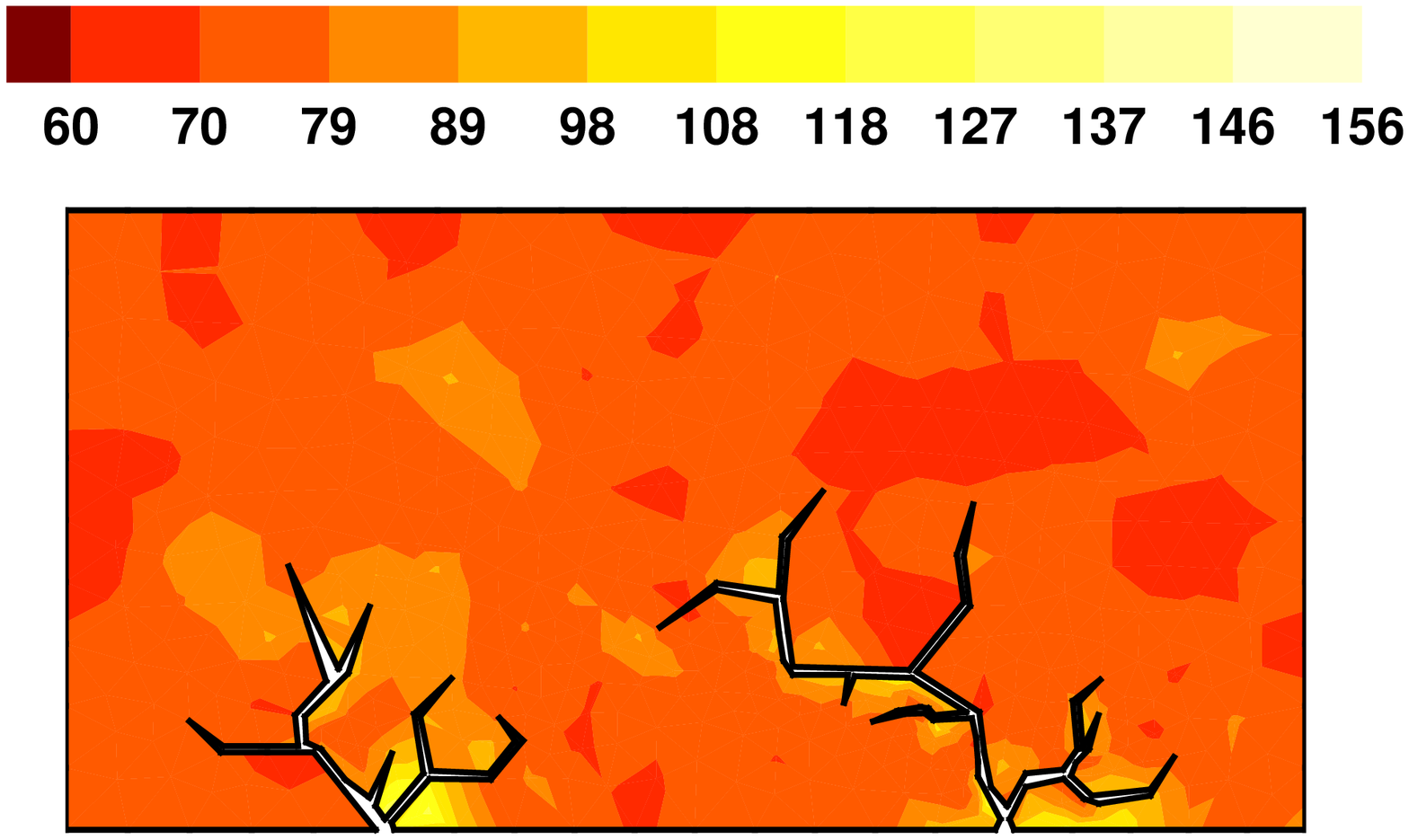}}
  \subfigure[Delaunay-type condition: $\mathbf{v} = (0,0)$ and $\alpha = 0$]
  {\includegraphics[scale=0.35]
  {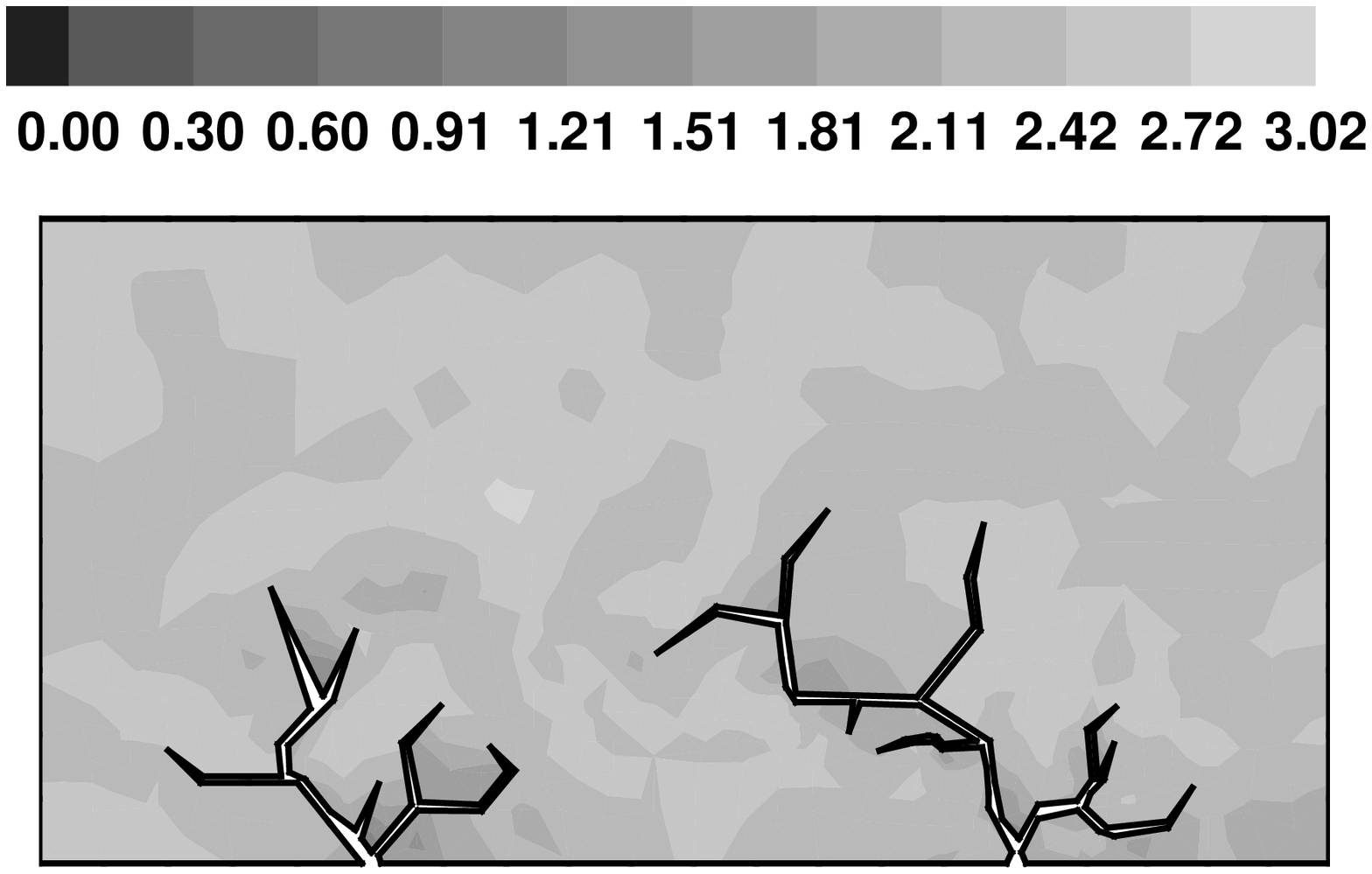}}
  \subfigure[Delaunay-type condition: $\mathbf{v} = (0.1,1.0)$ and $\alpha = 0$]
  {\includegraphics[scale=0.35]
  {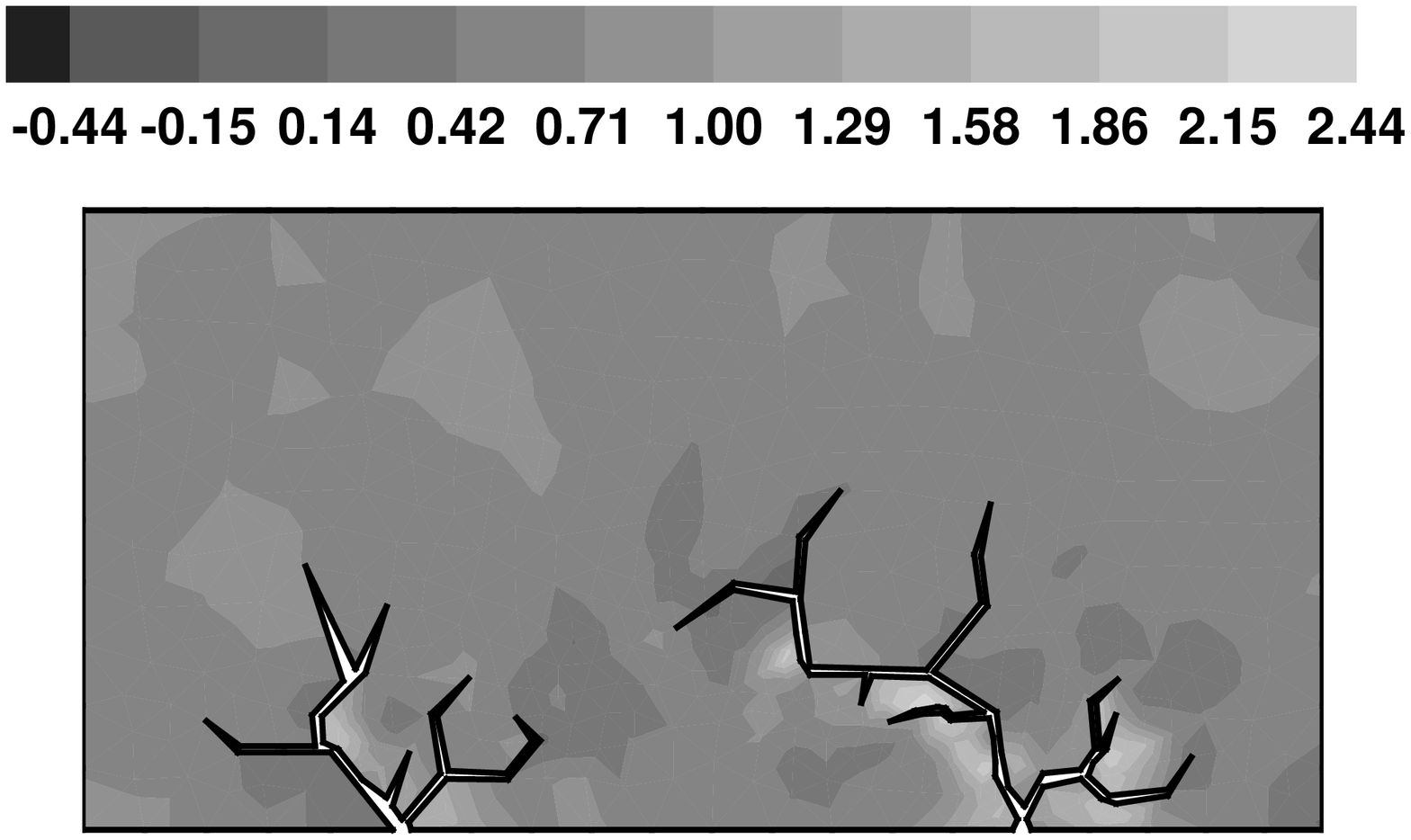}}
  \subfigure[Delaunay-type condition: $\mathbf{v} = (0.1,1.0)$ and $\alpha = 1.0$]
  {\includegraphics[scale=0.35]
  {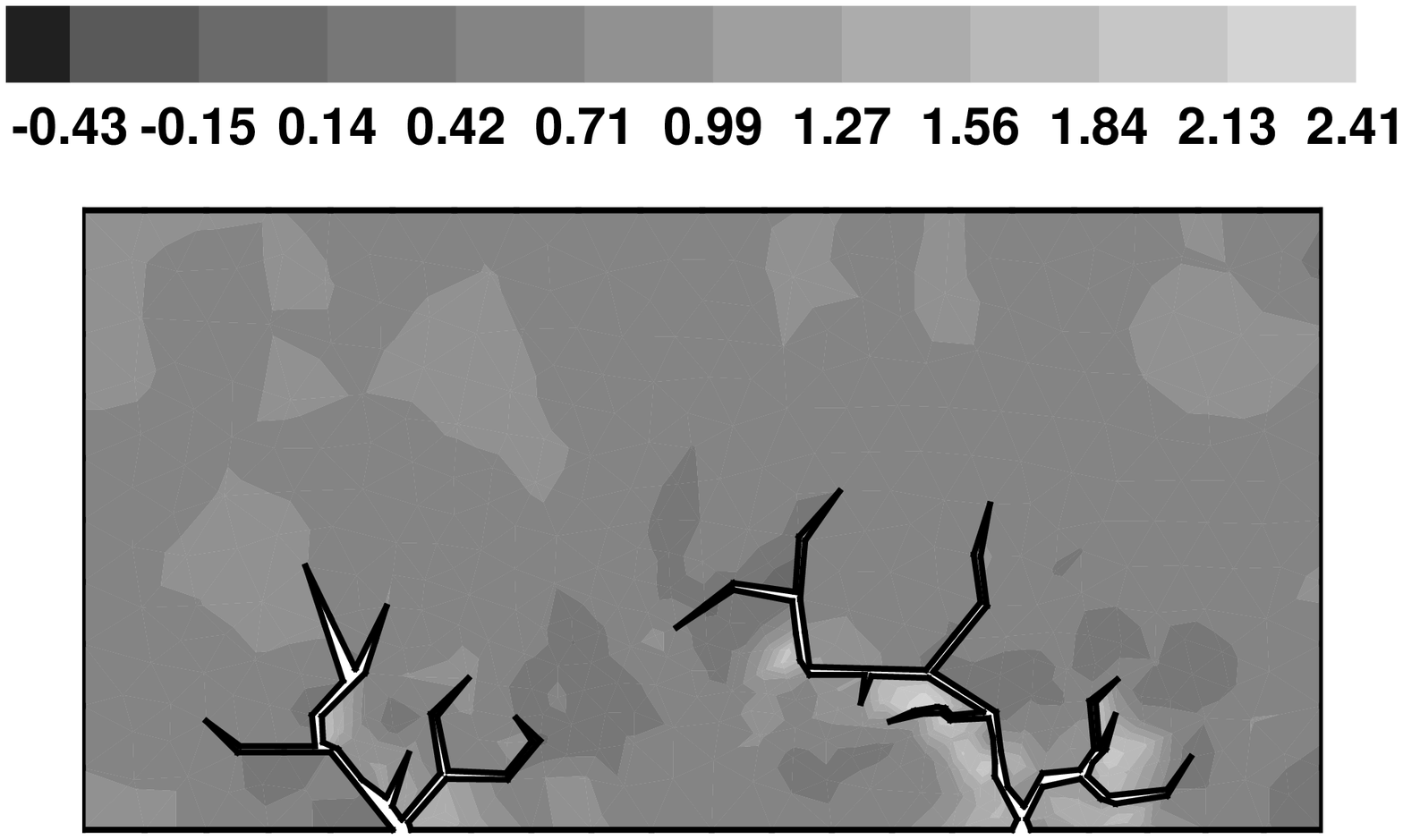}}
  \caption{\textsc{Test problem \#1}: The top left figure shows the maximum angle 
    possible in each element of the mesh. The top right figure and the bottom two 
    figures show the \emph{element maximum} \texttt{generalized Delaunay-type 
    condition}, which is a weaker condition as compared to the \emph{element 
    maximum} \texttt{anisotropic non-obtuse angle condition}.
    \label{Fig:Prob1_Delaunay_Condition}}
\end{figure}

\begin{figure}
  \centering 
  \psfrag{O}{$\mathrm{O}$}
  \psfrag{H}{$\mathrm{H}$}
  \psfrag{l1}{$l_x = 0.25$}
  \psfrag{l2}{\rotatebox{-90}{$l_y = 0.25$}}
  \psfrag{f1}{$f(\mathbf{x}) = 0.0$}
  \psfrag{f2}{$f(\mathbf{x}) = 1.0$}
  \psfrag{c1}{$c^{\mathrm{p}}(\mathbf{x}) = 0.0$}
  \psfrag{c2}{\rotatebox{-90}{$c^{\mathrm{p}}(\mathbf{x}) = 0.0$}}
  \psfrag{c3}{$c^{\mathrm{p}}(\mathbf{x}) = 0.0$}
  \psfrag{c4}{\rotatebox{90}{$c^{\mathrm{p}}(\mathbf{x}) = 0.0$}}
  \includegraphics[scale=0.55]{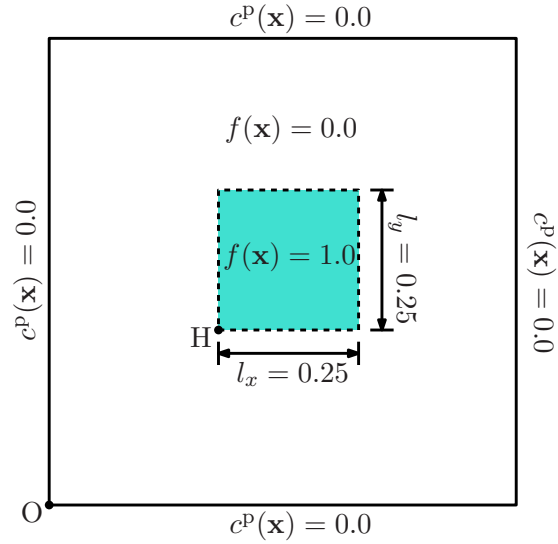}
  \caption{\textsc{Test problem \#2}: The computational domain under consideration 
    is a bi-unit square with one of its vertices at origin $\mathrm{O} = (0,0)$. 
    Homogeneous Dirichlet boundary conditions are prescribed on all sides of the 
    square. The volumetric source $f(\mathbf{x})$ is zero inside the domain, except 
    for the square region (including the boundaries) located at vertex $\mathrm{H} = 
    (0.375,0.375)$. In this region, $f(\mathbf{x})$ is equal to unity. Herein, we 
    assume that the velocity vector field and linear reaction coefficient are equal 
    to zero everywhere in the computational domain.
    \label{Fig:Prob2_Description}}
\end{figure}

\begin{figure}
  \subfigure[Background mesh: $Nv = 47$ and $Nele = 68$]
  {\includegraphics[scale=0.35]
  {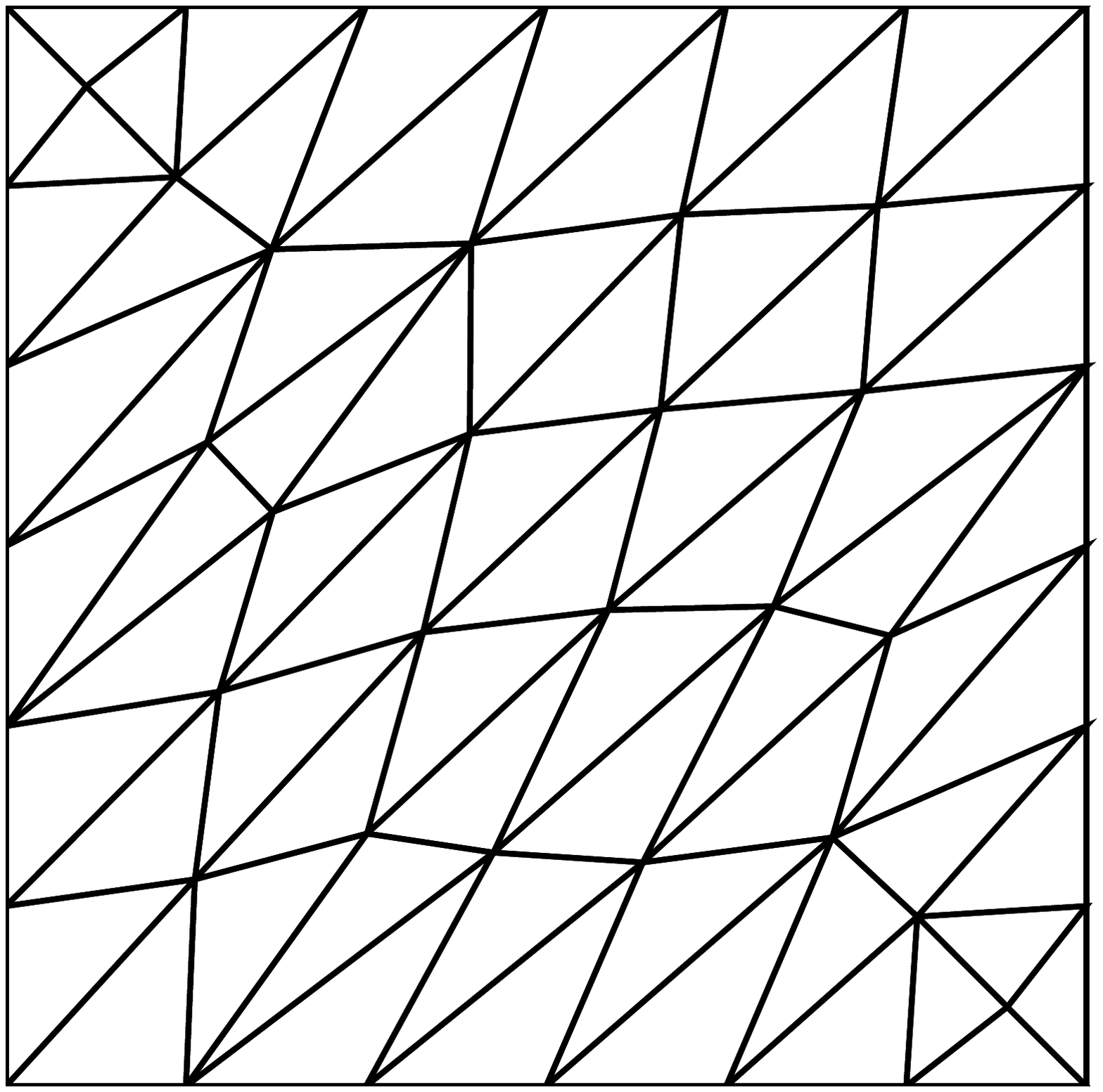}}
  \subfigure[Anisotropic mesh: $Nv = 593$ and $Nele = 1088$]
  {\includegraphics[scale=0.35]
  {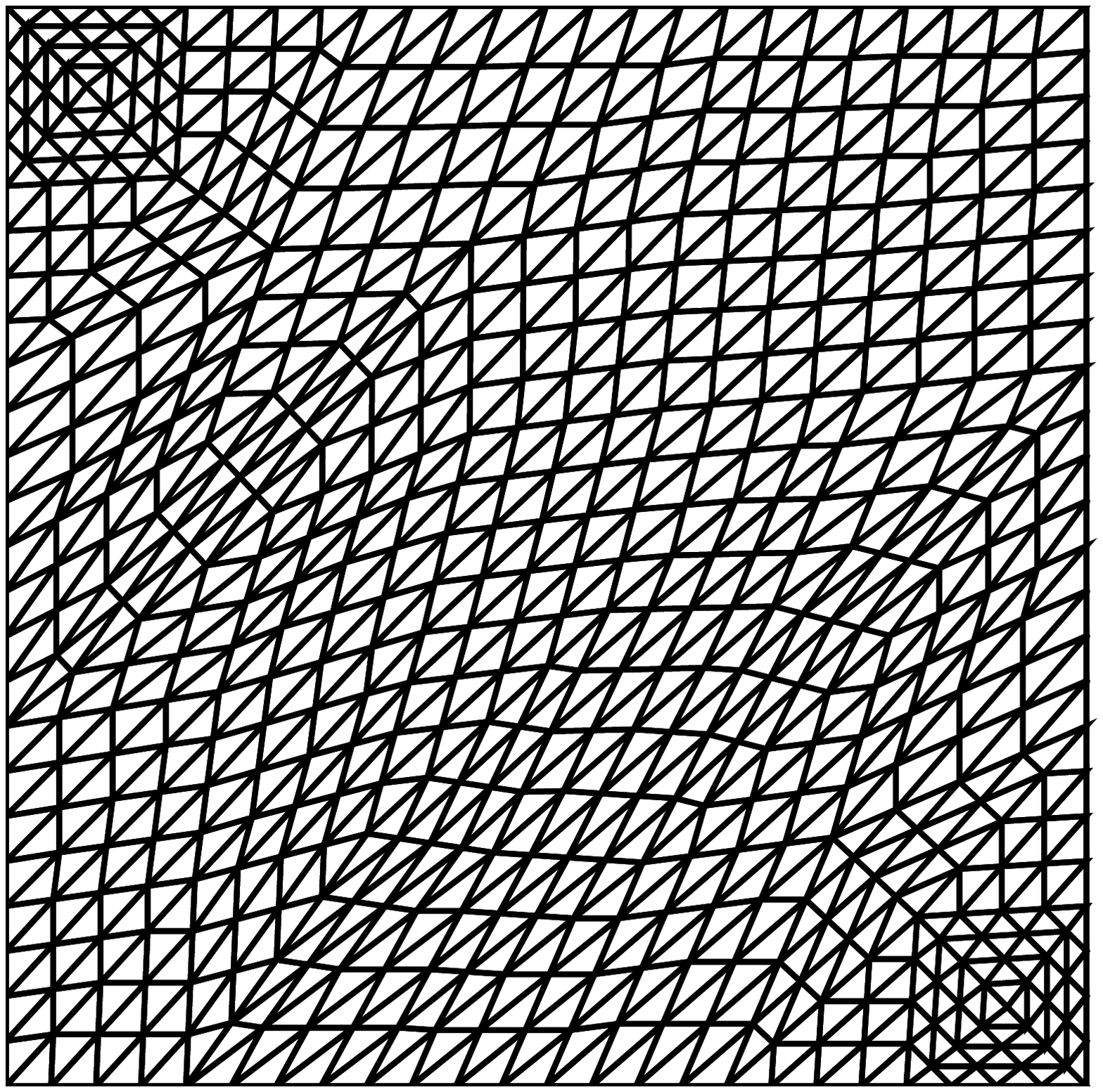}}
  \caption{\textsc{Test problem \#2}: The left figure shows the background mesh 
    on which \textsf{BAMG} operates to give an anisotropic triangulation, which 
    is shown in the right figure. As the ratio of the minimum eigenvalue of anisotropic 
    diffusivity tensor to its maximum is 0.1, which is not very high, so the 
    resulting triangulation consists of a mixture of skinny and normal triangles.
    \label{Fig:Prob2_Background_DMP_Meshes}}
\end{figure}

\begin{figure}
  \subfigure[Background mesh: $\mathbf{v} = (0,0)$ and $\alpha = 0$]
  {\includegraphics[scale=0.35]
  {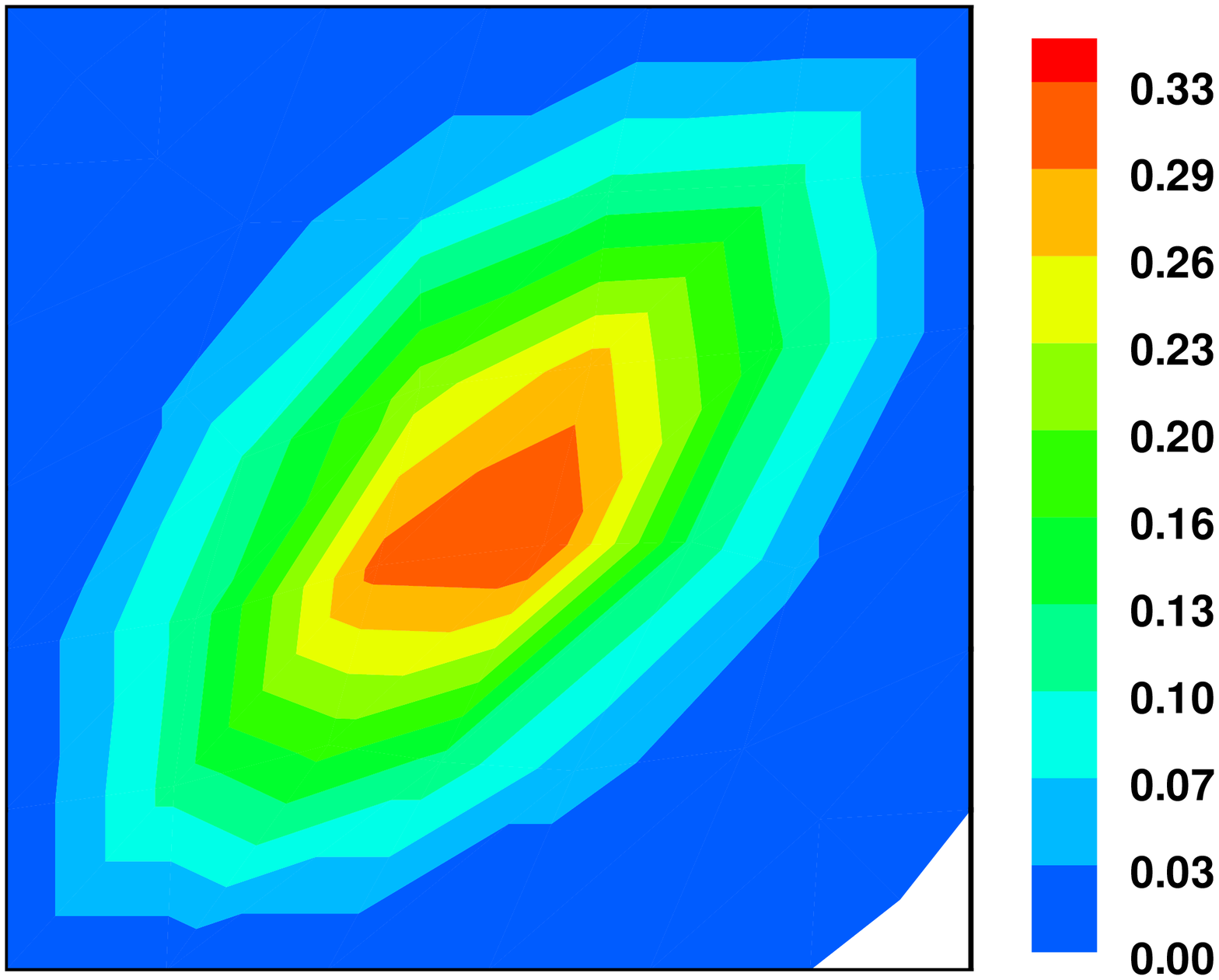}}
  \subfigure[Anisotropic mesh: $\mathbf{v} = (0,0)$ and $\alpha = 0$]
  {\includegraphics[scale=0.35]
  {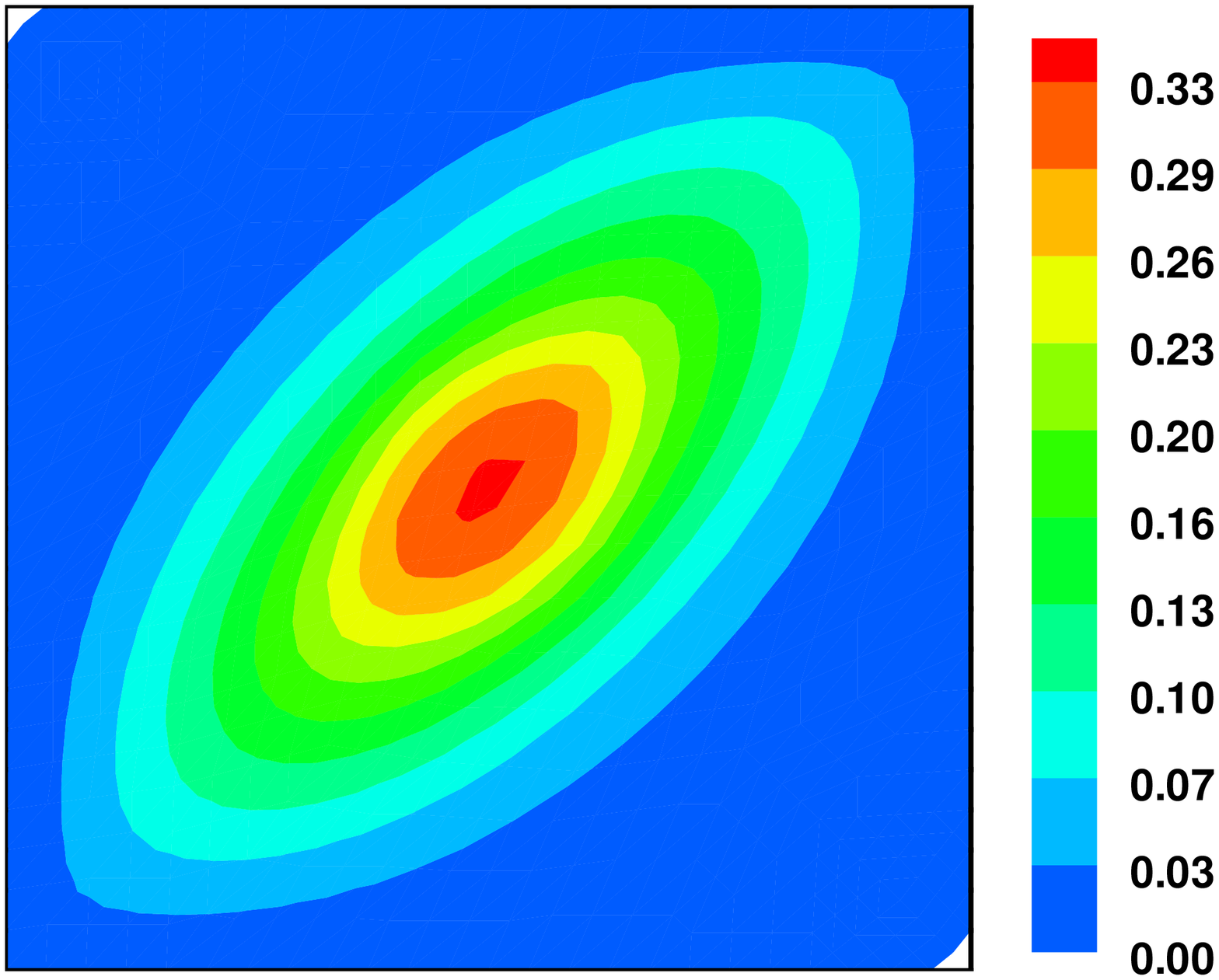}}
  \caption{\textsc{Test problem \#2}: The left figure shows the concentration 
    profile based on the background mesh, while the right figure shows the 
    concentration profile using the anisotropic triangulation.   
    \label{Fig:Prob2_Background_DMP_Meshes_Conc}}
\end{figure}

\begin{figure}
  \subfigure[Background mesh: $Nv = 5079$ and $Nele = 9918$]
  {\includegraphics[scale=0.35]
  {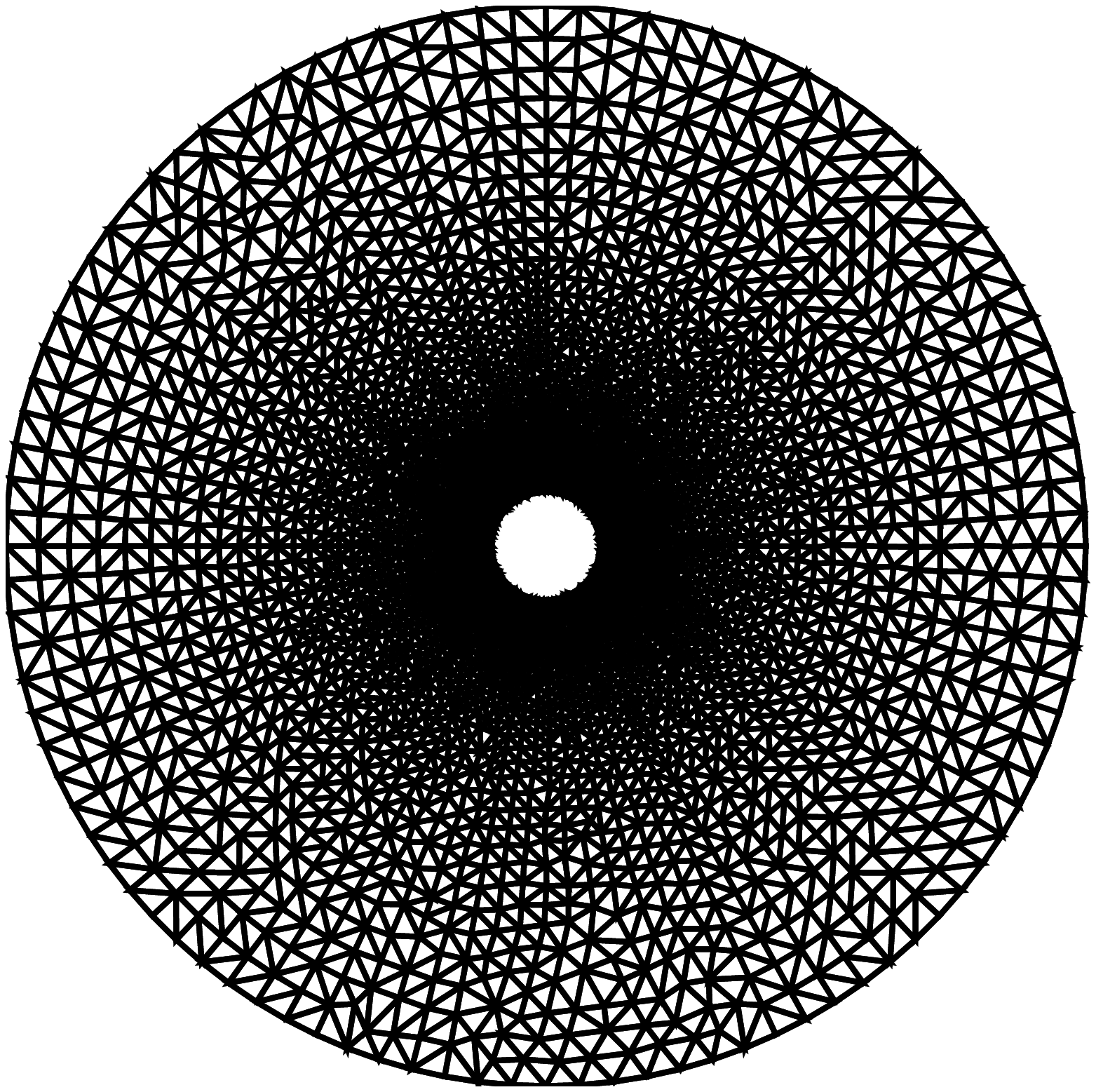}}
  \subfigure[Anisotropic mesh: $Nv = 297$ and $Nele = 436$]
  {\includegraphics[scale=0.35]
  {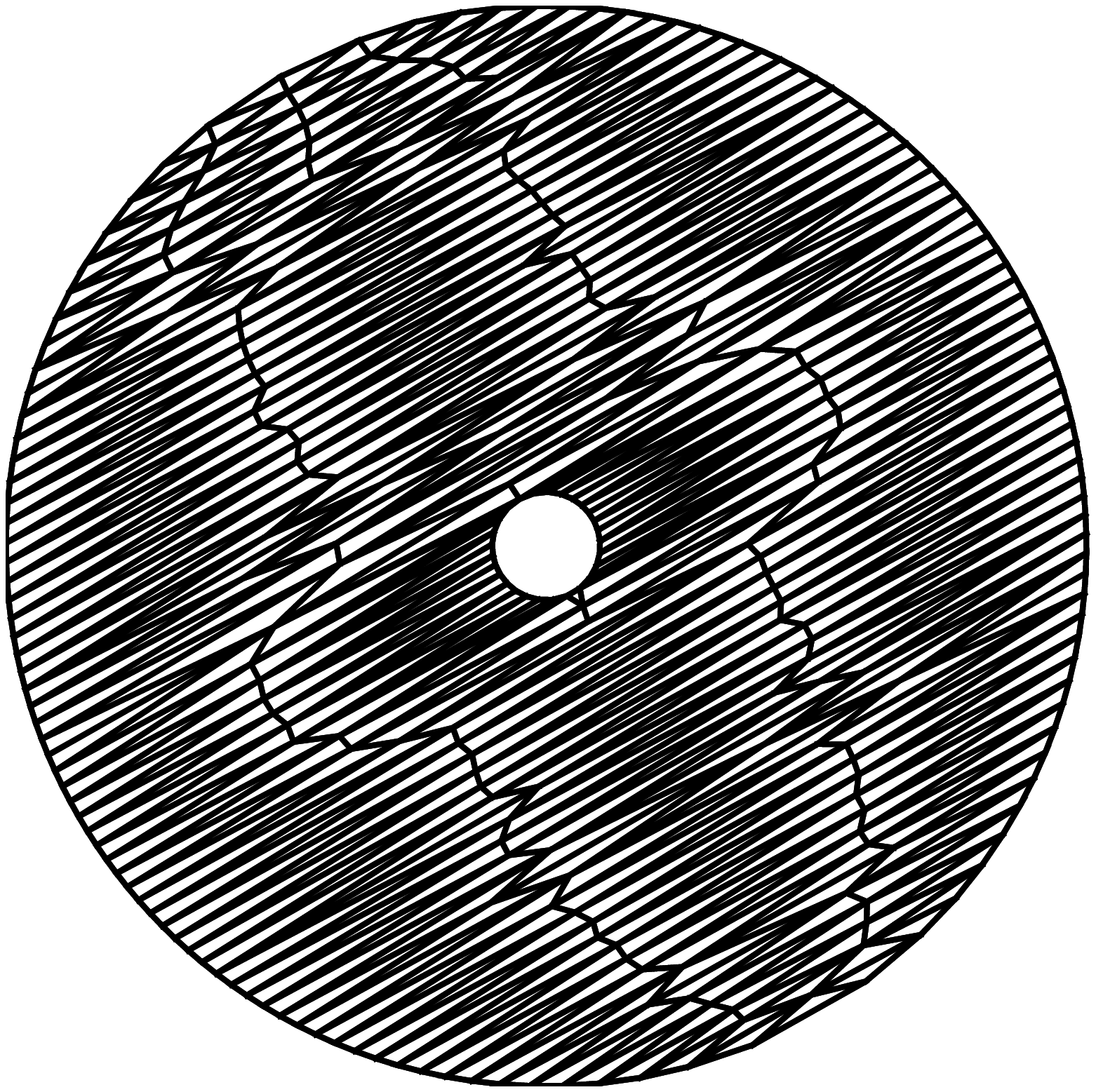}}
  \caption{\textsc{Test problem \#3}: The left figure shows the background 
    mesh and the right figure shows the anisotropic triangulation obtained 
    using \textsf{BAMG} for all the four cases.
    \label{Fig:Prob3_Background_DMP_Meshes}}
\end{figure}

\begin{figure}
  \subfigure[Background mesh: $\mathbf{v} = (0,0)$ and $\alpha = 0$]
  {\includegraphics[scale=0.3]
  {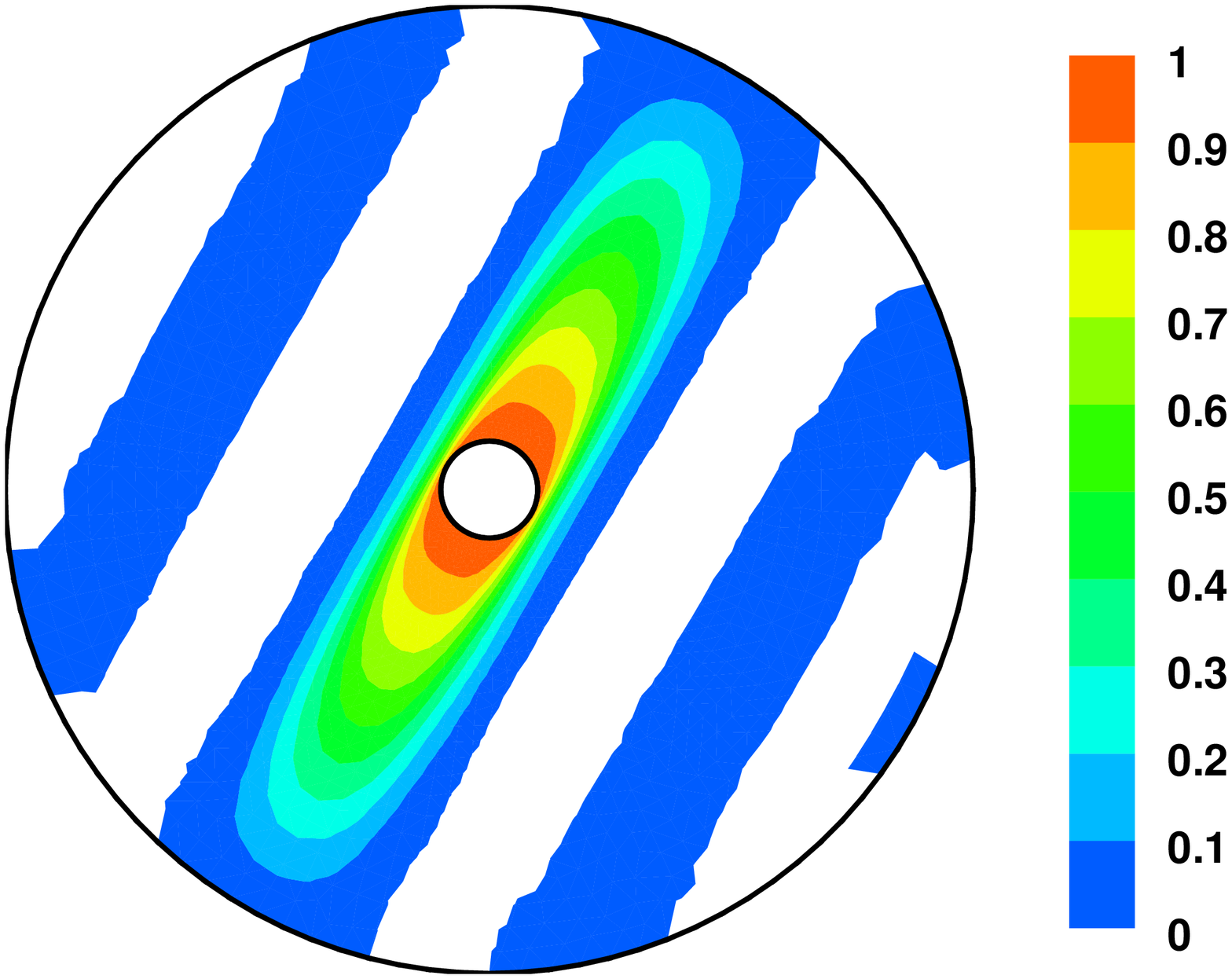}}
  \subfigure[Anisotropic mesh: $\mathbf{v} = (0,0)$ and $\alpha = 0$]
  {\includegraphics[scale=0.3]
  {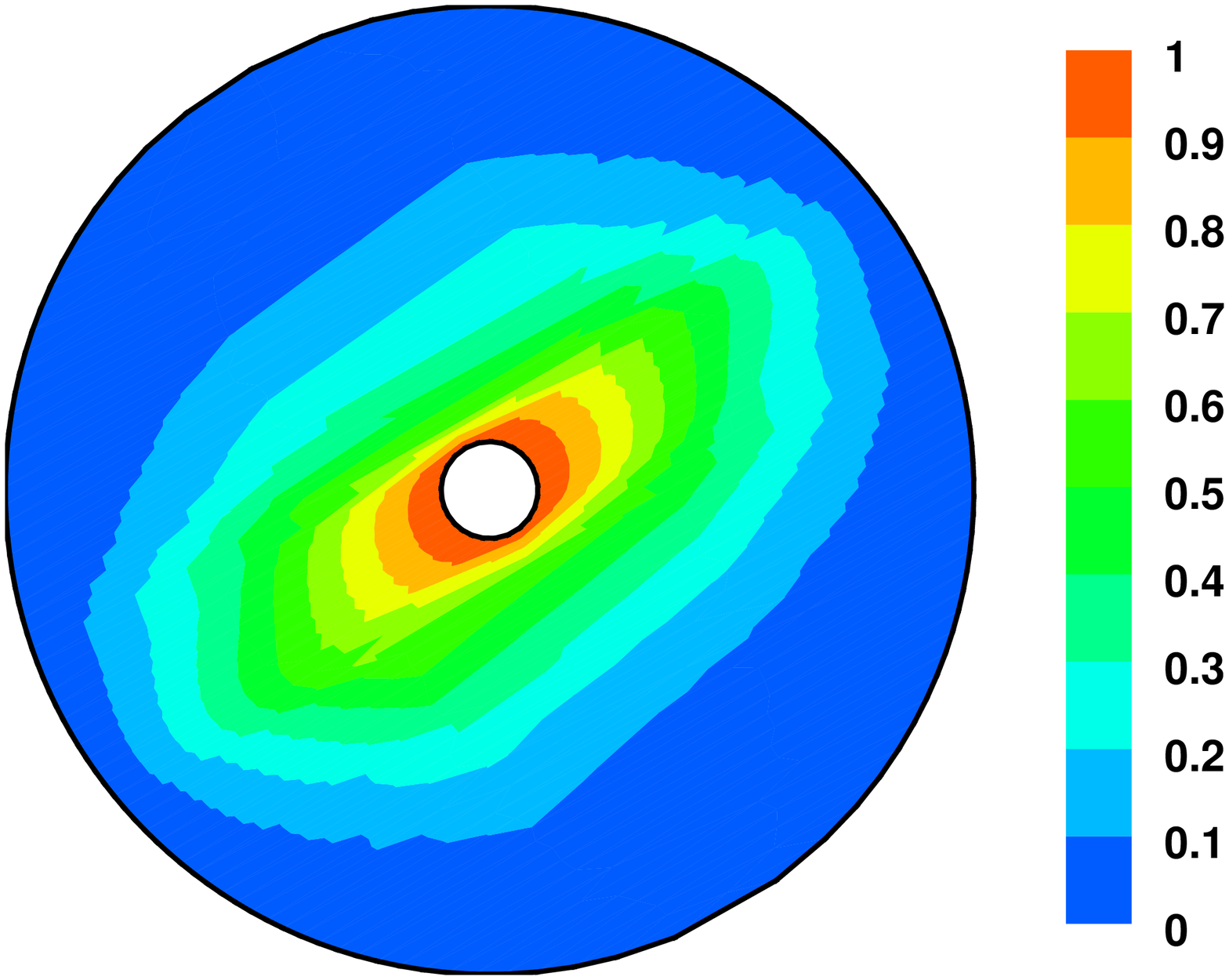}}
  \subfigure[Anisotropic mesh: $\mathbf{v} = (1.5,1.0)$ and $\alpha = 1.0$]
  {\includegraphics[scale=0.3]
  {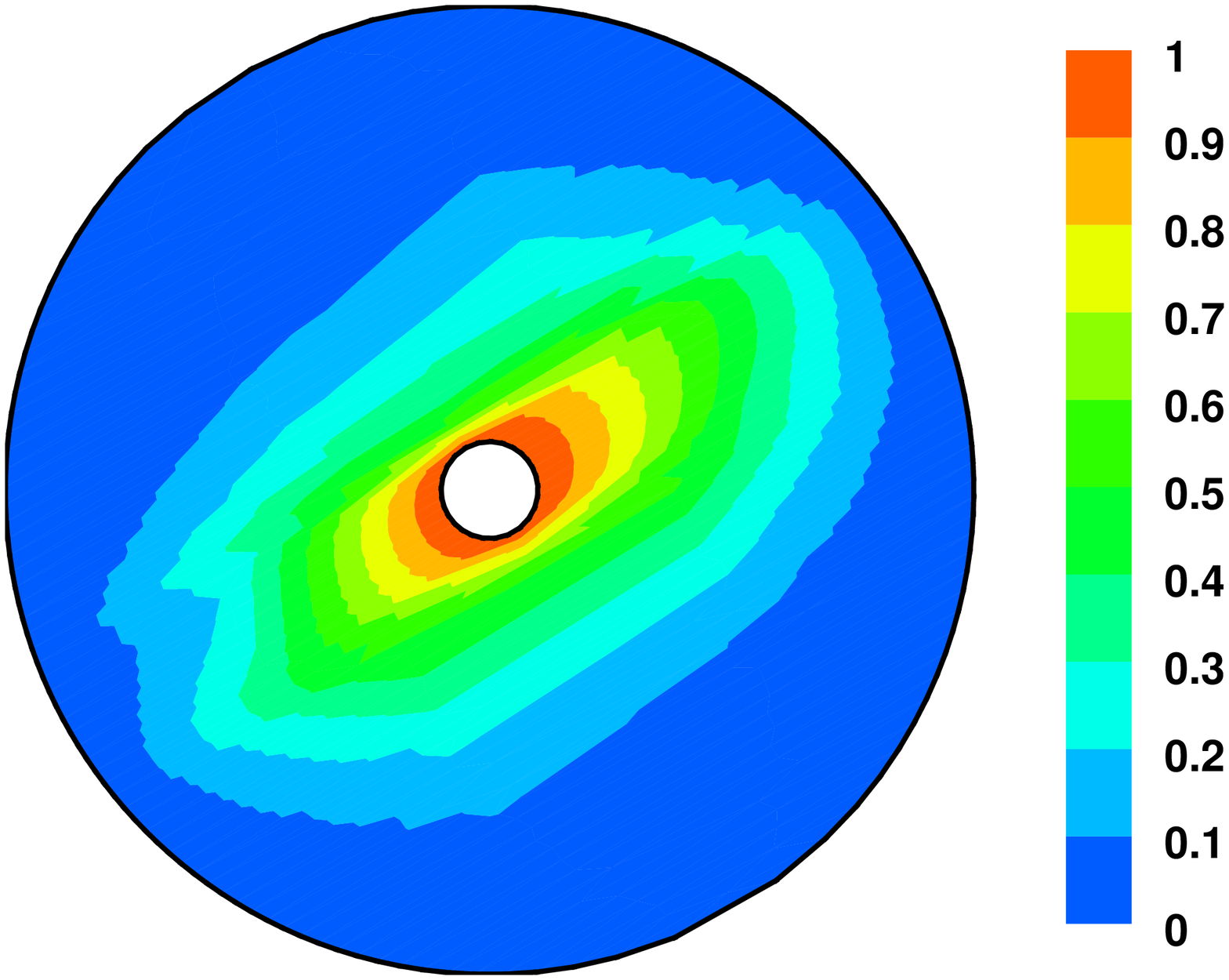}}
  \subfigure[Anisotropic mesh: $\mathbf{v} = (5.0,0.5)$ and $\alpha = 1.0$]
  {\includegraphics[scale=0.3]
  {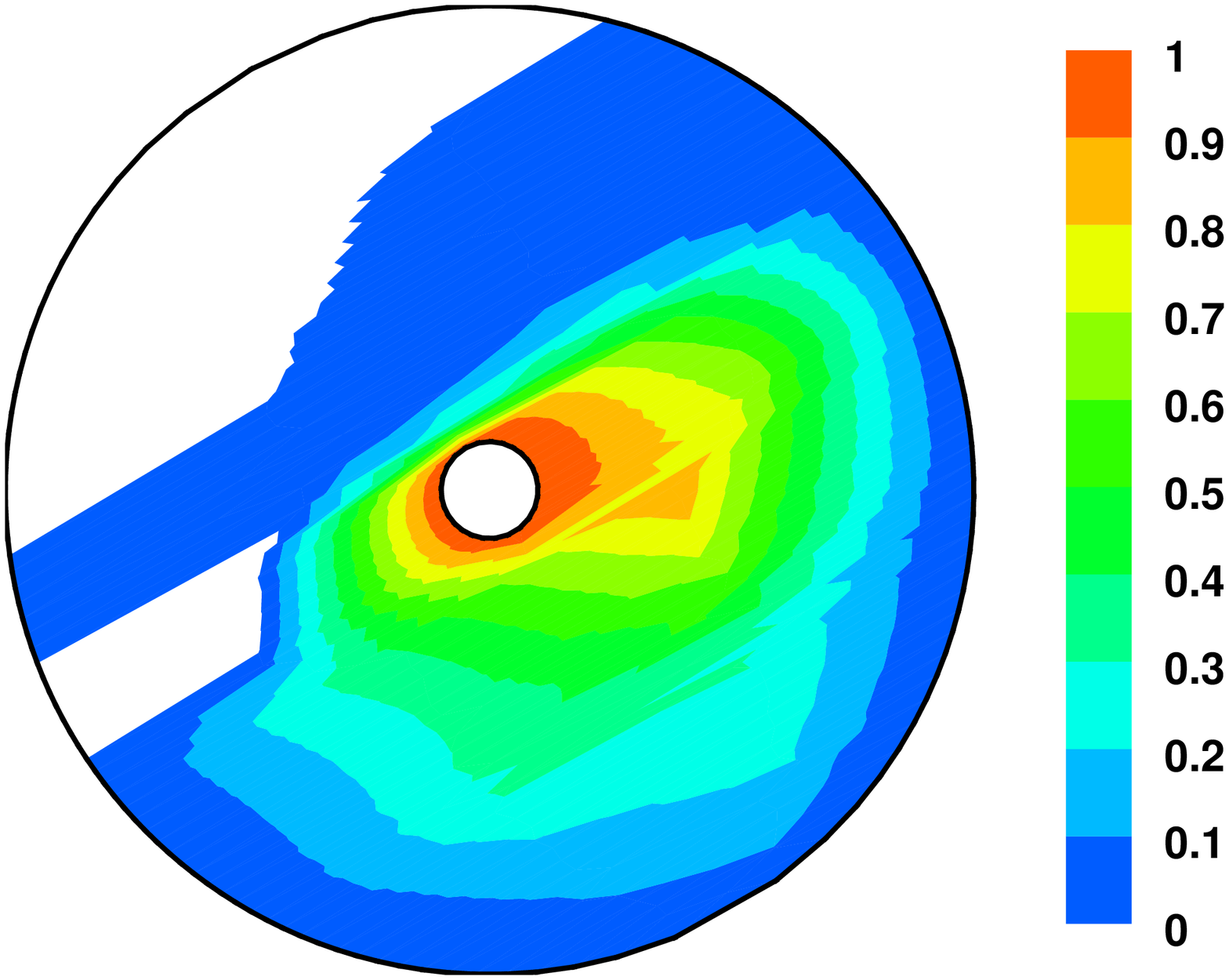}}
  \subfigure[Anisotropic mesh: $\mathbf{v} = (0,0)$ and $\alpha = 1000$]
  {\includegraphics[scale=0.3]
  {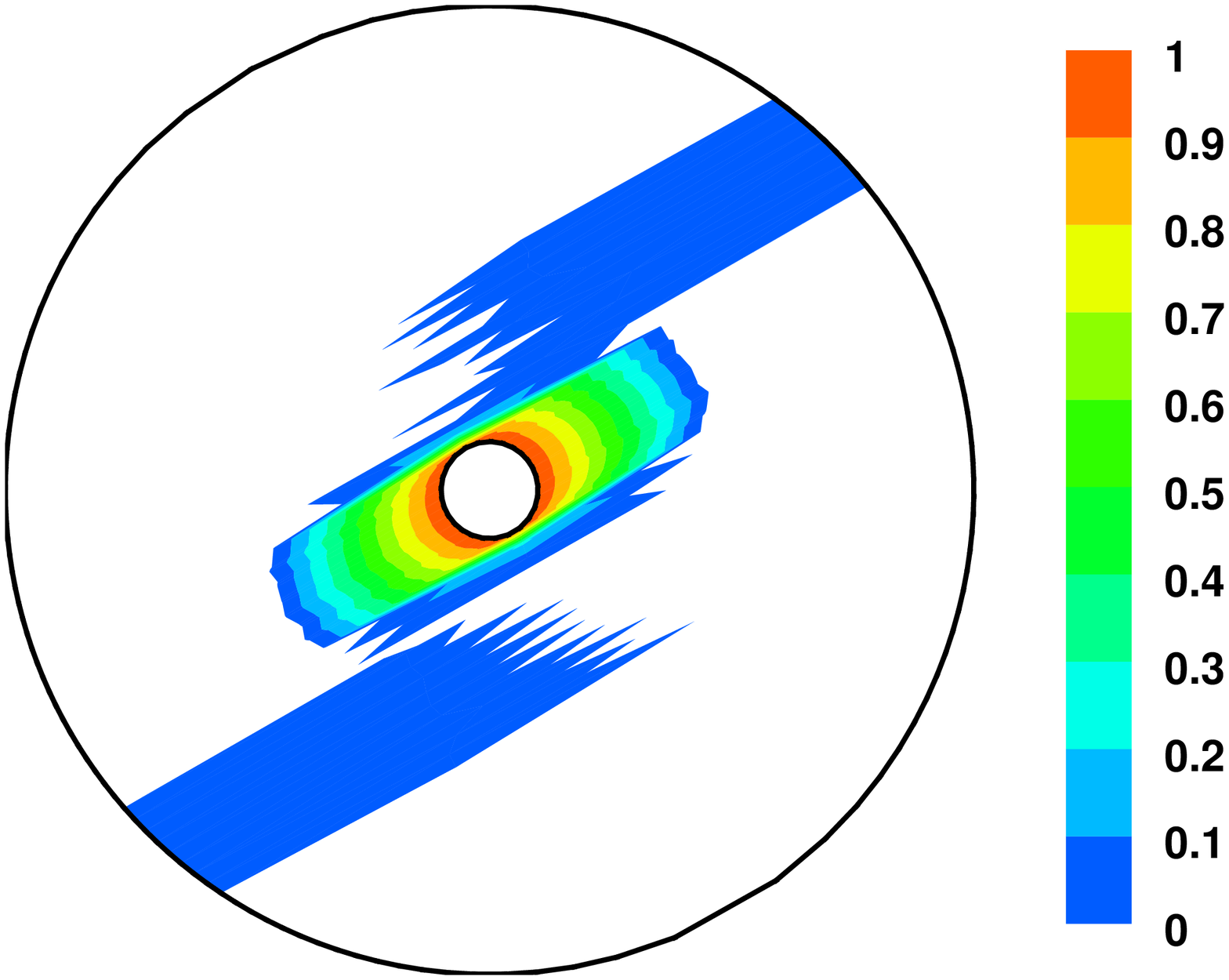}}
  \caption{\textsc{Test problem \#3}: This figure shows the concentration profiles 
    for four different cases based on the background mesh and anisotropic meshes 
    shown in Figure \ref{Fig:Prob3_Background_DMP_Meshes}.
    \label{Fig:Prob3_Background_DMP_Meshes_Conc}}
\end{figure}

\begin{figure}
  \subfigure[Delaunay mesh: $Nv = 1564$ and $Nele = 2826$]
  {\includegraphics[scale=0.35]
  {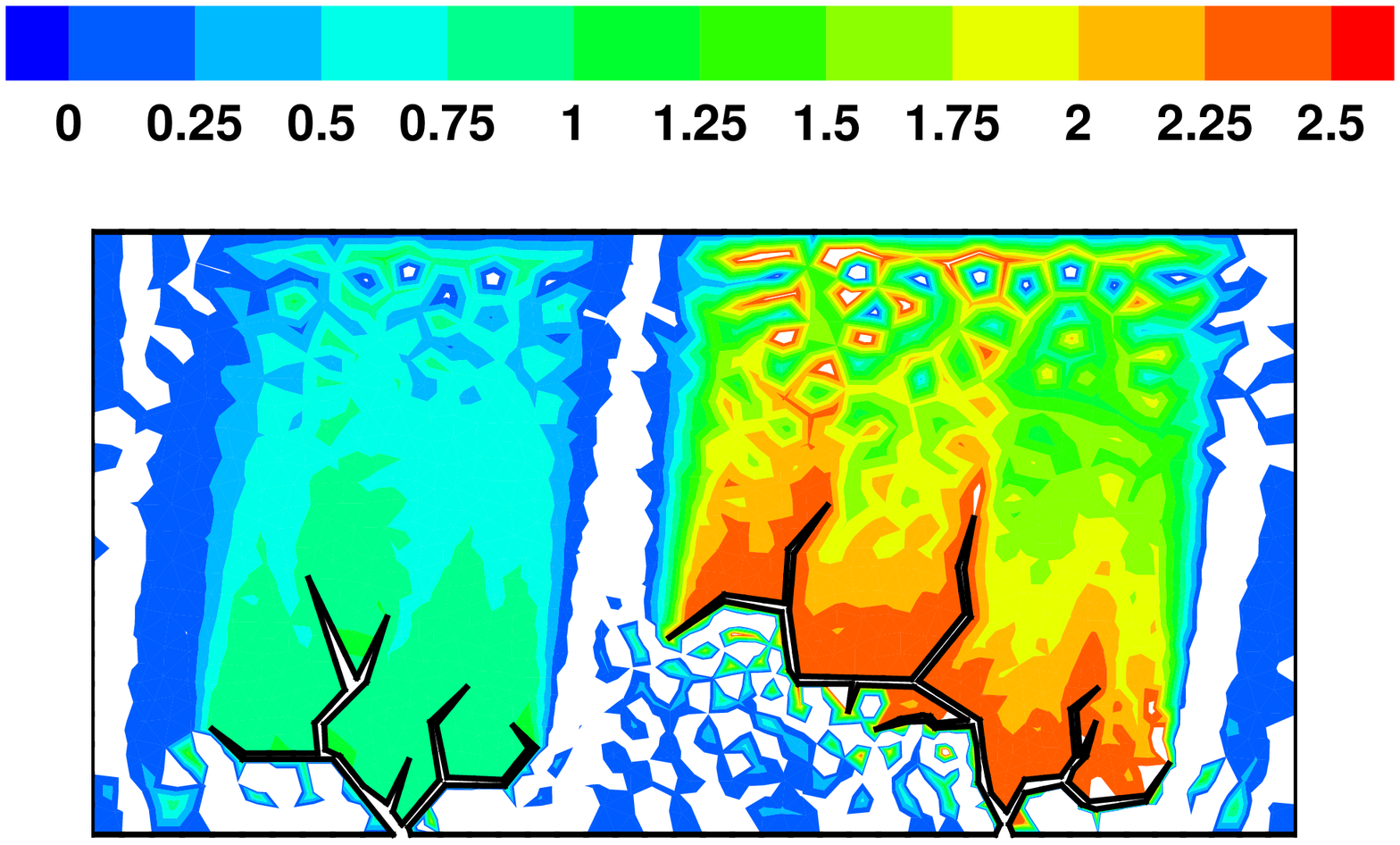}}
  \subfigure[Delaunay mesh: $Nv = 5090$ and $Nele = 9620$]
  {\includegraphics[scale=0.35]
  {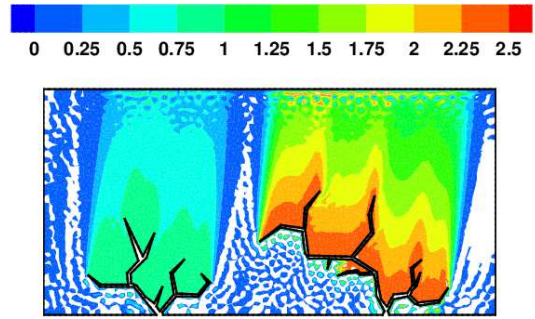}}
  \subfigure[Delaunay mesh: $Nv = 18372$ and $Nele = 35665$]
  {\includegraphics[scale=0.35]
  {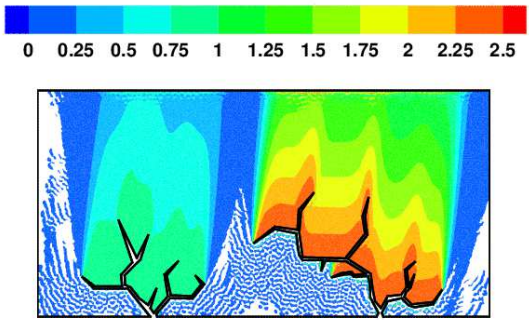}}
  \subfigure[Delaunay mesh: $Nv = 69995$ and $Nele = 137881$]
  {\includegraphics[scale=0.35]
  {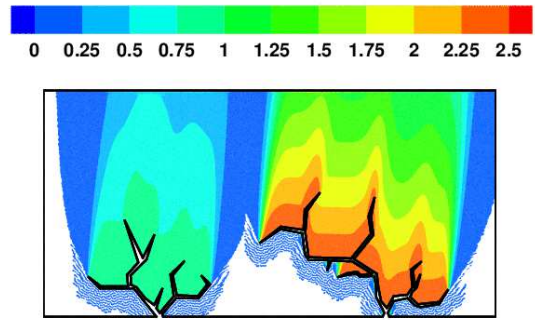}}
  \caption{\textsc{Issues with traditional mesh refinement}: Concentration profiles 
    for the fracture domain when $\mathbf{v} = (0.1,1.0)$ and $\alpha = 1.0$. The 
    white region in the figures shows the area in which the numerical simulation 
    has violated the NC and maximum constraint.
    \label{Fig:Prob1_TradRefine_Meshes}}
\end{figure}

\begin{figure}
  \subfigure[Anisotropic mesh: $Nv = 8897$ and $Nele = 17408$]
  {\includegraphics[scale=0.35]
  {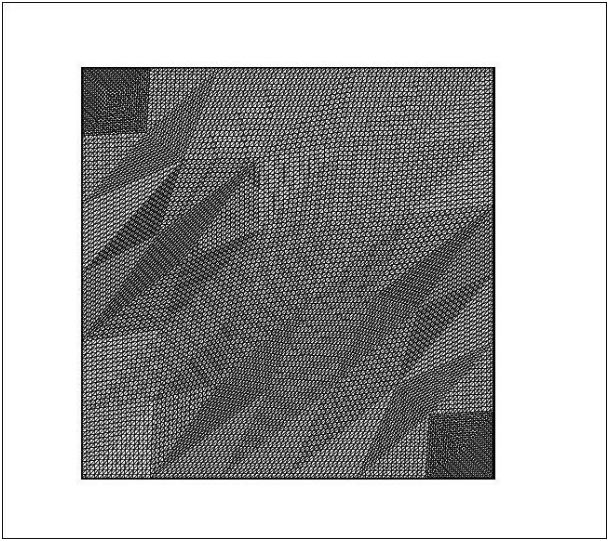}}
  \subfigure[Pure anisotropic diffusion: Concentration profile]
  {\includegraphics[scale=0.35]
  {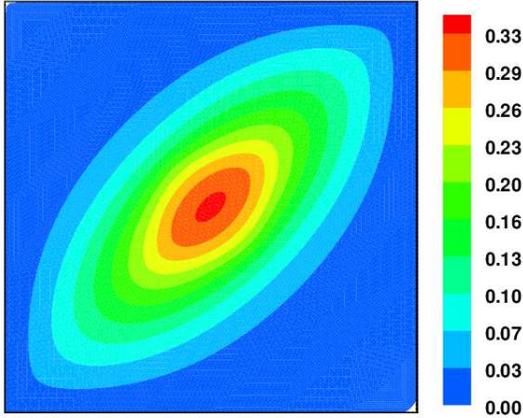}}
  \caption{\textsc{Issues with traditional mesh refinement}: The left figure shows 
    the anisotropic mesh obtained using the traditional mesh refinement procedure
    on the anisotropic triangulation given in Figure \ref{Fig:Prob2_Background_DMP_Meshes}. 
    The right figure shows the concentration profile obtained using this refined mesh. 
    It should be noted that this $h$-refined DMP-based mesh is interiorly connected. 
    Hence, it satisfies $\mathrm{DSMP}_{\boldsymbol{K}}$.
    \label{Fig:Prob2_TradRefine_Meshes}}
\end{figure}

\begin{figure}
  \subfigure[Anisotropic mesh: $Nv = 2199$ and $Nele = 3924$]
  {\includegraphics[scale=0.35]
  {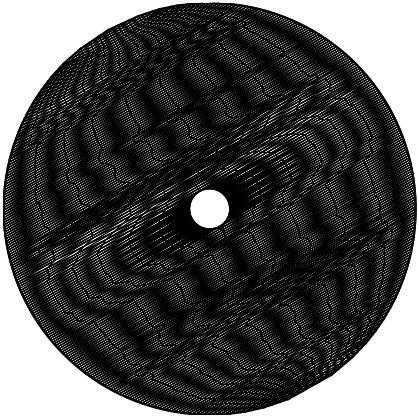}}
  \subfigure[Pure anisotropic diffusion: Concentration profile]
  {\includegraphics[scale=0.35]
  {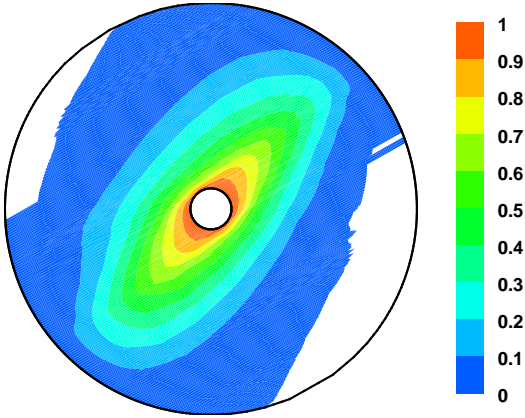}}
  \caption{\textsc{Issues with traditional mesh refinement}: The left figure shows 
    the anisotropic mesh obtained using the traditional mesh refinement procedure
    on the anisotropic triangulation given in Figure \ref{Fig:Prob3_Background_DMP_Meshes}.
    The right figure shows the concentration profile obtained using this refined mesh.
    \label{Fig:Prob3_TradRefine_Meshes}}
\end{figure}

\begin{figure}
  \subfigure[Anisotropic mesh: $Nv = 2647$ and $Nele = 4789$]
  {\includegraphics[scale=0.35]
  {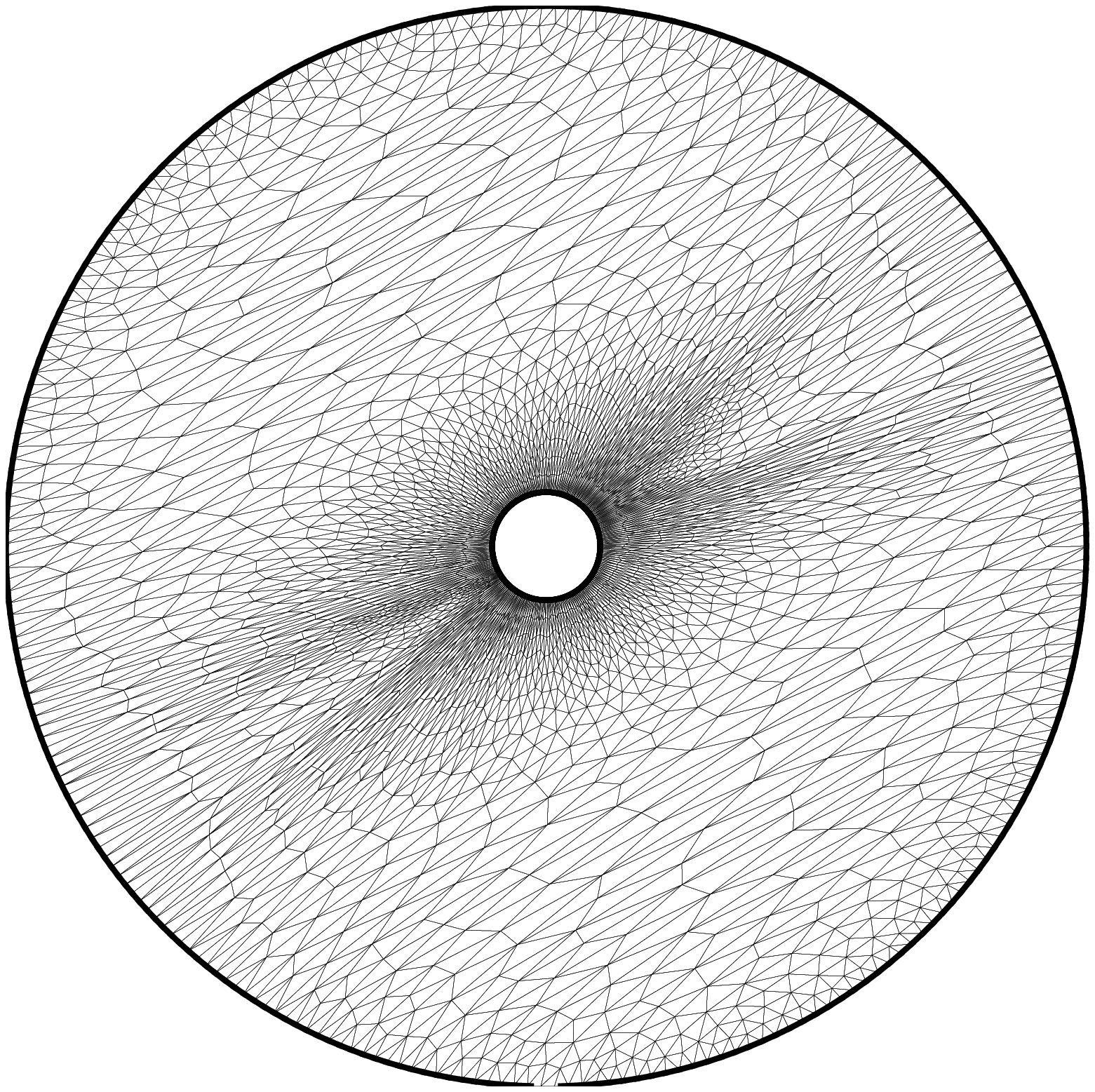}}
  \subfigure[Pure anisotropic diffusion: Concentration profile]
  {\includegraphics[scale=0.35]
  {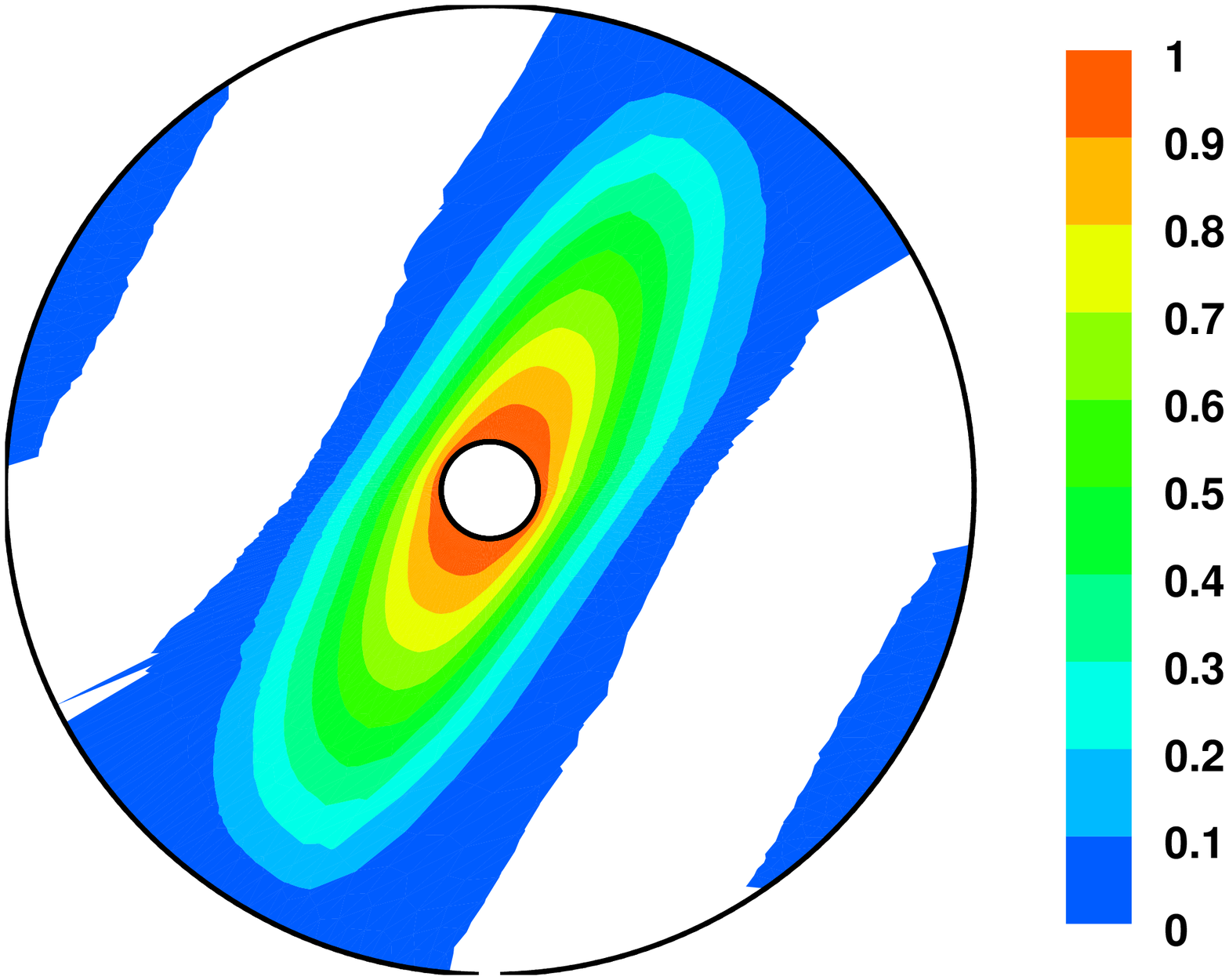}}
  \caption{\textsc{Issues with non-traditional mesh refinement}: The left figure shows 
    a refined anisotropic mesh obtained using the non-traditional approach.The right 
    figure shows the concentration profile obtained using this refined mesh. It should 
    be pointed out that the mesh obtained using this procedure did not converge in 
    \texttt{MaxIters} = 100. Hence, as a result, it violates NC and DMPs.
    \label{Fig:Prob3_NonTradRefine_Meshes}}
\end{figure}

\begin{figure}
  \subfigure[Test problem \#1: $\mathbf{v} = (0,0)$ and $\alpha = 0$]
  {\includegraphics[scale=0.3]
  {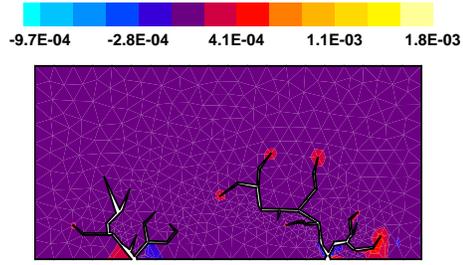}}
  \subfigure[Test problem \#2: $\mathbf{v} = (0,0)$ and $\alpha = 0$]
  {\includegraphics[scale=0.3]
  {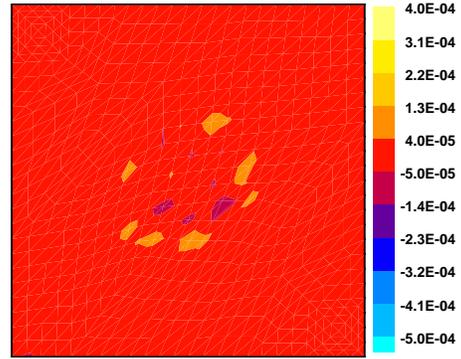}}
  \subfigure[Test problem \#3: $\mathbf{v} = (0,0)$ and $\alpha = 0$]
  {\includegraphics[scale=0.3]
  {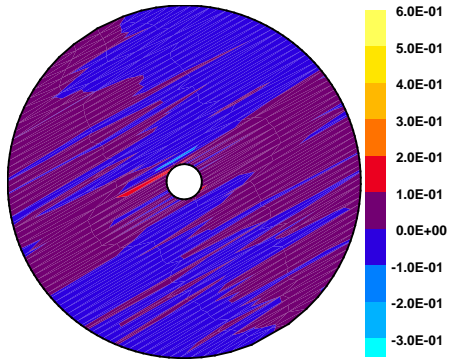}}
  \subfigure[Test problem \#3: $\mathbf{v} = (1.5,1.0)$ and $\alpha = 1.0$]
  {\includegraphics[scale=0.3]
  {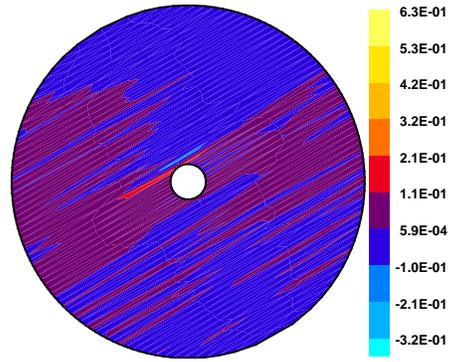}}  
  \caption{\textsc{Local species balance errors}: The figures show the errors incurred 
    in satisfying local species balance for various test problems on coarse 
    meshes.
    \label{Fig:Prob123_Local_Mass_Errors}}
\end{figure}

\end{document}